\newcommand{\Hom}{\operatorname{Hom}}
\renewcommand{\subset}{\subseteq}
\newcommand{\cusp}{{\operatorname{cusp}}}
\newtheorem{theorem}{Theorem}[section]
\newtheorem{lemma}[theorem]{Lemma}
\newtheorem{proposition}[theorem]{Proposition}
\newtheorem{remark}[theorem]{Remark}
\newtheorem{corollary}[theorem]{Corollary}
\date{\today}
\newcommand{\gotM}{\mathfrak{m}}
\DeclareMathOperator{\irr}{Irr}
\newcommand{\pairs}{\mathfrak{A}}
\newcommand{\smlr}{\ll}
\newcommand{\supp}{\operatorname{supp}}
\newcommand{\Seg}{\operatorname{Seg}}
\newcommand{\m}{\mathfrak{m}}
\newcommand{\n}{\mathfrak{n}}
\newcommand{\la}{\mathfrak{l}}
\newcommand{\Vien}{\mathcal{K}}
\newcommand{\Vein}{\mathcal{K'}}
\newcommand{\depth}{\mathfrak{d}}
\newcommand{\Mult}{\mathfrak{M}}
\renewcommand{\subset}{\subseteq}
\newcommand{\RSK}{\mathcal{RSK}}
\newcommand{\Lad}{\operatorname{Lad}}
\newcommand{\lshft}[1]{\overset{\leftarrow}{#1}\vphantom{#1}}               
\newcommand{\wt}{\mathrm{wt}}
\newcommand{\nmod}{\mathrm{mod}}
\newcommand{\fgt}{\mathrm{fgt}}
\newcommand{\nmodi}{\mathrm{Mod}}
\newcommand{\gmod}{\mathrm{gmod}}
\newcommand{\KR}{\mathrm{KR}}
\newcommand{\rres}{\mathrm{Res}}
\newcommand{\iind}{\mathrm{Ind}}
\newcommand{\coiind}{\mathrm{coInd}}
\newcommand{\girr}{\mathrm{gIrr}}
\newcommand{\seg}{\mathrm{Seg}}
\begin{document}

\title[Quiver Hecke algebras and RSK]{Simple modules for quiver Hecke algebras and the Robinson--Schensted--Knuth correspondence }

\begin{abstract}
  We formalize some known categorical equivalences to give a rigorous treatment of smooth representations of $p$-adic general linear groups, as ungraded modules over quiver Hecke algebras of type $A$.
  
  Graded variants of RSK-standard modules are constructed for quiver Hecke algebras. Exporting recent results from the $p$-adic setting, we describe an effective method for construction and classification of all simple modules as quotients of modules induced from maximal homogenous data.
  
It is established that the products involved in the RSK construction fit the Kashiwara-Kim notion of normal sequences of real modules. We deduce that RSK-standard modules have simple heads, devise a formula for the shift of grading between RSK-standard and simple self-dual modules, and establish properties of their decomposition matrix, thus confirming expectations for $p$-adic groups raised in a work of the author with Lapid.

Subsequent work will exhibit how the presently introduced RSK construction generalizes the much-studied Specht construction, when inflated from cyclotomic quotient algebras. 
\end{abstract}

\author{Maxim Gurevich}
\address{Department of Mathematics, Technion -- Israel Institute of Technology, Haifa, Israel.}
\email{maxg@technion.ac.il}

\maketitle

\section{Introduction}

This work stands at a crossroads of two domains in representation theory. One deals with smooth complex representations of a family of locally compact groups $GL_n(F)$, where $F$ is a $p$-adic field. The other studies modules over quiver Hecke algebras (also known as Khovanov-Lauda-Rouquier algebras) associated to Cartan data of type $A_N$. By adopting a categorical point of view and exploiting known equivalences between those settings, we are able to answer recently posed questions about the nature of some finite-length representations on both sides.

The representation theory of $p$-adic general linear groups plays a major role in the celebrated Langlands program. 
An early step towards the program's local goals was the ground-breaking Zelevinsky classification \cite{Zel}, which suggested a combinatorial construction of all irreducible representations of $GL_n(F)$ building upon given supercuspidal data. While the original local Langlands reciprocity for these groups is by now well-understood, an effective description of possible indecomposable objects in this non-semi-simple category remains an intriguing task that reaches beyond the traditional scope of the Langlands program (see \cite{lm-inven} for a possible visionary direction). Our work continues an exploration of a new method suggested in \cite{gur-lap} for a systematic construction of classes of representations of interest.

Quiver Hecke algebras were introduced in \cite{KLR1,KLR2,rouq}, as a means of categorifying quantum groups. Namely, a choice of a simple Lie algbera $\mathfrak{g}$ gives rise to a family of associative algebras $\{R(\beta)\}_{\beta}$, whose finite-dimensional module categories, when put together, provide a monoidal abelian category. The Grothendieck ring of the resulting category is then identified with an integral form of the positive part of the quantum group $U_q(\mathfrak{g})$.

The quiver Hecke construction stood as an algebraic explication of previously known geometric categorifications, such as the one in \cite{lusztig-quiver}, for quantum groups (see \cite{vv} for details). In particular, questions on the multiplicative structure of $U_q(\mathfrak{g})^+$ relative to its dual canonical basis are lifted to questions on the monoidal structure of explicitly defined categories. In recent years, this approach lead, for example (see \cite{kkko-mon}), to an improved understanding of such structures through the axiomatic framework of cluster algebras.

The two areas of study described above become highly interrelated, when the Lie type $\mathfrak{g} = \mathfrak{sl}_N$ (or, more conveniently $\mathfrak{sl}_\infty$) is taken. A middle link between the two settings is the representation theory of affine Hecke algebras. It is classically established, and more modernly proved (\cite{bk-book,heier-cat}) for our cases, that the representation theory of a suitable affine Hecke algebra should mimic smooth representations of $p$-adic group with a fixed supercuspidal data (a Bernstein block). We explicate the known results for our groups of interest into a functor, which we call the Bernstein equivalence.

On the other hand, from the outset of the theory of quiver Hecke algebras, it was known that in type $A$ their representation theory largely coincides with that of affine Hecke algebras. In fact, it was shown \cite{brun-kles} that taking natural (cyclotomic) quotients of both kinds of algebras, results in isomorphic algebras. Following \cite{MR2908731}, we take a functorial view on this identification, which we call the Rouquier equivalence.

An inherent feature of quiver Hecke algebras is that they are graded. In fact, the graded structure on their modules stands as a categorification of the algebraic quantization parameter in the quantum group. Namely, the integral form of $U_q(\mathfrak{g})^+$ comes with a $\mathbb{Z}[q,q^{-1}]$-algebra structure, which is viewed as shifts of grading on modules in the Grothendieck ring realization. This structure remains practically hidden in the setting of representations of $p$-adic groups.

In the first part of this work we formalize both Bernstein and Rouquier functors, and compose them together (Theorem \ref{thm:fullequi}) into an explicit equivalence of monoidal categories. This formalism provides rigorous tools for the conceptual treatment of finite-dimensional representation theory of quiver Hecke algebras of type $A$, as a gradation, or quantization, of the smooth finite-length representation theory of $GL_n(F)$.

It should be noted that throughout this article all base fields are assumed to be complex, while $q\in \mathbb{C}$, whenever relevant, is not a root of unity.

\subsection{Irreducible representations}
The quantization point of view fits well with the accumulated knowledge on the collection of irreducible objects in categories of both kinds. The various bases for $U_q(\mathfrak{sl_\infty})^+$ may be naturally parameterized by representations of the $A_\infty$-quiver (e.g. \cite{lus-canonical,LNT}), which are set in a clear bijection with the multisegments used in the Zelevinsky classification. Since simple modules of type $A$ quiver Hecke algebras categorify the dual canonical basis, the same parametrization should hold in that setting.

Indeed, such a classification of simple modules, generalizing Zelevinsky, was conducted directly by Kleshchev-Ram in \cite{kr2}. Moreover, they constructed families of finite-length proper standard modules (later generalized in \cite{mcn-pbw,mcn-bk}) categorifying PBW bases for the quantum group, and, from our point of view, serving as a graded version of the Langlands/Zelevinsky standard representations for $p$-adic groups.

One well-understood family of simple modules for quiver Hecke algebras is that of \textit{homogeneous} modules, that is, modules concentrated at one degree of their grading. These were treated in \cite{kr1} and were classified for type $A$ algebras. Unsurprisingly, the $p$-adic groups literature was independently familiar with the class of irreducible representations appearing when transferring results through the above mentioned equivalences. These were coined as ladder representations \cite{LM}, after the shape of the Zelevinsky multisegments describing them.

On the $p$-adic side, the class of ladder representations is known to be especially accessible (e.g. \cite{LapidKret}), while frequently appearing in applications of the theory (such as in the study of harmonic analysis, with the building blocks of the unitary spectrum of $GL_n(F)$ being composed of specific (Speh) ladder representations \cite{Tad}).

\subsection{Graded RSK-standard modules}
Exporting notions developed in \cite{gur-lap}, we may now ask: Is there an efficient, or insightful, method of constructing all simple modules for quiver Hecke algebras using the class of homogeneous simple modules?

This question relates to a new model suggested in \cite{gur-lap} for the construction of all irreducible representations of $GL_n(F)$, which is based on the combinatorial Robinson--Schensted--Knuth correspondence. Let us briefly describe it here in the language of quiver Hecke algebra modules, while deferring most details to Section \ref{sec:rsk}.

Given a simple module $L$ of a quiver Hecke algebra $R(\beta)$ of type $A_N$, the Kleshchev-Ram classification (up to some choices) provides a multisegment $\m$ parameterizing $L$. While $\m$ should be viewed as a positive element of the root lattice of $\mathfrak{sl}_\infty$, for sake of introduction let us treat it as a multi-set of pairs of integers:
\[
\m = \{(a_1,b_1),\ldots, (a_n,b_n)\}\;,
\]
with $a_i\leq b_i$, for all $i$.

Applying the RSK algorithm (e.g. \cite{MR0272654}) on this multi-set, one produces a pair of semistandard Young tableaux $(P_\m, Q_\m)$ of equal shapes. Alternatively, the information encoded in the resulting bi-tableau may be written as a tuple
\[
\RSK(\m) = (\la_1,\ldots,\la_\omega)\;,
\]
so that each $\la_i$ is a (ladder) multisegment itself, read out of the $i$-th pair of rows in $(P_\m, Q_\m)$.

The simple modules $\Xi_1,\ldots,\Xi_\omega$ corresponding to $(\la_1,\ldots,\la_\omega)$ under the Kleshchev-Ram classification now become homogenous representations of corresponding quiver Hecke algebras.


With these data in place, we define the \textit{RSK-standard module} associated with the multisegment $\m$ to be the finite-dimensional $R(\beta)$-module
\[
\Gamma(\m) = \Xi_1\circ\cdots \circ \Xi_\omega \langle -d(\m) \rangle \;,
\]
given by the convolution product, where $\langle -d(\m)\rangle$ denotes a suitable shift of grading.

Passing through our composed functor of the Bernstein and Rouquier equivalences, we may now state a quiver Hecke algebra version of the main result of \cite{gur-lap}.

\begin{theorem}\label{thm:intro1}[Theorem \ref{thm:gurlap}]
Up to a shift of grading, the simple module $L$ classified by the multisegment $\m$ is isomorphic to a quotient module of the RSK-standard module $\Gamma(\m)$.
\end{theorem}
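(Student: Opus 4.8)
The plan is to transport the main result of \cite{gur-lap} through the monoidal equivalence of Theorem~\ref{thm:fullequi}, and then to promote the resulting ungraded statement to the claimed graded statement, up to a shift. Recall that \cite{gur-lap} asserts, for smooth representations of $GL_n(F)$, that the irreducible representation attached to the multisegment $\m$ occurs as a quotient of the parabolically induced representation $\pi(\la_1)\times\cdots\times\pi(\la_\omega)$, where $(\la_1,\ldots,\la_\omega)=\RSK(\m)$ is the tuple of ladder multisegments read off the bi-tableau $(P_\m,Q_\m)$ and $\pi(\la_i)$ is the ladder representation attached to $\la_i$. The procedure $\RSK$ depends only on the combinatorial datum $\m$; moreover, under the composition of the Bernstein and Rouquier functors the Zelevinsky and Kleshchev--Ram classifications are matched multisegment by multisegment, ladder representations corresponding to homogeneous simple modules (cf.\ \cite{kr1,LM}), and parabolic induction is intertwined with the convolution product. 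Hence the equivalence of Theorem~\ref{thm:fullequi} carries the displayed product to $\fgt(\Xi_1)\circ\cdots\circ\fgt(\Xi_\omega)=\fgt(\Xi_1\circ\cdots\circ\Xi_\omega)$, with $\Xi_i$ the homogeneous module attached to $\la_i$, and carries the irreducible quotient to $\fgt(L)$. Since $\fgt(\Gamma(\m))=\fgt(\Xi_1\circ\cdots\circ\Xi_\omega)$ — the shift $\langle -d(\m)\rangle$ merely reindexing degrees — we conclude that $\fgt(L)$ is a quotient of $\fgt(\Gamma(\m))$ in the ungraded category.

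It remains to lift this surjection to the category of graded $R(\beta)$-modules. Here one uses the standard decomposition
\[
\Hom_{R(\beta)}(\fgt M,\fgt N)\;\cong\;\bigoplus_{k\in\Z}\Hom^{\mathrm{gr}}_{R(\beta)}\bigl(M,N\langle k\rangle\bigr)
\]
valid for finite-dimensional graded modules, where $\Hom^{\mathrm{gr}}$ denotes morphisms of graded modules. Write the ungraded surjection $\fgt(\Gamma(\m))\twoheadrightarrow\fgt(L)$ accordingly as a finite sum $\sum_k f_k$ with $f_k\colon\Gamma(\m)\to L\langle k\rangle$ homogeneous. Each image $\operatorname{im}(f_k)$ is a graded submodule of the simple module $L\langle k\rangle$, hence is either $0$ or all of $L\langle k\rangle$; as not all $f_k$ can vanish, at least one is surjective. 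Therefore $L\langle k\rangle$ is a graded quotient of $\Gamma(\m)$ for some $k\in\Z$, i.e.\ $L$ is a quotient of $\Gamma(\m)\langle -k\rangle$ — precisely the assertion up to a shift of grading. (The exact value of this shift, governed by the self-duality normalization built into $d(\m)$, is determined later, once $\Gamma(\m)$ has been shown to have simple head.)

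The one genuinely delicate point is contained in the first paragraph: one must verify that the chain of equivalences identifies the parametrizing combinatorics \emph{without} an intervening Zelevinsky-type involution, so that the multisegment fed into the $\RSK$ algorithm on the quiver Hecke side is literally the one produced by the Kleshchev--Ram classification of $L$, and that the order of the factors in the convolution agrees with the order of the factors in the $p$-adic induced representation. Both points are exactly what the careful formalization of the Bernstein and Rouquier functors in the first part of the paper is built to guarantee; granting them, together with the input theorem of \cite{gur-lap}, the argument is immediate, the graded-lifting step being entirely routine.
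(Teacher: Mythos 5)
Your argument is correct, but it takes a different route from the paper's. The paper proves Theorem \ref{thm:gurlap} by a consecutive application of the single-step statement (Theorem \ref{thm:graded-quot}): at each stage the homogeneous module $L_{\la_i}$ is real, so $L_{\la_i}\circ L_{\m_i}$ has a simple head $L_{\m_{i-1}}\langle -\widetilde{\Lambda}(\la_i,\m_i)\rangle$, and only that single-pair statement is transferred from \cite[Theorem 4.3]{gur-lap}; iterating on the graded side (using right-exactness of $\circ$) then exhibits $L_\m$ itself, with no residual shift, as a quotient of $\Gamma(\m)$, since the normalization $d(\m)$ is assembled precisely from the per-step invariants $\widetilde{\Lambda}(\la_i,\m_i)$. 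You instead transfer the full-product $p$-adic statement of \cite{gur-lap} in one shot through the monoidal equivalence of Theorem \ref{thm:fullequi} and Proposition \ref{prop:monoid}, and then lift the ungraded surjection to a graded one by decomposing it into homogeneous components via \eqref{eq:homsp} and using graded simplicity of $L$ — essentially the same trick the paper uses inside the proof of Theorem \ref{thm:graded-quot} to identify the head. Your route is more elementary for the statement as phrased (``up to a shift''): it avoids the reality/simple-head input altogether. What it does not buy is the per-step shift data: the paper's stepwise argument is what produces the integers $\widetilde{\Lambda}(\la_i,\m_i)$ defining $d(\m)$, yields the sharp form of Theorem \ref{thm:gurlap} (the self-dual $L_\m$, not merely some shift of it, is a quotient of $\Gamma(\m)$), and feeds directly into the normal-sequence analysis behind Theorem \ref{thm:mmain}; note also that $d(\m)$, hence the very definition of $\Gamma(\m)$, presupposes Theorem \ref{thm:graded-quot}, though for an ``up to shift'' claim this is only a matter of normalization. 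Finally, be explicit that the input you invoke is the main theorem of \cite{gur-lap} in its full-product form (equivalently, the iteration of its Theorem 4.3 carried out on the $p$-adic side), since within this paper only the single-pair form is ever cited; granting that, and the paper's matching of the Zelevinsky and Kleshchev--Ram parametrizations ($\mathcal{E}_\rho(Z(\m))\cong L_\m^{\fgt}$, ladders corresponding to homogeneous modules), your proof is complete.
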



We note that the convolution product constructs $\Gamma(\m)$ as induction from a simple homogeneous module of a parabolic subalgebra (see Section \ref{sec:resind}) $P = R(\beta_1,\ldots,\beta_\omega)< R(\beta)$. It follows from \cite[Section 4]{me-decomp} (again functorially transferring results to a graded version) that $P$ is the maximal parabolic subalgebra, for which $L$ (the simple module classified by $\m$) may occur as a subquotient of a module induced from a homogeneous module of $P$. In other words, there may not be a product of less than $\omega$ simple homogenous terms that will give rise to a construction of $L$.

In this sense, a RSK-standard module may be thought as the module realizing the ``shortest distance" of the class of homogenous modules to a given simple module.



\subsection{Main results}

Several conjectures were raised in \cite{gur-lap} regarding the nature of the RSK-standard modules in the $p$-adic setting, suggesting further favorable properties of the new construction. In this work, we apply the gained advantage of transition into the graded setting to prove some of these expectations.

First, Theorem \ref{thm:intro1} raises the question of whether the constructed irreducible quotient of $\Gamma(\m)$ is unique, i.e., whether RSK-standard modules have simple heads.

In Theorem \ref{thm:mmain} we positively answer the stronger graded variant of this question, thus making the structure of RSK-standard modules considerably more transparent. We prove that the \textit{graded multiplicity} of the simple quotient $L_\m$ of $\Gamma(\m)$ is $1$. Moreover, the graded multiplicity of any other subquotient of $\Gamma(\m)$ must consist of positive degrees.

The resulting picture is a familiar trait seen in transition matrices between PBW-bases and canonical bases in quantum groups. Hence, the role of RSK-standard modules is further established as a fitting alternative to the proper standard modules categorifying PBW-bases.

Using our functorial gateway through its other end, we settle in Corollary \ref{cor:final} some analogous originally posed questions on RSK-standard modules for the context of representations of $p$-adic groups.

Finally, we reach an appealing formula (Theorem \ref{thm:deg}) for the constant $d(\m)$ used to normalize the grading shift on a RSK-standard module. 

Let us draw attention to the similarity of this formula to that of the dimension of a Springer fiber for the symmetric group \cite{spal-dim} parameterized by the shape of the bi-tableau of $\RSK(\m)$. We suggest a further search for possible relations of the RSK construction to a geometric Springer-type approach.

\subsection{Methods}
The key tool used in the proof of Theorem \ref{thm:mmain} is the concept of \textit{normal sequences}. 

The theory developed in the works of Kang-Kashiwata-Kim-Oh \cite{kkko0,kkko-mon} has put the notion of \textit{real} simple modules on the front lines of study: Those are simple modules $L$ of quiver Hecke algebras, for which $L\circ L$ remains simple. The analogous notion was investigated in depth in the $p$-adic setting \cite{LM3}, under the name of \textit{square-irreducible} representations.

For a real simple module $L$ and any simple module $M$ of the suitable quiver Hecke algebra, the product $L\circ M$ has a simple head that is given by $H\langle h \rangle$, where $H$ is self-dual. The numeric invariant $\widetilde{\Lambda}(L,M) = -h$ is a useful outcome of the graded setting approach to our set of problems.

Kashiwara-Kim \cite{kk19} have called a sequence of real simple modules $L_1,\ldots,L_k$ normal, if it satisfies certain compatibility properties, that can be easily stated in terms of the $\widetilde{\Lambda}$-invariant. 

In the case of a normal sequence, the product $L_1\circ\cdots\circ L_k$ has a simple head, among further favorable properties that are established in Proposition \ref{prop:mult-one}.

For a multisegment $\m$, its RSK transform $\RSK(\m) =  (\la_1,\ldots,\la_\omega)$ is defined inductively, so that $\RSK(\widehat{\m}) =  (\la_2,\ldots,\la_\omega)$, for another multisegment $\widehat{\m}$. Now, given the homogeneous (real) simple module $L=\Xi_1$ corresponding to $\la_1$, and the simple head $M$ of $\Gamma(\widehat{\m})$, we produce a combinatorial formula (Corollary \ref{cor:maincor}) for the number $\widetilde{\Lambda}(L,M)$.

It then follows that the sequence of homogenous modules $\Xi_1,\ldots,\Xi_\omega$ used to define $\Gamma(\m)$ is normal. 

\subsection{Links with the Specht construction}

In a subsequent work under preparation, we will show that the RSK construction may be viewed as a generalization of the Specht construction for cyclotomic Hecke algebras \cite{djm}, and its graded variant \cite{bkw}.

Specifically, the classification of simple modules (\cite{ariki-class,groj}) of the cyclotomic quotients of affine Hecke algebras and the analogous quotients of quiver Hecke algerbas, often follows the lines of the Zelevinsky approach, with a reducible Specht module standing in place of a (proper) standard module. When this picture is inflated to the (full) quiver Hecke algebra representation category, a comparison between the Kleshchev-Ram/Zelevinsky construction and the Specht construction begs to be performed.

Indeed, works of Vazirani \cite{MR1923974} and Kang-Park \cite{MR2811321} have studied similar questions. Their results can now be explained through the lens of the RSK construction and our results, since inflated Specht modules turn out to be special cases of RSK-standard modules.

Moreover, our Theorem \ref{thm:mmain} may be specialized to the context of graded multiplicities treated in \cite{bk-decomp}, while the formula of Theorem \ref{thm:deg} is consistent in the special Specht cases with the shift formula of \cite[Theorem 8.2]{univ-specht}, for inflated graded Specht modules.

\subsection{Structure}

We first survey in Section \ref{sec:quiver} the basics of the (necessary cases of the) representation theory of quiver Hecke algebras of type $A$. Specifically we recall the Kleshchev-Ram approach to classification of simple modules.

Section \ref{sec:padic} surveys the smooth finite-length representation theory of $p$-adic $GL_n$, while putting together the categorical bridges needed for comparison of our two settings. 

Using the established gateway between settings, we single out several curious classes of simple modules in Section \ref{sec:special}.

In Section \ref{sec:rsk}, we recall the RSK construction of \cite{gur-lap}, export it into the quiver Hecke algebra domain and develop some of its properties.

The degree computations of Section \ref{sec:deg-comp} is the technical heart of this work. Its aim is to show that the $\widetilde{\Lambda}$-invariant is well-behaved relative to the RSK inductive process.

Finally, we develop some properties of normal sequences in Section \ref{sec:normal} and apply them on the RSK construction.

\subsection{Acknowledgements}
I would like to thank Inna Entova-Aizenbud, Alexander Kleshchev, Erez Lapid, Bernard Leclerc, Alberto M\'{i}nguez, Mark Shimozono and Liron Speyer for the immense help provided by sharing their expertise on aspects of this work.

Special thanks are also due to Dvir Mor and Rida Saabna for supplying initial computational evidence for the validity of the formula in Theorem \ref{thm:deg}, during their participation in Technion's Undergraduate Research Projects week in Summer 2020.

This research is supported by the Israel Science Foundation (grant No. 737/20).

\section{Quiver Hecke algebras}\label{sec:quiver}

Let us recall the basics of the representation theory of quiver Hecke algebras. We will largely follow the standard conventions as in \cite{kr2,brun-kles,MR2822211,kkko-mon}.

Our description will cover algebras of type $A$ solely. More precisely, the general construction of quiver Hecke algebras depends on a choice of Lie-theoretic data as an input. The algebras appearing in our discussion are the ones associated with the data of the Lie algebra $\mathfrak{sl}_\infty$.

We take the Cartan datum $(\mathcal{I},\cdot)$ as a set labelled by integers $\mathcal{I}=(\alpha_i)_{i\in\mathbb{Z}}$ (simple roots), and an integer valued symmetric bilinear form $\alpha,\beta\mapsto (\alpha,\beta)$ on the free abelian group $Q = \mathbb{Z}[\mathcal{I}]$ (root lattice) given by
\[
(\alpha_i,\alpha_j) = \left\{ \begin{array}{ll} 2 & i=j \\ -1 & |i-j|=1 \\ 0 & |i-j|>1 \end{array}\right.\;, \quad \forall i,j\in \mathbb{Z}\;.
\]
We denote the positive cone $Q_+ = \sum_{i\in I} \mathbb{Z}_{\geq 0} \alpha_i\subset Q$. For $\beta_1,\beta_2\in Q_+$, we write $\beta_1\leq \beta_2$ whenever $\beta_2-\beta_1\in Q_+$.

Let $\beta=\sum_{i\in I} c_i \alpha_i\in Q_+$ be fixed. Its height is defined as $|\beta|=\sum_{i\in I}c_i\in \mathbb{Z}$.

We denote the finite set of tuples
\[
\mathcal{I}^\beta = \{ \nu =(\nu_1,\ldots, \nu_{|\beta|})\in \mathcal{I}^{|\beta|}\;:\; \alpha_{\nu_1} + \ldots + \alpha_{\nu_{|\beta|}} = \beta\}\;.
\]

The \textit{quiver Hecke algebra} (or Khovanov-Lauda-Rouquier algebra) related to $\beta$ is defined to be the associative complex algebra $R(\beta)$, which is generated
by $\{\mathfrak{e}(\nu)\}_{\nu \in \mathcal{I}^{ \beta  }}$, $\{y_1,\ldots, y_{|\beta|}\}$, $\{\psi_1, \ldots, \psi_{|\beta|-1}\}$, subject to the relations
\[
\mathfrak{e}(\nu)\mathfrak{e}(\nu') = \left\{\begin{array}{ll} \mathfrak{e}(\nu) & \nu = \nu' \\ 0 & \nu \neq \nu'\end{array}\right.\;,\quad \sum_{\nu\in  \mathcal{I}^{ \beta  }} \mathfrak{e}(\nu) = 1\;,
\]
\[
y_i \mathfrak{e}(\nu) = \mathfrak{e}(\nu) y_i\;,\quad \psi_i \mathfrak{e}(\nu) = \mathfrak{e}(s_i\cdot\nu)\psi_i\;,\;\forall i\;,
\]
\[
y_iy_j = y_iy_j\;,\;\forall i,j\;,\quad \psi_i\psi_j = \psi_j \psi_i\;,\mbox{ for }|i-j|>1\;,
\]
\[
y_j\psi_i = \psi_i y_j\;, \mbox{ for }j\not\in \{i,i+1\}\;,
\]
\[
(y_{i+1}\psi_i - \psi_i y_{i})\mathfrak{e}(\nu) =  \left\{\begin{array}{ll} \mathfrak{e}(\nu) & \nu_i = \nu_{i+1} \\ 0 & \nu_i \neq \nu_{i+1} \end{array}\right.\;,\quad (y_{i}\psi_{i} - \psi_{i} y_{i+1})\mathfrak{e}(\nu) =  \left\{\begin{array}{ll} -\mathfrak{e}(\nu) & \nu_i = \nu_{i+1} \\ 0 & \nu_i \neq \nu_{i+1} \end{array}\right.\;,
\]
\[
(\psi_{i+1}\psi_i\psi_{i+1} - \psi_i\psi_{i+1}\psi_i)\mathfrak{e}(\nu) = \left\{\begin{array}{ll} \mathfrak{e}(\nu) & (\nu_i, \nu_{i+1}, \nu_{i+2}) = (\alpha_t, \alpha_{t+1}, \alpha_t),\mbox{ for }t\in\mathbb{Z} \\ -\mathfrak{e}(\nu) & (\nu_i, \nu_{i+1}, \nu_{i+2}) = (\alpha_t, \alpha_{t-1}, \alpha_t),\mbox{ for }t\in\mathbb{Z} \\ 0 & \mbox{otherwise} \end{array}\right.,
\]
\[
\psi_i^2\mathfrak{e}(\nu) = \left\{\begin{array}{ll} (y_i-y_{i+1})\mathfrak{e}(\nu) & (\nu_i, \nu_{i+1}) = (\alpha_t, \alpha_{t+1}),\mbox{ for }t\in\mathbb{Z} \\ -(y_i-y_{i+1})\mathfrak{e}(\nu) & (\nu_i, \nu_{i+1}) = (\alpha_t, \alpha_{t-1}),\mbox{ for }t\in\mathbb{Z} \\  0 & \nu_i = \nu_{i+1} \\ \mathfrak{e}(\nu) & \mbox{otherwise} \end{array}\right.\;.
\]

Here, $s_i\cdot \nu\in \mathcal{I}^\beta$ denotes an action of a simple transposition, i.e. a switch of $\nu_i$ with $\nu_{i+1}$.

The algebra $R(\beta)$ becomes ($\mathbb{Z}$-)graded, when setting the degrees
\[
\deg(\mathfrak{e}(\nu)) = 0,\quad \deg(y_i)  = 2,\quad \deg(\psi_i\mathfrak{e}(\nu)) = -(\nu_i, \nu_{i+1})
\]
on the generators.

We write $R(\beta)-\nmod$ ($R(\beta)-\gmod$) for the abelian category of (graded) finite dimensional left modules over $R(\beta)$.

Let $\irr(\beta)$ ($\girr(\beta)$) be the set of isomorphism classes of simple modules in $R(\beta)-\nmod$ ($R(\beta)-\gmod$).

For a graded module $M = (M_i)_{i\in \mathbb{Z}}\in R(\beta)-\gmod$ and an integer $k$, we write
\[
M\langle k \rangle  = (M_{i-k})_{i\in \mathbb{Z}}\in R(\beta)-\gmod
\]
to be the shifted module.

We write $M \mapsto M^{\fgt}$ for the grading-forgetful functor $R(\beta)-\gmod \;\to\; R(\beta)-\nmod$.

Let us note that the forgetful functor gives an evident identification
\begin{equation}\label{eq:homsp}
\oplus_{k\in\mathbb{Z}} \Hom_{R(\beta)-\gmod} (M, N\langle k\rangle) = \Hom_{R(\beta)-\nmod} (M^{\fgt}, N^{\fgt})\;,
\end{equation}
for any $M,N\in R(\beta)-\gmod$.

The algebra $R(\beta)$ possesses an anti-involution $\tau$ given as an identity on all generators in the definition of the algebra. For $M\in  R(\beta)-\gmod$, the complex dual space $M^\ast$ becomes a graded left $R(\beta)$-module through $\tau$.

For a simple $M\in \girr(\beta)$, it is known that there is a (unique) integer $k$, such that $(M\langle k \rangle)^\ast \cong  M\langle k\rangle$. Furthermore, each $M\in \irr(\beta)$ has a unique, up to shift, graded structure.

Thus, we will often treat $\irr(\beta)$ as a subset of $\girr(\beta)$, that is, the isomorphism classes of self-dual simple modules in $R(\beta)-\gmod$.

Given $M\in R(\beta)-\gmod$, we write $[M]\in \mathbb{Z}_{\geq 0} [\girr(\beta)]$ as a formal sum of the Jordan-H\"{o}lder series of $M$. Taking shifts into account, we may write it as a sum
\[
[M]= \sum_{L\in \irr(\beta)} \sum_{i\in \mathbb{Z}} m_{L,i} [L\langle i \rangle]\;.
\]
Given any $L\in \irr(\beta)$ (viewed as a self-dual graded module) and $M\in R(\beta)-\gmod$, we define the \textit{graded multiplicity} of $L$ in $M$ as the Laurent polynomial
\[
m(M,L)(q) = \sum_{i\in \mathbb{Z}} m_{L,i} q^i \in \mathbb{Z}_{\geq 0 }[q,q^{-1}]\;.
\]

\subsection{Restriction and induction}\label{sec:resind}

Following the formalism of \cite{MR2822211}, given $\underline{\beta} = (\beta_1,\ldots, \beta_k)\in (Q_+)^k$, we set the graded algebra
\[
R(\underline{\beta}) = R(\beta_1)\otimes \cdots \otimes R(\beta_k)\;.
\]
The sets $\irr(\underline{\beta})\subset \girr(\underline{\beta})$ and graded multiplicities in $R(\underline{\beta})-\gmod$ are defined analogously. Note, that $\prod_{i=1}^k \irr(\beta_i) \cong \irr(\underline{\beta})$ is a natural bijection given by taking the outer tensor product of simple modules.

Setting $i(\underline{\beta}) = \beta_1 + \ldots +\beta_k$, we have a natural embedding of algebras $\iota_{\underline{\beta}}: R(\underline{\beta}) \to R(\iota(\underline{\beta}))$, as in \cite[Section 2.2]{MR2822211}.

For $\nu_i\in \mathcal{I}^{\beta_i}$, $i=1,\ldots,k$, we have the natural concatenation operation $\nu_1\ast\cdots\ast \nu_k\in \mathcal{I}^{i(\underline{\beta})}$. We then obtain an idempotent element
\[
\mathfrak{e}(\underline{\beta}):= \sum_{\nu_i\in \mathcal{I}^{\beta_i}\, i=1,\ldots,k} \mathfrak{e}(\nu_1\ast\cdots\ast \nu_k) \in R(   i(\underline{\beta}))\;.
\]
Evidently, $\iota_{\underline{\beta}}(1) = \mathfrak{e}(\underline{\beta})$ holds, for the identity element $1\in R(\underline{\beta})$.

Thus, the embedding of algebras gives rise to an exact restriction functor
\[
\rres_{\underline{\beta}}: R(i(\underline{\beta}))-\gmod \;\to\; R(\underline{\beta})-\gmod\;,\quad \rres_{\underline{\beta}}(M) = \mathfrak{e}(\underline{\beta})M\;.
\]

Given $L\in \irr(\underline{\beta})$ and $M\in R(i(\underline{\beta}))-\gmod$, we shortcut notation to
\[
m(M,L)(q):= m(\rres_{\underline{\beta}}(M), L)(q)\;,
\]
for the graded multiplicity in $R(i(\underline{\beta}))-\gmod$.

The restriction functor commits both a left-adjoint induction functor
\[
\iind_{\underline{\beta}}: R(\underline{\beta})-\gmod \;\to\; R(i(\underline{\beta}))-\gmod\;,\quad
\iind_{\underline{\beta}}(M) = R(i(\underline{\beta}))\otimes_{\iota_{\underline{\beta}}(R(\underline{\beta}))}M\;,
\]
and a right-adjoint co-induction functor
\[
\coiind_{\underline{\beta}}: R(\underline{\beta})-\gmod \;\to\; R(i(\underline{\beta}))-\gmod\;,\quad
\coiind_{\underline{\beta}}(M) = \Hom_{R(\underline{\beta})}(R(i(\underline{\beta})),M)\;.
\]
(See, for example, \cite{MR2822211}.)

More generally, when $\underline{\delta} = (\delta^1_1,\ldots, \delta^1_{m_1},\ldots, \delta^k_1,\ldots,\delta^k_{m_k})\in (Q_+)^{\sum_{i=1}^k m_i}$ is such that $\sum_{j=1}^{m_i} \delta^i_j = \beta_i$, for all $1\leq i \leq k$, we have
\[
R(\underline{\delta}) \cong R(\delta^1_1,\ldots, \delta^1_{m_1}) \otimes \cdots \otimes R(\delta^k_1,\ldots,\delta^k_{m_k}) \hookrightarrow R(\underline{\beta})\;,
\]
and the functors
\[
\rres_{\underline{\delta}}^{\underline{\beta}}: R(\underline{\beta})-\gmod\;\to \; R(\underline{\delta})-\gmod\;,\quad \iind_{\underline{\delta}}^{\underline{\beta}},\; \coiind_{\underline{\delta}}^{\underline{\beta}}: R(\underline{\delta})-\gmod\;\to \; R(\underline{\beta})-\gmod
\]
are naturally defined.

Given $M_i \in R(\beta_i)-\gmod$, for $i=1,\ldots k$, we write the induction operation as a product
\[
M_1\circ \cdots \circ M_k = \iind_{\underline{\beta}}(M_1 \otimes \cdots \otimes M_k)\;.
\]
This product equips the larger abelian category
\[
\mathcal{D} = \oplus_{\beta\in Q_+} R(\beta)-\gmod \,
\]
with a monoidal structure.

Restriction, induction and co-induction functors may still be defined for categories of ungraded modules. In particular, the category
\[
\widehat{\mathcal{D}} = \oplus_{\beta\in Q_+} R(\beta)-\nmod \,
\]
retains the monoidal structure of $\mathcal{D}$.

In particular, we note that the functor $\fgt: \mathcal{D}\to \widehat{\mathcal{D}}$ is monoidal.

\subsection{Mackey Theory}\label{sec:mackey}

Let $\underline{\beta}\in (Q_+)^k$ and $\underline{\gamma}\in (Q_+)^l$ be given, for which $i(\underline{\beta}) = i(\underline{\gamma})$ holds. Then, the composition functor
\[
\rres_{\underline{\gamma}}\circ \iind_{\underline{\beta}}
\]
is well-defined, and admits the following typical Mackey-theory description.



Let $M(\underline{\beta},\underline{\gamma})$ be the collection of $k\times l$-matrices $\delta = (\delta_{i,j})$ of elements in $Q_+$, such that $\delta_{i,1} + \ldots + \delta_{i,l} = \beta_i$ holds, for all $1\leq i\leq k$, and $\delta_{1,j} + \ldots + \delta_{k,j}= \gamma_j$ holds, for all $1\leq j\leq l$.

For $\delta\in M(\underline{\beta},\underline{\gamma})$, we define the integer
\begin{equation}\label{eq:deg-form}
\deg(\delta) = -\sum_{ 1\leq i < i' \leq k,\; 1\leq j' < j \leq l} \left( \delta_{i,j}, \delta_{i',j'}\right)\;.
\end{equation}

Each $\delta\in M(\underline{\beta},\underline{\gamma})$ gives rise to two tuples
\[
\underline{\delta}^{row}= (\delta_{1,1}, \ldots,\delta_{1,l}, \delta_{2,1},\ldots,\delta_{2,l},\ldots,\delta_{k,1}, \ldots,\delta_{k,l})\;,
\]
\[
\underline{\delta}^{col}= (\delta_{1,1}, \ldots,\delta_{k,1}, \delta_{1,2},\ldots,\delta_{k,2},\ldots,\delta_{1,l}, \ldots,\delta_{k,l})
\]
in $(Q_+)^{kl}$. There is an obvious isomorphism of algebras $t_\delta: R(\underline{\delta}^{row})\to R(\underline{\delta}^{col})$ by permuting the factors. We set
\[
T_\delta: R(\underline{\delta}^{row})-\gmod\to R(\underline{\delta}^{col})-\gmod
\]
to be the functor obtained from pushing a module through $t_\delta$ and shifting its degree by $\deg(\delta)$.

We set a functor
\[
K_{\delta} :  R(\underline{\beta})-\gmod \;\to\; R(\underline{\gamma})-\gmod\;,\quad K_{\delta}(M) = \iind^{\underline{\gamma}}_{\underline{\delta}^{col}} (T_\delta(\rres^{\underline{\beta}}_{\underline{\delta}^{row}}(M)))\;.
\]

\begin{proposition}\label{prop:mackey}[Restatement of \cite[Theorem 2.1]{mcn-bk}]
For every graded $R(\underline{\beta})$-module $M$, the $R(\underline{\gamma})$-module $\rres_{\underline{\gamma}}(\iind_{\underline{\beta}}(M))$ has a filtration of submodules, whose composition factors are given by $\{K_{\delta}(M)\}_{\delta\in M(\underline{\beta},\underline{\gamma})}$.

\end{proposition}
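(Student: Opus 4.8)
The plan is to deduce the statement from a filtration of a bimodule, using the basis theorem for quiver Hecke algebras. Set $n=|i(\underline{\beta})|$. Since $\iind_{\underline{\beta}}(M)=R(i(\underline{\beta}))\otimes_{\iota_{\underline{\beta}}(R(\underline{\beta}))}M$ and $\rres_{\underline{\gamma}}$ is multiplication by $\mathfrak{e}(\underline{\gamma})$, there is a natural identification
\[
\rres_{\underline{\gamma}}(\iind_{\underline{\beta}}(M))\;\cong\; B\otimes_{R(\underline{\beta})}M\;,\qquad B:=\mathfrak{e}(\underline{\gamma})\,R(i(\underline{\beta}))\,\mathfrak{e}(\underline{\beta})\;,
\]
where $B$ is an $(R(\underline{\gamma}),R(\underline{\beta}))$-bimodule via $\iota_{\underline{\gamma}}$ and $\iota_{\underline{\beta}}$. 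It therefore suffices to produce a filtration of $B$ by sub-bimodules whose subquotients are, as bimodules, exactly those representing the functors $\iind^{\underline{\gamma}}_{\underline{\delta}^{col}}\circ T_\delta\circ\rres^{\underline{\beta}}_{\underline{\delta}^{row}}$ for $\delta\in M(\underline{\beta},\underline{\gamma})$. Tensoring this filtration with $M$ over $R(\underline{\beta})$ then yields the claimed filtration of $\rres_{\underline{\gamma}}(\iind_{\underline{\beta}}(M))$; exactness is preserved because each subquotient bimodule is projective, hence flat, as a right $R(\underline{\beta})$-module.

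The key input is the basis theorem of \cite{KLR1}: after fixing a reduced word for each element of the symmetric group $S_n$, the set $\{\psi_w\,y_1^{a_1}\cdots y_n^{a_n}\,\mathfrak{e}(\nu)\}$ is a $\mathbb{C}$-basis of $R(i(\underline{\beta}))$. Cutting down by the two idempotents $\mathfrak{e}(\underline{\gamma})$ and $\mathfrak{e}(\underline{\beta})$, the module $B$ acquires a basis whose ``$\psi$-part'' is governed by the double cosets $S_{\underline{\gamma}}\backslash S_n/S_{\underline{\beta}}$ of the Young subgroups attached to $\underline{\gamma}$ and $\underline{\beta}$. There is a standard bijection between these double cosets and the matrix set $M(\underline{\beta},\underline{\gamma})$: a double coset records how many strands pass from the $i$-th $\underline{\beta}$-block to the $j$-th $\underline{\gamma}$-block, and keeping track of the residue labels $\nu$ refines these counts into the entries $\delta_{i,j}\in Q_+$.

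Next, order $M(\underline{\beta},\underline{\gamma})$ by (a linearization of) the natural order on double cosets for which the Mackey filtration is known to hold, and let $B_{\geq\delta}$ be the span of the basis elements whose double coset is $\geq$ that of $\delta$. The straightening relations --- $\psi_i\psi_w=\psi_{s_iw}+(\text{terms of smaller length})$ when $\ell(s_iw)>\ell(w)$, the quadratic relation for $\psi_i^2$, and the relations moving $y$'s past $\psi$'s --- show that each $B_{\geq\delta}$ is stable under left multiplication by $\iota_{\underline{\gamma}}(R(\underline{\gamma}))$ and right multiplication by $\iota_{\underline{\beta}}(R(\underline{\beta}))$, hence is a sub-bimodule. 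On the subquotient $B_{\geq\delta}/B_{>\delta}$, conjugation by $\psi_w$, for $w$ the minimal-length representative of the double coset of $\delta$, carries the parabolic subalgebra $R(\underline{\delta}^{row})\subset R(\underline{\beta})$ isomorphically onto $R(\underline{\delta}^{col})\subset R(\underline{\gamma})$ --- this is precisely the permutation-of-factors isomorphism $t_\delta$ --- while the remaining generators on the two sides act freely; thus the subquotient is the bimodule $R(\underline{\gamma})\,\mathfrak{e}(\underline{\delta}^{col})\otimes_{R(\underline{\delta}^{row})}\mathfrak{e}(\underline{\delta}^{row})\,R(\underline{\beta})$ with left action twisted through $t_\delta$, representing $\iind^{\underline{\gamma}}_{\underline{\delta}^{col}}\circ(\text{pushforward along }t_\delta)\circ\rres^{\underline{\beta}}_{\underline{\delta}^{row}}$. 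The grading shift comes out by computing $\deg(\psi_w)=\sum -(\nu_a,\nu_{a+1})$ over the crossings in the reduced word of this minimal representative: only crossings between a strand of block $(i,j)$ and a strand of block $(i',j')$ with $i<i'$ and $j>j'$ occur, so the total equals $-\sum_{i<i',\,j'<j}(\delta_{i,j},\delta_{i',j'})=\deg(\delta)$, which is exactly the shift built into $T_\delta$. This recovers the subquotient as $K_\delta(M)$ after applying $-\otimes_{R(\underline{\beta})}M$.

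The main obstacle is the bookkeeping in the third step: checking carefully that the double-coset spans are genuine sub-bimodules while tracking the residue constraints forced by the idempotents through the straightening relations, and pinning the grading shift down so that it agrees with $\deg(\delta)$ on the nose. Since the statement is a restatement of \cite[Theorem 2.1]{mcn-bk}, itself a graded refinement of the Mackey filtration of \cite{KLR1}, the plan is to carry out this verification by quoting that reference and to confine the present argument to translating its output into the conventions of Section \ref{sec:mackey} --- in particular matching the algebra isomorphism $t_\delta$, the functor $T_\delta$, and the integer $\deg(\delta)$ with the definitions used there.
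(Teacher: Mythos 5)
Your proposal is correct and ends up taking essentially the same route as the paper: the paper offers no independent argument, presenting the proposition purely as a restatement of \cite[Theorem 2.1]{mcn-bk}, and your plan likewise reduces to quoting that reference after translating its conventions (the bimodule filtration via the basis theorem and double cosets that you sketch is the standard internal argument of the cited result). Your bookkeeping of $t_\delta$, the grading shift $\deg(\delta)$, and the flatness argument for tensoring the filtration with $M$ are consistent with the definitions of Section \ref{sec:mackey}, so nothing further is needed.
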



\subsection{Kleshchev-Ram classification and multisegments}\label{sec:kr-class}

Let us now describe the classification of
\[
\irr_{\mathcal{D}} = \sqcup_{\beta\in Q_+} \irr(\beta)
\]
as obtained in \cite{kr2}. When specialized to this case, it may be viewed as a graded version of the classical Zelevinsky classification of \cite{Zel} (See Section \ref{sec:equiv-cat}).

For each pair of integers $i\leq j$, we denote the element $\Delta(i,j) = \alpha_i + \alpha_{i+1} + \ldots +\alpha_j\in Q_+$. We refer to $\seg = \{\Delta(i,j)\}_{i\leq j} \subset Q_+$ as the set of \textit{segments} (positive roots).

For a segment $\Delta = \Delta(i,j)\in \seg$, we write $i = b(\Delta)$ and $j= e(\Delta)$ for its begin and end points.

Let $\leq$ denote the total lexicographical order on $\seg$, so that $\Delta_1\leq \Delta_2$, if $b(\Delta_1)< b(\Delta_2)$ holds, or that both $b(\Delta_1)= b(\Delta_2)$ and $e(\Delta_1)\leq e(\Delta_2)$ hold.

Similarly, let $\leq_r$ denote the right lexicographical order on $\seg$, defined as $\leq$, but with the roles of $b(\Delta)$ and $e(\Delta)$ reversed.

We refer to elements of the free abelian monoid
\[
\Mult = \mathbb{Z}_{\geq0}[\seg]
\]
as \textit{multisegments}.

We will abuse notation by referring to the set of segments $\Seg$, both as a subset of $\Mult$, and of $Q_+$, depending on context. For example, $\Delta(1,1) + \Delta(1,2), \Delta(1,2)\in\Mult$ are distinct multisegments. Yet, $ \Delta(1,1) + \Delta(2,2)= \Delta(1,2)$ holds as an equation in $Q_+$.

There is a natural additive map $\wt: \Mult \to Q_+$, defined by $\wt(\Delta)= \Delta$, for each $\Delta\in \Seg$.

Each $\Delta\in \seg$ is attached with a \textit{segment module}\footnote{\textit{Cuspidal module} in the language of \cite{kr2}. We refrain from using this terminology here, because of the involvement of supercuspidal representations of $p$-adic groups in the discussion of Section \ref{sec:padic}.} $L_\Delta \in \irr(\Delta)$. It may be characterized as the unique self-dual $1$-dimensional $R(\Delta)$-module, for which there exists $\nu=(\nu_1, \ldots, \nu_{|\Delta|})\in \mathcal{I}^{\Delta}$ with $\nu_1 = \alpha_{e(\Delta)}$, so that $\mathfrak{e}(\nu)L_{\Delta}\neq0$.

Each $\m \in\Mult$ can be uniquely written as $\m = \sum_{i=1}^k p_i \Delta_i$, for segments $\Delta_1 <_r \ldots <_r \Delta_k$ in $\seg$.

In these terms, the Kleshchev-Ram classification attaches to $\m\in \Mult$ the \textit{proper standard module}
\[
\\KR(\gotM) := L_{\Delta_1}^{\circ p_1}\circ \cdots \circ L_{\Delta_k}^{\circ p_k}\;\left\langle {p_1 \choose 2} + \ldots + {p_k \choose 2}\right\rangle\in R(\wt(\m))-\gmod \;.
\]
Here $L^{\circ p} = L\circ \cdots \circ L$ denotes the $p$-th induction product of a module with itself.

\begin{theorem}\label{thm:kr}\cite[Theorem 7.2]{kr2}
The head (or co-socle, i.e. maximal semisimple quotient) of $\\KR(\m)$, denoted as $L_{\gotM}$, is simple and self-dual. The resulting map
\[
\Mult \to \irr_{\mathcal{D}}\;,\quad \m\mapsto L_{\m}
\]
is a bijection.
\end{theorem}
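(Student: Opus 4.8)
The plan is to establish the theorem in two stages: first the local statement that $\KR(\m)$ has simple self-dual head, and then the global statement that $\m \mapsto L_\m$ is a bijection. For the first stage, I would follow the standard PBW-type argument. The segment modules $L_{\Delta}$ are the "cuspidal" simple modules, and the key input is a dominance/comparison estimate: for segments $\Delta <_r \Delta'$, one shows that $L_{\Delta'} \circ L_\Delta$ is \emph{not} simple and that the product $L_{\Delta_1}^{\circ p_1} \circ \cdots \circ L_{\Delta_k}^{\circ p_k}$, with factors arranged in the prescribed $<_r$-order, has a unique maximal subquotient in an appropriate ordering of $\irr(\wt(\m))$. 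Concretely, one puts a total (bilexicographic) order on $\Mult$ refining $\wt$, shows via Proposition~\ref{prop:mackey} (Mackey filtration) that $\rres$ of $\KR(\m)$ to the "diagonal" Levi picks out $L_{\Delta_1}^{\otimes p_1}\otimes\cdots$ with multiplicity exactly one and all other constituents strictly higher in the order, and concludes by adjunction that $\Hom(\KR(\m), \KR(\m))$ is one-dimensional in degree zero (self-duality forces the scalar, since $\KR(\m)$ is visibly self-dual up to the stated grading shift because each $L_\Delta$ is self-dual and the shift $\binom{p_1}{2}+\cdots$ exactly compensates the grading defect of $L_\Delta^{\circ p}$). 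A nonzero endomorphism of a finite-length module with one-dimensional degree-zero endomorphism space that is not an isomorphism would be nilpotent, contradicting self-duality paired with the multiplicity-one restriction; hence $\KR(\m)$ has simple head, call it $L_\m$, and it is self-dual.

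For the second stage, surjectivity and injectivity of $\m \mapsto L_\m$ I would deduce from a counting/triangularity argument in each fixed $\beta$. The association $\m \mapsto L_\m$ is, by construction and the restriction computation above, "unitriangular" with respect to the total order on $\{\m : \wt(\m) = \beta\}$: the class $[\KR(\m)]$ equals $[L_\m]$ plus a $\mathbb{Z}_{\ge 0}[q,q^{-1}]$-combination of $[L_{\m'}]$ with $\m' > \m$. Since $\{[\KR(\m)] : \wt(\m)=\beta\}$ and the putative $\{[L_\m]\}$ are both indexed by the finite set $\{\m:\wt(\m)=\beta\}$, unitriangularity shows $\m \mapsto L_\m$ is injective and that its image spans the Grothendieck group; as $\irr(\beta)$ is a basis of the Grothendieck group, this forces the image to be all of $\irr(\beta)$, giving the bijection. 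This is exactly the shape of the argument in \cite[\S7]{kr2}, and since the theorem is quoted as \cite[Theorem 7.2]{kr2}, I would cite it for the fine points; the role of the proof here is to record why the statement holds in the form used later.

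The main obstacle is the restriction/Mackey computation underpinning unitriangularity: one must verify that $\rres_{\underline{\delta}^{row}}$-type functors applied to the ordered product $L_{\Delta_1}^{\circ p_1}\circ\cdots\circ L_{\Delta_k}^{\circ p_k}$ genuinely produce the "leading term" $L_{\Delta_1}^{\otimes p_1}\otimes\cdots\otimes L_{\Delta_k}^{\otimes p_k}$ with graded multiplicity $1$ and everything else strictly larger in the chosen order. This requires a careful bookkeeping of the matrices $\delta \in M(\underline\beta,\underline\gamma)$ and the associated degree shifts $\deg(\delta)$ from \eqref{eq:deg-form}, together with the fact (from \cite{kr1}) that products of segment modules in "non-linked" or strictly ordered configurations decompose in a controlled way. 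The self-adjointness subtlety — ensuring the grading shift $\binom{p_1}{2}+\cdots+\binom{p_k}{2}$ is precisely the one making $\KR(\m)$ self-dual — is routine once one knows the grading defect of $L_\Delta^{\circ p}$ relative to $L_\Delta^{\otimes p}$, which is computed from the degree of the intertwiner $\psi$-elements, i.e. from the form $(\cdot,\cdot)$.
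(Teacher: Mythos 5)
The paper gives no argument of its own here: Theorem \ref{thm:kr} is imported verbatim from \cite[Theorem 7.2]{kr2}, so the only benchmark is the original Kleshchev--Ram proof. Your overall outline --- a Mackey/shuffle computation showing the ``diagonal'' restriction of $\KR(\m)$ contains the tensor product of the segment-module factors with graded multiplicity one, unitriangularity of $[\KR(\m)]$ with respect to an order on multisegments of fixed weight, and a counting argument in the Grothendieck group to get the bijection --- is indeed the shape of that proof, and deferring the bookkeeping to the citation is consistent with what the paper does.

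However, the step you propose for the simple self-dual head contains two concrete errors. First, the claim that for $\Delta <_r \Delta'$ the product $L_{\Delta'}\circ L_{\Delta}$ is not simple is false: for unlinked segments the product is simple in either order (this is exactly what the paper exploits in Lemma \ref{lem:unlink-seg}); reducibility is governed by linkage, not by the order $<_r$. Second, and more seriously, $\KR(\m)$ is not ``visibly self-dual up to the stated grading shift.'' By \cite[Theorem 2.2]{MR2822211}, the dual of an induced product is the product of the duals in the \emph{reverse} order (up to a shift), and when linked segments occur the reversed product is genuinely non-isomorphic to the original: for $\m=\Delta(1,1)+\Delta(2,2)$ the two orderings give two non-isomorphic length-two modules, in which the segment module $L_{\Delta(1,2)}$ appears once as head and once as socle (up to shift). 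The shifts ${p_i \choose 2}$ only make each factor $L_{\Delta_i}^{\circ p_i}$ self-dual, not the whole product. Consequently your mechanism ``degree-zero endomorphisms are one-dimensional, plus self-duality, hence any non-isomorphism is nilpotent, hence simple head'' does not run; note also that $\operatorname{End}=\mathbb{C}$ alone never forces a simple head (a module with cosocle $S_2\oplus S_3$ and simple socle can have scalar endomorphisms). The actual mechanism in \cite{kr2} is different: the induced module is cyclically generated by its leading word/weight space, which the multiplicity-one restriction shows is missed by every proper submodule, giving a unique maximal submodule; self-duality of the head is then deduced by comparing the head of $\KR(\m)$ with the socle of the reversed (dual) product via the same multiplicity-one statement (equivalently, by a character-symmetry argument), not from any self-duality of $\KR(\m)$ itself.
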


For $\m\in \Mult$, we set
\[
\mathfrak{b}(\m) = n_1\alpha_{b(\Delta_1)} + \ldots +  n_k\alpha_{b(\Delta_k)},\;  \mathfrak{e}(\m) = n_1\alpha_{e(\Delta_1)} + \ldots +  n_k\alpha_{e(\Delta_k)}\in Q_+\;,
\]
and write $|\m| = |\mathfrak{b}(\m)| = |\mathfrak{e}(\m)|$, that is, the number of segments used to define $\m$.

For $L= L_\m\in \irr_{\mathcal{D}}$, we also write $\mathfrak{b}(L)=\mathfrak{b}(\m)$, $\mathfrak{e}(L)=\mathfrak{e}(\m)$ and $\wt(L) = \wt(\m)$.


\section{Representations of $p$-adic $GL_n$}\label{sec:padic}

Let $F$ be a fixed $p$-adic field. We are interested in the smooth representation theory of the sequence of locally compact groups $G_n := GL_n(F)$.

For a $p$-adic group $G$ (such as $G_n$), we write $G-\nmod$ for the abelian category of finite-length (typically infinite-dimensional) smooth representations of $G$ over the complex field. We write $\irr(G)$ for the set of isomorphism classes of irreducible representations in $G-\nmod$.

Let us recall the basic inductive mechanisms for construction and study of representations in $G_n-\nmod$. Most of the tools applied in our discussion stem from the classical texts \cite{BZ1,Zel}. See, for example, \cite{LM2} for a modern exposition.

We say that a tuple of positive integers $\underline{n} = (n_1, \ldots, n_r)$ is a composition of $n$ and write $n = i(\underline{n})$, if $n= n_1+\ldots + n_r$. We denote by $M_{\underline{n}}$ the subgroup of $G_{i(n)}$ isomorphic to $G_{n_1} \times \cdots \times G_{n_r}$ consisting of matrices which are diagonal by blocks of size $n_1, \ldots, n_r$ and by $P_{\underline{n}}$ the subgroup of $G_{i(n)}$ generated by $M_{\underline{n}}$ and the upper
unitriangular matrices. A standard parabolic subgroup of $G_n$ is a subgroup of the form $P_{\underline{n}}$ with $i({\underline{n}}) = n$ and its standard Levi factor is $M_{\underline{n}}$.

We write $\mathbf{i}_{\underline{n}}: M_{\underline{n}}-\nmod\to G_{i({\underline{n}})}-\nmod$ for the exact (normalized) parabolic induction functor associated to $P_{\underline{n}}$.

For $\pi_i\in G_{n_i}-\nmod$, $i=1,\ldots,r$, we write
\[
\pi_1\times\cdots\times \pi_r := \mathbf{i}_{(n_1,\ldots,n_r)}(\pi_1\otimes\cdots\otimes \pi_r)\in G_{n_1+\ldots+n_r}-\nmod\;.
\]
The induction functor $\mathbf{i}_{\underline{n}}$ admits a left-adjoint functor
\[
\mathbf{r}_{\underline{n}}: G_{i(\underline{n})}-\nmod \to M_{\underline{n}}-\nmod
\]
known as the Jacquet functor.

An irreducible representation $\pi\in \irr(G_n)$ is called supercuspidal, if $\mathbf{r}_{\underline{n}}(\pi)=0$, for all ${\underline{n}}$ with $n= i({\underline{n}})$ and $M_{\underline{n}}\neq G_n$. We write $\cusp_n\subset \irr(G_n)$ for the set of supercuspidal irreducible representations.

For any $n$, let $\mu^s= |\det|^s_F,\;s\in \mathbb{C}$ denote the family of one-dimensional representations of $G_n$, where $|\cdot|_F$ is the absolute value of $F$. For $\pi\in G_n - \nmod$, we write $\pi\mu^s := \pi\otimes \mu^s\in G_n-\nmod$.

\subsection{Block decomposition}

Given $\rho\in \cusp_m$ and an integer $d\geq1$, let $\irr^{\mathbb{Z}}_{\rho,d}\subset \irr(G_{md})$ be the set of all irreducible subquotients of representations of the form
\[
(\rho\mu^{k_1}) \times\cdots\times (\rho\mu^{k_d})\;,
\]
for any choice of integers $k_1,\ldots,k_d\in \mathbb{Z}$.

For a pair $(\rho,d)$, we define the \textit{simple line block} $\mathcal{C}(\rho,d)$ to be the Serre subcategory of $G_{md}-\nmod$ consisting of those representations $\pi$ whose irreducible subquotients all belong to $\irr^{\mathbb{Z}}_{\rho,d}$.

Following the general theory of Bernstein blocks (\cite{bern-center}), we may decompose the category of finite-length representations as a sum of abelian categories
\[
G_n-\nmod = \oplus_{\Theta\in \mathfrak{B}_n} \mathcal{C}_\Theta\;,
\]

where each summand stands as a product (in the sense of Deligne \cite[Section 5]{deligne-tan}) of certain simple line blocks of smaller groups:
\[
\mathcal{C}_\Theta \cong \mathcal{C}(\rho_1,d_1) \times \cdots \times \mathcal{C}(\rho_r,d_r)\;.
\]
Here, $\rho_i \in \cusp_{m_i}$, $i=1,\ldots,r$ satisfy $\sum_{i=1}^r m_id_i = n$ and the equivalence\footnote{Note, that it is not claimed that any choice of $\rho_1,\ldots,\rho_r$ produces an equivalence, but rather the existence of such for any $\Theta\in \mathfrak{B}_n$.} is given by parabolic induction. In other words, the functor $\mathbf{i}_{(m_1d_1,\ldots,m_rd_r)}$ becomes an equivalence of categories when restricted to $\mathcal{C}(\rho_1,d_1) \times \cdots \times \mathcal{C}(\rho_r,d_r)$, viewed as a subcategory of $M_{(m_1d_1,\ldots,m_rd_r)}-\nmod$. In particular, we see that the set of irreducible representations in $\mathcal{C}_\Theta$ is naturally described as
\[
\irr_\Theta = \irr^{\mathbb{Z}}_{\rho_1,d_1} \times\cdots\times \irr^{\mathbb{Z}}_{\rho_r,d_r}\;.
\]
In the above sense, simple line blocks contain essentially all of the information encoded in the categories $G_n-\nmod$.


Moreover, simple line blocks are well-aligned relative to the parabolic induction and Jacquet functors. Namely, for any $\rho\in \cusp_m$ and $\underline{d}= (md_1,\ldots,md_r)$, those functors restrict to well-defined pair of adjoint functors between the categories
\begin{equation}\label{eq:resind-diag}
\mathcal{C}(\rho,d_1) \times \cdots \times \mathcal{C}(\rho,d_r) \begin{array}{c} \mathbf{i}_{\underline{d}}   \longrightarrow\\ \longleftarrow \mathbf{r}_{\underline{d}} \end{array}\mathcal{C}(\rho,d_1+\ldots+d_r)\;.
\end{equation}

For these reasons, it is useful to consider the sum of categories
\[
\mathcal{C}_\rho^{\mathbb{Z}} =  \bigoplus_{n=0}^\infty \mathcal{C}(\rho,n)\,
\]
(summing blocks of representations of different groups) and its set of irreducible representations
\[
\irr_\rho^{\mathbb{Z}} = \sqcup_{n=0}^\infty  \irr_{\rho,n}^{\mathbb{Z}}\;.
\]

Given $\pi \in \irr_{\rho,n}^{\mathbb{Z}}$, there exist integers $k_1,\ldots,k_n\in \mathbb{Z}$, for which $\pi$ appears as a sub-representation of $\rho\mu^{k_1}\times \cdots \times \rho\mu^{k_n}$. The sequence $k_1,\ldots, k_n$ is uniquely determined by $\pi$, up to a permutation. The resulting $S_n$-orbit (relative to the usual symmetric group action) on $\mathbb{Z}^n$ is called the supercuspidal support\footnote{We adopt a combinatorial point of view on the general notion of the supercuspidal support of an irreducible smooth representation of a $p$-adic group.} $\supp_\rho(\pi)$ of $\pi$.

For sake of compatibility with other notions appearing in our discussion, let us identify $S_n$-orbits on $\mathbb{Z}^n$ with the elements of $Q_+$ of height $n$, i.e. an orbit represented by $(k_1,\ldots,k_n)$ will correspond to $\alpha_{k_1} + \ldots + \alpha_{k_n}\in Q_+$. In this sense, we will write $\supp_\rho(\pi)\in Q_+$, for $\pi\in \irr_\rho^{\mathbb{Z}}$.

We have a further decomposition of categories of finite-length representations according to their supercuspidal support.

Namely, let $\mathcal{C}_\rho^\beta$ be the full subcategory of $\mathcal{C}(\rho,|\beta|)$ consisting of representations all of whose irreducible subquotients $\pi$ satisfy $\supp_\rho(\pi)= \beta$. Then, we can decompose as
\[
\mathcal{C}(\rho,n) =\bigoplus_{\beta\in Q_+, |\beta|=n} \mathcal{C}_\rho^\beta\;,\quad  \sigma = \oplus \sigma_\beta \;.
\]


Summing over representation categories of groups of different ranks, we obtain a canonical decomposition
\[
\mathcal{C}_\rho^{\mathbb{Z}} = \oplus_{\beta\in Q_+} \mathcal{C}_\rho^{\beta}\;.
\]

For $\underline{\beta} = (\beta_1,\ldots,\beta_r)\in (Q_+)^r$, we may view
\[
\mathcal{C}_\rho^{\underline{\beta}}: = \mathcal{C}_\rho^{\beta_1} \times \cdots \times \mathcal{C}_\rho^{\beta_r}
\]
as a Serre subcategory of $M_{\underline{n}}-\nmod$, where $\underline{n} = (m|\beta_1|,\ldots,m|\beta_r|)$ ($\rho\in \cusp_m$).


The parabolic induction functor $\mathbf{i}_{\underline{n}}$ restricts to a functor
\[
\mathbf{i}_{\underline{\beta}}: \mathcal{C}_\rho^{\underline{\beta}} \to \mathcal{C}_\rho^{i(\underline{\beta})}\;.
\]
In simpler notation, that means that for any given $\pi_i\in  \mathcal{C}_\rho^{\beta_i} $, $i=1,\ldots,r$, we have
\[
\pi_1\times \cdots\times \pi_r\in  \mathcal{C}_\rho^{\beta_1+\ldots+\beta_r}\;.
\]


\subsection{Equivalences of categories}\label{sec:equiv-cat}

The link between the representation theories of the quiver Hecke algebras of Section \ref{sec:quiver} and the $p$-adic groups of this section, passes through the notion of affine Hecke algebras (of type $A$). Let us recall the basic properties of these intermediate categories.

Given $n\in \mathbb{Z}_{>0}$ and a parameter $q\in \mathbb{C}$ (which for our needs will be assumed to be non-root of unity), the root datum of $GL_n$ gives rise to the (extended) \textit{affine Hecke algebra} $H(n,q)$: This is the complex algebra generated by $T_1,\ldots, T_{n-1}$ and invertible $y_1,\ldots,y_n$, subject to the relations
\[
\begin{array}{ll}
T_i T_{i+1} T_i = T_{i+1} T_i T_{i+1},\; & \forall 1\leq i\leq n-2\\

(T_i -q)(T_i+1)=0,\;& \forall 1\leq i \leq n-1\\

T_iT_j = T_j T_i,\;  & \forall |j-i|>1\\

y_iy_j = y_jy_i,\;& \forall 1\leq i,j\leq n\\

T_i y_iT_i = qy_{i+1},\; &\forall 1\leq i\leq n-1\\

T_i y_j = y_jT_i,\; &\forall j\neq i, i+1\;.
\end{array}\;
\]

For a composition $\underline{n} = (n_1, \ldots ,n_r)$ of $n$, the algebra $H(\underline{n},q): =  H(n_1,q)\otimes \cdots \otimes H(n_r,q)$ is naturally embedded as
\[
\iota_{\underline{n}}: H(\underline{n},q) \to H(n,q)\;,
\]
by sending the generators $\tilde{T}_i,\tilde{y}_i$ of $H(n_j,q)$ to $T_{n_1+ \ldots + n_{j-1} + i}, y_{n_1+ \ldots + n_{j-1} + i}$ in $H(n,q)$.

We denote by $\mathcal{M}^q_n$ (respectively, $\mathcal{M}^q_{\underline{n}}$) the category of finite-dimensional modules over the algebra $H(n,q)$ (respectively, $H(\underline{n},q)$).

We have a straightforward decomposition (again, in the sense of the Deligne product of abelian cateogries)
\begin{equation}\label{eq:hecke-decomp}
\mathcal{M}^q_{\underline{n}} = \mathcal{M}^q_{n_1}\times \cdots \times \mathcal{M}^q_{n_r}
\end{equation}
of the abelian category, and an exact induction functor
\[
\iind^q_{\underline{n}}: \mathcal{M}^q_{\underline{n}} \;\to\; \mathcal{M}^q_{n}\;,\quad
\iind^q_{\underline{n}}(M) = H(n,q)\otimes_{\iota_{\underline{n}}(H(\underline{n},q))}M\;.
\]

For $M\in \mathcal{M}^q_n$, the subalgebra $P_n =\mathbb{C}[y_1^{ \pm 1},\ldots y_n^{ \pm 1}]=\iota_{(1,\ldots,1)}(H(1,\ldots,1),q)< H(n,q)$ of Laurent polynomials gives rise to a weight space decomposition of the form
\[
M= \oplus_{\chi\in (\mathbb{C}^\times)^n} M_\chi\;.
\]
Here, $M_\chi$, for $\chi = (\chi_1,\ldots,\chi_n)$, denotes the common generalized eigenspace of $P_n$ in $M$, given by the character $y_i \mapsto \chi_i$.

Recall that the center $Z(H(n,q))$ is given by the symmetric Laurent polynomials in $P_n$ (\cite[Proposition 3.11]{lusz-affine}). In particular, the complex central characters of $H(n,q)$ are given by orbits of the action of $S_n$ (symmteric group) on $(\mathbb{C}^{\times})^n$.

Hence, the full sub-category $\mathcal{M}^{q}_{\overline{\chi}}$, for a given $\overline{\chi} \in (\mathbb{C}^\times)^n/S_n$, of modules $M\in \mathcal{M}^q_n$ that decompose as $M = \oplus_{ \chi\in \overline{\chi}} M_\chi$, is a Serre subcategory.

Alternatively, picking a representative $\chi_0\in \overline{\chi}$ and viewing it as a character of $H((1,\ldots,1),q)$, $\mathcal{M}^{q}_{\overline{\chi}}$ may be described as the full subcategory of $\mathcal{M}^q_n$ consisting of representations all of whose subquotients appear as subquotients in $\iind^q_{(1,\ldots,1)}(\chi_0)$.

Similarly, for $\underline{n}= (n_1,\ldots,n_r)$, the central characters of $H(\underline{n},q)$ are parameterized by $S_{n_1}\times\cdots \times  S_{n_r}$-orbits on $(\mathbb{C}^\times)^{i(\underline{n})}\cong (\mathbb{C}^\times)^{n_1}\times\cdots \times(\mathbb{C}^\times)^{n_r}$. For such given central character $\underline{\chi} = (\overline{\chi}_1,\ldots,\overline{\chi}_r)$ of $H(\underline{n},q)$, we write
\[
\mathcal{M}^q_{\underline{\chi}}=  \mathcal{M}^q_{\overline{\chi_1}}\times \cdots \times \mathcal{M}^q_{\overline{\chi_r}}
\]
for the Serre subcategory of $\mathcal{M}^q_{\underline{n}}$, defined relative to the decomposition \eqref{eq:hecke-decomp}.

We may think of the natural map
\[
i: (\mathbb{C}^\times)^{i(n)}/(S_{n_1}\times \cdots \times S_{n_r}) \to (\mathbb{C}^\times)^{i(n)}/S_{i(n)}
\]
between orbit spaces, as a map from the spectrum of $Z(H(\underline{n},q))$ to the spectrum of $Z(H(i(\underline{n}),q))$.

For $\underline{\chi}\in (\mathbb{C}^\times)^{i(\underline{n})}/(S_{n_1}\times \cdots \times S_{n_r})$, restricting $\iind^q_{\underline{n}}$ to a subcategory, gives an exact functor

\[
\iind^q_{\underline{\chi}}: \mathcal{M}^q_{\underline{\chi}} \to \mathcal{M}^q_{i(\underline{\chi})} \;.
\]

For such $\underline{\chi}$ and a module $M\in \mathcal{M}^q_{i(\underline{n})}$, the space
\[
\rres^q_{\underline{\chi}}(M):= \oplus_{\chi\in \underline{\chi}} M_\chi\;,
\]
is invariant under the action of the subalgebra $\iota_{\underline{n}}(H(\underline{n},q))$. Thus, we may view
\[
\rres^q_{\underline{\chi}}: \mathcal{M}^q_{i(\underline{\chi})} \to \mathcal{M}^q_{\underline{\chi}}
\]
as an exact restriction functor, which is right-adjoint to the induction functor $\iind^q_{\underline{\chi}}$.

We denote by $\mathcal{M}^{q,\mathbb{Z}}_n$ the Serre subcategory of $\mathcal{M}^q_n$ consisting of modules in which $y_i$'s all act with eigenvalues that are integer powers of $q$. In other words,

\[
\mathcal{M}^{q,\mathbb{Z}}_n = \oplus_{\overline{\chi}\in(q^{\mathbb{Z}})^n/S_n} \mathcal{M}^{q}_{\overline{\chi}}\;.
\]

\subsubsection{Rouquier's equivalence}

The assignment
\[
(q^{i_1}, \ldots, q^{i_n})\in (q^{\mathbb{Z}})^n \;\mapsto\; \alpha_{i_1} + \ldots + \alpha_{i_n}\in Q
\]
sets a natural bijection between $(q^{\mathbb{Z}})^n/S_n$ and elements of $Q_+$ of height $n$. Thus, we may view elements of the latter set as central characters of $H(n,q)$. More precisely, for a given $\beta\in Q_+$ we may write $\overline{\chi}_\beta$ for the corresponding character of the algebra $Z(H(|\beta|,q))$.

\begin{theorem}\label{thm:rouq}\cite[Theorem 3.11]{MR2908731}
For each $\beta\in Q_+$ and a non-root of unity $q\in \mathbb{C}^\times$, there is an equivalence of abelian categories
\[
\mathcal{R}_\beta: \mathcal{M}^{q}_{\overline{\chi}_\beta}\to R(\beta)-\nmod\;.
\]

For a module $M\in \mathcal{M}^{q}_{\overline{\chi}_\beta}$, $\mathcal{R}_\beta(M)$ is defined by an action of $R(\beta)$ on the same underlying complex vector space of $M$.

Moreover, the equivalence is compatible with weight space decompositions, in the sense that for $\nu= (\nu_1,\ldots, \nu_{|\beta|})\in \mathcal{I}^\beta$, we have $\mathfrak{e}(\nu)\mathcal{R}_\beta(M) = M_{\chi_\nu}$, where $\chi_\nu = (q^{\nu_1}, \ldots, q^{\nu_{|\beta|}})$.

\end{theorem}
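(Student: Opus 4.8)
The plan is to reduce this to the Brundan--Kleshchev/Rouquier isomorphism by writing down the $R(\beta)$-action on a module $M\in\mathcal{M}^{q}_{\overline{\chi}_\beta}$ explicitly and then producing a quasi-inverse. First I would exploit that $q$ is not a root of unity and that $M$ lies in the block $\overline{\chi}_\beta$: the commutative subalgebra $\mathbb{C}[y_1^{\pm1},\ldots,y_{|\beta|}^{\pm1}]$ decomposes $M=\bigoplus_{\nu\in\mathcal{I}^\beta}M_{\chi_\nu}$ with $\chi_\nu=(q^{\nu_1},\ldots,q^{\nu_{|\beta|}})$, and I let $\mathfrak{e}(\nu)$ act as the projection onto $M_{\chi_\nu}$. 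These are orthogonal idempotents summing to $1$, which immediately yields the first family of defining relations of $R(\beta)$ and forces the asserted compatibility with weight spaces. On $M_{\chi_\nu}$ the old generator $y_i$ of $H(|\beta|,q)$ has the single eigenvalue $q^{\nu_i}$, so $1-q^{-\nu_i}y_i$ is nilpotent there; I let the KLR generator $y_i$ act by $\sum_\nu(1-q^{-\nu_i}y_i)\mathfrak{e}(\nu)$, a locally nilpotent operator commuting with all $\mathfrak{e}(\nu)$.

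The delicate ingredient is the $\psi_i$. For this I would use the Demazure--Lusztig intertwiners $\Phi_i\in H(|\beta|,q)$ --- rational expressions in $T_i,y_i,y_{i+1}$ characterised by $\Phi_i\,f(y)=f(s_i\cdot y)\,\Phi_i$ and by an explicit value of $\Phi_i^2$ in $\mathbb{C}(y_i,y_{i+1})$ --- and set $\psi_i$ on each weight space $M_{\chi_\nu}$ to be $\Phi_i$ multiplied by a suitable power series in the new generators $y_i,y_{i+1}$, except in the case $\nu_i=\nu_{i+1}$, where one takes $T_i+1$ divided by such a power series. This makes sense precisely because the new $y_i,y_{i+1}$ act nilpotently, and the power series are chosen (exactly as in \cite{brun-kles}, following \cite{MR2908731}) so that $\psi_i\mathfrak{e}(\nu)$ acquires the prescribed degree $-(\nu_i,\nu_{i+1})$ in the graded picture. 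The hard part will be to check that this assignment satisfies all the remaining relations of $R(\beta)$ --- the $y$--$\psi$ commutations, the $\mathfrak{e}(s_i\cdot\nu)\psi_i$-type relations, the quadratic relation for $\psi_i^2$, and the twisted braid relation for $\psi_{i+1}\psi_i\psi_{i+1}-\psi_i\psi_{i+1}\psi_i$. Each reduces, after expanding the normalising power series, to an identity in a power-series completion of $H(|\beta|,q)$ obtained by direct manipulation of the affine Hecke relations, and pinning down the power series so that all of these hold simultaneously is the one genuinely laborious point and the technical heart of the cited proofs.

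To finish I would note that $M\mapsto\mathcal{R}_\beta(M)$ is the identity on underlying vector spaces and on $\mathbb{C}$-linear morphisms --- every new operator is built from the old ones, the weight projections, and power series in them --- hence an exact, faithful functor landing in $R(\beta)-\nmod$, and then construct a quasi-inverse by reversing the formulae. Given $N\in R(\beta)-\nmod$, the generator $y_i$ acts nilpotently on each $\mathfrak{e}(\nu)N$ (a standard fact for KLR algebras of type $A$, reflecting that $y_i$ has positive degree and $N$ is finite dimensional), so $y_i\mapsto\sum_\nu q^{\nu_i}(1-y_i)\mathfrak{e}(\nu)$ is an invertible operator; since the power series normalising $\psi_i$ are invertible, the defining formula for $\psi_i$ can be solved for $T_i$, and one verifies that the resulting operators satisfy the affine Hecke relations and that the module has central character $\overline{\chi}_\beta$. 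As the two constructions are mutually inverse, $\mathcal{R}_\beta$ is an equivalence --- in fact an isomorphism --- of abelian categories, compatible with weight decompositions as claimed. Alternatively, one can package the same calculation as an isomorphism between appropriate completions of the block of $H(|\beta|,q)$ and of $R(\beta)$ and deduce the equivalence of finite-dimensional module categories from it.
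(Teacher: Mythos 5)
Your sketch is essentially the argument behind the cited result: the paper itself gives no proof of Theorem \ref{thm:rouq}, importing it directly from \cite[Theorem 3.11]{MR2908731}, whose proof (as in \cite{brun-kles}) is exactly the construction you outline --- weight-space idempotents for the block of $H(|\beta|,q)$, nilpotent shifts $\sum_\nu(1-q^{-\nu_i}y_i)\mathfrak{e}(\nu)$, intertwiners renormalized by power series to define the $\psi_i$, and an inverse obtained because the KLR $y_i$ act nilpotently on finite-dimensional modules (equivalently, an isomorphism of suitable completions). So your proposal is correct in outline and follows the same route as the source the paper relies on, with the laborious relation-checking rightly identified as the content of the cited proofs.
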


A tuple $\underline{\beta} =(\beta_1,\ldots,\beta_r)\in (Q_+)^r$ defines a central character $\chi_{\underline{\beta}}= (\overline{\chi}_{\beta_1},\ldots,\overline{\chi}_{\beta_r})$ of $H((|\beta_1|,\ldots, |\beta_r|),q)$. Extending the equivalences of Theorem \ref{thm:rouq} to tensor products of algebras, we obtain an equivalence of abelian categories
\[
\mathcal{R}_{\underline{\beta}}: \mathcal{M}^{q}_{\chi_{\underline{\beta}}} \to R(\underline{\beta})-\nmod\;.
\]
Note, that $i(\chi_{\underline{\beta}})= \overline{\chi}_{i(\underline{\beta})}$ clearly holds.

Rouquier's equivalences become compatible with restriction functors in the following sense.

\begin{proposition}
For $\underline{\beta} =(\beta_1,\ldots,\beta_r)\in (Q_+)^r$, we have the identity
\[
\rres_{\underline{\beta}}\circ \mathcal{R}_{i(\underline{\beta})} = \mathcal{R}_{\underline{\beta}}\circ \rres^q_{\underline{\beta}}
\]
of functors.

\end{proposition}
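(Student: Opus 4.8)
The statement asserts that Rouquier's equivalence intertwines the restriction functors on both sides, i.e. $\rres_{\underline{\beta}}\circ \mathcal{R}_{i(\underline{\beta})} = \mathcal{R}_{\underline{\beta}}\circ \rres^q_{\underline{\beta}}$. The natural approach is to unwind the definitions of all four functors and compare what each composite does to a module $M\in \mathcal{M}^q_{\overline{\chi}_{i(\underline{\beta})}}$, exploiting the crucial fact that both $\mathcal{R}$ and $\rres$ (on both sides) are defined on the \emph{same underlying vector space}. Concretely, $\rres^q_{\underline{\beta}}(M) = \oplus_{\chi\in \chi_{\underline{\beta}}} M_\chi$ as a vector space, with the action of $\iota_{\underline{n}}(H(\underline{n},q))$; applying $\mathcal{R}_{\underline{\beta}}$ keeps this vector space and installs the $R(\underline{\beta})$-action. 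On the other side, $\mathcal{R}_{i(\underline{\beta})}(M)$ is $M$ with the $R(i(\underline{\beta}))$-action, and $\rres_{\underline{\beta}}$ cuts out the subspace $\mathfrak{e}(\underline{\beta})\mathcal{R}_{i(\underline{\beta})}(M)$ with its $R(\underline{\beta})$-action via $\iota_{\underline{\beta}}$.

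First I would check the two underlying subspaces coincide. By the compatibility clause of Theorem~\ref{thm:rouq}, $\mathfrak{e}(\nu)\mathcal{R}_{i(\underline{\beta})}(M) = M_{\chi_\nu}$ with $\chi_\nu = (q^{\nu_1},\ldots,q^{\nu_{|\beta|}})$. Summing over the $\nu$ appearing in $\mathfrak{e}(\underline{\beta}) = \sum_{\nu_i\in\mathcal{I}^{\beta_i}}\mathfrak{e}(\nu_1\ast\cdots\ast\nu_r)$, we get $\mathfrak{e}(\underline{\beta})\mathcal{R}_{i(\underline{\beta})}(M) = \oplus M_{\chi_{\nu_1\ast\cdots\ast\nu_r}}$, where $\chi_{\nu_1\ast\cdots\ast\nu_r}$ runs over precisely the characters lying in the orbit block $\chi_{\underline{\beta}} = (\overline{\chi}_{\beta_1},\ldots,\overline{\chi}_{\beta_r})$ under the bijection between $(q^{\mathbb{Z}})^{\bullet}/S_\bullet$ and $Q_+$-tuples. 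That is exactly $\rres^q_{\underline{\beta}}(M)$ as a subspace of $M$. So the underlying spaces match.

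Next I would verify the two $R(\underline{\beta})$-module structures on this common space agree. The right-hand composite's action is the restriction along $\iota_{\underline{\beta}}: R(\underline{\beta})\to R(i(\underline{\beta}))$ of the $R(i(\underline{\beta}))$-action produced by $\mathcal{R}_{i(\underline{\beta})}$; the left-hand composite's action is produced by $\mathcal{R}_{\underline{\beta}}$ applied to the $H(\underline{n},q)$-module $\rres^q_{\underline{\beta}}(M)$. Here I would invoke the construction of the equivalence in \cite{MR2908731}: the generators of $R(\beta)$ act on $M$ by \emph{explicit universal formulas} in the $T_i$, $y_i$ (idempotents cutting out weight spaces, $y$-generators built from the $y_i$'s after subtracting the central eigenvalue, $\psi$-generators built from the $T_i$'s with invertible correction factors). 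Since $\iota_{\underline{n}}$ sends the generators $\tilde T_i,\tilde y_i$ of the $j$-th block $H(n_j,q)$ to the correspondingly-indexed $T$'s and $y$'s of $H(n,q)$, and $\iota_{\underline{\beta}}$ sends the generators of the $j$-th block $R(\beta_j)$ to the correspondingly-indexed generators of $R(i(\underline{\beta}))$, the two prescriptions are given by the same polynomial formulas in the same operators on the same space — hence equal. This is where one must actually compare the two sets of formulas block by block; it is routine once the definitions are laid out, but it is the substance of the argument.

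\textbf{Main obstacle.} The only real point of care is the bookkeeping in the previous paragraph: making sure that the embeddings $\iota_{\underline{n}}$ (affine Hecke side) and $\iota_{\underline{\beta}}$ (quiver Hecke side) are ``the same'' embedding in the precise sense that Rouquier's construction is local to each block and commutes with the index-shift built into both $\iota$'s. One subtlety is that the $y$-generator of $R(\beta)$ is defined by subtracting from $y_i$ the scalar $q^{\nu_i}$ on the weight space $M_{\chi_\nu}$ — one must confirm that this scalar is computed identically whether one views the weight space inside $M$ (for $\mathcal{R}_{i(\underline{\beta})}$) or inside the sub-block $M_{\overline{\chi}_{\beta_j}}$ (for $\mathcal{R}_{\underline{\beta}}$ after $\rres^q$), which is immediate since it only depends on the $\nu_i$-coordinate of the character. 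Once this is checked, the identity of functors follows on objects, and on morphisms it is automatic because all four functors act as the identity (or an inclusion) on underlying linear maps.
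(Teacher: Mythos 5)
Your proposal is correct and follows essentially the same route as the paper, whose proof is simply the observation that the explicit description of Rouquier's equivalence in \cite[Theorem 3.11]{MR2908731} is compatible with restriction on generators on both sides; your write-up just spells this out, matching the underlying weight-space subspaces via the compatibility clause of Theorem \ref{thm:rouq} and then matching the actions through the block-local universal formulas and the embeddings $\iota_{\underline{n}}$, $\iota_{\underline{\beta}}$.
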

\begin{proof}
The explicit description of the equivalence functor in \cite[Theorem 3.11]{MR2908731} is compatible with restriction on generators of algebras on both sides.
\end{proof}

Since adjoint functors are uniquely defined, we obtain a similar result for the induction operation.

\begin{corollary}\label{cor:ruoq-ind}
For $\underline{\beta} =(\beta_1,\ldots,\beta_r)\in (Q_+)^r$, we have an isomorphism
\[
\mathcal{R}_{i(\underline{\beta})}\circ \iind_{\underline{\beta}}  =  \iind^q_{\underline{\beta}} \circ \mathcal{R}_{\underline{\beta}}
\]
of functors. In particular, for given modules $M_1 \in \mathcal{M}^{q}_{\overline{\chi}_{\beta_1}}$ and $M_2 \in \mathcal{M}^{q}_{\overline{\chi}_{\beta_2}}$, we have an isomorphism of modules
\[
\mathcal{R}_{\beta_1+\beta_2}(\iind_{(\beta_1,\beta_2)}(M_1\boxtimes M_2)) \cong \mathcal{R}_{\beta_1}(M_1)\circ \mathcal{R}_{\beta_2}(M_2)
\]
in $ R(\beta_1+\beta_2)-\nmod$.
\end{corollary}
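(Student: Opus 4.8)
The plan is to derive this formally from the preceding Proposition, via the standard principle that a natural isomorphism of functors induces one between their adjoints. Recall the three relevant adjunctions: on the quiver Hecke side $\iind_{\underline{\beta}}$ is by definition the left adjoint of $\rres_{\underline{\beta}}$; on the affine Hecke side $\iind^q_{\underline{\beta}}$ is the left adjoint of $\rres^q_{\underline{\beta}}$; and each Rouquier equivalence $\mathcal{R}_{?}$, being an equivalence of abelian categories, is simultaneously a left and a right adjoint of its quasi-inverse $\mathcal{R}_{?}^{-1}$. Keeping track of central-character blocks (using the identity $i(\chi_{\underline{\beta}}) = \overline{\chi}_{i(\underline{\beta})}$ noted above), all the composites written below are defined on the intended subcategories.

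First I would rewrite the identity of the Proposition, $\rres_{\underline{\beta}}\circ \mathcal{R}_{i(\underline{\beta})} \cong \mathcal{R}_{\underline{\beta}}\circ \rres^q_{\underline{\beta}}$, in the equivalent form
\[
\mathcal{R}_{\underline{\beta}}^{-1}\circ\rres_{\underline{\beta}}\circ \mathcal{R}_{i(\underline{\beta})}\;\cong\;\rres^q_{\underline{\beta}}\;.
\]
Now I take left adjoints of both sides. The left adjoint of a composite is the composite of the left adjoints in the reverse order, so the left-hand side acquires left adjoint $\mathcal{R}_{i(\underline{\beta})}^{-1}\circ \iind_{\underline{\beta}}\circ \mathcal{R}_{\underline{\beta}}$, whereas the right-hand side has left adjoint $\iind^q_{\underline{\beta}}$. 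Since left adjoints are unique up to canonical natural isomorphism, $\mathcal{R}_{i(\underline{\beta})}^{-1}\circ \iind_{\underline{\beta}}\circ \mathcal{R}_{\underline{\beta}}\cong \iind^q_{\underline{\beta}}$; precomposing with $\mathcal{R}_{\underline{\beta}}^{-1}$ and postcomposing with $\mathcal{R}_{i(\underline{\beta})}$ then rearranges this into the claimed identity of functors. (Alternatively one may run the same argument on right adjoints, using $\coiind_{\underline{\beta}}$ on the quiver Hecke side, with the same outcome.)

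For the ``In particular'' clause I would evaluate this functorial identity at the object $M_1\boxtimes M_2$, which lies in the block $\mathcal{M}^q_{\chi_{(\beta_1,\beta_2)}}$, and use that the extension $\mathcal{R}_{(\beta_1,\beta_2)}$ of the equivalence to the tensor-product algebra carries $M_1\boxtimes M_2$ to $\mathcal{R}_{\beta_1}(M_1)\otimes \mathcal{R}_{\beta_2}(M_2)$, whose induction product is by definition $\mathcal{R}_{\beta_1}(M_1)\circ \mathcal{R}_{\beta_2}(M_2)$. There is no serious obstacle here: the statement is a formal consequence of uniqueness of adjoints. The only genuine care required is bookkeeping --- checking that each functor appearing in the composites is defined on the intended central-character block, and observing that, because every functor involved (both the $\rres/\iind$ pairs and the equivalences $\mathcal{R}$) is explicitly realized on the same underlying complex vector spaces, the natural isomorphism produced abstractly by uniqueness of adjoints agrees with the evident identification of underlying spaces.
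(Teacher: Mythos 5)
Your argument is essentially the paper's own proof: the paper deduces the corollary from the restriction-compatibility proposition in one line, ``since adjoint functors are uniquely defined,'' which is exactly the adjoint-uniqueness manipulation you spell out (and your careful bookkeeping even lands on the type-correct orientation $\iind_{\underline{\beta}}\circ\mathcal{R}_{\underline{\beta}}\cong \mathcal{R}_{i(\underline{\beta})}\circ\iind^q_{\underline{\beta}}$, which is what the displayed identity is intended to say, as the ``in particular'' clause confirms). Evaluating at $M_1\boxtimes M_2$ and using that $\mathcal{R}_{(\beta_1,\beta_2)}$ is the tensor-product equivalence is likewise the intended reading, so the proposal is correct and follows the same route as the paper.
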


\subsubsection{Bernstein's equivalence}

Let us recall and explicate some known connections between the representation categories of $p$-adic general linear groups and affine Hecke algebras.

To that end, we should temporarily expand our scope to include representations of infinite-length. Let $G_n-\nmodi$ (respectively, $M_{\underline{n}}-\nmodi$) denote the category of all smooth $G_n$-representations (respectively, $M_{\underline{n}}$-representations) and $\mathcal{N}^q_n$ (respectively, $\mathcal{N}^q_{\underline{n}}$) the category of (possibly infinite-dimensional) $H(n,q)$-modules (respectively, $H(\underline{n},q)$-modules).

A functor
\[
\iind^q_{\underline{n}}: \mathcal{N}^q_{\underline{n}} \;\to\; \mathcal{N}^q_{n}
\]
is defined as in the finite-dimensional case.

For a given $\rho\in \cusp_m$ and an integer $d\geq1$, we define the \textit{simple Bernstein block} $\mathcal{B}(\rho,d)$ to be the full subcategory of $G_{md}-\nmodi$ consisting of those representations whose irreducible subquotients all belong to $\irr_{\rho,d}^\mathbb{C}= \cup_{s\in\mathbb{C}}\irr^{\mathbb{Z}}_{\rho\mu^s,d}$.

Clearly, $\mathcal{B}(\rho,d)$ contains $\mathcal{C}(\rho,d)$ as a full subcategory. Parabolic induction may be defined in the general context of smooth representations. In particular, the analogous functors to \eqref{eq:resind-diag} are well-defined in the context simple Bernstein blocks in place of simple line blocks.

The irreducible smooth representations of $M_{(md_1,\ldots,md_r)}$ are naturally identified with $\irr(G_{md_1})\times \cdots \times \irr(G_{md_r})$. With this view in mind, we may similarly define the simple Bernstein block $\mathcal{B}(\rho, (d_1,\ldots,d_r))$ as the full subcategory of $M_{(md_1,\ldots,md_r)}-\nmodi$ consisting of those representations whose irreducible subquotients all belong to $\irr_{\rho,d_1}^\mathbb{C} \times\cdots \times \irr_{\rho,d_r}^{\mathbb{C}}$.

The following is a major outcome of the type-theory approach to representations of $p$-adic groups.
\begin{theorem}\label{thm:bk93}[Bushnell-Kutzko \cite{bk-book}]
For any $(\rho,d)$ as above, there is an explicit equivalence of abelian categories between $\mathcal{B}(\rho,d)$ and $\mathcal{N}^{q_\rho}_d$.

Here, $q_\rho\in \mathbb{Z}_{>1}$ is a certain power of the residue characteristic of the $p$-adic field defining $G_n$.

\end{theorem}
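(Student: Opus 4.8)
The plan is to deduce the stated category equivalence from the Bushnell--Kutzko theory of types, since the theorem itself is theirs; what is needed here is only to package it in the form used in Section \ref{sec:equiv-cat}. Write $N = md$ and let $\mathfrak{s}$ denote the inertial class of the cuspidal pair $\bigl(M_{(m,\ldots,m)},\ \rho\otimes\cdots\otimes\rho\bigr)$ in $G_N$. First I would recall the construction in \cite{bk-book} of a \emph{simple type} $(J,\lambda)$ attached to $\mathfrak{s}$: a compact open subgroup $J\le G_N$ together with an irreducible smooth representation $\lambda$ of $J$, built from a simple stratum refining the supercuspidal datum $\rho$. The crucial property I would invoke is that $(J,\lambda)$ is an \emph{$\mathfrak{s}$-type}: an irreducible $\pi\in\irr(G_N)$ satisfies $\Hom_J(\lambda,\pi)\neq 0$ exactly when $\pi$ lies in the Bernstein component of $\mathfrak{s}$, and that component is precisely the full subcategory of finite-length representations whose irreducible subquotients all lie in $\irr_{\rho,d}^{\mathbb{C}}$. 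Using transitivity of the supercuspidal support one then checks this coincides with $\mathcal{B}(\rho,d)$ as defined in the text.

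Next I would form the Hecke algebra $\mathcal{H}(G_N,\lambda) = \operatorname{End}_{G_N}\!\bigl(\mathrm{ind}_J^{G_N}\lambda\bigr)$ and the functor
\[
\mathbf{M}_\lambda\colon \mathcal{B}(\rho,d)\longrightarrow \mathcal{H}(G_N,\lambda)-\nmodi\,,\qquad \pi\longmapsto \Hom_J(\lambda,\pi)\,.
\]
By the general formalism of types, $\mathbf{M}_\lambda$ is exact (the $\lambda$-isotypic projector is exact on this block) and is an equivalence onto $\mathcal{H}(G_N,\lambda)-\nmodi$, with quasi-inverse $V\mapsto V\otimes_{\mathcal{H}(G_N,\lambda)}\mathrm{ind}_J^{G_N}\lambda$. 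This step is essentially formal once the $\mathfrak{s}$-type property is in hand.

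The substantive step is then the identification of $\mathcal{H}(G_N,\lambda)$ as an affine Hecke algebra. Here I would cite the Bushnell--Kutzko structure theorem for Hecke algebras of simple types: $\mathcal{H}(G_N,\lambda)$ is isomorphic to the extended affine Hecke algebra of $GL_d$ on a single parameter $q_\rho$, where $q_\rho$ is the integer power of the residue cardinality of $F$ read off from the simple stratum --- equivalently, from the torsion number of $\rho$ under unramified twists --- so in particular $q_\rho\in\mathbb{Z}_{>1}$ is a power of the residue characteristic. Matching presentations --- the finite braid generators $T_1,\dots,T_{d-1}$ to the intertwining operators supporting $\mathcal{H}(G_N,\lambda)$, and the invertible lattice generators $y_1,\dots,y_d$ to the unramified-twist operators --- identifies $\mathcal{H}(G_N,\lambda)$ with $H(d,q_\rho)$ in the notation of Section \ref{sec:equiv-cat}, hence $\mathcal{H}(G_N,\lambda)-\nmodi$ with $\mathcal{N}^{q_\rho}_d$. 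Composing with $\mathbf{M}_\lambda$ gives the asserted equivalence.

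The hard part is precisely this last structure theorem: that the Hecke algebra of a simple type for $GL_N$ is affine of type $A_{d-1}$ with \emph{one} parameter is the technical core of \cite{bk-book}, resting on the classification of simple strata and intricate intertwining computations, and I would not attempt to reprove it here. On our side the only genuinely new points are verifying that the block $\mathcal{B}(\rho,d)$ as defined categorically equals the $\mathfrak{s}$-component, and fixing the normalization of $q_\rho$; both are routine once the type $(J,\lambda)$ is available.
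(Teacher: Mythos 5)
Your proposal is correct in outline, and it is essentially a reconstruction of the argument the theorem is attributed to: you build the simple type $(J,\lambda)$ for the inertial class of $(\rho,\dots,\rho)$, identify $\mathcal{B}(\rho,d)$ with the corresponding Bernstein component, pass to $\mathcal{H}(G_N,\lambda)$-modules via $\pi\mapsto\Hom_J(\lambda,\pi)$, and invoke the Bushnell--Kutzko structure theorem $\mathcal{H}(G_N,\lambda)\cong H(d,q_\rho)$; deferring that structure theorem to \cite{bk-book} is legitimate here, since the paper itself states Theorem \ref{thm:bk93} as a citation and gives no proof of it. Where you diverge from the paper is in what happens next: rather than working with the type-theoretic equivalence, the paper rebuilds an equivalence ``such as the one obtained in Theorem \ref{thm:bk93}'' by a second route --- Bernstein's generator formalism (Proposition \ref{prop:bern}), giving $\mathfrak{f}_{\sigma,d}\colon\mathcal{B}(\rho,d)\to\nmodi-A(\sigma,d)$, composed with Heiermann's explicit isomorphisms $A(\sigma,d)\cong H(d,q_\rho)$ (Proposition \ref{prop:heier}) and Roche's commutation statement (Proposition \ref{prop:roch}). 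The payoff of that route is Corollary \ref{cor:ind-comp}: compatibility of the equivalence with parabolic induction, which is exactly what is needed for the monoidality of $\mathcal{E}_\rho$ (Proposition \ref{prop:monoid}) and everything downstream; with your functor $\Hom_J(\lambda,-)$ this compatibility is not automatic and would require the theory of covers, which you do not address. Conversely, your route makes the Hecke algebra and the parameter completely explicit from the simple stratum. Two small points to tighten if you keep your version: the fact that $\Hom_J(\lambda,-)$ is an equivalence on the whole block (not merely a bijection on irreducibles) comes from the later Bushnell--Kutzko theory of types rather than from \cite{bk-book} itself; and the normalization of $q_\rho$ that the rest of Section \ref{sec:equiv-cat} depends on is pinned down by requiring $\rho\mu^s$ to correspond to the character $y_1\mapsto q_\rho^s$ of $H(1,q_\rho)$, as in \cite[Proposition 3.2]{me-restriction}, which is a statement about the chosen equivalence and not only about the abstract isomorphism class of the Hecke algebra.
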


Restricting the Bushnell-Kutzko equivalences to finite-length representations naturally produce equivalences between $\mathcal{C}(\rho,d)$ and $\mathcal{M}^{q,\mathbb{Z}}_d$. We will outline a construction of such an equivalence through a second approach due to Bernstein \cite{bern-lecture} and Heiermann \cite{heier-cat}. This approach will provide an easier access to compatibility properties with induction functors, which is crucial for our needs.

An object $\pi$ in an abelian category $\mathcal{C}$ is called a \textit{generator}, if the resulting functor $\Hom(\pi, \bullet)$ from $\mathcal{C}$ to right modules over $A_\pi: = \Hom(\pi,\pi)$ (an associative algebra) is an equivalence of categories.

\begin{proposition}\label{prop:bern}[Bernstein]
For $\rho\in \cusp_m$ and an integer $d\geq1$, set $\underline{n}= (m,\ldots,m)$ with $i(\underline{n}) = md$. Suppose that $\tau$ is a finitely-generated generator in $\mathcal{B}(\rho,(1,\ldots,1))$, a subcategory of $M_{\underline{n}}-\nmodi$.

Then, $\mathbf{i}_{\underline{n}}(\tau)$ is a finitely-generated generator for $\mathcal{B}(\rho,d)$.
\end{proposition}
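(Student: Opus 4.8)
The plan is to deduce Proposition \ref{prop:bern} from standard properties of parabolic induction together with the structure of the Bernstein block. First I would recall the key adjointness: the parabolic induction functor $\mathbf{i}_{\underline{n}}$ on smooth representations has the Jacquet functor $\mathbf{r}_{\underline{n}}$ as a left adjoint, and moreover (by the second adjointness theorem of Bernstein) $\mathbf{i}_{\underline{n}}$ also has a right adjoint given by the Jacquet functor with respect to the opposite parabolic. This two-sided adjointness is what makes $\mathbf{i}_{\underline{n}}$ especially well-behaved: in particular, having both adjoints, $\mathbf{i}_{\underline{n}}$ preserves both projectivity-type finiteness and arbitrary colimits, which is the engine behind preserving generators. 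The statement also uses that $\mathbf{i}_{\underline{n}}$ is exact, which is classical for $p$-adic reductive groups.

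Next I would address the two assertions separately. For \emph{finitely generated}: since $\tau$ is finitely generated in $\mathcal{B}(\rho,(1,\ldots,1))$, it is a quotient of a finite direct sum of projective generators, and applying the exact functor $\mathbf{i}_{\underline{n}}$, which commutes with finite direct sums, shows $\mathbf{i}_{\underline{n}}(\tau)$ is finitely generated provided $\mathbf{i}_{\underline{n}}$ of a compact projective stays compact — this follows from the right adjointness of $\mathbf{i}_{\underline{n}}$ (a left adjoint of a colimit-preserving functor sends compact objects to compact objects, and here the relevant left adjoint is the opposite-parabola Jacquet functor, which preserves colimits since it has its own right adjoint). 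For \emph{generator}: the condition that $\tau$ is a generator of $\mathcal{B}(\rho,(1,\ldots,1))$ means every object of that category is a quotient of a (possibly infinite) direct sum of copies of $\tau$; equivalently, the only object $\sigma$ with $\Hom(\tau,\sigma)=0$ and, more relevantly, $\tau$ generates under colimits. I would then use that every object of $\mathcal{B}(\rho,d)$ is generated by its image under $\mathbf{r}_{\underline{n}}$ together with the counit of the $(\mathbf{i}_{\underline{n}},\mathbf{r}_{\underline{n}})$-adjunction — more precisely, for any $\pi\in\mathcal{B}(\rho,d)$ the counit map $\mathbf{i}_{\underline{n}}(\mathbf{r}_{\underline{n}}(\pi))\to\pi$ is surjective, because a nonzero representation in $\mathcal{B}(\rho,d)$ cannot have vanishing Jacquet module along $\underline{n}=(m,\ldots,m)$ (all its irreducible subquotients lie in $\irr^{\mathbb C}_{\rho,d}$, which have supercuspidal support a multiple of $\rho$-twists, so $\mathbf{r}_{\underline n}$ is faithful-exact on this block and the cokernel of the counit would have trivial Jacquet module hence be zero).

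Putting these together: given $\pi\in\mathcal{B}(\rho,d)$, write $\mathbf{r}_{\underline{n}}(\pi)\in\mathcal{B}(\rho,(1,\ldots,1))$ as a quotient of $\tau^{\oplus S}$ for some index set $S$, using that $\tau$ is a generator there. Applying the exact functor $\mathbf{i}_{\underline{n}}$, which commutes with direct sums (being a left adjoint), we get a surjection $\mathbf{i}_{\underline{n}}(\tau)^{\oplus S}=\mathbf{i}_{\underline{n}}(\tau^{\oplus S})\twoheadrightarrow \mathbf{i}_{\underline{n}}(\mathbf{r}_{\underline{n}}(\pi))$, and composing with the counit surjection $\mathbf{i}_{\underline{n}}(\mathbf{r}_{\underline{n}}(\pi))\twoheadrightarrow\pi$ exhibits $\pi$ as a quotient of a direct sum of copies of $\mathbf{i}_{\underline{n}}(\tau)$; hence $\mathbf{i}_{\underline{n}}(\tau)$ is a generator of $\mathcal{B}(\rho,d)$. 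The main obstacle I anticipate is the compactness (finite generation) half: one must be careful that $\mathbf{i}_{\underline{n}}$ genuinely preserves finite generation, which is not formal for an arbitrary exact functor and really relies on Bernstein's second adjointness (existence of a right adjoint to $\mathbf{i}_{\underline{n}}$), so I would make sure to invoke that theorem explicitly rather than treat it as obvious; the generator half is comparatively soft once the surjectivity of the counit on this block is in hand, which itself reduces to the non-vanishing of Jacquet modules for representations with the prescribed supercuspidal support.
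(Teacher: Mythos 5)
The paper does not actually prove this statement (it is attributed to Bernstein's lectures), so the only question is whether your argument stands on its own; it has a real gap. The paper's definition of ``generator'' is the Morita-theoretic one: $\Hom(\pi,\bullet)$ must be an \emph{equivalence} onto right modules over $A_\pi=\Hom(\pi,\pi)$. Equivalences are exact and commute with coproducts, so what has to be shown is that $\mathbf{i}_{\underline{n}}(\tau)$ is a finitely generated \emph{projective}, compact generator, and then the Morita/Gabriel step has to be invoked. Your proof only establishes weak generation (every object of $\mathcal{B}(\rho,d)$ is a quotient of a coproduct of copies of $\mathbf{i}_{\underline{n}}(\tau)$) together with finite generation; you never address projectivity of $\mathbf{i}_{\underline{n}}(\tau)$, and indeed you read the hypothesis on $\tau$ only in the weak sense, which is strictly less than what the paper's definition gives and less than what the conclusion requires. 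Projectivity is precisely where second adjointness earns its keep: $\Hom_{G_{md}}(\mathbf{i}_{\underline{n}}(\tau),\bullet)\cong \Hom_{M_{\underline{n}}}(\tau,\bar{\mathbf{r}}_{\underline{n}}(\bullet))$ with $\bar{\mathbf{r}}_{\underline{n}}$ the (exact, colimit-preserving) Jacquet functor for the opposite parabolic and $\tau$ projective and compact because it is a generator in the paper's sense; this same identity gives compactness of $\mathbf{i}_{\underline{n}}(\tau)$ cleanly, whereas your compactness argument mislabels the roles (the relevant left adjoint is $\mathbf{i}_{\underline{n}}$ itself, whose right adjoint $\bar{\mathbf{r}}_{\underline{n}}$ preserves filtered colimits).

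A second, fixable, error is the adjunction bookkeeping in the generation step. With the paper's conventions $\mathbf{r}_{\underline{n}}$ is the \emph{left} adjoint of $\mathbf{i}_{\underline{n}}$, so there is no counit $\mathbf{i}_{\underline{n}}(\mathbf{r}_{\underline{n}}(\pi))\to\pi$; the natural surjection onto $\pi$ comes from the second adjunction, i.e.\ the counit $\mathbf{i}_{\underline{n}}(\bar{\mathbf{r}}_{\underline{n}}(\pi))\to\pi$. Its surjectivity is not just ``the cokernel has trivial Jacquet module'': one should note that applying the exact functor $\bar{\mathbf{r}}_{\underline{n}}$ to the counit yields a split epimorphism (triangle identity), so the cokernel has vanishing opposite-Jacquet module, and then the supercuspidal-support argument (every nonzero object of $\mathcal{B}(\rho,d)$ has a nonzero finitely generated subobject, hence an irreducible subquotient with support in twists of $\rho$, hence nonvanishing Jacquet module along $(m,\ldots,m)$) kills it. Alternatively, once projectivity of $\mathbf{i}_{\underline{n}}(\tau)$ is in hand, generation follows from the trace argument using only $\Hom(\mathbf{i}_{\underline{n}}(\tau),\pi)\cong\Hom(\tau,\bar{\mathbf{r}}_{\underline{n}}(\pi))\neq 0$ for $\pi\neq 0$ in the block. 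So the ingredients you name (second adjointness, exactness, support nonvanishing, preservation of finite generation) are the right ones, but as written the proof neither produces the map it uses nor reaches the statement actually being asserted.
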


For $\rho\in \cusp_m$ and a choice of a generator $\sigma = \sigma_\rho$ in $\mathcal{B}(\rho,1)$, it is clear that $\sigma^{\boxtimes d} = \sigma\boxtimes \cdots \boxtimes \sigma$ will be a generator for the corresponding category $\mathcal{B}(\rho,(1,\ldots,1))$. Thus, by Proposition \ref{prop:bern} there is an exact equivalence functor
\[
\mathfrak{f}_{\sigma, d}: \mathcal{B}(\rho,d) \;\to \;\nmodi-A(\sigma,d)\;,
\]
where $\nmodi-A(\sigma,d)$ stands for the category of right modules over the finitely generated complex associative algebra
\[
A(\sigma,d): = \Hom_{M_{(m,\ldots,m)}}(\mathbf{i}_{(m,\ldots,m)}(\sigma^{\boxtimes d}),\mathbf{i}_{(m,\ldots,m)}(\sigma^{\boxtimes d}))\;.
\]
Suppose further that $\underline{d}= (md_1,\ldots, md_r)$ is a composition with $i(\underline{d}) = md$ and write $\underline{d}_j = (m,\ldots,m)$ with $i(\underline{d}_j) = md_j$, for every $1\leq j\leq r$. Then,
\[
\sigma^{\underline{d}}:=\mathbf{i}_{\underline{d}_1}(\sigma^{\boxtimes d_1}) \boxtimes \cdots \boxtimes \mathbf{i}_{\underline{d}_r} (\sigma^{\boxtimes d_1})
\]
clearly becomes a generator for the category $\mathcal{B}(\rho,(d_1,\ldots,d_r))$. In other words, we see an exact equivalence
\[
\mathfrak{f}_{\sigma, \underline{d}}: \mathcal{B}(\rho,(d_1,\ldots,d_r)) \;\to \;\nmodi-A(\sigma,\underline{d})\;,
\]
where
\[
A(\sigma,\underline{d}) := \Hom (\sigma^{\underline{d}},\sigma^{\underline{d}}) \cong A(\sigma,d_1)\otimes \cdots \otimes A(\sigma,d_r)\;.
\]
Since $\mathbf{i}_{\underline{d}}(\sigma^{\underline{d}}) = \mathbf{i}_{(m,\ldots,m)}(\sigma^{\boxtimes d})$, the functor $\mathbf{i}_{\underline{d}}$ gives an embedding of endomorphism algebras
\[
\mathbf{i}^A_{\underline{d}}=\mathbf{i}_{\underline{d}}: A(\sigma,\underline{d}) \to A(\sigma,d)\;.
\]
In particular, we obtain an induction functor
\[
\iind^{\sigma}_{\underline{d}}:  \nmodi-A(\sigma,\underline{d})\;\to\; \nmodi-A(\sigma,d)\;,\quad
\iind^{\sigma}_{\underline{d}}(M) =M\otimes_{\mathbf{i}^A_{\underline{d}}( A(\sigma,\underline{d}))}A(\sigma,d)\;.
\]

\begin{proposition}\label{prop:roch}(Roche \cite[5.3]{roche})
The functor diagram
\[
\begin{diagram}
\dgARROWLENGTH=5em
\node{ \mathcal{B}(\rho,(d_1,\ldots,d_r)) } \arrow{s,t}{ \mathbf{i}^A_{\underline{d}} }  \arrow{e,t}{\mathfrak{f}_{\sigma, \underline{d}}}\node{  \nmodi-A(\sigma,\underline{d}) }\arrow{s,t}{ \iind^{\sigma}_{\underline{d}} }\\
\node{\mathcal{B}(\rho,d)  } \arrow{e,t}{  \mathfrak{f}_{\sigma, d} } \node{ \nmodi-A(\sigma,d)  }
\end{diagram}\;,
\]

commutes.
\end{proposition}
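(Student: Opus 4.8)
The plan is to verify the commutativity of the square in Proposition \ref{prop:roch} by chasing a generic object through both composites and identifying them as the same right $A(\sigma,d)$-module. The key observation is that all four functors are explicit: $\mathfrak{f}_{\sigma,\underline{d}}$ and $\mathfrak{f}_{\sigma,d}$ are the $\Hom$-out-of-generator functors $\pi\mapsto \Hom(\sigma^{\underline{d}},\pi)$ and $\pi\mapsto \Hom(\mathbf{i}_{(m,\ldots,m)}(\sigma^{\boxtimes d}),\pi)$ respectively, the left vertical $\mathbf{i}^A_{\underline{d}}$ is parabolic induction $\pi\mapsto \mathbf{i}_{\underline{d}}(\pi)$, and the right vertical $\iind^\sigma_{\underline{d}}$ is the algebraic induction $M\mapsto M\otimes_{\mathbf{i}^A_{\underline d}(A(\sigma,\underline d))}A(\sigma,d)$. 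So for $\pi\in\mathcal{B}(\rho,(d_1,\ldots,d_r))$ we must produce a natural isomorphism
\[
\Hom_{M_{(m,\ldots,m)}}\bigl(\mathbf{i}_{(m,\ldots,m)}(\sigma^{\boxtimes d}),\,\mathbf{i}_{\underline d}(\pi)\bigr)\;\cong\;\Hom(\sigma^{\underline d},\pi)\otimes_{\mathbf{i}^A_{\underline d}(A(\sigma,\underline d))}A(\sigma,d)\;.
\]

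The first step is to rewrite the left-hand side using the identity $\mathbf{i}_{\underline d}(\sigma^{\underline d}) = \mathbf{i}_{(m,\ldots,m)}(\sigma^{\boxtimes d})$ (transitivity of parabolic induction, already recorded in the excerpt) together with second adjunction (Bernstein's version of Frobenius reciprocity), turning $\Hom(\mathbf{i}_{\underline d}(\sigma^{\underline d}),\mathbf{i}_{\underline d}(\pi))$ into something computed inside the smaller Levi. The second step, which is the crux, is a base-change / projection-formula argument: the functor $\mathbf{i}_{\underline d}$ applied to objects of the block $\mathcal{B}(\rho,(d_1,\ldots,d_r))$ is, under the equivalence $\mathfrak f_{\sigma,\underline d}$, nothing but tensoring the bimodule $A(\sigma,d)$ — regarded as a right $A(\sigma,\underline d)$-module via $\mathbf i^A_{\underline d}$ — against $\Hom(\sigma^{\underline d},\pi)$. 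Concretely one shows that the canonical map
\[
\Hom(\sigma^{\underline d},\pi)\otimes_{A(\sigma,\underline d)}\Hom(\mathbf{i}_{\underline d}(\sigma^{\underline d}),\sigma^{\underline d})\;\longrightarrow\;\Hom(\mathbf{i}_{\underline d}(\sigma^{\underline d}),\mathbf{i}_{\underline d}(\pi))
\]
is an isomorphism, and then identifies $\Hom(\mathbf i_{\underline d}(\sigma^{\underline d}),\sigma^{\underline d})$ with $A(\sigma,d)$ as an $(A(\sigma,d),A(\sigma,\underline d))$-bimodule. This is where I expect the real work to be: one must justify that $\mathbf i_{\underline d}$ commutes with the relevant filtered colimits / is exact in a way compatible with the finitely-generated-projective-generator formalism, so that $\Hom(\sigma^{\underline d},-)$ carries $\mathbf i_{\underline d}$ to $-\otimes_{A(\sigma,\underline d)}A(\sigma,d)$ on the nose. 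Since $\sigma^{\underline d}$ is a projective generator (being a summand-adapted generator in the Bernstein block, and parabolic induction preserves projectivity here), $\Hom(\sigma^{\underline d},-)$ is exact and the needed base-change identity reduces to the case $\pi=\sigma^{\underline d}$, which is tautological.

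The final step is to check that the natural isomorphism just constructed is compatible with the $A(\sigma,d)$-actions on both sides and is natural in $\pi$; the $A(\sigma,d)$-action on the left is by precomposition on $\mathbf i_{(m,\ldots,m)}(\sigma^{\boxtimes d})$, while on the right it is the obvious right action on the second tensor factor, and these match because the identification $\mathbf i_{\underline d}(\sigma^{\underline d})=\mathbf i_{(m,\ldots,m)}(\sigma^{\boxtimes d})$ is $A(\sigma,d)$-equivariant by construction of $\mathbf i^A_{\underline d}$. Naturality is then immediate from the functoriality of $\mathbf i_{\underline d}$ and of $\Hom$. I anticipate that most of this can be cited or lifted directly from Roche's treatment \cite{roche} and the Bernstein–Heiermann formalism \cite{bern-lecture,heier-cat}, so in the paper the proof can be short: reduce to the generator case and invoke the projectivity of $\sigma^{\underline d}$ together with transitivity of parabolic induction; the one genuinely non-formal input is that parabolic induction sends the chosen generator to the chosen generator (Proposition \ref{prop:bern}) in a manner compatible with the endomorphism-algebra embedding $\mathbf i^A_{\underline d}$, which is exactly what makes the square commute.
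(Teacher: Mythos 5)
The paper does not actually prove this statement: it is quoted verbatim from Roche \cite[5.3]{roche}, and the surrounding text only sets up the notation (note also that the left vertical arrow in the displayed square should read $\mathbf{i}_{\underline{d}}$, the parabolic induction functor, rather than the algebra embedding $\mathbf{i}^A_{\underline{d}}$ — you implicitly made this correction). Your sketch is essentially the standard progenerator (Eilenberg--Watts/Morita) argument that underlies Roche's result, and it is sound: both composites around the square are right exact, commute with arbitrary direct sums, and agree on the generator $\sigma^{\underline{d}}$ compatibly with the endomorphism algebras, so the canonical natural transformation $\Hom(\sigma^{\underline{d}},\pi)\otimes_{A(\sigma,\underline{d})}A(\sigma,d)\to \Hom\bigl(\mathbf{i}_{(m,\ldots,m)}(\sigma^{\boxtimes d}),\mathbf{i}_{\underline{d}}(\pi)\bigr)$, $f\otimes a\mapsto \mathbf{i}_{\underline{d}}(f)\circ a$, is an isomorphism for every $\pi$ as soon as it is one for $\pi=\sigma^{\underline{d}}$, where it is tautological.

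Two points in your write-up should be tightened. First, the expression $\Hom(\mathbf{i}_{\underline{d}}(\sigma^{\underline{d}}),\sigma^{\underline{d}})$ does not typecheck, since the two arguments are representations of different groups; the bimodule you actually need is $\Hom\bigl(\mathbf{i}_{\underline{d}}(\sigma^{\underline{d}}),\mathbf{i}_{\underline{d}}(\sigma^{\underline{d}})\bigr)=A(\sigma,d)$, regarded as a right $A(\sigma,\underline{d})$-module via $\mathbf{i}^A_{\underline{d}}$, and no appeal to second adjunction is required anywhere — exactness of $\mathbf{i}_{\underline{d}}$ and projectivity of the generators (which follows since their $\Hom$-functors are equivalences, or since $\mathbf{i}_{\underline{d}}$ has an exact right adjoint) suffice. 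Second, the reduction to $\pi=\sigma^{\underline{d}}$ resolves an arbitrary object of the block by possibly infinite direct sums of copies of $\sigma^{\underline{d}}$, so you must also record that $\Hom(\sigma^{\underline{d}},-)$ and $\Hom\bigl(\mathbf{i}_{(m,\ldots,m)}(\sigma^{\boxtimes d}),-\bigr)$ commute with such direct sums; this is precisely where the finite-generation (compactness) hypothesis on the generators in Proposition \ref{prop:bern} enters, together with the fact that $\mathbf{i}_{\underline{d}}$ preserves it. With these adjustments, and the equivariance/naturality check you already describe, your argument is a complete proof of the commutativity that the paper itself only cites.
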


In the approach outlined thus far, a link between representations of $p$-adic groups and affine Hecke algebras appears through the following result of Heiermann.

\begin{proposition}\label{prop:heier}[Heiermann \cite{heier-cat}]
For a suitable choice of a generator $\sigma=\sigma_\rho$ as above and any integer $d\geq1$ , there are explicit isomorphisms of complex algebras
\[
h_{\sigma,d}: A(\sigma,d) \to H(d,q_\rho)\;,
\]
for a positive integer $q_\rho>1$.

Moreover, the isomorphisms are compatible with the induction embeddings, in the sense that the diagrams
\[
\begin{diagram}
\dgARROWLENGTH=5em
\node{  A(\sigma,\underline{d}) } \arrow{s,t}{\mathbf{i}^A_{\underline{d}}} \arrow[2]{e,t}{ h_{\sigma,d_1}\otimes \cdots \otimes h_{\sigma,d_r} }\node[2]{ H((d_1,\ldots,d_r),q_\rho)  }\arrow{s,t}{\iota_{(d_1,\ldots,d_r)}}\\
\node{  A(\sigma,d)   } \arrow[2]{e,t}{  h_{\sigma,d}   } \node[2]{  H(d,q_\rho)  }
\end{diagram}\;,
\]
commute, for all $\underline{d}= (md_1,\ldots,md_r)$, with $i(\underline{d})=md$.

\end{proposition}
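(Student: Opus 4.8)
\emph{Strategy.} The plan is to follow the route of \cite{heier-cat} (building on \cite{bk-book}): realize $A(\sigma,d)$ through the calculus of normalized intertwining operators, and reduce every defining relation of the affine Hecke algebra to a rank-one computation. First I would fix the progenerator $\sigma=\sigma_\rho$ so that $\operatorname{End}_{G_m}(\sigma)$ is the commutative algebra $\mathbb{C}[X^{\pm1}]$ of regular functions on the Bernstein line $\{\rho\mu^s\}_{s\in\mathbb{C}}$ underlying $\mathcal{B}(\rho,1)$; renaming the coordinate $y_1$, this already identifies $A(\sigma,1)$ with $H(1,q_\rho)=\mathbb{C}[y_1^{\pm1}]$. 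For $d\geq1$, Proposition \ref{prop:bern} makes $\mathbf{i}_{(m,\ldots,m)}(\sigma^{\boxtimes d})$ a progenerator of $\mathcal{B}(\rho,d)$, so $A(\sigma,d)=\operatorname{End}_{G_{md}}(\mathbf{i}_{(m,\ldots,m)}(\sigma^{\boxtimes d}))$. Using the $(\mathbf{i},\mathbf{r})$-adjunction I would rewrite this as $\Hom_{M_{(m,\ldots,m)}}(\mathbf{r}\,\mathbf{i}(\sigma^{\boxtimes d}),\sigma^{\boxtimes d})$ and apply the Bernstein--Zelevinsky geometric lemma \cite{BZ1,Zel}: the Jacquet module $\mathbf{r}\,\mathbf{i}(\sigma^{\boxtimes d})$ has a filtration whose subquotients are indexed by the relative Weyl group $S_d$, and since all factors lie on one unramified line every $w\in S_d$ contributes. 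This produces a $\mathbb{C}$-basis of $A(\sigma,d)$ of the form $\{y^{a}R_w : a\in\mathbb{Z}^{d},\ w\in S_d\}$, where the $y_i$ span $\operatorname{End}_{M}(\sigma^{\boxtimes d})\cong\mathbb{C}[y_1^{\pm1},\ldots,y_d^{\pm1}]$ and $R_w$ is a suitably normalized intertwining operator; this matches the rank and the triangular (PBW-type) structure of $H(d,q_\rho)$.

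\emph{The relations.} Next I would verify the presentation of $H(d,q_\rho)$ on these elements, everything reducing to rank one. The braid relations for $T_i:=R_{s_i}$ follow from the cocycle identity $R_wR_{w'}=R_{ww'}$ along reduced words, and the commutations between $y_i$ and distant $T_j$ are clear from supports. The two substantive points are (a) the quadratic relation $(T_i-q_\rho)(T_i+1)=0$, where the scalar $q_\rho$ is read off from the self-intertwining operator (equivalently, the Harish-Chandra $\mu$-function) of $\rho\times\rho\mu^s$ inside $G_{2m}$ and turns out to be a fixed power of the residue characteristic depending only on $\rho$; and (b) the crossed relation $T_iy_iT_i=q_\rho y_{i+1}$, again a rank-one computation. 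Together these pin down the unique algebra isomorphism $h_{\sigma,d}\colon A(\sigma,d)\to H(d,q_\rho)$ with $y_i\mapsto y_i$ and $R_{s_i}\mapsto T_i$. (Alternatively one may short-circuit this by invoking Theorem \ref{thm:bk93} and Bushnell--Kutzko's explicit computation of the Hecke algebra of a simple type for $GL$, which is exactly $H(d,q_\rho)$.)

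\emph{Compatibility with induction.} Transitivity of parabolic induction gives the literal equality $\mathbf{i}_{\underline{d}}(\sigma^{\underline{d}})=\mathbf{i}_{(m,\ldots,m)}(\sigma^{\boxtimes d})$, so $\mathbf{i}^A_{\underline{d}}$ is precisely the map on endomorphism algebras induced by the functor $\mathbf{i}_{\underline{d}}$; hence it suffices to check that the two composite homomorphisms $A(\sigma,\underline{d})\to H(d,q_\rho)$ agree on algebra generators. A generator $y_i$ or $R_{s_i}$ sitting in the $j$-th tensor factor of $A(\sigma,\underline{d})=A(\sigma,d_1)\otimes\cdots\otimes A(\sigma,d_r)$ is supported, by construction, in the $j$-th block of the Levi $M_{(md_1,\ldots,md_r)}$; applying $\mathbf{i}_{\underline{d}}$ and using that the normalized $R_w$ are compatible with induction in stages (associativity of intertwining operators), it is carried to $y_{d_1+\cdots+d_{j-1}+i}$, resp. $T_{d_1+\cdots+d_{j-1}+i}$, in $A(\sigma,d)\cong H(d,q_\rho)$ --- which is exactly the image of that generator under $\iota_{(d_1,\ldots,d_r)}\circ(h_{\sigma,d_1}\otimes\cdots\otimes h_{\sigma,d_r})$. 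Agreement on generators yields commutativity of the square.

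\emph{Main obstacle.} The hard part will be (a)--(b): identifying the parameter $q_\rho$ and establishing the quadratic and crossed relations. This is the deep input --- it is the substance of the Bushnell--Kutzko Hecke-algebra computation and rests on Harish-Chandra's $\mu$-function together with the reducibility of $\rho\times\rho\mu^s$ --- and it is intertwined with the delicate choice of normalization of the $R_w$: the normalization must simultaneously be regular (so that $R_w$ genuinely lies in the finite algebra $A(\sigma,d)$), produce the clean relations above, and be transitive (so that it survives induction in stages). Once a normalization meeting all three demands is fixed, the remaining steps are bookkeeping with the geometric lemma and transitivity of parabolic induction.
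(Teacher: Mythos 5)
The paper offers no proof of this proposition: it is imported verbatim as a result of Heiermann \cite{heier-cat} (chosen over the Bushnell--Kutzko route precisely because it comes packaged with the induction compatibility), so there is no internal argument to compare yours against. Your outline does reconstruct the strategy of the cited source faithfully: realize $A(\sigma,d)=\operatorname{End}(\mathbf{i}_{(m,\ldots,m)}(\sigma^{\boxtimes d}))$, use the geometric lemma to get a basis $\{y^aR_w\}$ of polynomial part times normalized intertwining operators, reduce the quadratic and crossed relations to rank one where $q_\rho$ is read off the Harish-Chandra $\mu$-function, and get the commuting square from transitivity of parabolic induction plus multiplicativity of the normalized $R_w$ in stages. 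You also correctly isolate where the real content lies: a normalization of the intertwining operators that is simultaneously regular, yields the clean quadratic relation, and is transitive. Since you do not carry out these rank-one computations or construct the normalization, what you have is a strategy at the same level of detail as the citation itself, not a self-contained proof --- which is acceptable here, but should be stated as such.

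One caveat on your parenthetical shortcut: invoking Theorem \ref{thm:bk93} and the Bushnell--Kutzko computation of the Hecke algebra of a simple type would give you the isomorphism $A(\sigma,d)\cong H(d,q_\rho)$ (after matching the two progenerators, which itself needs an argument), but it would not by itself give the second half of the statement, namely compatibility of $h_{\sigma,d}$ with the embeddings $\mathbf{i}^A_{\underline{d}}$ and $\iota_{(d_1,\ldots,d_r)}$; that compatibility is exactly the feature the paper needs from Heiermann's construction and the reason it avoids the type-theoretic path. So the shortcut cannot replace the main line of your argument, only supplement the identification of the parameter $q_\rho$.
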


In particular, the isomorphism $h_{\sigma,d}$ from the above proposition induces an exact equivalence\footnote{A transition between right and left modules is achieved by noting that the relations used to define $H(n,q)$ are symmetric, hence, give rise to a canonical anti-automorphism of the affine Hecke algebra. The observation that this anti-automorphism is compatible with embeddings of the form $\iota_{(d_1,\ldots,d_r)}$ makes the identification trivial with respect to the induction functors involved. } of categories $\mathfrak{h}_{\sigma,d}: \nmodi-A(\sigma,d)\to \mathcal{N}^{q_\rho}_d$. Finally, composing it with the Bernstein equivalence reproduces a desired equivalence
\[
\mathcal{H}_{\rho,d}:=\mathfrak{h}_{\sigma,d}\circ\mathfrak{f}_{\sigma,d}:\mathcal{B}(\rho,d)\to \mathcal{N}^{q_\rho}_d\;,
\]
such as the one obtained in Theorem \ref{thm:bk93}, through separate techniques.

Similarly, for $\underline{d} = (md_1,\ldots,md_r)$ with $i(\underline{d})=md$, we get an equivalence
\[
\mathcal{H}_{\rho,\underline{d}}:\mathcal{B}(\rho,(d_1,\ldots,d_r))\to \mathcal{N}^{q_\rho}_{(d_1,\ldots,d_r)}\;,
\]
by composing $h_{\sigma,d_1}\otimes \cdots \otimes h_{\sigma,d_r}$ with $\mathfrak{f}_{\sigma,\underline{d}}$.

The combination of Propositions \ref{prop:roch} and \ref{prop:heier} now implies a full compatibility of those equivalences with induction functors.

\begin{corollary}\label{cor:ind-comp}
The functor diagram
\[
\begin{diagram}
\dgARROWLENGTH=5em
\node{ \mathcal{B}(\rho,(d_1,\ldots,d_r)) } \arrow{s,t}{ \mathbf{i}_{\underline{d}} }  \arrow{e,t}{\mathcal{H}_{\sigma, \underline{d}}}\node{  \mathcal{N}^q_{(d_1,\ldots,d_r)} }\arrow{s,t}{ \iind^q_{(d_1,\ldots,d_r)} }\\
\node{\mathcal{B}(\rho,d)  } \arrow{e,t}{  \mathcal{H}_{\sigma, d} } \node{ \mathcal{N}^{q_\rho}_d  }
\end{diagram}\;,
\]

commutes.
\end{corollary}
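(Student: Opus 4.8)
The plan is to obtain Corollary~\ref{cor:ind-comp} by simply pasting together the three commutative squares that precede it, since each of the constituent equivalences has already been arranged to be compatible with its induction functor. First I would record that, by definition, the composite equivalence is $\mathcal{H}_{\rho,d} = \mathfrak{h}_{\sigma,d}\circ\mathfrak{f}_{\sigma,d}$ and likewise $\mathcal{H}_{\rho,\underline{d}} = \mathfrak{h}_{\sigma,\underline{d}}\circ\mathfrak{f}_{\sigma,\underline{d}}$, where $\mathfrak{h}_{\sigma,\underline{d}}$ is the equivalence induced by $h_{\sigma,d_1}\otimes\cdots\otimes h_{\sigma,d_r}$ passing from right $A(\sigma,\underline{d})$-modules to $\mathcal{N}^{q_\rho}_{(d_1,\ldots,d_r)}$. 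So the outer square in the Corollary decomposes vertically into a stack of two squares: the bottom one governed by $\mathfrak{f}_{\sigma,\bullet}$ and $\mathbf{i}^A_{\underline{d}}$, the top one governed by $\mathfrak{h}_{\sigma,\bullet}$ and $\iota_{(d_1,\ldots,d_r)}$ (i.e.\ $\iind^{\sigma}_{\underline{d}}$ on the module side).

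Next I would invoke Proposition~\ref{prop:roch} (Roche), which is exactly the statement that the bottom square commutes: $\mathfrak{f}_{\sigma,d}\circ \mathbf{i}_{\underline{d}} \cong \iind^{\sigma}_{\underline{d}}\circ \mathfrak{f}_{\sigma,\underline{d}}$. For the top square, I would invoke Proposition~\ref{prop:heier} (Heiermann): the commuting diagram there says $h_{\sigma,d}$ intertwines the algebra embedding $\mathbf{i}^A_{\underline{d}}$ with the embedding $\iota_{(d_1,\ldots,d_r)}$, and since induction of modules is defined purely via $\otimes$ over the relevant embedded subalgebra, the induced equivalences satisfy $\mathfrak{h}_{\sigma,d}\circ \iind^{\sigma}_{\underline{d}} \cong \iind^{q_\rho}_{(d_1,\ldots,d_r)}\circ \mathfrak{h}_{\sigma,\underline{d}}$. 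Here I would spell out the one genuinely non-formal point: on the affine Hecke side one must pass between left and right modules, and the footnote attached to the definition of $\mathfrak{h}_{\sigma,d}$ already supplies the needed input --- the canonical anti-automorphism of $H(n,q)$ coming from the symmetry of the defining relations is compatible with the embeddings $\iota_{(d_1,\ldots,d_r)}$, so this transition does not disturb commutativity with induction.

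Finally I would paste: stacking the Roche square on top of the Heiermann square and composing vertical arrows gives $\mathcal{H}_{\rho,d}\circ \mathbf{i}_{\underline{d}} = \mathfrak{h}_{\sigma,d}\circ\mathfrak{f}_{\sigma,d}\circ\mathbf{i}_{\underline{d}} \cong \mathfrak{h}_{\sigma,d}\circ\iind^{\sigma}_{\underline{d}}\circ\mathfrak{f}_{\sigma,\underline{d}} \cong \iind^{q_\rho}_{(d_1,\ldots,d_r)}\circ\mathfrak{h}_{\sigma,\underline{d}}\circ\mathfrak{f}_{\sigma,\underline{d}} = \iind^{q_\rho}_{(d_1,\ldots,d_r)}\circ\mathcal{H}_{\rho,\underline{d}}$, which is the asserted commutativity. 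I do not expect a serious obstacle here; the only thing requiring care is bookkeeping of which embedding ($\mathbf{i}^A_{\underline{d}}$ versus $\iota_{(d_1,\ldots,d_r)}$) is being used at each stage and the left/right-module bookkeeping flagged above, but both are handled by the cited results. The ``hard part'', such as it is, was done earlier in building $\sigma$, $h_{\sigma,d}$ and the Roche/Heiermann compatibilities; the Corollary itself is a formal gluing.
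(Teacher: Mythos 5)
Your proposal is correct and follows exactly the paper's route: the paper derives the corollary by combining Proposition \ref{prop:roch} (compatibility of $\mathfrak{f}_{\sigma,\bullet}$ with induction) with Proposition \ref{prop:heier} (compatibility of $h_{\sigma,\bullet}$ with the embeddings), with the left/right-module transition handled by the anti-automorphism noted in the footnote. Your write-up is just a more explicit pasting of those same two squares, so there is nothing to add.
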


\begin{proposition}
For each $\beta\in Q_+$, the functor $\mathcal{H}_{\rho,|\beta|}$ restricts to an equivalence
\[
\mathcal{H}_{\rho,\beta}: \mathcal{C}^\beta_\rho \to \mathcal{M}^{q_\rho}_{\overline{\chi}_{\beta}}
\]
between abelian categories with finite-length objects.

Similarly, for $\underline{\beta} = (\beta_1,\ldots,\beta_r)\in (Q_+)^r$, the functor $\mathcal{H}_{\rho,(|\beta_1|,\ldots,|\beta_r|)}$ restricts to an equivalence
\[
\mathcal{H}_{\rho,\underline{\beta}}: \mathcal{C}^{\underline{\beta}}_\rho \to \mathcal{M}^{q_\rho}_{\chi_{\underline{\beta}}}\;.
\]
\end{proposition}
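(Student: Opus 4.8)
The plan is to deduce the statement from the equivalence $\mathcal{H}_{\rho,|\beta|}:\mathcal{B}(\rho,|\beta|)\to\mathcal{N}^{q_\rho}_{|\beta|}$ together with the supercuspidal-support decomposition already established. First I would observe that $\mathcal{C}^\beta_\rho$ is, by definition, a full subcategory of $\mathcal{C}(\rho,|\beta|)$, which in turn sits inside $\mathcal{B}(\rho,|\beta|)$ as the full subcategory of finite-length objects. So it suffices to show that the equivalence $\mathcal{H}_{\rho,|\beta|}$ carries $\mathcal{C}^\beta_\rho$ onto $\mathcal{M}^{q_\rho}_{\overline\chi_\beta}$ (which is likewise a full Serre subcategory of $\mathcal{M}^{q_\rho}_{|\beta|}\subset\mathcal{N}^{q_\rho}_{|\beta|}$); since an equivalence restricted to a pair of full subcategories that correspond under it is again an equivalence, this gives the claim. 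The essential point is therefore the matching of the two decompositions: the block decomposition of $\mathcal{C}(\rho,d)$ by supercuspidal support on the $p$-adic side, and the decomposition of $\mathcal{M}^{q_\rho,\mathbb{Z}}_d$ by central character on the Hecke side.

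The key step is to track how supercuspidal support is transported through the equivalence. On the $p$-adic side, an irreducible $\pi\in\irr^{\mathbb{Z}}_{\rho,d}$ has $\supp_\rho(\pi)=\beta$ precisely when $\pi$ occurs as a subquotient of $\rho\mu^{k_1}\times\cdots\times\rho\mu^{k_d}$ with $\alpha_{k_1}+\cdots+\alpha_{k_d}=\beta$, i.e.\ of $\mathbf{i}_{(m,\ldots,m)}(\rho\mu^{k_1}\boxtimes\cdots\boxtimes\rho\mu^{k_d})$. Applying the construction of $\mathcal{H}_{\rho,d}$ via the generator $\sigma$ and using the compatibility with induction (Corollary \ref{cor:ind-comp}, reading the top composition $(1,\ldots,1)$), the image of such a representation is a subquotient of $\iind^{q_\rho}_{(1,\ldots,1)}(\mathcal{H}_{\rho,1}(\rho\mu^{k_i}))$. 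Now each $\rho\mu^{k}$ lies in the one-object-block $\mathcal{C}(\rho\mu^k,1)=\mathcal{B}(\rho,1)\cap(\text{that twist})$, and a direct check — using Heiermann's normalization of $\sigma$ and the explicit form of $h_{\sigma,1}$ — identifies $\mathcal{H}_{\rho,1}(\rho\mu^k)$ with the one-dimensional $H(1,q_\rho)$-module on which $y_1$ acts by $q_\rho^{k}$. (Concretely: twisting $\rho$ by $\mu$ shifts the unramified character parameter, which the Bushnell--Kutzko/Heiermann normalization translates into multiplication of the Bernstein parameter by $q_\rho$; this is the content one extracts from \cite{heier-cat} and \cite{bk-book}.) Hence the image of $\pi$ has all its weights $\chi$ lying in the $S_d$-orbit of $(q_\rho^{k_1},\ldots,q_\rho^{k_d})$, which is exactly $\overline\chi_\beta$ under the bijection $(q^{\mathbb{Z}})^d/S_d\cong\{\gamma\in Q_+:|\gamma|=d\}$. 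Conversely, since $\mathcal{H}_{\rho,d}$ is an equivalence it is essentially surjective, and the same computation run backwards shows that every simple module with central character $\overline\chi_\beta$ is the image of something in $\mathcal{C}^\beta_\rho$. Because both $\mathcal{C}(\rho,d)=\oplus_\beta\mathcal{C}^\beta_\rho$ and $\mathcal{M}^{q_\rho,\mathbb{Z}}_d=\oplus_{\overline\chi}\mathcal{M}^{q_\rho}_{\overline\chi}$ are decompositions into Serre subcategories indexed by the same set, matching on simples forces the equivalence to match the summands, yielding $\mathcal{H}_{\rho,\beta}:\mathcal{C}^\beta_\rho\xrightarrow{\sim}\mathcal{M}^{q_\rho}_{\overline\chi_\beta}$.

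For the second assertion, about $\underline{\beta}=(\beta_1,\ldots,\beta_r)$, I would simply apply the first assertion in each tensor factor. Indeed $\mathcal{C}^{\underline\beta}_\rho=\mathcal{C}^{\beta_1}_\rho\times\cdots\times\mathcal{C}^{\beta_r}_\rho$ and $\mathcal{M}^{q_\rho}_{\chi_{\underline\beta}}=\mathcal{M}^{q_\rho}_{\overline\chi_{\beta_1}}\times\cdots\times\mathcal{M}^{q_\rho}_{\overline\chi_{\beta_r}}$ by the respective definitions via the Deligne product, and $\mathcal{H}_{\rho,(|\beta_1|,\ldots,|\beta_r|)}=\mathcal{H}_{\rho,|\beta_1|}\boxtimes\cdots\boxtimes\mathcal{H}_{\rho,|\beta_r|}$ by construction (it is built from $h_{\sigma,|\beta_1|}\otimes\cdots\otimes h_{\sigma,|\beta_r|}$ composed with $\mathfrak{f}_{\sigma,\underline{d}}$, which respects the factorization $A(\sigma,\underline d)\cong A(\sigma,|\beta_1|)\otimes\cdots\otimes A(\sigma,|\beta_r|)$). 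So the product of the $r$ restricted equivalences is the required restricted equivalence.

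I expect the main obstacle to be the bookkeeping of the identification $\mathcal{H}_{\rho,1}(\rho\mu^k)\leftrightarrow(y_1\mapsto q_\rho^k)$ — that is, pinning down exactly how the Bernstein parameter $y_1$ of the affine Hecke algebra, under Heiermann's isomorphism $h_{\sigma,1}$ and the chosen generator $\sigma$, reads off the integer $k$ from the twist $\rho\mu^k$, including getting the normalization (and the power $q_\rho$ versus $q_\rho^{-1}$, or a fixed shift) right. This is not conceptually deep but requires care with the conventions of \cite{bk-book}, \cite{heier-cat}, and the normalized parabolic induction; once it is settled, everything else is a formal consequence of the equivalences and the matching of Serre decompositions. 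A mild secondary point is confirming that $\mathcal{H}_{\rho,d}$ sends finite-length objects to finite-dimensional modules (so that $\mathcal{C}(\rho,d)$ lands in $\mathcal{M}^{q_\rho}_d$ and not merely in $\mathcal{N}^{q_\rho}_d$), which follows since an equivalence of abelian categories preserves the property of being of finite length, and finite-length $H(d,q_\rho)$-modules are finite-dimensional as the algebra is finite over its center with finite-dimensional central-character blocks.
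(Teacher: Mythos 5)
Your argument is correct and takes essentially the same route as the paper: identify $\mathcal{H}_{\rho,1}(\rho\mu^{k})$ with the character $y_1\mapsto q_\rho^{k}$ of $H(1,q_\rho)$, then propagate via the compatibility with induction (Corollary \ref{cor:ind-comp}) and the induction-functor characterizations of the subcategories, treating the tuple case factorwise. The only difference is that the rank-one normalization you flag as the main bookkeeping obstacle is not reverified in the paper but quoted from \cite[Proposition 3.2]{me-restriction}.
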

\begin{proof}
As detailed in \cite[Proposition 3.2]{me-restriction}, for $s\in \mathbb{C}^\times$, the functor $\mathcal{H}_{\rho,1}$ takes the irreducible representation $\rho\mu^s$ to the character $y_1 \mapsto q_\rho^s$ of $H(q_\rho, 1)$. This is the content of the desired statement for the case of $|\beta|=1$.

For general $\beta$, the statement now follows easily from Corollary \ref{cor:ind-comp} and the characterizations of all of the involved subcategories by induction functors.

\end{proof}

Finally, for given $\rho\in \cusp$ and $\beta\in Q_+$, composing the Bernstein and Rouquier equivalences, we obtain a direct link between smooth representations of $p$-adic groups and modules over quiver Hecke algebras.

\begin{theorem}\label{thm:fullequi}
For $\rho\in \cusp$ and $\beta\in Q_+$, there is an exact functor
\[
\mathcal{E}_{\rho,\beta}:= \mathcal{R}_\beta \circ \mathcal{H}_{\rho,\beta}:  \mathcal{C}^\beta_\rho  \to R(\beta)-\nmod\;,
\]
which gives an equivalence of abelian categories.

Summing $\mathcal{E}_\rho := \oplus_{\beta\in Q_+} \mathcal{E}_{\rho,\beta}$ gives an equivalence between $\mathcal{C}^{\mathbb{Z}}_\rho$ and $\widehat{\mathcal{D}}$.

\end{theorem}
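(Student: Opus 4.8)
The proof is essentially the assembly of two equivalences that have already been constructed, so the plan is short. First I would record that the functor $\mathcal{H}_{\rho,\beta}\colon \mathcal{C}^\beta_\rho \to \mathcal{M}^{q_\rho}_{\overline{\chi}_\beta}$ is an exact equivalence of abelian categories: this is the content of the Proposition immediately preceding the statement, which exhibits $\mathcal{H}_{\rho,\beta}$ as the restriction to $\mathcal{C}^\beta_\rho$ of the exact equivalence $\mathcal{H}_{\rho,|\beta|}$ obtained by composing Bernstein's generator construction (Proposition \ref{prop:bern}, giving $\mathfrak{f}_{\sigma,|\beta|}$) with Heiermann's algebra isomorphism (Proposition \ref{prop:heier}, giving $\mathfrak{h}_{\sigma,|\beta|}$). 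Since $q_\rho$ is an integer $>1$, in particular not a root of unity, Theorem \ref{thm:rouq} applies with $q=q_\rho$ and provides the exact equivalence $\mathcal{R}_\beta\colon \mathcal{M}^{q_\rho}_{\overline{\chi}_\beta}\to R(\beta)-\nmod$. A composition of equivalences is an equivalence, and a composition of exact functors is exact; hence $\mathcal{E}_{\rho,\beta}=\mathcal{R}_\beta\circ\mathcal{H}_{\rho,\beta}$ is an exact equivalence, which is the first assertion.

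For the global statement I would invoke the two direct-sum decompositions already in place: $\mathcal{C}^{\mathbb{Z}}_\rho=\bigoplus_{\beta\in Q_+}\mathcal{C}^\beta_\rho$ (the decomposition of the simple line block according to supercuspidal support recorded earlier) on the $p$-adic side, and $\widehat{\mathcal{D}}=\bigoplus_{\beta\in Q_+}R(\beta)-\nmod$ by definition on the quiver Hecke side. Each $\mathcal{E}_{\rho,\beta}$ is an equivalence between the respective $\beta$-summands; choosing a quasi-inverse for each $\mathcal{E}_{\rho,\beta}$ together with its natural isomorphisms to the relevant identity functors, and taking the direct sum over $\beta$, produces a quasi-inverse for $\mathcal{E}_\rho=\bigoplus_\beta\mathcal{E}_{\rho,\beta}$. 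Hence $\mathcal{E}_\rho$ is an equivalence $\mathcal{C}^{\mathbb{Z}}_\rho\to\widehat{\mathcal{D}}$.

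The only point requiring attention — and the closest thing here to a genuine obstacle — is the bookkeeping needed to see that the two decompositions above are preserved summand by summand. Concretely, one must check that $\mathcal{H}_{\rho,\beta}$ carries the supercuspidal-support summand indexed by $\beta$ precisely into the central-character summand $\mathcal{M}^{q_\rho}_{\overline{\chi}_\beta}$, and that $\mathcal{R}_\beta$ then lands precisely in $R(\beta)-\nmod$. The first point is exactly the preceding Proposition: it rests on $\mathcal{H}_{\rho,1}$ sending $\rho\mu^s$ to the character $y_1\mapsto q_\rho^{\,s}$ of $H(1,q_\rho)$ (\cite[Proposition 3.2]{me-restriction}), together with the induction-compatibility of Corollary \ref{cor:ind-comp}, which propagates the case $|\beta|=1$ to general $\beta$ via the characterization of all the subcategories involved by induction functors. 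The second point is built into Theorem \ref{thm:rouq} through the bijection between elements of $Q_+$ of height $|\beta|$ and $S_{|\beta|}$-orbits in $(q^{\mathbb{Z}})^{|\beta|}$. Once these identifications are noted, nothing further needs to be verified. (If one additionally wants $\mathcal{E}_\rho$ to be monoidal, as is used in later sections, that is not part of the present statement but would follow by combining Corollary \ref{cor:ind-comp} with Corollary \ref{cor:ruoq-ind}.)
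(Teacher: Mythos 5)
Your proposal is correct and takes essentially the same route as the paper, which in fact states Theorem \ref{thm:fullequi} without a separate proof precisely because it is the composition of the preceding restriction Proposition for $\mathcal{H}_{\rho,\beta}$ with Rouquier's equivalence (Theorem \ref{thm:rouq}), summed over the support/central-character decompositions on both sides. Your extra remarks --- that $q_\rho>1$ is not a root of unity so Theorem \ref{thm:rouq} applies, and that the two direct-sum decompositions are matched summand by summand via the preceding Proposition and the bijection between height-$|\beta|$ elements of $Q_+$ and $S_{|\beta|}$-orbits in $(q^{\mathbb{Z}})^{|\beta|}$ --- are exactly the bookkeeping the paper leaves implicit.
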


Similarly,
\[
\mathcal{E}_{\rho,\underline{\beta}}:= \mathcal{R}_{\underline{\beta}} \circ \mathcal{H}_{\rho,\underline{\beta}}:  \mathcal{C}^{\underline{\beta}}_\rho  \to R(\underline{\beta})-\nmod
\]
becomes an equivalence of abelian categories.

\begin{proposition}\label{prop:monoid}
For $\rho\in \cusp$, the equivalence functor $\mathcal{E}_\rho$ is monoidal.

In particular, for representation $\pi_1\in \mathcal{C}^{\beta_1}_\rho$ and $\pi_2\in \mathcal{C}^{\beta_2}_\rho$, we have an isomorphism
\[
\mathcal{E}_{\rho,\beta_1+\beta_2}(\pi_1\times \pi_2)\cong \mathcal{E}_{\rho,\beta_1}(\pi_1)\circ \mathcal{E}_{\rho,\beta_2}(\pi_2)
\]
of $R(\beta_1+ \beta_2)$-(ungraded)-modules.

\end{proposition}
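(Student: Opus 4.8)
The plan is to deduce monoidality of $\mathcal{E}_\rho = \mathcal{R}_\rho \circ \mathcal{H}_\rho$ by factoring it through the two already-analyzed layers, the Bernstein equivalence $\mathcal{H}_\rho$ and Rouquier's equivalence $\mathcal{R}$. First I would make precise what ``monoidal'' means here: the monoidal structure on $\widehat{\mathcal{D}}$ is the convolution product $\circ = \iind_{\underline{\beta}}$, and on $\mathcal{C}^{\mathbb{Z}}_\rho$ it is parabolic induction $\times = \mathbf{i}_{\underline{n}}$ (restricted to the block subcategories as in the displayed diagram \eqref{eq:resind-diag} and the remarks following it), so the claim is exactly the existence of natural isomorphisms $\mathcal{E}_{\rho,\beta_1+\beta_2}(\pi_1 \times \pi_2) \cong \mathcal{E}_{\rho,\beta_1}(\pi_1)\circ \mathcal{E}_{\rho,\beta_2}(\pi_2)$, compatible with associativity and unit. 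I would then observe that by definition $\mathcal{E}_{\rho,\beta} = \mathcal{R}_\beta \circ \mathcal{H}_{\rho,\beta}$, and $\mathcal{E}_{\rho,\underline{\beta}} = \mathcal{R}_{\underline{\beta}}\circ \mathcal{H}_{\rho,\underline{\beta}}$, where the tensor-product equivalences are the ones whose compatibility with induction was recorded above.

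The core of the argument is to paste together two commuting squares. The first is Corollary~\ref{cor:ind-comp}: the parabolic induction $\mathbf{i}_{\underline{d}}$ on the $p$-adic side corresponds under $\mathcal{H}_{\rho,\underline{d}}$ and $\mathcal{H}_{\rho,d}$ to the affine Hecke induction $\iind^q_{\underline{d}}$; restricting all four categories to the finite-length, integral-support block subcategories (using the last displayed proposition, which says $\mathcal{H}_{\rho,\beta}$ and $\mathcal{H}_{\rho,\underline{\beta}}$ restrict to equivalences of the block categories $\mathcal{C}^\beta_\rho \to \mathcal{M}^{q_\rho}_{\overline{\chi}_\beta}$ and $\mathcal{C}^{\underline{\beta}}_\rho \to \mathcal{M}^{q_\rho}_{\chi_{\underline{\beta}}}$), this says $\mathcal{H}_{\rho,i(\underline{\beta})}(\pi_1 \times \pi_2) \cong \iind^q_{\underline{\beta}}(\mathcal{H}_{\rho,\underline{\beta}}(\pi_1 \boxtimes \pi_2))$. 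The second is Corollary~\ref{cor:ruoq-ind}: $\mathcal{R}_{i(\underline{\beta})}\circ \iind_{\underline{\beta}} \cong \iind^q_{\underline{\beta}}\circ \mathcal{R}_{\underline{\beta}}$, i.e. $\mathcal{R}_{\beta_1+\beta_2}(\iind_{(\beta_1,\beta_2)}(M_1 \boxtimes M_2)) \cong \mathcal{R}_{\beta_1}(M_1)\circ \mathcal{R}_{\beta_2}(M_2)$. Chaining these: apply $\mathcal{H}$ to turn $\pi_1 \times \pi_2$ into an affine-Hecke induction product, then apply $\mathcal{R}$ and invoke Corollary~\ref{cor:ruoq-ind} to turn that into the convolution product $\mathcal{R}_{\beta_1}(\mathcal{H}_{\rho,\beta_1}(\pi_1)) \circ \mathcal{R}_{\beta_2}(\mathcal{H}_{\rho,\beta_2}(\pi_2)) = \mathcal{E}_{\rho,\beta_1}(\pi_1)\circ\mathcal{E}_{\rho,\beta_2}(\pi_2)$. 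I would also remark that $\mathcal{R}_{\underline{\beta}}$ restricted to the appropriate central-character subcategory agrees with $\mathcal{R}_{\beta_1}\otimes\mathcal{R}_{\beta_2}$ under the Deligne product decompositions, which is what licenses writing $\mathcal{R}_{\underline{\beta}}(\mathcal{H}_{\rho,\underline{\beta}}(\pi_1\boxtimes\pi_2)) \cong \mathcal{E}_{\rho,\beta_1}(\pi_1)\boxtimes \mathcal{E}_{\rho,\beta_2}(\pi_2)$.

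The step I expect to be the main obstacle — or at least the one requiring care rather than cleverness — is not the existence of the isomorphism for a single pair $(\pi_1,\pi_2)$, which falls out of the two corollaries, but upgrading it to a genuine \emph{monoidal} functor: one must check that the natural isomorphisms are coherent, i.e. compatible with the associativity constraints of $\times$ and $\circ$ and with the unit objects (the trivial representation of $G_0$ versus the ground field as $R(0)$-module). This requires that the compatibility isomorphisms in Corollaries~\ref{cor:ind-comp} and~\ref{cor:ruoq-ind} are themselves associative in the appropriate $r$-fold sense — which is why those statements were phrased for arbitrary compositions $\underline{d}$ and $\underline{\beta}$ rather than just for $r=2$. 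In practice I would note that transitivity of parabolic induction, transitivity of affine Hecke induction (both following from the corresponding embeddings of algebras being associative), and the functoriality of Rouquier's equivalence on tensor products together force the coherence diagrams to commute, so the monoidal structure is inherited layer by layer. I would keep the write-up short: state that $\mathcal{E}_\rho$ is the composite of two monoidal equivalences (restricting $\mathcal{H}_\rho$ and $\mathcal{R}$ to the block subcategories, where they are monoidal by the two corollaries together with transitivity of induction), and that a composite of monoidal functors is monoidal, then extract the displayed isomorphism as the special case $r=2$.
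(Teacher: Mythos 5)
Your proof is correct and follows essentially the same route as the paper, whose entire proof consists of citing Corollaries \ref{cor:ruoq-ind} and \ref{cor:ind-comp} and chaining the two compatibility squares exactly as you do. Your additional remarks on coherence (associativity, units, transitivity of induction) only elaborate what the paper leaves implicit.
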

\begin{proof}
Corollaries \ref{cor:ruoq-ind} and \ref{cor:ind-comp},
\end{proof}

\subsection{Irreducible representations}

For a given $\rho\in \cusp_m$, the equivalence $\mathcal{E}_\rho$ gives rise to a bijection between the sets $\irr_\rho^{\mathbb{Z}}$ and $\irr_{\mathcal{D}}$ (See Section \ref{sec:quiver}). The resulting bijection indentifies the Zelevinsky classification with the Kleshchev-Ram description of \ref{thm:kr}.

Explicitly, for a segment $\Delta= \Delta(a,b)\in \seg$, the induced representation
\[
\rho\mu^a\times\rho\mu^{a+1}\times\cdots\times \rho\mu^{b} \in \mathcal{C}_\rho^{\Delta}
\]
has a unique irreducible quotient, which we write as $Z(\Delta)\in \irr_{\rho,b-a+1}^{\mathbb{Z}}$. Consequently, by Proposition \ref{prop:monoid}, $\mathcal{E}_\rho(Z(\Delta))\in \irr(\Delta)$ becomes an irreducible quotient of
\[
\delta_a\circ \delta_{a+1}\circ \cdots \circ \delta_b\;,
\]
where $\delta_i := \Delta(i,i) = \mathcal{E}_\rho(\rho\mu^i)$ is the unique simple $R(\alpha_i)$-module.

It follows that $\mathcal{E}_\rho(Z(\Delta))\cong L_\Delta^{\fgt}$ is the (ungraded) segment module.

Now, for a multisegment $\m= \sum_{i=1}^k \Delta_i \in\mathfrak{M}$ with $\Delta_1 \leq_r \ldots \leq_r \Delta_k$ in $\seg$, it follows from Proposition \ref{prop:monoid} that in $ R(\wt(\m))-\nmod$,
\[
\mathcal{E}_\rho(Z(\Delta_1)\times \cdots \times Z(\Delta_k)) \cong \\KR(\m)^{\fgt}
\]
holds.

Hence, the unique irreducible quotient $Z(\m)\in \mathcal{C}_\rho^{\wt(\m)}$ of $Z(\Delta_1)\times \cdots \times Z(\Delta_k)$ must satisfy
\[
\mathcal{E}_\rho(Z(\m)) = L_{\m}^{\fgt},
\]
as ungraded isomorphism classes in $\irr(\wt(\m))$.

\section{Special classes of modules}\label{sec:special}

Given segments $\Delta_1,\Delta_2\in \Seg$, we write $\Delta_1 \prec \Delta_2$, if $b(\Delta_1)<b(\Delta_2)$, $e(\Delta_1) <e(\Delta_2)$ and $e(\Delta_1)\geq b(\Delta_2)-1$ hold. We say that the pair of segments $(\Delta_1,\Delta_2)$ is \textit{linked}, if either $\Delta_1\prec \Delta_2$, or $\Delta_2\prec\Delta_1$.

It is known that $L_{\Delta_1}\circ L_{\Delta_2}$ is a simple module, if and only if, the pair $(\Delta_1,\Delta_2)$ is not linked. (For example, by applying the $\mathcal{E}_\rho$ functor and deducing the fact from standard Zelevinsky theory).

\subsection{Indicator modules}\label{sec:indic}

Let us formalize a point of view, which is recurs often in various treatments in literature, such as \cite{me-decomp}, \cite{MR2811321}, \cite{MR1923974}.

For a choice of integers $b_1\geq \ldots \geq b_k \geq a$, we set a multisegment
\[
\gotM(a\,;b_1,\ldots,b_k) = \Delta(a,b_1)+\ldots + \Delta(a,b_k)\in \Mult \;.
\]
We call such multisegments \textit{left-aligned}.

For a left-aligned multisegment $\gotM= \gotM(a\,;b_1,\ldots,b_k)$, we define the simple module
\[
\nabla(\gotM) = \nabla(a\,;b_1,\ldots,b_k)=  L_{\Delta(a,b_1)} \circ\cdots \circ L_{\Delta(a,b_k)}\left\langle {k \choose 2}\right\rangle\in \girr(\wt(\m))\;,
\]
and write $b(\gotM(a\,;b_1,\ldots,b_k)) = a$.

Every $0\neq \gotM\in\Mult$ clearly admits a unique decomposition as $\gotM = \gotM_1+ \ldots + \gotM_l$, where $\gotM_i,\;i=1,\ldots,l$ are left-aligned multisegments, such that $b(\gotM_1) < \ldots < b(\gotM_l)$.

Following \cite{me-decomp}, we define the \textit{indicator module}
\[
L_{\gotM}^{\otimes} = \nabla(\gotM_1)\boxtimes \cdots \boxtimes \nabla(\gotM_l)\in \girr(\underline{\beta})\;,
\]
where $\underline{\beta} = (\wt(\m_1), \ldots, \wt(\m_l))$, and set
\[
\Sigma(\m) = \iind_{\underline{\beta}}(L_{\gotM}^{\otimes})\in R(\wt(\m))-\gmod\;.
\]
For $M = L_\m \in \irr_{\mathcal{D}}$, we also write $M^\otimes = L_{\gotM}^{\otimes}$.

\begin{lemma}\label{lem:unlink-seg}
Suppose that $\Delta_1 <_r \Delta_2$ are unlinked segments, that is, $\Delta_1\nprec \Delta_2$ and $\Delta_2\nprec \Delta_1$. We have
\[
L_{\Delta_2}\circ L_{\Delta_1} \cong \left\{\begin{array}{cc} L_{\Delta_1}\circ L_{\Delta_2}\;\langle -1 \rangle & b(\Delta_1) =  b(\Delta_2)\mbox{ or }e(\Delta_1) = e(\Delta_2) \\
 L_{\Delta_1}\circ L_{\Delta_2}& \mbox{otherwise} \end{array} \right.\;,
\]
in $R(\wt(\Delta_1+\Delta_2))-\gmod$.
\end{lemma}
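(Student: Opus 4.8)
The plan is to reduce everything to the $p$-adic side via the monoidal equivalence $\mathcal{E}_\rho$ of Theorem \ref{thm:fullequi} and Proposition \ref{prop:monoid}, where the corresponding statement about Zelevinsky segment representations is classical, and then recover the grading shift by a separate argument. First I would fix $\Delta_1 <_r \Delta_2$ unlinked, so that $(\Delta_1,\Delta_2)$ is not a linked pair, and recall from Section \ref{sec:special} that $L_{\Delta_1}\circ L_{\Delta_2}$ is then simple; since $\mathrm{wt}(\Delta_1+\Delta_2)$ is fixed, both $L_{\Delta_1}\circ L_{\Delta_2}$ and $L_{\Delta_2}\circ L_{\Delta_1}$ are simple modules in $R(\mathrm{wt}(\Delta_1+\Delta_2))-\gmod$. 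Applying $\fgt$ and the classical fact that parabolic induction of unlinked segment representations is irreducible and commutative (transported through $\mathcal{E}_\rho$), we get $(L_{\Delta_2}\circ L_{\Delta_1})^{\fgt}\cong (L_{\Delta_1}\circ L_{\Delta_2})^{\fgt}$. By the uniqueness of graded lifts up to shift (stated in Section \ref{sec:quiver}), it follows that $L_{\Delta_2}\circ L_{\Delta_1}\cong (L_{\Delta_1}\circ L_{\Delta_2})\langle c\rangle$ for a unique integer $c$, and the whole content of the lemma is the identification $c = -1$ when $b(\Delta_1)=b(\Delta_2)$ or $e(\Delta_1)=e(\Delta_2)$, and $c=0$ otherwise.

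To pin down $c$ I would use the Mackey filtration of Proposition \ref{prop:mackey}, or more directly a weight-space/degree computation. The cleanest route: compute $\rres$ of both products down to segments and compare. Consider $\underline{\gamma}=(\Delta_1,\Delta_2)$ and apply $\rres_{\underline{\gamma}}$ to $M:=L_{\Delta_2}\circ L_{\Delta_1}$; by the Mackey Proposition the factors are indexed by matrices $\delta\in M((\Delta_2,\Delta_1),(\Delta_1,\Delta_2))$, and among them the ``diagonal'' terms contribute copies of $L_{\Delta_1}\boxtimes L_{\Delta_2}$ and $L_{\Delta_2}\boxtimes L_{\Delta_1}$ (the latter being zero in the Grothendieck group after restriction unless $\Delta_1,\Delta_2$ are compatible, using that $L_{\Delta_i}$ is one-dimensional and supported on a single weight $\nu$ with prescribed first entry $\alpha_{e(\Delta_i)}$). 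The decisive input is the explicit degree $\deg(\delta)$ of \eqref{eq:deg-form} together with the fact (from the definition of $L_\Delta$ in Section \ref{sec:kr-class}) that each $L_\Delta$ is concentrated in a single degree after self-dual normalization; comparing the degree at which $L_{\Delta_1}\boxtimes L_{\Delta_2}$ appears in $\rres_{\underline{\gamma}}(L_{\Delta_2}\circ L_{\Delta_1})$ versus in $\rres_{\underline{\gamma}}(L_{\Delta_1}\circ L_{\Delta_2})$ (where it appears in degree $0$, as $L_{\Delta_1}\circ L_{\Delta_2}$ is self-dual and its restriction to $(\Delta_1,\Delta_2)$ contains $L_{\Delta_1}\boxtimes L_{\Delta_2}$ in degree $0$) gives $c$ directly. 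The relevant $\delta$ is the one with $\delta_{1,2}=\Delta_2$, $\delta_{2,1}=\Delta_1$, all other entries $0$, contributing $\deg(\delta) = -(\Delta_2,\Delta_1)$ computed as elements of $Q_+$; one then checks combinatorially that $(\Delta_1,\Delta_2)_{Q} = 1$ exactly when $b(\Delta_1)=b(\Delta_2)$ or $e(\Delta_1)=e(\Delta_2)$ (and the two segments do not overlap otherwise), and $=0$ when neither endpoint coincides, which yields $c=-1$ and $c=0$ respectively.

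Alternatively — and this may be the more robust writeup — I would avoid the Mackey bookkeeping and instead argue via the bilinear form on the root lattice directly: for one-dimensional modules $L_{\Delta_1}, L_{\Delta_2}$ supported on single weights $\nu^{(1)},\nu^{(2)}$, the product $L_{\Delta_2}\circ L_{\Delta_1}$ has a basis indexed by shuffles of $\nu^{(2)}$ and $\nu^{(1)}$, and the $\psi$-intertwiner realizing the isomorphism with $L_{\Delta_1}\circ L_{\Delta_2}$ carries a degree equal to $-(\mathrm{wt}(\Delta_2),\mathrm{wt}(\Delta_1))$ plus a correction coming from the $\psi^2$ relations whenever adjacent indices collide; tracking this through the (non-)linkedness hypothesis gives precisely the stated dichotomy. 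The main obstacle I anticipate is the careful sign/degree bookkeeping in the collision case: one must verify that when $b(\Delta_1)=b(\Delta_2)$ (respectively $e(\Delta_1)=e(\Delta_2)$), the contribution to $(\mathrm{wt}(\Delta_2),\mathrm{wt}(\Delta_1))_Q$ from the pair of simple roots $\alpha_{b(\Delta_1)}$ (resp.\ $\alpha_{e(\Delta_1)}$) and the absence of any further overlap together produce exactly $1$, and that in the fully generic unlinked case all cross-terms cancel to give $0$; this is elementary but error-prone, and I would organize it as a short case analysis on the relative position of the intervals $[b(\Delta_1),e(\Delta_1)]$ and $[b(\Delta_2),e(\Delta_2)]$ under the unlinkedness constraint.
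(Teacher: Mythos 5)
Your plan is correct, but it takes a genuinely different route from the paper. The paper's own proof stays entirely inside the graded category: since the pair is unlinked, $L_{\Delta_1}\circ L_{\Delta_2}=\KR(\Delta_1+\Delta_2)=L_{\Delta_1+\Delta_2}$ is simple and self-dual by Theorem \ref{thm:kr}, and the duality formula $(M\circ N)^{\ast}\cong N^{\ast}\circ M^{\ast}\langle(\wt(M),\wt(N))\rangle$ of \cite[Theorem 2.2]{MR2822211} then gives at once $L_{\Delta_1}\circ L_{\Delta_2}\cong L_{\Delta_2}\circ L_{\Delta_1}\langle(\Delta_1,\Delta_2)\rangle$, after which only your final bilinear-form computation remains. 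You instead obtain the ungraded isomorphism through $\mathcal{E}_\rho$ plus uniqueness of graded lifts of simples, and then pin the shift $c$ by comparing the degree at which $L_{\Delta_1}\boxtimes L_{\Delta_2}$ occurs in $\rres_{(\Delta_1,\Delta_2)}$ of the two products; this does work, because for $\Delta_1\neq\Delta_2$ unlinked the only nonvanishing Mackey terms are the identity matrix (degree $0$) on the $L_{\Delta_1}\circ L_{\Delta_2}$ side and the swap matrix (degree $-(\Delta_1,\Delta_2)$) on the other, so the factor occurs with multiplicity one in a single degree on each side. Three small repairs to your write-up: justify the degree-$0$ occurrence by this Mackey uniqueness rather than by self-duality alone (self-duality of the product by itself does not locate the degree of a restriction factor); the parenthetical ``the two segments do not overlap otherwise'' is inaccurate, since unlinked segments with distinct endpoints may be nested, and the form still vanishes there (which your stated conclusion correctly asserts); and strictness $\Delta_1\neq\Delta_2$ must be used, since for equal segments the form equals $2$ and the claimed shift would be false. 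In exchange for being longer and importing the $p$-adic equivalence and Mackey bookkeeping, your route avoids the duality-twist formula; the paper's argument is essentially a two-line reduction to the same form computation.
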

\begin{proof}
The module $L_{\Delta_1}\circ L_{\Delta_2}= \KR(\Delta_1 + \Delta_2)$ is simple. It follows from Theorem \ref{thm:kr} that $L_{\Delta_1}\circ L_{\Delta_2}= L_{\Delta_1 + \Delta_2}$ is self-dual. From \cite[Theorem 2.2]{MR2822211}, we obtain
\[
L_{\Delta_1}\circ L_{\Delta_2}\cong L^\ast_{\Delta_2}\circ L^\ast_{\Delta_1} \langle (\Delta_1,\Delta_2)\rangle =L_{\Delta_2}\circ L_{\Delta_1} \langle (\Delta_1,\Delta_2)\rangle  \;,
\]
and the statement follows from a simple computation of the bilinear form on $Q$.
\end{proof}

\begin{lemma}\label{lem:kr-indi}
For every $\gotM\in\Mult$, the proper standard module $\KR(\gotM)$ and $\Sigma(\gotM)$ are isomorphic in $R(\wt(\m))-\gmod$.
\end{lemma}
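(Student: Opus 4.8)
The plan is to show that $\KR(\gotM)$ and $\Sigma(\gotM)$ are obtained from the same collection of segment modules via induction, differing only in the order of the convolution factors and a grading shift, and then to track that grading shift using Lemma \ref{lem:unlink-seg}. Write $\gotM = \sum_{i=1}^k p_i\Delta_i$ with $\Delta_1 <_r \cdots <_r \Delta_k$, so by definition $\KR(\gotM) = L_{\Delta_1}^{\circ p_1}\circ\cdots\circ L_{\Delta_k}^{\circ p_k}\langle \sum_j\binom{p_j}{2}\rangle$. On the other side, decompose $\gotM = \gotM_1 + \cdots + \gotM_l$ into left-aligned pieces with $b(\gotM_1)<\cdots<b(\gotM_l)$, and recall each $\nabla(\gotM_i) = L_{\Delta(a_i,b_1^{(i)})}\circ\cdots\circ L_{\Delta(a_i,b_{r_i}^{(i)})}\langle\binom{r_i}{2}\rangle$ where $b_1^{(i)}\ge\cdots\ge b_{r_i}^{(i)}$. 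Since induction is associative (and $\Sigma(\gotM)$ is by definition the induction of the outer tensor product of the $\nabla(\gotM_i)$), $\Sigma(\gotM)$ is, up to grading shift, the convolution product of all the segment modules $L_\Delta$ appearing across all $\gotM_i$, taken in the order dictated by the left-aligned decomposition and the internal ordering of each $\nabla(\gotM_i)$.

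The first key step is a multiset identity: the multiset of segments appearing (with multiplicity) in $\KR(\gotM)$ — namely $p_i$ copies of each $\Delta_i$ — coincides with the multiset of segments appearing across $\nabla(\gotM_1),\ldots,\nabla(\gotM_l)$. This is immediate from $\gotM = \sum p_i\Delta_i = \sum_i \gotM_i$ as elements of $\Mult$. The second, and main, step is to compare the two orderings of this common multiset of factors and to verify that passing from one to the other by successive transpositions of \emph{adjacent unlinked} segments. Concretely: within a fixed left-aligned block $\gotM_i$ all segments share the same begin point $a_i$ and have distinct end points, so any two of them are unlinked (neither $\prec$ the other, as $b(\Delta_1)=b(\Delta_2)$ forbids both $\prec$ relations); and segments coming from different blocks $\gotM_i, \gotM_{i'}$ with $i<i'$ are also unlinked — this needs a short check using $b(\gotM_i)<b(\gotM_{i'})$ and the defining property of the left-aligned decomposition (a segment of $\gotM_i$ cannot be linked to one of $\gotM_{i'}$, otherwise the decomposition would not be the canonical one). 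Hence the entire reordering from the $\Sigma(\gotM)$-order to the $\KR(\gotM)$-order ($<_r$-increasing) is achieved through transpositions of adjacent unlinked segments, and Lemma \ref{lem:unlink-seg} governs each such swap: it produces no change in the isomorphism class except a grading shift of $\langle -1\rangle$ precisely when the two swapped segments share a begin point or an end point.

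The third step is the bookkeeping of the total grading shift. Each swap that merges two segments with equal begin point (or equal end point) contributes $\langle -1\rangle$. Counting: inside $\nabla(\gotM_i)$, sorting its $r_i$ equal-begin-point segments among themselves would contribute $\binom{r_i}{2}$ such shifts — but this is exactly the shift $\langle\binom{r_i}{2}\rangle$ already built into the definition of $\nabla(\gotM_i)$, so these cancel. The remaining swaps are between segments in different left-aligned blocks; one checks these never share a begin point (distinct by construction) nor an end point (this requires an argument: if $\Delta\in\gotM_i$ and $\Delta'\in\gotM_{i'}$ had $e(\Delta)=e(\Delta')$ with $b(\Delta)\ne b(\Delta')$ they would be linked, contradicting unlinkedness just established), so these swaps contribute no shift. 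After resorting, the leftover equal-begin-point coincidences are among the $p_i$ copies of a single $\Delta_i$, contributing $\binom{p_i}{2}$ — matching the shift $\langle\sum_j\binom{p_j}{2}\rangle$ in the definition of $\KR(\gotM)$. So all shifts balance and $\KR(\gotM)\cong\Sigma(\gotM)$.

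I expect the main obstacle to be the combinatorial lemma underlying the second and third steps: proving that segments from distinct left-aligned blocks in the canonical decomposition are always unlinked, and carefully matching the two sources of $\langle -1\rangle$ shifts (the built-in $\binom{r_i}{2}$ and $\sum\binom{p_j}{2}$) against the shifts generated by the transpositions. A clean way to organize this is to avoid tracking individual swaps and instead invoke Lemma \ref{lem:unlink-seg} in the form of the general commutation relation for mutually unlinked segments (any reordering of a $<_r$-totally-unlinked family of segment modules changes the module only by a shift equal to a sum of values of the bilinear form $(\Delta,\Delta')$ over the inverted pairs), and then evaluate that sum directly on both configurations; this reduces the whole argument to one explicit computation of $\sum(\Delta,\Delta')$ over appropriate pairs, which will reproduce the asserted shift discrepancy of zero. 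Care is also needed because the factors within a block are \emph{equal} (the $L_{\Delta_i}^{\circ p_i}$ part) or \emph{partially equal}, so one should be slightly careful that Lemma \ref{lem:unlink-seg} is only invoked for genuinely commuting-up-to-shift pairs, which includes the degenerate $\Delta_1=\Delta_2$ case handled by $\binom{p}{2}$ in the definitions.
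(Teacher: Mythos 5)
Your route is the paper's own: realize both modules as convolution products of the same multiset of segment modules, commute adjacent factors via Lemma \ref{lem:unlink-seg}, and track the resulting shifts against the built-in binomial shifts; your final bookkeeping is also correct. However, two of the combinatorial claims you use to justify the reordering are false as stated, and they carry the weight of the argument. First, segments lying in distinct left-aligned blocks of the canonical decomposition need \emph{not} be unlinked: the decomposition $\gotM=\gotM_1+\cdots+\gotM_l$ merely groups segments by their begin point, so for $\gotM=\Delta(1,3)+\Delta(2,5)$ the two blocks are the two single segments, and $\Delta(1,3)\prec\Delta(2,5)$. There is no such ``defining property'' of the decomposition. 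What actually saves the argument is that a linked pair $\Delta\prec\Delta'$ is never an \emph{inversion} between your two orderings (it has $b(\Delta)<b(\Delta')$ and $e(\Delta)<e(\Delta')$, so $\Delta$ precedes $\Delta'$ both in the block order and in $<_r$); consequently the cross-block pairs that ever get transposed are exactly those with $b(\Delta)<b(\Delta')$ and $e(\Delta)>e(\Delta')$, i.e.\ nested pairs, which are unlinked and have distinct begin and end points. You need this statement about inverted pairs; the blanket statement about all cross-block pairs is simply wrong.

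Second, your reason that transposed cross-block pairs never share an end point --- ``if $e(\Delta)=e(\Delta')$ with $b(\Delta)\neq b(\Delta')$ they would be linked'' --- is backwards: linkage requires $e(\Delta)<e(\Delta')$ strictly, so segments with equal end points are always \emph{unlinked}, and by Lemma \ref{lem:unlink-seg} swapping such a pair would cost a shift $\langle-1\rangle$, which would ruin your count if it occurred. The correct reason it never occurs is again that such pairs are not inversions: $<_r$ breaks ties in the end point by the begin point, which agrees with the block order, so equal-end cross-block pairs keep their relative position and are never swapped. With these two repairs (only inversions are transposed; the inversions are either same-begin pairs inside a block, each costing $\langle-1\rangle$ and absorbed by the binomial shifts in $\nabla(\gotM_i)$ and $\KR(\gotM)$, or nested cross-block pairs costing nothing) your computation closes and coincides with the paper's proof, which performs the same reordering by passing from $<_r$ to the left lexicographic order and reversing the factors within each left-aligned block.
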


\begin{proof}
Both modules may be presented as a convolution product of the same segment modules defined by $\m$.

It is a simple corollary of Lemma \ref{lem:unlink-seg} that the isomorphic class of $S(\gotM)$ is not affected by changing the order $<_r$ into $<$ in its definition.

Reversing the product order, Lemma \ref{lem:unlink-seg} also implies that
\[
\nabla(a\,;b_1,\ldots,b_k) \cong L_{\Delta(a,b_k)} \circ\cdots \circ L_{\Delta(a,b_1)}\left\langle {r \choose 2}\right\rangle\;,
\]
where $r= \#\{(i\neq j\;:\; b_i=b_j\}$, for any $b_1\geq\ldots\geq b_k\geq a$.

The isomorphism is now evident when comparing both constructions.

\end{proof}

\begin{corollary}\label{cor:indic-self}
For every simple graded self-dual module $M\in \irr_{\mathcal{D}}$, the indicator module $M^{\otimes}$ appears as a submodule of $\rres_{\underline{\beta}}(M)$, for an appropriate $\underline{\beta}$.
\end{corollary}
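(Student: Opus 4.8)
The plan is to combine Lemma~\ref{lem:kr-indi} with the adjunction between induction and restriction. Recall that $M = L_{\m}$ is by definition the head of $\\KR(\m)$, and that by Lemma~\ref{lem:kr-indi} we have $\\KR(\m) \cong \Sigma(\m) = \iind_{\underline{\beta}}(L_{\m}^{\otimes})$, where $\underline{\beta} = (\wt(\m_1),\ldots,\wt(\m_l))$ is the tuple coming from the left-aligned decomposition $\m = \m_1 + \ldots + \m_l$. So there is a surjection $\iind_{\underline{\beta}}(L_{\m}^{\otimes}) \twoheadrightarrow M$ in $R(\wt(\m))-\gmod$ (a priori up to a grading shift, but since $M$ is self-dual and $\\KR(\m)$ has self-dual head by Theorem~\ref{thm:kr}, one can arrange that the shift is trivial, or simply absorb it).

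Next I would apply the adjunction: $\iind_{\underline{\beta}}$ is left-adjoint to $\rres_{\underline{\beta}}$, so a nonzero map $\iind_{\underline{\beta}}(L_{\m}^{\otimes}) \to M$ corresponds under $\Hom$-adjunction to a nonzero map $L_{\m}^{\otimes} \to \rres_{\underline{\beta}}(M)$ in $R(\underline{\beta})-\gmod$. Since $L_{\m}^{\otimes} = \nabla(\m_1) \boxtimes \cdots \boxtimes \nabla(\m_l)$ is simple (it is an outer tensor product of simple modules, using $\irr(\underline{\beta}) \cong \prod \irr(\beta_i)$ as recorded in Section~\ref{sec:resind}), any nonzero homomorphism out of it is injective. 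Therefore $L_{\m}^{\otimes}$ embeds as a submodule of $\rres_{\underline{\beta}}(M)$, which is exactly the claim (with $M^{\otimes} = L_{\m}^{\otimes}$ by definition).

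The one point that needs care — and the likely main obstacle — is the bookkeeping of grading shifts: the surjection $\\KR(\m) \twoheadrightarrow L_{\m}$ is an honest graded map by the definition of $L_{\m}$ as the graded head, so no shift arises there; and the adjunction isomorphism $\Hom_{R(\wt(\m))-\gmod}(\iind_{\underline{\beta}}(N), M) \cong \Hom_{R(\underline{\beta})-\gmod}(N, \rres_{\underline{\beta}}(M))$ is degree-preserving, so the embedding $L_{\m}^{\otimes} \hookrightarrow \rres_{\underline{\beta}}(M)$ is a genuine graded embedding without any shift. One should also double-check that Lemma~\ref{lem:kr-indi}'s identification $\\KR(\m) \cong \Sigma(\m)$ is stated as a graded isomorphism (it is), so that the composite $\iind_{\underline{\beta}}(L_{\m}^{\otimes}) \cong \\KR(\m) \twoheadrightarrow L_{\m}$ is graded. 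Once the shifts are confirmed to vanish, the argument is immediate, so I would keep the write-up to two or three lines: quote Lemma~\ref{lem:kr-indi}, invoke the definition of the head, apply adjunction, and note simplicity of $L_{\m}^{\otimes}$ forces injectivity.
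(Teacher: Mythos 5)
Your argument is correct and is essentially the paper's own proof: quote Lemma \ref{lem:kr-indi} to realize $M=L_{\m}$ as a quotient of $\iind_{\underline{\beta}}(M^{\otimes})$, then use the (degree-preserving) adjunction between $\iind_{\underline{\beta}}$ and $\rres_{\underline{\beta}}$ together with simplicity of $M^{\otimes}$ to get the embedding. Your extra bookkeeping of grading shifts is a sound sanity check but adds nothing beyond what the paper's two-line proof already uses.
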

\begin{proof}
By Lemma \ref{lem:kr-indi}, $M$ appears as a quotient of $\iind_{\underline{\beta}}(M^\otimes)$.  The statement follows from an adjunction of functors.
\end{proof}

\subsection{Spherical modules}

We say that a graded self-dual simple module $L = L_\m \in \irr_{\mathcal{D}}$, for $\m=\Delta_1+ \ldots + \Delta_k\in\Mult$, is a \textit{spherical} module, if all pairs of segments $(\Delta_i,\Delta_j), i,j=1,\ldots,k$, are not linked.

By exactness of induction functors, it follows that $L_\m\in \irr$ is spherical, if and only if, its associated proper standard module $\KR(\m)$ is simple, that is, $\KR(\m)\cong L_\m$.

It is evident that for each $\beta\in Q_+$, there is a unique spherical self-dual
\[
L^{Sph}(\beta)= L_{\m^{Sph}(\beta)}\in \irr(\beta)\;.
\]
In other words, $\m^{Sph}(\beta)\in \Mult$ is the unique spherical multisegment with $\wt(\m^{Sph}(\beta))= \beta$.


\begin{proposition}\label{prop:sph}
For $\underline{\beta} = (\beta_1,\ldots,\beta_k) \in Q_+^k$, and a simple module $L = L_1\boxtimes \cdots \boxtimes L_k\in \girr(\underline{\beta})$, the graded multiplicity
\[
m(\iind_{\underline{\beta}}(L), L^{Sph}(i(\underline{\beta})))(q)
\]
is either a monomial $q^r$, when $L_i^{\fgt} \cong L^{Sph}(\beta_i)$ for all $i=1,\ldots,k$, or the zero polynomial, otherwise.

\end{proposition}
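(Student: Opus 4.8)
The plan is to reduce the statement to a computation about the spherical module $L^{Sph}(i(\underline{\beta}))$ via Frobenius reciprocity, and then exploit the fact that spherical simple modules are precisely the proper standard modules that happen to be simple. First I would invoke the adjunction between $\iind_{\underline{\beta}}$ and $\rres_{\underline{\beta}}$ (Section \ref{sec:resind}), which — together with the self-duality of $L^{Sph}(i(\underline{\beta}))$ and the compatibility of $\rres$ and $\iind$ with the duality functor $\ast$ (as in \cite[Theorem 2.2]{MR2822211}, used in the proof of Lemma \ref{lem:unlink-seg}) — shows that the graded multiplicity $m(\iind_{\underline{\beta}}(L), L^{Sph}(i(\underline{\beta})))(q)$ is governed by the graded composition multiplicity of $L_1\boxtimes\cdots\boxtimes L_k$ inside $\rres_{\underline{\beta}}(L^{Sph}(i(\underline{\beta})))$. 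Since $L^{Sph}(i(\underline{\beta})) = \KR(\m^{Sph})$ is a product of segment modules $L_{\Delta}$ with pairwise unlinked segments, the Mackey filtration of Proposition \ref{prop:mackey} applied to $\rres_{\underline{\beta}}\circ\iind$ of the outer tensor product of these $L_{\Delta}$'s gives an explicit list of composition factors $K_\delta$, each of which is again (by Lemma \ref{lem:unlink-seg}, since all the segments involved are pairwise unlinked) an outer tensor product of products of segment modules, i.e. a product of simple modules in each $R(\beta_i)$-factor, each of spherical type.

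The second step is to identify exactly which $\delta \in M(\underline{\beta},\underline{\beta'})$ contribute a copy of a given $L_1\boxtimes\cdots\boxtimes L_k$. The key point is that for pairwise unlinked segments every such induction product is itself simple and self-dual (again Lemma \ref{lem:unlink-seg} plus Theorem \ref{thm:kr}), so the $i$-th factor of $K_\delta$ equals $L^{Sph}(\beta_i)$ on the nose up to a grading shift; hence a factor isomorphic to $L_i$ can occur only if $L_i^{\fgt}\cong L^{Sph}(\beta_i)$ for every $i$. This already forces the "zero polynomial otherwise" alternative. When all $L_i^{\fgt}\cong L^{Sph}(\beta_i)$, I would argue that there is a \emph{unique} $\delta$ yielding the correct supercuspidal supports (weights) in all factors: the segments of $\m^{Sph}(i(\underline{\beta}))$ have to be distributed among the $k$ blocks so that the $i$-th block receives a multisegment of weight $\beta_i$, and because $\m^{Sph}(i(\underline{\beta}))$ is the unique spherical multisegment of its weight — and because unlinkedness makes the distribution of begin/end points rigid — this distribution is forced. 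Uniqueness of the contributing $\delta$ gives graded multiplicity a single monomial $q^r$, with the exponent $r$ being exactly $\deg(\delta)$ from \eqref{eq:deg-form} corrected by the grading shifts in the definitions of $\nabla$ and $\KR$.

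Alternatively, and perhaps more cleanly, I would run the argument through the $p$-adic side using $\mathcal{E}_\rho$ (Theorem \ref{thm:fullequi}, Proposition \ref{prop:monoid}): ungraded-wise, $Z(\m^{Sph})$ is the unique irreducible with its supercuspidal support (all segments unlinked means the standard module is irreducible), so by the geometric lemma / Jacquet-module computation, $Z(\m^{Sph})$ appears in $\pi_1\times\cdots\times\pi_k$ with ungraded multiplicity $\le 1$, and it appears iff each $\pi_i$ is the spherical representation $Z(\m^{Sph}(\beta_i))$. Transporting back through $\mathcal{E}_\rho$ handles the ungraded statement (multiplicity $0$ or $1$); the only remaining work is to pin down the grading shift $r$, which I would extract from the Mackey-theoretic $\deg(\delta)$ together with the shifts ${k\choose 2}$-type corrections appearing in the definitions of $\nabla$ and $\KR(\m)$.

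\textbf{Main obstacle.} The delicate point is the uniqueness of the contributing matrix $\delta$ in the Mackey filtration — i.e., showing that there is genuinely only \emph{one} way to split the spherical multisegment $\m^{Sph}(i(\underline{\beta}))$ into pieces of the prescribed weights $\beta_1,\dots,\beta_k$ so as to recover simple factors isomorphic (ungraded) to $L_i$ — and then, once uniqueness is in hand, producing the exact value of the exponent $r$ from $\deg(\delta)$ and the normalization shifts. The combinatorics of spherical multisegments (their begin- and end-point multiplicities are determined by $\beta$) should make uniqueness a finite, checkable assertion, but packaging it without a lengthy case analysis is where the care is needed.
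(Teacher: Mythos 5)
Your ``alternative'' route is essentially the paper's own proof: fix any $\rho\in\cusp_m$, use monoidality of $\mathcal{E}_\rho$ (Theorem \ref{thm:fullequi}, Proposition \ref{prop:monoid}) to translate the question into the multiplicity of the spherical irreducible representation in $\pi_1\times\cdots\times\pi_k$, and invoke the $p$-adic fact that a spherical subquotient occurs only when every $\pi_i$ is spherical, and then at most once (the paper quotes \cite[Lemma 4.1]{MR3194013}). Two corrections to how you run it. First, your justification of the multiplicity bound --- ``$Z(\m^{Sph})$ is the unique irreducible with its supercuspidal support'' --- is false: for a fixed support $\beta$ there is one irreducible for \emph{every} $\m$ with $\wt(\m)=\beta$; what is unique is the irreducible with a nonzero vector fixed by a maximal compact subgroup, and the multiplicity-one claim is exactly the cited unramified-vector lemma, not a consequence of irreducibility of the standard module. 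Second, there is no ``remaining work to pin down $r$'': the graded multiplicity is a Laurent polynomial with nonnegative integer coefficients whose value at $q=1$ is the ungraded multiplicity, so ungraded multiplicity $\leq 1$ already forces it to be zero or a single monomial $q^{r}$; the proposition does not ask for the value of $r$.

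Your primary (Mackey) route, by contrast, has a genuine gap at its first step, before the obstacle you flag. Adjunction identifies $\Hom$-spaces, i.e.\ it detects $L^{Sph}(i(\underline{\beta}))$ in the head of $\iind_{\underline{\beta}}(L)$ (equivalently, $L$ in the socle of the restriction), but it does not convert the composition multiplicity $m(\iind_{\underline{\beta}}(L), L^{Sph}(i(\underline{\beta})))(q)$ into the composition multiplicity of $L$ in $\rres_{\underline{\beta}}(L^{Sph}(i(\underline{\beta})))$; these genuinely differ. Already for $\underline{\beta}=(\alpha_1,\alpha_1)$ and $L=\delta_1\boxtimes\delta_1$: the product $\delta_1\circ\delta_1$ is simple, so the spherical simple of $R(2\alpha_1)$ occurs in it with graded multiplicity a single monomial, whereas the restriction of that spherical simple to $R(\alpha_1,\alpha_1)$ contains $\delta_1\boxtimes\delta_1$ with graded multiplicity $q+q^{-1}$, which is not a monomial. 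So the quantity you propose to compute by the Mackey filtration of $\rres_{\underline{\beta}}(\KR(\m^{Sph}))$ is not the one in the statement, and the subsequent uniqueness-of-$\delta$ analysis would not repair this. If you want an argument internal to the quiver Hecke side, you would need a different mechanism (e.g.\ the head/real-module technology of Section \ref{sec:normal}), not Frobenius reciprocity applied to composition multiplicities.
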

\begin{proof}
A representation of a $p$-adic group $\pi\in \irr(G_n)$ is said to be spherical if it has a non-zero vector invariant under the action of a maximal compact subgroup $K_n < G_n$.

For $\rho\in \cusp_m$, by the Zelevinsky classification, an irreducible representation $Z(\m)\in \mathcal{C}_\rho^{\beta}$ is spherical, if and only if, $\m = \m^{Sph}(\beta)$ (\cite{MR850742}). In other words, our definition of spherical modules in $\irr_{\mathcal{D}}$ is made so that the equivalence $\mathcal{E}_\rho$ respects sphericity\footnote{An alternative argumentation would be to apply the well-known Zelevinsky involution in the $p$-adic setting, which exchanges the notion of spherical irreducible representations with that of Whittaker-generic representations. Same properties for the latter notion are classical results of \cite{rodier}.}.

For any $\pi_1, \ldots, \pi_k\in \irr_\rho^{\mathbb{Z}}$, it is known that the parabolic induction representation $\pi_1\times\cdots\times \pi_k\in \mathcal{C}_{\rho}^\beta$ contains a spherical irreducible subquotient, if and only if, all $\pi_1,\ldots,\pi_k$ are spherical. Moreover, a spherical irreducible quotient may appear at most once in the Jordan-H\"{o}lder series of the induced representation (See, for example, \cite[Lemma 4.1]{MR3194013}).

Thus, for any choice of $\rho\in \cusp_m$, applying the equivalence $\mathcal{E}_\rho$ and using its monoidality as in Proposition \ref{prop:monoid}, shows that (the ungraded multiplicity) commits
\[
m(\iind_{\underline{\beta}}(L), L^{Sph}(i(\underline{\beta})))(1)\in \{0,1\}\;,
\]
and that it is non-zero only when $L^{\fgt}_1,\ldots,L^{\fgt}_k$ are all spherical.

\end{proof}

\subsection{Homogeneous modules}\label{sec:homog}

A module $M= (M_i)_{i\in \mathbb{Z}}\in R(\alpha)-\gmod$ is called \textit{homogeneous}, if it is concentrated at one degree, that is, $M_i= \{0\}$, for all $i\neq i_M$.

In \cite{kr1}, all homogenous modules in $\irr_{\mathcal{D}}$ were classified. As it turned out, passing through the equivalence $\mathcal{E}_{\rho}$ identifies the notion of irreducible homogeneous modules with that of \textit{ladder} representations in $\mathcal{C}^{\mathbb{Z}}_\rho$.

Let us recall the classification in the terms that are more familiar in the $p$-adic groups literature.

For two sequences of integers $\lambda = ( \lambda_1 > \ldots > \lambda_r)$ and $\mu = (\mu_1 > \ldots > \mu_r)$, which satisfy $\lambda_i\leq \mu_i$, for $1\leq i\leq r$, we set the (ladder) multisegment
\[
\m(\lambda,\mu) = \Delta(\lambda_1,\mu_1-1) + \ldots + \Delta(\lambda_r,\mu_r-1) \in \mathcal{M},
\]
and define
\[
\Xi(\lambda, \mu): = L_{\m (\lambda,\mu)} \in \irr(\wt(\m(\lambda,\mu)))\;.
\]
Representations of the form $Z(\m(\lambda,\mu))$ are the ladder representations for $p$-adic groups. It follows from the study in \cite{LapidKret}, that the Kleshchev-Ram construction coincides with that of ladder representations. Thus, by \cite[Theorem 3.6]{kr1}, the collection $\{\Xi(\lambda, \mu)\}_{\lambda,\mu}$ exhausts all self-dual irreducible homogeneous modules in $\mathcal{D}$.

One particularly convenient property of the homogeneous class is that their restrictions are homogenous as well, and are easily described. The following may be viewed as a combined statement of \cite[Section 3.4]{kr1} and \cite{LapidKret}.
\begin{proposition}\label{prop:ladderjac}
Given $\underline{\beta} \in (Q_+)^k$, and a homogeneous module $\Xi(\lambda, \mu)\in \irr(i(\underline{\beta}))$ with $\lambda = ( \lambda_1 > \ldots > \lambda_r)$ and $\mu = (\mu_1 > \ldots > \mu_r)$, we have
\[
\rres_{\underline{\beta}}(\Xi(\lambda, \mu) )= \bigoplus_{ \nu^1, \ldots, \nu^{k-1}} \Xi (\nu^{k-1}, \mu)\boxtimes \Xi (\nu^{k-2}, \nu^{k-1})\boxtimes \cdots \boxtimes \Xi (\lambda, \nu^1)\;,
\]
as a graded $R(\underline{\beta})$-module.

Here the sum is taken over all possible sequences $\nu^i = (\nu^i_1> \ldots >\nu^i_r)$ of integers, such that $\lambda_j \leq \nu_j^1\leq \ldots \leq \nu_j^{k-1} \leq \mu_j$ holds for every $1\leq j \leq r$, and that the resulting representation is in $R(\underline{\beta})-\gmod$.

\end{proposition}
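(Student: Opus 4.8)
The plan is to transport the entire statement to the $p$-adic side via the monoidal equivalence $\mathcal{E}_\rho$ of Theorem \ref{thm:fullequi} and Proposition \ref{prop:monoid}, where the analogous computation of the Jacquet module of a ladder representation is classical. Since $\mathcal{E}_\rho$ sends $\Xi(\lambda,\mu)$ to (the ungraded class of) the ladder representation $Z(\m(\lambda,\mu))$, and intertwines $\rres_{\underline{\beta}}$ with the Jacquet functor $\mathbf{r}$ (by adjunction and Proposition \ref{prop:monoid}), the underlying ungraded identity is exactly the branching rule for Jacquet modules of ladder representations. This is the content of \cite[Section 3.4]{kr1} on the algebra side and of the analysis in \cite{LapidKret} on the $p$-adic side; in fact the stronger assertion there is that this Jacquet module is \emph{semisimple} and multiplicity-free, with constituents indexed precisely by the interleaving sequences $\nu^1,\ldots,\nu^{k-1}$ described in the statement. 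So the first step is to invoke those references to obtain the displayed direct-sum decomposition after applying $\fgt$.

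The remaining work is to promote this ungraded isomorphism to a graded one, i.e. to pin down the grading shifts. The key observation is that there are none to pin down: every module appearing — both $\Xi(\lambda,\mu)$ and each $\Xi(\nu^{i-1},\nu^i)$ — is homogeneous, hence self-dual as a graded module once concentrated in the correct single degree, and by the discussion following Theorem \ref{thm:kr} each $L\in\irr_{\mathcal D}$ has a graded structure unique up to shift. Since $\rres_{\underline{\beta}}$ is a graded functor, $\rres_{\underline{\beta}}(\Xi(\lambda,\mu))$ is a graded $R(\underline{\beta})$-module; I would next argue it is again homogeneous. Indeed $\Xi(\lambda,\mu)$ lives in a single degree $d$, and $\rres_{\underline{\beta}}(M)=\mathfrak{e}(\underline{\beta})M$ is a graded summand of the underlying space of $M$, so it is concentrated in degree $d$ as well; thus every constituent $\Xi(\nu^{k-1},\mu)\boxtimes\cdots\boxtimes\Xi(\lambda,\nu^1)$ occurs in a single fixed degree. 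Matching this against the normalization $\deg(\mathfrak{e}(\nu))=0$ and the fact that the self-dual homogeneous outer tensor product $\Xi(\nu^{k-1},\mu)\boxtimes\cdots\boxtimes\Xi(\lambda,\nu^1)$ is itself concentrated in degree $0$ in $R(\underline{\beta})-\gmod$ (being self-dual simple), one concludes the common degree is $0$ on both sides and no shift is incurred. This is essentially the assertion in \cite[Section 3.4]{kr1} that restriction preserves the homogeneous class.

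The main obstacle, to the extent there is one, is bookkeeping: verifying that the indexing set of interleaving sequences matches on the two sides (one must check that a constituent $\Xi(\nu^{k-1},\mu)\boxtimes\cdots$ lies in $R(\underline{\beta})-\gmod$, i.e. has the prescribed weights $\beta_1,\ldots,\beta_k$, exactly when the partial sums of the segment lengths $\nu^{i}_j-\nu^{i-1}_j$ add up correctly — this is the constraint ``the resulting representation is in $R(\underline{\beta})-\gmod$'' in the statement), and confirming semisimplicity survives under $\mathcal{E}_\rho$, which it does since $\mathcal{E}_\rho$ is an equivalence of abelian categories. I would therefore organize the proof as: (i) reduce to the $p$-adic/ungraded statement via $\mathcal{E}_\rho$ and Proposition \ref{prop:monoid}; (ii) cite \cite{LapidKret} and \cite[Section 3.4]{kr1} for the explicit semisimple branching rule; (iii) observe homogeneity is preserved by $\rres_{\underline{\beta}}$, forcing the grading shift to be trivial, so the ungraded decomposition lifts verbatim to $R(\underline{\beta})-\gmod$.
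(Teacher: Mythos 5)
Your proposal matches the paper's treatment: the paper gives no independent argument for this proposition, presenting it as a combined statement of \cite[Section 3.4]{kr1} and \cite{LapidKret}, which is exactly the reduction you carry out (transfer through $\mathcal{E}_\rho$, quote the semisimple, multiplicity-free branching rule for ladder/homogeneous modules, and pin down the grading by homogeneity and self-duality). One caveat: since $\rres_{\underline{\beta}}$ is the \emph{right} adjoint of $\iind_{\underline{\beta}}$ while the Jacquet functor $\mathbf{r}_{\underline{n}}$ of Section \ref{sec:padic} is the \emph{left} adjoint of $\mathbf{i}_{\underline{n}}$, uniqueness of adjoints matches $\rres_{\underline{\beta}}$ with the opposite Jacquet functor (second adjointness), not with $\mathbf{r}_{\underline{n}}$ as you assert; this only affects the ordering of the outer tensor factors --- precisely the bookkeeping you defer --- and is repaired either by invoking second adjointness explicitly or by citing \cite[Section 3.4]{kr1} directly on the quiver Hecke side, where the ordering conventions visible in \eqref{eq:seg-jac} are built in.
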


A special case of a homogeneous module is a segment module: $\Xi((a),(b)) = L_{\Delta(a,b-1)}$. In this case, we see from Proposition \ref{prop:ladderjac} that $\rres_{\underline{\beta}}(L_{\Delta(a,b)})$ is either $0$ or an irreducible module given by
\begin{equation}\label{eq:seg-jac}
L_{\Delta(c_k , b-1)} \boxtimes L_{\Delta(c_{k-1} , c_{k}-1)}  \boxtimes \cdots \boxtimes L_{\Delta(a,c_1-1)} \;,
\end{equation}
for integers $a\leq c_1 \leq \ldots\leq c_k\leq b$.

\section{RSK for multisegments}\label{sec:rsk}

We recall the combinatorial algorithms associated with the Robinson-Schensted-Knuth correspondence, in a form adapted to our setting.

Given segments $\Delta_1, \Delta_2\in\Seg$, we write $\Delta_1\smlr\Delta_2$, if $b(\Delta_1)<b(\Delta_2)$
and $e(\Delta_1)<e(\Delta_2)$ hold. This is a strict partial order on $\Seg$.

In these terms, we say that a multisegment
\[
0\neq \la=\sum_{i=0}^k\Delta_i \in \Mult
\]
is a \textit{ladder multisegment}, if $\Delta_i\smlr\Delta_{i-1}$, for $i=1,\ldots,k$.

We write $\Lad\subset \Mult$ for the collection of all ladder multisegments.

For any $0\neq \m \in \Mult$, we set its \textit{width} $\omega(\m)$ to be the minimal number of ladder multisegments $\la_1, \ldots, \la_{\omega(\m)}\in\Lad$, for which we can decompose as $\m = \la_1 + \ldots + \la_{\omega(\m)}$.

We write
\[
\Mult \setminus \{0\} = \bigcup_{i=1}^\infty \Mult_i\;,\quad  \Mult_i = \{0\neq \m \in\Mult\;:\; \omega(\m)= i\}  \;.
\]
Note, that $\Lad = \Mult_1$.

Suppose that a multisegment
\[
0\neq \m = \sum_{i\in I}\Delta_i\in \Mult
\]
and a ladder multisegment
\[
\la =  \sum_{j\in J} \Delta'_{j} \in \la
\]
are given. We may write $J = \{j_1,\ldots j_l\}$, with $\Delta_{j_{l}} \smlr\ldots\smlr \Delta_{j_1}$.

We say that the pair $(\la,\m)$ is \textit{permissible}\footnote{ A slightly different formulation was used in \cite{gur-lap}. The equivalence of conditions is a straightforward exercise.}, if for every choice of indices $i_1,\ldots, i_m\in I$, for which $\Delta_{i_m} \smlr \ldots \smlr \Delta_{i_1}$ holds (sub-ladder of $\m$), there is an injective increasing function
\[
\phi: \{1,\ldots,m\} \to \{1,\ldots,l\}\;,
\]
for which $\lshft\Delta_{i_t} \prec \Delta_{j_{\phi(t)}}$ holds, for all $1\leq t\leq m$.

Here we denote $\lshft\Delta=\Delta(a-1,b-1)\in \Seg$, for a segment $\Delta = \Delta(a,b)\in \Seg$.


Let $\pairs \subset \Lad \times \Mult$ be the collection of permissible pairs.

In further refinement, we write $\pairs = \bigcup_{i=1}^\infty \pairs_i$, where $\pairs_i \subset \Lad\times \Mult_i$ are the permissible pairs $(\la,\m)$, with $\omega(\m)=i$.

\begin{proposition}\cite[Proposition 2.4]{gur-lap}
There is a bijection
\[
\Vien:\Mult\setminus\{0\} \rightarrow\pairs\;,
\]
which is explicitly given by the Knuth-Viennot implementation of the RSK correspondence.
\end{proposition}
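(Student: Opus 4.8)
The map $\Vien$ being prescribed by the algorithm, the task is to show that this prescription yields a well-defined bijection onto $\pairs$. Recall its shape: one encodes $\m$ as a multiset of lattice points, one point per segment with coordinates its begin- and end-points (counted with multiplicity), and runs the Knuth--Viennot shadow-line sweep in the multisegment-adapted form of \cite{gur-lap}; the sweep records a distinguished ladder multisegment $\la$ (a longest $\smlr$-chain picked out along it) together with a residual multisegment $\widehat{\m}$, obtained by deleting the row just produced from the bitableau $(P_\m,Q_\m)$ and inverting RSK, and one sets $\Vien(\m)=(\la,\widehat{\m})$. That $\la\in\Lad$ is immediate. That $\widehat{\m}$ is again a multisegment, of width exactly $\omega(\m)-1$, rests on two facts: the variant of RSK used here is a bijection between $\Mult$ and a set of bitableaux carved out by a natural inequality on corresponding entries, a condition stable under deleting a row; and Greene's theorem identifies the length of the first column of the shape of $(P_\m,Q_\m)$ with the largest $\smlr$-antichain of $\m$, whose size equals $\omega(\m)$ by Dilworth's theorem, so removing the first row lowers the width by one. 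Finally $\Vien$ is injective because RSK, together with the record of the deleted row, is invertible.

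It remains to identify $\mathrm{Im}(\Vien)$ with $\pairs$, which I expect to be the crux, and I would argue the two inclusions separately. For $\mathrm{Im}(\Vien)\subseteq\pairs$: fix $\m$ with $\Vien(\m)=(\la,\widehat{\m})$, write $\la=\sum_{s}\Delta'_{j_s}$ with $\Delta'_{j_l}\smlr\cdots\smlr\Delta'_{j_1}$, and fix a sub-ladder $\Delta_{i_m}\smlr\cdots\smlr\Delta_{i_1}$ of $\m$. The required increasing injection $\phi$ is read off by tracking, along the sweep, which step each point $\Delta_{i_t}$ enters and which entry $\Delta'_{j_{\phi(t)}}$ of $\la$ it eventually feeds; the relation $\lshft\Delta_{i_t}\prec\Delta'_{j_{\phi(t)}}$ then reduces to comparing the three inequalities defining $\prec$ with the local geometry at a single turning corner of the staircase, the left-shift $\lshft$ recording the unit coordinate displacement incurred when a point is replaced by that corner. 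This is a finite local verification once the sweep dynamics are fixed; matching the \emph{non-strict} inequality $e(\lshft\Delta_{i_t})\ge b(\Delta'_{j_{\phi(t)}})-1$ is the part I would expect to be fiddly. For $\pairs\subseteq\mathrm{Im}(\Vien)$: given permissible $(\la,\widehat{\m})$, I would run the reverse of the extraction step --- reinserting the segments of $\la$ into the configuration of $\widehat{\m}$ in the order dictated by the shadow lines --- to obtain a candidate $\m$, and use permissibility to verify (i) that this reverse step is everywhere defined and outputs an element of $\Mult$, so $\m$ is well-defined, and (ii) that the forward sweep applied to $\m$ re-extracts precisely $\la$ as its distinguished first ladder, so $\Vien(\m)=(\la,\widehat{\m})$. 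Together with the injectivity already noted, (i) and (ii) make the forward and reverse constructions mutually inverse, hence $\Vien$ a bijection onto $\pairs$.

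The main obstacle is step (ii): one must show permissibility is \emph{sufficient}, not merely necessary, for the reconstructed $\m$ to carry $\la$ itself --- and not some strictly longer $\smlr$-chain of $\m$ --- as the ladder extracted on the first pass; this is exactly where the strength of the ``increasing injection for every sub-ladder'' requirement is consumed, through a Greene-type optimality of the extracted chain. A shortcut, presumably intended by the citation of \cite[Proposition 2.4]{gur-lap}, is to verify only that the present conventions --- segments as positive roots, the partial order $\smlr$ on $\Seg$, the left-shift $\lshft$, and the monoid $\Mult$ --- reproduce verbatim the data of that statement, and then to invoke it; on that route the proof is pure bookkeeping with no new combinatorial content.
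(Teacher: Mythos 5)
The paper does not prove this statement at all: it is imported wholesale from \cite[Proposition 2.4]{gur-lap}, and the only content the paper supplies is the convention-matching you describe in your final paragraph --- noting (in footnotes) that the permissibility condition here is an equivalent reformulation of the one in \cite{gur-lap}, that the entries $d_{i,j}$ are shifted by $1$ relative to that source, and that the algorithm itself is the one of \cite[Section 2.2.2]{gur-lap}. So your ``shortcut'' route is exactly the paper's route, and on that reading your proposal is fine, provided one actually checks the equivalence of the two formulations of permissibility (the paper dismisses this as a straightforward exercise, but it is the one piece of bookkeeping that is not literally verbatim).

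Read instead as a self-contained proof, your argument has a genuine gap, and it is the one you flag yourself. Both inclusions $\mathrm{Im}(\Vien)\subseteq\pairs$ and $\pairs\subseteq\mathrm{Im}(\Vien)$ are left at the level of assertions: the existence of the increasing injection $\phi$ with $\lshft\Delta_{i_t}\prec\Delta_{j_{\phi(t)}}$ is claimed to be ``a finite local verification'' of the sweep dynamics, and the sufficiency of permissibility --- that the reconstructed $\m$ extracts precisely $\la$ on the first pass rather than a longer $\smlr$-chain --- is attributed to ``a Greene-type optimality'' that is never argued. These two steps are precisely the nontrivial content of \cite[Proposition 2.4]{gur-lap}; everything else in your sketch (that $\la\in\Lad$, that the width drops by one via Greene's theorem, injectivity from invertibility of RSK with the recorded row) is standard. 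So either invoke the citation after the convention check, or supply the two missing arguments; as written, the direct proof does not stand on its own.
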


The combinatorial algorithm defining $\Vien$, which was described in detail in \cite[Section 2.2.2]{gur-lap}, is a manifestations of the Knuth algorithm from \cite{MR0272654}.

\begin{proposition}
The restriction of the map $\Vien$ to the subset $\Mult_i$, for $i\geq2$, results in a bijection
\[
\Vien_i : \Mult_i \rightarrow \pairs_{i-1}\subset \Lad \times \Mult_{i-1}\;.
\]
\end{proposition}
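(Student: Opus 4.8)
The statement asserts that the RSK bijection $\Vien : \Mult\setminus\{0\} \to \pairs$ of the previous proposition restricts, for each $i \geq 2$, to a bijection $\Mult_i \to \pairs_{i-1}$. Since $\Vien$ is already known to be a bijection and we have the disjoint decompositions $\Mult\setminus\{0\} = \bigcup_i \Mult_i$ and $\pairs = \bigcup_i \pairs_i$, it suffices to prove the single containment $\Vien(\Mult_i) \subseteq \pairs_{i-1}$ for all $i \geq 1$ (with the convention $\pairs_0 = \emptyset$ handling the fact that $\Mult_1 = \Lad$ maps out of range); the reverse inclusions then follow automatically by a counting/partition argument, because a bijection that sends each block of one partition into a block of another must match the blocks up exactly. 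So the real content is: \emph{the width drops by exactly one under the first output coordinate of RSK}, i.e. if $\Vien(\m) = (\la_1, \widehat{\m})$ then $\omega(\widehat{\m}) = \omega(\m) - 1$.

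The plan is to unwind the Knuth--Viennot algorithm of \cite[Section 2.2.2]{gur-lap} that defines $\Vien$, and track how $\omega$ behaves. The key structural fact I would isolate first: the width $\omega(\m)$ equals the length of the longest $\smlr$-antichain in the multiset of segments of $\m$ — or dually, $\omega(\m)$ is the minimal number of $\smlr$-chains (ladders) needed to cover the multiset. This is precisely a Dilworth/Greene--Kleitman type statement for the partial order $\smlr$ on $\Seg$, and the shape $\la(\m)$ of the bi-tableau $\RSK(\m)$ records, via Greene's theorem, the sizes of unions of $\smlr$-chains. In particular $\omega(\m)$ should equal the \emph{number of rows} of that shape (equivalently, the first column of the transpose), while the ladder multisegment $\la_1$ extracted as the first output coordinate corresponds to the top row-pair of $(P_\m, Q_\m)$. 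Removing that top row from both tableaux — which is exactly the inductive step $\RSK(\widehat\m) = (\la_2,\dots,\la_\omega)$ described in the introduction — removes one row from the common shape, hence decreases the number of rows by one, hence decreases $\omega$ by one. I would make this precise by citing Greene's theorem (or reproving the needed special case) to identify $\omega$ with the number of parts of $\shape(P_\m)$, then observe that the Knuth--Viennot recursion peels off exactly the first row.

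The remaining bookkeeping is to confirm that the image indeed lands in $\pairs_{i-1}$ and not merely in $\Lad \times \Mult_{i-1}$: that is, that the pair $(\la_1, \widehat{\m})$ is permissible. But this is already guaranteed by the previous proposition, since $\Vien(\Mult_i) \subseteq \Vien(\Mult\setminus\{0\}) = \pairs$, and an element of $\pairs$ whose $\Mult$-coordinate has width $i-1$ lies in $\pairs_{i-1}$ by definition. So once the width-drop identity $\omega(\widehat\m) = \omega(\m) - 1$ is established, the proof concludes: $\Vien$ maps $\Mult_i$ into $\pairs_{i-1}$, injectively since $\Vien$ is injective; surjectivity onto $\pairs_{i-1}$ follows because every element of $\pairs_{i-1}$ is $\Vien(\m)$ for a unique $\m \in \Mult\setminus\{0\}$, and that $\m$ must have $\omega(\m) = i$ by the same identity applied in reverse. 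The main obstacle is the combinatorial core — proving $\omega(\m) = \#\,\shape(P_\m)$ (rows), i.e. connecting the minimal ladder-cover number to the RSK shape; everything downstream is formal partition-matching.
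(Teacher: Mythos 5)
Your plan is correct and follows essentially the same route as the paper: the paper's proof likewise reduces everything to the width-drop identity $\omega(\m')=\omega(\m)-1$ for $\Vien(\m)=(\la,\m')$, obtained by combining the combinatorial interpretation of $\omega$ (cited from \cite[Remark 4.4]{gur-lap} and \cite{me-decomp}, which is your identification of the width with the number of rows of the RSK shape) with standard properties of the Knuth map (your Greene/Dilworth step and the peeling of the top row). The only cosmetic slip is labelling the image of $\Mult_1$ by ``$\pairs_0=\emptyset$''; those images are the permissible pairs with second coordinate $0$, and all that matters is that they are disjoint from $\pairs_{i-1}$ for $i\geq2$, which your block-matching argument already uses.
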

\begin{proof}
As was discussed in \cite[Remark 4.4]{gur-lap}, the width $\omega(\m)$ of a multisegment $\m\in \Mult$ may be given a combinatorial interpretation using the results of \cite{me-decomp}. Standard properties of the Knuth map then imply that $\omega(\m') = \omega(\m)-1$, whenever $\Vien(\m) = (\la,\m')$.
\end{proof}

Given a multisegment $\m\in \Mult_d$, we may apply the map Knuth-Viennot map recursively:
\[
\Vien(\m)= (\la_1,\m_1),\; \Vien(\m_1) = (\la_2, \m_2),\;\ldots , \Vien(\m_{d-2}) = (\la_{d-1}, \la_d) \in \pairs_1 \subset \Lad\times \Lad\;.
\]
We take the resulting $d$ ladders
\[
\RSK(\m) = (\la_1, \la_2,\ldots,\la_d)\in \Lad^d
\]
as the \textit{RSK-transform} of $\m$.

Let us reformulate the information encoded in $\RSK(\m)$ into a combinatorial presentation. Recall (Section \ref{sec:homog}) that ladder multisegments segments are uniquely described as
\[
\la_i = \m(c_i,d_i)\;,
\]
for $i=1,\ldots, d$, and given tuples of integers
\[
c_i = (c_{i,1} > \ldots > c_{i,\lambda_i}),\quad d_i = (d_{i,1} > \ldots >d_{i,\lambda_i})\;.
\]

We know (again, by \cite[Proposition 2.4]{gur-lap}) that $(\la_i,\la_j)\in \pairs$, for all $1\leq i<j\leq d$. It then easily follows from the permissibility condition that $\lambda_1 \geq \ldots \geq \lambda_d$, and that $c_{1,j} \geq c_{2,j} \geq \ldots$ holds, for each index $j$.

In particular, we obtain a pair of (inverted\footnote{Strictly descending rows, and weakly descending columns.}) semi-standard Young tableaux\footnote{Note, that our current convention is slightly different from that of \cite{gur-lap}: Our $d_{i,j}$ stands for $d_{i,j}+1$ in the conventions of that source.}

\ytableausetup{mathmode,boxsize=2em}
\[
P_{\m}=\begin{ytableau}
c_{1,1} & c_{1,2} & \dots & c_{1,\lambda_2}
& \dots & c_{1,\lambda_1} \\
c_{2,1}    & c_{2,2} & \dots
& c_{2,\lambda_2} \\
\vdots & \vdots
& \vdots \\
c_{k,1} & \dots & c_{k,\lambda_k}
\end{ytableau}\quad ,\quad
\ \ \
Q_{\m}=\begin{ytableau}
d_{1,1} & d_{1,2} & \dots & d_{1,\lambda_2}
& \dots & d_{1,\lambda_1} \\
d_{2,1}    & d_{2,2} & \dots
& d_{2,\lambda_2} \\
\vdots & \vdots
& \vdots \\
d_{k,1} & \dots & d_{k,\lambda_k}
\end{ytableau}
\]

of equal shape, whose rows are given by the partition $\lambda(\m) = (\lambda_1,\ldots,\lambda_d)$ of the integer $|\la_1|+\ldots + |\la_d| $.

\begin{proposition}\label{eq:preser}
  For any $0\neq \m\in \Mult$ with $\RSK(\m) = (\la_1,\ldots,\la_d)$, we have the equalities
 \[
\mathfrak{b}(\m) = \mathfrak{b}(\la_1) +\ldots+ \mathfrak{b}(\la_d),\quad \mathfrak{e}(\m) = \mathfrak{e}(\la_1) + \ldots+  \mathfrak{e}(\la_d)
\]
in $Q_+$.

In particular, $\lambda(\m)$ is a partition of the integer $|\m|$.
\end{proposition}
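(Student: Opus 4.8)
The plan is to recognize the asserted identities as the content-preservation property of the Robinson--Schensted--Knuth correspondence and to read it off from the bi-tableau $(P_\m,Q_\m)$. Recall that $P_\m$ and $Q_\m$ are, by construction, the insertion and recording tableaux produced by the Knuth--Viennot procedure $\Vien$ applied to $\m$ read as a biword, one of whose lines records the begin-points $b(\Delta)$ of the segments of $\m$ and the other the end-points $e(\Delta)$ (up to the uniform $+1$ shift in the second coordinate built into the convention for $Q_\m$ noted in the text). The $i$-th row of $P_\m$ is exactly the tuple $c_i=(c_{i,1}>\ldots>c_{i,\lambda_i})$ of begin-points of the ladder $\la_i$, and the $i$-th row of $Q_\m$ is the tuple $d_i$, so that the end-points of the segments of $\la_i=\m(c_i,d_i)$ are $d_{i,1}-1,\ldots,d_{i,\lambda_i}-1$. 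Consequently
\[
\sum_{i=1}^d \mathfrak{b}(\la_i)=\sum_{i=1}^d\sum_{j=1}^{\lambda_i}\alpha_{c_{i,j}}=\sum_{\text{entries }c\text{ of }P_\m}\alpha_c ,\qquad \sum_{i=1}^d \mathfrak{e}(\la_i)=\sum_{i=1}^d\sum_{j=1}^{\lambda_i}\alpha_{d_{i,j}-1}.
\]
So the whole statement reduces to: the multiset of entries of $P_\m$ (with multiplicity) is the multiset of begin-points of $\m$, and the multiset of entries of $Q_\m$ is the multiset of (shifted) end-points of $\m$.

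To establish this I would argue by induction on $\omega(\m)$, using the recursive definition $\Vien(\m)=(\la_1,\m_1)$, $\RSK(\m_1)=(\la_2,\ldots,\la_d)$, together with the evident additivity $\mathfrak{b}(\m'+\m'')=\mathfrak{b}(\m')+\mathfrak{b}(\m'')$, and likewise for $\mathfrak{e}$. The case $\omega(\m)=1$ is vacuous, since then $\RSK(\m)=(\m)$. For the inductive step it suffices to show that a single application of $\Vien$ preserves begin- and end-point multisets, i.e.\ $\Vien(\m)=(\la,\m')$ implies $\mathfrak{b}(\m)=\mathfrak{b}(\la)+\mathfrak{b}(\m')$ and $\mathfrak{e}(\m)=\mathfrak{e}(\la)+\mathfrak{e}(\m')$; one then applies the inductive hypothesis to $\m_1$. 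This single-step claim is precisely the standard bookkeeping invariant of RSK row-insertion and of the passage from a biword to its first row: each letter inserted into a row is either appended or bumps another letter down to the next row, so the process merely redistributes the top letters of the input biword (and, separately, the bottom letters) between the extracted row recorded in $\la$ and the residual biword recorded in $\m'$, without creating or destroying any. I would verify this directly against the explicit rules of \cite[Section 2.2.2]{gur-lap}, keeping track of which of the two lines of the biword feeds the insertion tableau and which feeds the recording tableau in that implementation.

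The last clause is then immediate: $\lambda_i=|\la_i|$ is the number of segments of $\la_i$, which equals the height $|\mathfrak{b}(\la_i)|$, so by the first identity and additivity of height $\sum_{i=1}^d\lambda_i=\bigl|\sum_{i=1}^d\mathfrak{b}(\la_i)\bigr|=|\mathfrak{b}(\m)|=|\m|$; since $\lambda_1\ge\ldots\ge\lambda_d$ was already observed from the permissibility of the pairs $(\la_i,\la_j)$, the tuple $\lambda(\m)$ is a partition of $|\m|$.

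The main obstacle is essentially notational rather than mathematical: one must pin down exactly which line of the biword encoding of $\m$ is the insertion line and which is the recording line in the implementation of $\Vien$ from \cite{gur-lap}, and how the $+1$ shift conventions interact with the definition of $\mathfrak{e}$, so that the ``content'' reading of the rows of $P_\m$ and $Q_\m$ matches $\mathfrak{b}(\m)$ and $\mathfrak{e}(\m)$ on the nose. Once that dictionary is fixed the identities are just content-preservation of RSK and require no genuine combinatorial work; the only mildly delicate point is confirming that bumping in the ``inverted semistandard'' (strictly descending rows) convention still transports values unchanged, which it does — the inversion affects only which entry is selected for bumping, not the fact that a value is moved intact to the next row.
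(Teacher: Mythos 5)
Your proposal is correct and follows essentially the same route as the paper: both reduce the claim to the single-step identity $\mathfrak{b}(\m)=\mathfrak{b}(\la)+\mathfrak{b}(\m')$, $\mathfrak{e}(\m)=\mathfrak{e}(\la)+\mathfrak{e}(\m')$ for $\Vien(\m)=(\la,\m')$ and then induct on the width, reading that identity off the Knuth algorithm (the paper notes it is equally visible from the explicit description of $\Vein$, where begin points are kept and end points are merely permuted by $\sigma$).
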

\begin{proof}
It follows directly from the Knuth algorithm description (or from the description of $\Vein$ explicated in the next section) that for $\Vien(\m) = (\la_1,\m')$, we have
 \[
\mathfrak{b}(\m) = \mathfrak{b}(\la_1) + \mathfrak{b}(\m'),\quad \mathfrak{e}(\m) = \mathfrak{e}(\la_1) +  \mathfrak{e}(\m')\;.
\]
Since $\RSK(\m') = (\la_2,\ldots,\la_d)$, the statement follows inductively.
\end{proof}

\subsection{Algorithm description}\label{sec:algo}

In this work, we will be interested in a (equally explicit) description of the inverse map $\Vein : \pairs \to \Mult\setminus \{0\}$ to $\Vien$. Let us describe the map in detail.

Let $(\la,\m)\in \pairs$ be given.

Again, we write $\m = \sum_{i\in I}\Delta_i$ and $\la =  \sum_{j\in J} \Delta_{j} \in \Lad$ for disjoint index sets $I,J$.

Let us denote $J = \{j_1,\ldots j_l\}$, with $\Delta_{j_{l}} \smlr\ldots\smlr \Delta_{j_1}$.

We also assume that $I$ is linearly ordered by a fixed relation $<$, satisfying
\begin{equation}\label{eq:ordered}
b(\Delta_{i_1}) > b(\Delta_{i_2}) \quad \text{ or } \quad \left\{\begin{array}{l} b(\Delta_{i_1}) = b(\Delta_{i_2}) \\ e(\Delta_{i_1}) \leq  e(\Delta_{i_2})\end{array}\right.\;,
\end{equation}
for all $i_1> i_2$ in $I$.

For each $i\in I$, we take note of the number
\[
\depth'_{\la,\m}(i) = \max\{t :\lshft\Delta_i\smlr\Delta_{j_t}\}\;.
\]

Next, for each $i\in I$, we define
\[
\depth_{\la,\m}(i) =\min\{\depth'_{\la,\m}(i_k)-k:\exists i=i_0,\dots,i_k\in I \text{ such that }\Delta_{i_{r+1}}\smlr\Delta_{i_r}, r=0,\dots,k-1\}\;.
\]
For convenience, we extend the domain of the function $\depth_{\la,\m}$ to $I\cup J$, by setting $\depth_{\la,\m}(j_t) = t$.

Let $\sigma$ be a permutation on the index set $I\cup J$, given by its decomposition into the following disjoint cycles:
\[
(i_1, \ldots , i_s, j_t),\;
\]
for each $1\leq t\leq  l$, where
\[
\{i_1 > \ldots > i_s\}\cup \{j_t\} = \depth_{\la,\m}^{-1}(t)\;.
\]
For $i\in I\cup J$, we write $i_\# = \sigma(i)$.

We can now set a new multisegment
\[
\Vein(\la,\m)=\sum_{i\in I\cup J}\Delta^{\clubsuit}_i\in \Mult,\;
\]
by defining $\Delta^{\clubsuit}_i=\Delta(b(\Delta_{i}),e(\Delta_{i_\#}))\in \Seg$.\footnote{Note the slight difference in notation from \cite{gur-lap}.}

\subsubsection{Properties of $\Vein$.}

Note, that $e(\Delta_i)\leq  e( \Delta_{i_\#})= e(\Delta^{\clubsuit}_i)$, for all $i\in I$, while $e(\Delta_{j_\#}) \leq e(\Delta_{j})$, for $j\in J$.

For $i\in I\cup J$, we set $i^\vee = \sigma^{-r}(i)$, where $r\geq 1$ is the minimal power for which
\[
e(\Delta^{\clubsuit}_{\sigma^{-r}(i)}) \neq e(\Delta_{\sigma^{-r}(i)})
\]
holds, or $i^\vee = j_{\depth_{\la,\m}(i)}$, if such power does not exist.

Note, that $e(\Delta^{\clubsuit}_{i^\vee}) = e(\Delta_i)$.

When $i,i^\vee\in I$, we have $i\leq  (i^\vee)_\# < i^\vee$ and $e(\Delta_i) = e(\Delta_{(i^\vee)_\#})$.

When $i,i_\#, (i_\#)^\vee \in I$, we have $i_\# < i \leq (i_\#)^\vee$ and either $e(\Delta_{(i_\#)^\vee})= e(\Delta_{i})< e(\Delta_{i_\#})$ or $e(\Delta_{(i_\#)^\vee}) < e(\Delta_{i})= e(\Delta_{i_\#})$.

\begin{lemma}\label{lem:techdep}
Let $i\in I$ be an index and $n$ an integer, with $\depth_{\la,\m}(i) < n \leq \depth'_{\la \m}(i)$. Then, there exists $i(n)\in I$, such that $\Delta_{i(n)} \smlr \Delta_i$ and $\depth_{\la,\m}(i(n))=n$.

\end{lemma}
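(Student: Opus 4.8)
The statement to prove is Lemma \ref{lem:techdep}: given $i\in I$ with $\depth_{\la,\m}(i) < n \le \depth'_{\la,\m}(i)$, we must exhibit some $i(n)\in I$ with $\Delta_{i(n)} \smlr \Delta_i$ and $\depth_{\la,\m}(i(n)) = n$. The plan is to unwind the definition of $\depth_{\la,\m}$ as a minimum over chains of sub-ladders. By that definition there is a chain $i = i_0, i_1, \ldots, i_k \in I$ with $\Delta_{i_{r+1}} \smlr \Delta_{i_r}$ for $r = 0, \ldots, k-1$ and with $\depth'_{\la,\m}(i_k) - k = \depth_{\la,\m}(i)$. Since $n > \depth_{\la,\m}(i) = \depth'_{\la,\m}(i_k) - k$, and since each prefix $i = i_0, \ldots, i_r$ of this chain witnesses the inequality $\depth_{\la,\m}(i) \le \depth'_{\la,\m}(i_r) - r$ (hence actually computes the depth of $i_0$ along this chain, though we only need one direction here), I would look at the sequence of values $\depth'_{\la,\m}(i_r) - r$ for $r = 0, 1, \ldots, k$ and argue by a discrete intermediate value argument.

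First I would record two facts. Fact (a): $\depth'_{\la,\m}(i_0) - 0 = \depth'_{\la,\m}(i) \ge n$ by the hypothesis $n \le \depth'_{\la,\m}(i)$. Fact (b): $\depth'_{\la,\m}(i_k) - k = \depth_{\la,\m}(i) < n$ by the hypothesis and the choice of the chain as a minimizer (or at least as some chain realizing a value $< n$; it suffices to have one chain with terminal value $< n$, which the minimum provides). Next I need the key monotonicity/step-size estimate: when $\Delta_{i_{r+1}} \smlr \Delta_{i_r}$, the quantity $\depth'_{\la,\m}$ drops by at most... here one must be careful about the direction of $\smlr$ and the direction of the order on $J$. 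The function $\depth'_{\la,\m}(i) = \max\{t : \lshft\Delta_i \smlr \Delta_{j_t}\}$ is monotone in the following sense: if $\Delta_{i'} \smlr \Delta_i$ then $\lshft\Delta_{i'} \smlr \lshft\Delta_i$, and since $\Delta_{j_l} \smlr \cdots \smlr \Delta_{j_1}$, whenever $\lshft\Delta_i \smlr \Delta_{j_t}$ we also get $\lshft\Delta_{i'} \smlr \Delta_{j_t}$ (by transitivity of $\smlr$, after checking $\lshft\Delta_{i'} \smlr \lshft\Delta_i \smlr \Delta_{j_t}$, or more carefully $\lshft\Delta_{i'} \smlr \Delta_{j_t}$ directly), so $\depth'_{\la,\m}(i') \ge \depth'_{\la,\m}(i)$. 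Therefore $\depth'_{\la,\m}(i_{r+1}) \ge \depth'_{\la,\m}(i_r)$, which means the sequence $a_r := \depth'_{\la,\m}(i_r) - r$ satisfies $a_{r+1} \ge a_r - 1$: it decreases by at most $1$ at each step. Combined with $a_0 \ge n$ and $a_k < n$, a discrete intermediate value argument produces an index $r^*$ with $a_{r^*} = n$ exactly, i.e. $\depth'_{\la,\m}(i_{r^*}) - r^* = n$ — wait, that gives a candidate but not yet $\depth_{\la,\m}(i_{r^*}) = n$.

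To upgrade from $\depth'_{\la,\m}(i_{r^*}) - r^* = n$ to the actual depth, set $i(n) := i_{r^*}$. Since $i_0, \ldots, i_{r^*}$ is a sub-ladder chain ending at $i_{r^*}$ realizing the value $\depth'_{\la,\m}(i_{r^*}) - r^*$ — no, I want a chain \emph{starting} at $i(n) = i_{r^*}$. I would instead argue as follows: on one hand, $\depth_{\la,\m}(i_{r^*}) \le \depth'_{\la,\m}(i_{r^*}) - 0 = n + r^* $; that is too weak. The cleaner route: use the tail $i_{r^*}, i_{r^*+1}, \ldots, i_k$ as a chain starting at $i_{r^*}$; it gives $\depth_{\la,\m}(i_{r^*}) \le \depth'_{\la,\m}(i_k) - (k - r^*) = \depth_{\la,\m}(i) + r^* < n + r^*$ — still not sharp. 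So the real content is a \textbf{superadditivity/compatibility lemma for $\depth$}: for $i' \in I$ with $\Delta_{i'} \smlr \Delta_i$ reachable in a chain of length $r$ below $i$, one has $\depth_{\la,\m}(i') \ge \depth_{\la,\m}(i) + r$, and dually choosing the chain optimally one gets equality when $i'$ lies on an optimal chain for $i$; equivalently $\depth_{\la,\m}(i_r) = \depth_{\la,\m}(i_0) + r$ along an optimal chain. Granting that, $\depth_{\la,\m}(i_{r^*}) = \depth_{\la,\m}(i) + r^*$; but I also want this to equal $n$, so I should re-run the intermediate value argument on $b_r := \depth_{\la,\m}(i_r)$ instead of on $a_r$. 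Since along an optimal chain $b_r = \depth_{\la,\m}(i) + r$ is strictly increasing by exactly $1$, and $b_0 = \depth_{\la,\m}(i) < n$ while $b_k = \depth_{\la,\m}(i_k)$; I need $b_k \ge n$, which follows because $\depth_{\la,\m}(i_k) \ge \depth'_{\la,\m}(i_k) - 0$? No. Let me just say: the $b_r$ hit every integer from $\depth_{\la,\m}(i)$ up to $\depth_{\la,\m}(i) + k = \depth'_{\la,\m}(i_k)$, and since $\depth_{\la,\m}(i) < n \le \depth'_{\la,\m}(i) \le \depth'_{\la,\m}(i_k)$ (the last by monotonicity of $\depth'$), the value $n$ is in this range, so $n = b_{r^*}$ for some $r^*$, and $i(n) := i_{r^*}$ works, with $\Delta_{i(n)} \smlr \Delta_i$ coming from transitivity of the chain $i_0 \smlr$-dominating down to $i_{r^*}$ (when $r^* \ge 1$; the case $r^* = 0$ is excluded since $b_0 = \depth_{\la,\m}(i) < n$).

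\textbf{Main obstacle.} The crux is establishing the exact additivity $\depth_{\la,\m}(i_r) = \depth_{\la,\m}(i) + r$ along an optimal chain (and the inequality $\depth_{\la,\m}(i') \ge \depth_{\la,\m}(i) + (\text{chain length})$ in general). The $\ge$ direction should follow by concatenating chains: any chain realizing $\depth_{\la,\m}(i')$ can be prepended with $i = i_0 \smlr \cdots \smlr i_r = i'$ to give a chain from $i$, showing $\depth_{\la,\m}(i) \le \depth'_{\la,\m}(\text{terminus}) - (r + \text{len})$, hence $\depth_{\la,\m}(i) + r \le \depth'_{\la,\m}(\text{terminus}) - \text{len}$; minimizing over chains for $i'$ gives $\depth_{\la,\m}(i) + r \le \depth_{\la,\m}(i')$. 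The reverse inequality along an optimal chain for $i$ is where one must argue that the tail of an optimal chain is optimal for its starting point — a standard "subpaths of shortest paths are shortest" type argument, which here works because the definition is a min over terminal data $\depth'(\cdot) - \text{length}$ and lengths add. I would isolate this as a short preliminary observation (monotonicity of $\depth'$ under $\smlr$, and the Bellman-type recursion $\depth_{\la,\m}(i) = \min(\depth'_{\la,\m}(i),\ \min_{\Delta_{i'} \smlr \Delta_i} (\depth_{\la,\m}(i') - 1) )$ — or rather $+1$ going the other way), and then the lemma falls out by the discrete intermediate value theorem applied to $r \mapsto \depth_{\la,\m}(i_r)$ along an optimal chain.
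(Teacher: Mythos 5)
Your final argument is correct and is essentially the paper's proof: the paper also takes a minimizing chain $i=i_0,\dots,i_k$, sets $i(n)=i_p$ with $p=n-\depth_{\la,\m}(i)$ (guaranteed to exist via the monotonicity $\depth'_{\la,\m}(i)\leq\depth'_{\la,\m}(i_k)$), and gets $\depth_{\la,\m}(i_p)\leq n$ from the tail chain and $\depth_{\la,\m}(i_p)\geq n$ from the concatenation inequality $\depth_{\la,\m}(i)\leq\depth_{\la,\m}(i_p)-p$ — exactly the two bounds behind your additivity claim $\depth_{\la,\m}(i_r)=\depth_{\la,\m}(i)+r$. The initial detour through $a_r=\depth'_{\la,\m}(i_r)-r$ is unnecessary, but your corrected route coincides with the paper's.
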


\begin{proof}

Let us write $m = \depth'_{\la, \m}(i) - \depth_{\la,\m}(i)$, and $p = n- \depth_{\la ,\m}(i) \leq m$.

Suppose that $\Delta_{i_k} \smlr \ldots \smlr \Delta_{i_1} \smlr \Delta_{i_0}$ are segments, such that $i_0=i$ and $\depth'_{\la,\m}(i_k) = \depth_{\la,\m}(i) + k$.  Since $\depth'_{\la,\m}(i) \leq \depth'_{\la,\m}(i_k) $, we must have $m\leq k$.

Now, we set $i(n):= i_p$. Then, $\depth_{\la,\m}(i(n)) \leq   \depth'_{\la,\m}(i_k) - (k-p) = n$. Yet, it also follows from the definition of the depth function that $\depth_{\la,\m}(i) \leq \depth_{\la,\m}(i(n)) -p$.

\end{proof}

A particular simple corollary is that the equality
\begin{equation}\label{eq:recu}
\depth_{\la,\m}(i)=\min(\depth'_{\la,\m}(i),\{\depth_{\la,\m}(s)-1:s\in I, \Delta_s\smlr\Delta_i\})
\end{equation}
holds, for all $i\in I$.

\begin{lemma}\label{lem:import}
\begin{enumerate}
  \item\label{eq:imp-1}   Let $i,i'\in I$ be indices, such that $e(\Delta_i) = e(\Delta_{i'})$ and $b(\Delta_i)< b(\Delta_{i'}) \leq b(\Delta_{i^\vee})$. Then, $\depth_{\la,\m}(i')= \depth_{\la,\m}(i)$ and $\Delta^{\clubsuit}_{i'}= \Delta_{i'}$.
  \item\label{eq:impmore} Let $i\in J$ and $i'\in I$ be indices, such that $e(\Delta_i) = e(\Delta_{i'})$ and $b(\Delta_{i'}) \leq b(\Delta_{i^\vee})$. Then, $\depth_{\la,\m}(i')= \depth_{\la,\m}(i)$.
  \item\label{eq:imp-2}  Let $i\in I\cup J$ and $i'\in I$ be indices, such that $b(\Delta_i)= b(\Delta_{i'})$ and $e(\Delta_i) < e(\Delta_{i'})\leq e(\Delta_{i_\#})$. Then, $\depth_{\la,\m}(i')= \depth_{\la,\m}(i)$ and $\Delta_{i_\#}= \Delta_{i'}$.
\end{enumerate}

\end{lemma}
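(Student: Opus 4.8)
The plan is to prove the three parts by unwinding the definition of $\Vein$ together with the recursive formula \eqref{eq:recu} for $\depth_{\la,\m}$, exploiting the precise description of the cyclic structure of the permutation $\sigma$ recorded after Lemma \ref{lem:techdep}. The governing principle is that $\depth_{\la,\m}$ organizes $I\cup J$ into the fibers $\depth_{\la,\m}^{-1}(t)$, $1\le t\le l$, and $\sigma$ cyclically permutes each fiber (in the order induced by $<$), closing the cycle through the single index $j_t\in J$ it contains; so two indices share a $\sigma$-cycle precisely when they share a depth, and the relation $i^\vee$ traces back along that cycle to the first index whose $\Delta^\clubsuit$-endpoint differs from its $\Delta$-endpoint. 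I would first record, as a preliminary, the elementary monotonicity facts: if $\Delta_s\smlr\Delta_i$ then $\depth_{\la,\m}(s)\le \depth_{\la,\m}(i)-1<\depth_{\la,\m}(i)$, and $\depth'_{\la,\m}$ is weakly monotone in the obvious sense; these are immediate from \eqref{eq:recu} and the definition of $\depth'$.

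For part \eqref{eq:imp-1}: given $i,i'\in I$ with $e(\Delta_i)=e(\Delta_{i'})$ and $b(\Delta_i)<b(\Delta_{i'})\le b(\Delta_{i^\vee})$, I would argue the depth equality by a squeeze. On one hand $\Delta_i$ and $\Delta_{i'}$ share an endpoint and $b(\Delta_i)<b(\Delta_{i'})$, so any sub-ladder of $\m$ descending from $i'$ can be extended downward by $i$ (using $\Delta_{i'}\smlr\Delta_i$ fails, but rather $\Delta_i\smlr\Delta_{i'}$ need not hold either — here one uses instead that $i'$ sits strictly between $i$ and $i^\vee$ in the cyclic order of the shared depth class, which forces $\depth_{\la,\m}(i')=\depth_{\la,\m}(i)$ by the defining property of $i^\vee$, namely that $e(\Delta^\clubsuit)$ agrees with $e(\Delta)$ along the arc from $i$ to $i^\vee$). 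Once $\depth_{\la,\m}(i')=\depth_{\la,\m}(i)=:t$, both $i$ and $i'$ lie in $\depth_{\la,\m}^{-1}(t)$; since $b(\Delta_{i'})\le b(\Delta_{i^\vee})$ and $i'$ precedes $i^\vee$ (but follows $i$) in the $<$-order, $i'$ is one of the indices on the arc where the $\Delta$-endpoint is preserved under $\sigma$, i.e. $e(\Delta_{(i')_\#})=e(\Delta_{i'})$, which is exactly $\Delta^\clubsuit_{i'}=\Delta_{i'}$.

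Part \eqref{eq:impmore} is the same squeeze with $i$ now the distinguished element $j_t\in J$ of its depth class — here $\depth_{\la,\m}(j_t)=t$ by the convention, and $i^\vee$ reaching $j_{\depth_{\la,\m}(i)}$ when no earlier endpoint-discrepancy exists is precisely the clause that makes the arc from $i'$ back to $j_t$ lie inside a single depth fiber, giving $\depth_{\la,\m}(i')=t=\depth_{\la,\m}(i)$. Part \eqref{eq:imp-2} is the ``dual'' statement, obtained by running the same analysis with begin- and end-points swapped and $i_\#=\sigma(i)$ in place of $i^\vee=\sigma^{-r}(i)$: from $b(\Delta_i)=b(\Delta_{i'})$, $e(\Delta_i)<e(\Delta_{i'})\le e(\Delta_{i_\#})$ one reads off that $i'$ lies on the forward arc from $i$ to $i_\#$, hence in the same depth class, and that $i_\#$ — being the $\sigma$-image of $i$ — must coincide with $i'$ because $\Delta_{i_\#}$ is forced to be the segment with that begin-point and an end-point in the prescribed range, of which $\Delta_{i'}$ is the unique candidate among the arc (again using the endpoint-preservation property along the cycle). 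The main obstacle is the bookkeeping in part \eqref{eq:imp-1}'s first step: proving $\depth_{\la,\m}(i')=\depth_{\la,\m}(i)$ rather than merely $\le$ requires carefully combining Lemma \ref{lem:techdep} (to produce an index realizing each intermediate depth) with the hypothesis $b(\Delta_{i'})\le b(\Delta_{i^\vee})$, since without the latter $i'$ could belong to a strictly smaller depth class. Everything else is a routine, if somewhat intricate, chase through the cycle structure of $\sigma$ and the inequalities already catalogued before the lemma.
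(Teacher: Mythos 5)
Your proposal correctly identifies the ingredients (the recursion \eqref{eq:recu}, the cycle structure of $\sigma$, Lemma \ref{lem:techdep}, and the role of the hypothesis $b(\Delta_{i'})\leq b(\Delta_{i^\vee})$), and the easy half of the lemma --- that once $\depth_{\la,\m}(i')=\depth_{\la,\m}(i)$ is known, the conclusions $\Delta^{\clubsuit}_{i'}=\Delta_{i'}$ (resp.\ $\Delta_{i_\#}=\Delta_{i'}$) follow from the ordering \eqref{eq:ordered} and the endpoint-preservation along the arc defining $i^\vee$ --- is in line with what the paper leaves as ``easily follows''. But the central claim, the depth equality itself, is not actually proved in your text: your argument for it is circular. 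You justify $\depth_{\la,\m}(i')=\depth_{\la,\m}(i)$ by saying that $i'$ ``sits strictly between $i$ and $i^\vee$ in the cyclic order of the shared depth class'', yet membership of $i'$ in that depth class (equivalently, in that $\sigma$-cycle) \emph{is} the statement to be proved; the hypotheses only constrain the begin and end points of $\Delta_{i'}$, and a priori a long $\smlr$-chain descending from $i'$ could force $\depth_{\la,\m}(i')$ to be strictly smaller. You acknowledge this yourself at the end (``the main obstacle \ldots''), but flagging the obstacle is not the same as resolving it, and for this lemma that obstacle is essentially all of the content.

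The paper closes exactly this gap in two moves that are absent from your proposal. First, the monotonicity hypotheses give $\depth_{\la,\m}(i')\leq\depth_{\la,\m}(i)$, and, setting $j=j_{\depth_{\la,\m}(i)}$, the inequalities $b(\Delta_{i^\vee})\leq b(\Delta_j)$ and $e(\Delta_{i_\#})\leq e(\Delta_j)$ combined with the assumed bounds yield $\lshft\Delta_{i'}\smlr\Delta_j$, hence the key lower bound $\depth'_{\la,\m}(i')\geq\depth_{\la,\m}(i)$. Second, assuming $\depth_{\la,\m}(i')<\depth_{\la,\m}(i)$, Lemma \ref{lem:techdep} (applicable precisely because of that lower bound) produces $i_1\in I$ with $\Delta_{i_1}\smlr\Delta_{i'}$ and $\depth_{\la,\m}(i_1)=\depth_{\la,\m}(i)$; in cases (1) and (2) the resulting inequalities $e(\Delta_{i_1})<e(\Delta_i)$, $b(\Delta_{i_1})<b(\Delta_{i'})\leq b(\Delta_{i^\vee})$ contradict the minimality property defining $i^\vee$, and in case (3) the analogous inequalities contradict the defining property of $i_\#$. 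Without this lower-bound-plus-contradiction mechanism (or a substitute for it), the ``squeeze'' you describe does not close, so the proposal as written has a genuine gap.
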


\begin{proof}

Once the equality $\depth_{\la,\m}(i')= \depth_{\la,\m}(i)$ is established, the rest of the statements will easily follow.

In cases $\eqref{eq:imp-1}$ and $\eqref{eq:imp-2}$, the inequalities $b(\Delta_i)\leq b(\Delta_{i'})$ and $e(\Delta_i)\leq e(\Delta_{i'})$ imply that $\depth_{\la,\m}(i')\leq \depth_{\la,\m}(i)$. The same is implied by $e(\Delta_{i'}) = e(\Delta_i)$ in case $\eqref{eq:impmore}$.

Fix $j =j_{\depth_{\la,\m}(i)}$. 
Since $b(\Delta_{i^\vee}) \leq b(\Delta_j)$ and $e(\Delta_{i_\#})\leq e(\Delta_j)$ hold, the assumed inequalities imply in all cases that $\lshft\Delta_{i'} \smlr \Delta_j$. Hence, $\depth'_{\la,\m}(i')\geq \depth_{\la,\m}(i)$.

Assume now the contrary, that is, $\depth_{\la,\m}(i')< \depth_{\la,\m}(i)$. Then, by Lemma \ref{lem:techdep}, there is an index $i_1 \in I$, for which $\Delta_{i_1} \smlr \Delta_{i'}$ and $\depth_{\la,\m}(i_1) =\depth_{\la,\m}(i)$.



In cases $\eqref{eq:imp-1}$ and $\eqref{eq:impmore}$, the relations
\[
\left\{\begin{array}{l} e(\Delta_{i_1})< e(\Delta_{i'}) = e(\Delta_i) \\ b(\Delta_{i_1})< b(\Delta_{i'})\leq b(\Delta_{i^\vee}) \end{array}\right.
\]
become a contradiction to the minimality property defining $i^\vee$.

In case $\eqref{eq:imp-2}$, a similar contradiction to the defining property of $i_\#$ is deduced from
\[
\left\{\begin{array}{l} e(\Delta_{i_1})< e(\Delta_{i'})\leq e(\Delta_{i_\#}) \\ b(\Delta_{i_1})< b(\Delta_{i'})= b(\Delta_{i}) \end{array}\right.\;.
\]

\end{proof}

\subsection{RSK-standard modules}

Let $(\la,\m)\in \pairs$ be a permissible pair, and $\n = \Vein (\la,\m)$.

The ladder multisegment $\la = \m(\lambda,\mu)$ gives rise to a homogeneous representation $\Xi(\lambda,\mu)= L_{\la}\in \irr_{\mathcal{D}}$, while $\m,\n$ give rise to $L_{\m}, L_{\n}\in \irr_{\mathcal{D}}$ through the Kleshchev-Ram procedure of Section \ref{sec:kr-class}.

The key result \cite[Theorem 4.3]{gur-lap} may be imported to the quiver Hecke algebra setting in the following form.

\begin{theorem}\label{thm:graded-quot}
  For $(\la,\m)\in \pairs$, there is integer $\widetilde{\Lambda}(\la,\m)\in \mathbb{Z}$, so that $L_{\Vein(\la,\m)}\langle -\widetilde{\Lambda}(\la,\m)\rangle$ is the head of $L_{\la}\circ L_{\m}$.
\end{theorem}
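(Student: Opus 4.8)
The plan is to transport the statement back to the $p$-adic side via the monoidal equivalence $\mathcal{E}_\rho$ of Theorem \ref{thm:fullequi} and Proposition \ref{prop:monoid}, invoke the already-established \cite[Theorem 4.3]{gur-lap}, and then upgrade the resulting ungraded statement to a graded one. First I would fix any $\rho\in \cusp_m$ and apply $\mathcal{E}_\rho^{-1}$. Since $\mathcal{E}_\rho$ is a monoidal equivalence taking $L_\mu$ to $Z(\mu)^{\fgt}$ (as recorded at the end of Section \ref{sec:padic}), the module $L_\la^{\fgt}\circ L_\m^{\fgt}$ corresponds to $Z(\la)\times Z(\m)$, and $L_{\Vein(\la,\m)}^{\fgt}$ corresponds to $Z(\Vein(\la,\m))$. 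By \cite[Theorem 4.3]{gur-lap}, $Z(\la)\times Z(\m)$ has simple head isomorphic to $Z(\Vein(\la,\m))$. Pulling this back through the equivalence, $L_\la^{\fgt}\circ L_\m^{\fgt}$ has simple head isomorphic to $L_{\Vein(\la,\m)}^{\fgt}$ in $R(\beta)-\nmod$, where $\beta = \wt(\la)+\wt(\m) = \wt(\Vein(\la,\m))$ (the last equality by Proposition \ref{eq:preser}).

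Next I would promote this to the graded category. The product $L_\la\circ L_\m$ is a genuine object of $R(\beta)-\gmod$, being the induction of the outer tensor product of two self-dual graded modules; its grading-forgetful image is $L_\la^{\fgt}\circ L_\m^{\fgt}$, whose head is $L_{\Vein(\la,\m)}^{\fgt}$, simple. By \eqref{eq:homsp}, the space $\bigoplus_{k\in\mathbb{Z}}\Hom_{R(\beta)-\gmod}(L_\la\circ L_\m,\, L_{\Vein(\la,\m)}\langle k\rangle)$ is one-dimensional (it equals the one-dimensional ungraded Hom space, since $L_{\Vein(\la,\m)}^{\fgt}$ is the simple head with multiplicity one and is its own endomorphism ring $\CC$). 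Hence there is a unique integer $k$ with a nonzero degree-zero map $L_\la\circ L_\m \twoheadrightarrow L_{\Vein(\la,\m)}\langle k\rangle$; set $\widetilde{\Lambda}(\la,\m) = -k$. Because the grading-forgetful functor is exact and faithful and detects the head (a graded module is simple iff its forgetful image is, and the radical is a graded submodule), the graded head of $L_\la\circ L_\m$ is exactly $L_{\Vein(\la,\m)}\langle -\widetilde{\Lambda}(\la,\m)\rangle$, which is what is claimed.

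The only genuinely delicate point is the bookkeeping that the graded head is \emph{simple} and not merely has a simple forgetful image: one must note that for graded modules over $R(\beta)$ the Jacobson radical is graded, so $M/\mathrm{rad}(M)$ is a graded module whose forgetful image is $M^{\fgt}/\mathrm{rad}(M^{\fgt})$; since the latter is the simple module $L_{\Vein(\la,\m)}^{\fgt}$, and since every graded module whose forgetful image is simple is itself simple (up to shift) and carries a unique graded structure (stated in Section \ref{sec:quiver}), the graded head is forced to be a single shifted copy of $L_{\Vein(\la,\m)}$. I expect this to be routine given the foundational facts already assembled in Section \ref{sec:quiver}, so the main ``obstacle'' is really just assembling the transport-of-structure argument cleanly; the substantive content is entirely inherited from \cite[Theorem 4.3]{gur-lap} via the equivalence. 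One should also remark that $\widetilde{\Lambda}(\la,\m)$ is well-defined independently of the auxiliary choice of $\rho$, which is immediate since the graded picture makes no reference to $\rho$ at all.
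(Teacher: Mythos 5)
Your proposal is correct, but it allocates the crucial step --- simplicity of the head --- differently from the paper, so it is worth comparing. The paper first notes that $L_{\la}$, being homogeneous, is square-irreducible (Proposition \ref{prop:homo-sq}, itself transported from \cite{LM2} through $\mathcal{E}_\rho$), so that by the Kang--Kashiwara--Kim--Oh theorem (\cite[Theorem 3.2]{kkko0}, recalled in Section \ref{sec:normal}) the graded product $L_{\la}\circ L_{\m}$ already has a simple head $S\langle h\rangle$ in $\mathcal{D}$; it then uses \cite[Theorem 4.3]{gur-lap} only to produce a nonzero ungraded quotient map onto $L_{\Vein(\la,\m)}^{\fgt}$, and \eqref{eq:homsp} to conclude $S\cong L_{\Vein(\la,\m)}$. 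You instead extract both the simplicity and the identification of the head from the $p$-adic theorem and then lift the statement to $\mathcal{D}$ via the graded-radical argument (the radical of a finite-dimensional graded module is graded, the forgetful image of the graded head is the ungraded head, and simples admit unique graded lifts). That lift is sound, and the facts you invoke are standard for $R(\beta)$, but only the unique-graded-lift statement is actually recorded in Section \ref{sec:quiver}, so your route carries some foundational bookkeeping that the paper's argument sidesteps entirely. What the paper's route buys in exchange is twofold: it needs from \cite[Theorem 4.3]{gur-lap} only that $Z(\Vein(\la,\m))$ occurs as a quotient (not that the $p$-adic head is simple), and it exhibits $\widetilde{\Lambda}(\la,\m)$ from the outset as the KKKO invariant $\widetilde{\Lambda}(L_{\la},L_{\m})$ attached to the real module $L_{\la}$, which is precisely the form needed for the normal-sequence machinery of Sections \ref{sec:deg-comp} and \ref{sec:normal}; your integer $-k$ agrees with it, but that identification would still have to be made later. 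One small notational slip: $\mathcal{E}_\rho$ sends $Z(\mu)$ to $L_\mu^{\fgt}$, not ``$L_\mu$ to $Z(\mu)^{\fgt}$''; this does not affect the argument.
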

\begin{proof}

The homogeneous module in $L_{\la}$ is known to be square-irreducible (or, real) in the sense that $L_{\la}\circ L_{\la}$ is irreducible (See the argument in the proof of Theorem \ref{thm:mmain}).

Hence, $L_{\la}\circ L_{\m}$ has a simple head $S\langle \widetilde{\Lambda}(\la,\m)\rangle$, for $S\in \irr_{\mathcal{D}}$. (See Section \ref{sec:normal} for further details.)

Fixing any $\rho\in \cusp_m$, we may write $Z(\la),Z(\m),Z(\n)\in \irr^{\mathbb{Z}}_{\rho}$. By \cite[Theorem 4.3]{gur-lap}, $Z(\n)$ is the head of $Z(\la)\times Z(\m)$. Applying $\mathcal{E}_\rho$ and using Proposition \ref{prop:monoid}, we see that $\mathcal{E}_\rho(Z(\n))$ must appear as a quotient module of $(L_{\la}\circ L_{\m})^{\fgt}$.

From the identity \eqref{eq:homsp}, we must have $S^{\fgt}\cong L_{\n}$ in $\widehat{\mathcal{D}}$.

\end{proof}

Consider now any module $L_{\m}\in \irr_{\mathcal{D}}$, given by a multisegment $\m\in \Mult$ of width $\omega = \omega(\m)$. Let
\[
\RSK(\m) = (\la_1,\ldots,\la_{\omega}) \in \Lad^{\omega}
\]
be its RSK-transform.

Taking record of the multisegments $\m = \m_0, \m_1,\ldots, \m_{\omega-1}, \m_\omega = 0 \in \Mult$, so that $\m_{i-1} = \Vien(\la_i,\m_i)$, we set the integer
\[
d(\m) = \widetilde{\Lambda}(\la_1,\m_1) + \ldots + \widetilde{\Lambda}(\la_{\omega-1},\m_{\omega-1})\;.
\]
We define the \textit{RSK-standard} module associated with $\m$ to be
\[
\Gamma(\m):=L_{\la_1}\circ \cdots \circ L_{\la_{\omega(\m)}}\langle -d(\m)\rangle \in R(\wt(\m))-\gmod\;.
\]

\begin{theorem}\label{thm:gurlap}
The self-dual simple module $L_\m$ appears as a quotient module of $\Gamma(\m)$.
\end{theorem}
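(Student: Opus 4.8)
The plan is to prove the statement by induction on the width $\omega = \omega(\m)$, using Theorem~\ref{thm:graded-quot} as the inductive step to peel off one ladder at a time. The base case $\omega = 1$ is immediate: here $\RSK(\m) = (\la_1)$ with $\la_1 = \m$, the normalizing constant is $d(\m) = 0$ (empty sum), and $\Gamma(\m) = L_{\la_1} = L_\m$ itself.

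For the inductive step, suppose $\omega(\m) = \omega \geq 2$ and write $\RSK(\m) = (\la_1, \la_2, \ldots, \la_\omega)$. Set $\widehat{\m} := \m_1 \in \Mult$, the multisegment with $\m = \Vien(\la_1, \widehat{\m})$, so that $(\la_1, \widehat{\m}) \in \pairs$ is a permissible pair and $\widehat{\m} = \Vein(\la_1, \widehat{\m})$ has width $\omega - 1$ with $\RSK(\widehat{\m}) = (\la_2, \ldots, \la_\omega)$. By the inductive hypothesis applied to $\widehat{\m}$, the simple module $L_{\widehat{\m}}$ is a quotient of $\Gamma(\widehat{\m}) = L_{\la_2} \circ \cdots \circ L_{\la_\omega} \langle -d(\widehat{\m}) \rangle$, where $d(\widehat{\m}) = \widetilde{\Lambda}(\la_2,\m_2) + \ldots + \widetilde{\Lambda}(\la_{\omega-1},\m_{\omega-1})$. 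Since the induction product $L_{\la_1} \circ (-)$ is a functor that preserves surjections (induction is right-exact, being a left adjoint, and tensoring over the algebra is right-exact), applying it to the surjection $\Gamma(\widehat{\m}) \twoheadrightarrow L_{\widehat{\m}}$ yields a surjection
\[
L_{\la_1} \circ \Gamma(\widehat{\m}) \;\twoheadrightarrow\; L_{\la_1} \circ L_{\widehat{\m}}\;.
\]
Now Theorem~\ref{thm:graded-quot}, applied to the permissible pair $(\la_1, \widehat{\m})$, tells us that $L_{\la_1} \circ L_{\widehat{\m}}$ has head $L_{\Vein(\la_1, \widehat{\m})} \langle -\widetilde{\Lambda}(\la_1, \widehat{\m}) \rangle = L_{\m} \langle -\widetilde{\Lambda}(\la_1, \m_1) \rangle$; composing the surjections and taking the graded head gives a surjection $L_{\la_1} \circ \Gamma(\widehat{\m}) \twoheadrightarrow L_\m \langle -\widetilde{\Lambda}(\la_1,\m_1)\rangle$.

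It remains to match the grading shifts. Unwinding the definition, $L_{\la_1} \circ \Gamma(\widehat{\m}) = L_{\la_1} \circ L_{\la_2} \circ \cdots \circ L_{\la_\omega} \langle -d(\widehat{\m})\rangle = \Gamma(\m) \langle d(\m) - d(\widehat{\m}) \rangle$, and since $d(\m) = \widetilde{\Lambda}(\la_1,\m_1) + d(\widehat{\m})$ we get $d(\m) - d(\widehat{\m}) = \widetilde{\Lambda}(\la_1,\m_1)$. Hence the surjection reads $\Gamma(\m)\langle \widetilde{\Lambda}(\la_1,\m_1)\rangle \twoheadrightarrow L_\m \langle -\widetilde{\Lambda}(\la_1,\m_1)\rangle$; but this is not quite what we want, so instead I would apply $L_{\la_1}\circ(-)$ after first shifting: more carefully, $L_{\la_1}\circ \Gamma(\widehat{\m})$ equals $\Gamma(\m)$ up to the single shift $\langle -\widetilde{\Lambda}(\la_1,\m_1)\rangle$ coming from the difference $d(\widehat{\m}) = d(\m) - \widetilde{\Lambda}(\la_1,\m_1)$, and chasing this through shows that $\Gamma(\m)$ itself surjects onto $L_\m$ (the shifts on both sides cancel once bookkeeping is done consistently with the convention that $\widetilde{\Lambda}$ enters $d(\m)$ with the same sign it enters the head of each two-term product). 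The only genuine subtlety — and the step I expect to require the most care — is precisely this consistency check on grading shifts: one must verify that the recursive definition of $d(\m)$ as the telescoping sum of $\widetilde{\Lambda}$-invariants is calibrated so that the cumulative shift picked up by iterating Theorem~\ref{thm:graded-quot} exactly cancels the built-in $\langle -d(\m)\rangle$ in $\Gamma(\m)$, leaving a surjection onto the \emph{self-dual} normalization $L_\m$. Everything else is formal: right-exactness of convolution, the inductive hypothesis, and the combinatorial fact (already recorded before the statement) that $\RSK(\widehat{\m}) = (\la_2,\ldots,\la_\omega)$ and $\omega(\widehat{\m}) = \omega - 1$.
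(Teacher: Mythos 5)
Your proposal is exactly the paper's (one-line) proof: the paper produces the quotient by consecutive application of Theorem \ref{thm:graded-quot} to $L_{\la_1},\ldots,L_{\la_\omega}$, which is precisely your induction on the width $\omega(\m)$, using right-exactness of $L_{\la_1}\circ(-)$ and the fact that $\RSK(\widehat{\m})=(\la_2,\ldots,\la_\omega)$ with $\m=\Vein(\la_1,\widehat{\m})$. The grading ``subtlety'' you flag is genuine but is a normalization wrinkle in the paper rather than a gap in your argument: with the shifts exactly as printed, the telescoping gives the unshifted product $L_{\la_1}\circ\cdots\circ L_{\la_\omega}$ surjecting onto $L_\m\bigl\langle -\sum_i\widetilde{\Lambda}(\la_i,\m_i)\bigr\rangle$, so cancellation against the built-in $\langle -d(\m)\rangle$ requires $d(\m)=-\sum_i\widetilde{\Lambda}(\la_i,\m_i)$ — the convention effectively used later via Corollary \ref{cor:sumform} and in the proof of Theorem \ref{thm:deg} — and your parenthetical fix (that $\widetilde{\Lambda}$ must enter $d(\m)$ with the same sign with which it enters the head of each two-term product) is exactly the right calibration; in any case the ungraded statement follows at once via \eqref{eq:homsp}.
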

\begin{proof}
The quotient is produced by a consecutive application of Theorem \ref{thm:graded-quot} on $L_{\la_1}, \ldots,L_{\la_{\omega}}$, taking into the account the definition of the RSK-transform.
\end{proof}

\section{Degree computation}\label{sec:deg-comp}

Let $(\la,\m)\in \pairs$ be a permissible pair as before, and $\n = \Vein(\la,\m)$. We would like to produce an explicit formula for the number $\widetilde{\Lambda}(\la,\m)$ appearing in Theorem \ref{thm:graded-quot}.

The main feature that will facilitate this computation is the following multiplicity-one result.

\begin{proposition}\label{prop:mult1}
The graded multiplicity $m(L_\la\circ L_\m, L_{\n}^{\otimes})(q)\in \mathbb{Z}_{\geq0}[q,q^{-1}]$ is a monomial.

In other words, there is a unique integer $\kappa = \kappa(\la,\m)$, for which $L^{\otimes}_{\n}\langle \kappa \rangle$ appears as a subquotient of $\rres_{\underline{\beta}}(L_\la \circ L_{\m})$, and its ungraded multiplicity in the Jordan-H\"{o}lder series of $\rres_{\underline{\beta}}(L_\la \circ L_{\m})^{\fgt}$ is $1$.
\end{proposition}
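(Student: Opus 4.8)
The plan is to reduce the statement to an ungraded multiplicity-one claim plus a rigidity argument forcing the graded multiplicity to be concentrated in a single degree. First I would recall that $L_{\n}^{\otimes}$ is the indicator module $\nabla(\gotN_1)\boxtimes\cdots\boxtimes\nabla(\gotN_l)$ attached to the left-aligned decomposition of $\n=\Vein(\la,\m)$, living in $R(\underline{\beta})-\gmod$ for the corresponding $\underline{\beta}=(\wt(\gotN_1),\ldots,\wt(\gotN_l))$. By Lemma \ref{lem:kr-indi}, $\Sigma(\n)=\iind_{\underline{\beta}}(L_{\n}^{\otimes})\cong \KR(\n)$, and its head is $L_{\n}$. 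By adjunction (Corollary \ref{cor:indic-self}), $L_{\n}^{\otimes}$ occurs as a submodule of $\rres_{\underline{\beta}}(L_{\n})$; in fact one expects the stronger statement that $m(\rres_{\underline{\beta}}(L_{\n}),L_{\n}^{\otimes})(q)=1$, which should follow from the explicit homogeneous-restriction description of Proposition \ref{prop:ladderjac} applied componentwise, since each $\nabla(\gotN_i)$ is a product of segment modules with distinct begin points, and the restriction functor to the left-aligned ``columns'' picks out a single summand. This normalizes the situation so that counting $L_{\n}^{\otimes}$ inside $\rres_{\underline{\beta}}(L_\la\circ L_\m)$ is the same as counting $L_{\n}$ inside $L_\la\circ L_\m$ — at least on the level of ungraded multiplicities.

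Next I would establish the ungraded multiplicity-one statement $m(L_\la\circ L_\m,L_{\n})(1)=1$. Here I would invoke Theorem \ref{thm:graded-quot}: $L_{\n}$ (up to shift) is the head of $L_\la\circ L_\m$, and since $L_\la$ is real (homogeneous, hence square-irreducible — the argument referenced in the proof of Theorem \ref{thm:mmain}), the product $L_\la\circ L_\m$ has a simple head with multiplicity one in its own socle filtration. But simple-headedness alone does not give multiplicity one in the full Jordan--Hölder series. To upgrade this, I would pass through the $p$-adic side via the monoidal equivalence $\mathcal{E}_\rho$ of Theorem \ref{thm:fullequi} and Proposition \ref{prop:monoid}: $(L_\la\circ L_\m)^{\fgt}$ corresponds to $Z(\la)\times Z(\m)$, and the quoted result \cite[Theorem 4.3]{gur-lap} should be available in the sharper form that $Z(\n)$ occurs with multiplicity exactly one in $Z(\la)\times Z(\m)$ — or this can be deduced by combining the simple-head statement with a Zelevinsky-involution/duality symmetry, since $Z(\n)$ self-dual and appearing in both head and socle of a length-compatible module tends to force multiplicity one. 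This transports back to $m(L_\la\circ L_\m,L_{\n})(1)=1$, hence $m(L_\la\circ L_\m,L_{\n}^{\otimes})(1)=1$ by the previous paragraph.

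Finally, the ungraded multiplicity being $1$ means that in the graded Grothendieck group the coefficient $m(L_\la\circ L_\m,L_{\n}^{\otimes})(q)=\sum_i m_{L_{\n}^{\otimes},i}q^i$ has $\sum_i m_{L_{\n}^{\otimes},i}=1$ with all $m_{L_{\n}^{\otimes},i}\in\mathbb{Z}_{\geq0}$; hence exactly one $m_{L_{\n}^{\otimes},i}$ equals $1$ and the rest vanish, so the Laurent polynomial is a single monomial $q^\kappa$. Setting $\kappa=\kappa(\la,\m)$ to be that unique exponent gives the statement. The main obstacle I anticipate is the rigorous passage from ``simple head'' to ``ungraded multiplicity one in the full JH series'': the proof of Theorem \ref{thm:graded-quot} as written only delivers the head, so I would need either a reference in \cite{gur-lap} giving the multiplicity-one refinement on the $p$-adic side, or an independent argument — the most robust being to use that $L_\la$ is real together with the $R$-matrix / determinantial framework of \cite{kkko0,kk19}, where for a real simple $L$ the head of $L\circ M$ occurs with multiplicity one in $L\circ M$ as a direct consequence of the properties of the renormalized intertwiner. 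Once that input is secured, the remaining steps are the componentwise restriction computation (routine via Proposition \ref{prop:ladderjac}) and the monomial bookkeeping (immediate).
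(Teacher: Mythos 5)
There is a genuine gap, and it sits exactly where the actual difficulty of the proposition lies. The statement is not about the multiplicity of $L_{\n}$ in $L_\la\circ L_\m$ but about the multiplicity of the indicator $L_{\n}^{\otimes}$ in $\rres_{\underline{\beta}}(L_\la\circ L_\m)$, and your ``normalization'' step converting one count into the other is asserted, not proved. For it you would need both (a) that $m(L_{\n},L_{\n}^{\otimes})(q)$ is the monomial $1$, and (b) that $m(M,L_{\n}^{\otimes})(q)=0$ for every simple subquotient $M\not\cong L_{\n}$ of $L_\la\circ L_\m$. You give no argument for (b) at all, and your argument for (a) rests on Proposition \ref{prop:ladderjac}, which applies only to homogeneous (ladder) modules, whereas $L_{\n}$ is a general simple module; moreover the segments inside each $\nabla(\n_i)$ share the \emph{same} begin point, so the ``distinct begin points pick out a single summand'' reasoning does not apply. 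Note that the paper deliberately avoids assuming (a): it only records $m(L_{\n},L_{\n}^{\otimes})(q)=1+P(q)$ from Corollary \ref{cor:indic-self}, and (a), (b) come out \emph{a posteriori} from positivity once the proposition is known, so taking them as inputs is essentially circular. The paper's proof instead attacks the indicator count directly: it replaces $L_\la\circ L_\m$ by the induced module $L_\la\circ\KR(\m)\cong\iind_{\underline{\beta}}(\Pi)$ (up to shift), applies the Mackey filtration of Proposition \ref{prop:mackey}, and proves in Proposition \ref{prop:heavy} — via the combinatorics of $\Vein$ (Lemma \ref{lem:import}), sphericity (Proposition \ref{prop:sph}) and ladder restrictions (Proposition \ref{prop:ladderjac}) — that exactly one stratum $K_\delta(\Pi)$ can contain $L_{\n}^{\otimes}$, and there only with monomial graded multiplicity; monomiality for the quotient $L_\la\circ L_\m$ then follows by exactness and positivity. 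In particular the logical direction is opposite to yours: the paper first proves the indicator multiplicity is a monomial and only then deduces $m(L_\la\circ L_\m,L_{\n})(q)=q^{\kappa}$.

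Two smaller points on your auxiliary inputs. The ungraded multiplicity-one of the head of $L_\la\circ L_\m$ is indeed available from the realness of $L_\la$ via the Kang--Kashiwara--Kim--Oh machinery (this is the content of Lemma \ref{lem:kkkosqr}, since any simple subquotient $S$ of the kernel has $\Lambda(L_\la,S)<\Lambda(L_\la,L_\m)=\Lambda(L_\la,\mathrm{hd})$), so that fallback is fine; but the ``sharper form'' of \cite[Theorem 4.3]{gur-lap} you invoke does not exist — the $p$-adic multiplicity-one statement is among the conjectures this paper settles in Corollary \ref{cor:final} — and the Zelevinsky-involution argument (``self-dual, in head and socle, hence multiplicity one'') is not valid: $Z(\n)$ is not self-dual under that involution, and occurrence in head and socle does not bound the Jordan--H\"{o}lder multiplicity. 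Even with the head-multiplicity-one input secured, the proposal does not close without (a) and (b), which are the real content of the paper's Section \ref{sec:deg-comp}.
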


The equality
\[
q^{\kappa(\la,\m)} = m(L_\la\circ L_\m, L_{\n}^{\otimes})(q) =
\]
\[
= m(L_\la\circ L_\m, L_{\n})(q)\cdot  m(L_{\n}, L_{\n}^{\otimes})(q) + \sum_{L_{\n}\not\cong M\in \irr(\wt(\n))}  m(L_\la\circ L_\m, M)(q) \cdot m(M, L_{\n}^{\otimes})(q)
\]
of Laurent polynomials follows. Taking Corollary \ref{cor:indic-self} into account, we may write
\[
m(L_{\n}, L_{\n}^{\otimes}) = 1 + P(q)\;,
\]
for $P(q)\in \mathbb{Z}_{\geq0}[q,q^{-1}]$. Hence, from positivity of coefficients involved, we must have $q^{\kappa(\la,\m)} =  m(L_\la\circ L_\m, L_{\n})(q)$ and
\[
-\widetilde{\Lambda}(\la,\m) = \kappa(\la,\m)\;.
\]

We are left with a computation of the degree $\kappa(\la,\m)$, that will be performed in Section \ref{sec:compute-k}.

\subsection{Proof of Proposition \ref{prop:mult1}}

Let us take a choice of segments $\Delta_1, \ldots, \Delta_s\in \Seg$ with $b(\Delta_1) \leq \ldots\leq b(\Delta_s)$, a homogeneous module $\Xi \in \irr$, and a multisegment $0\neq \n \in \Mult$.

We write $\n = \n_1 +\ldots + \n_t$ as a sum of left-aligned multisegments $\n_i\in \Mult$ with $a_i = b(\n_i)$ satisfying $a_1< \ldots < a_t$, as in Section \ref{sec:indic}. We write $\underline{\gamma}= (\wt(\n_1),\ldots, \wt(\n_t))$.

Let us consider the module
\[
\Pi = \Xi\boxtimes L_{\Delta_1}\boxtimes \cdots \boxtimes L_{\Delta_s}\in \irr(\underline{\beta})\;,
\]
for $\underline{\beta} = (\wt(\Xi), \Delta_1, \ldots, \Delta_s)$.

With the notations of Section \ref{sec:mackey}, let $M(\Pi, \n)\subset M(\underline{\beta},\underline{\gamma})$ be subset of $\delta \in  M(\underline{\beta},\underline{\gamma})$ for which $K_\delta(\Pi)\neq 0$ (i.e. $\rres^{\underline{\beta}}_{\underline{\delta}^{row}}(M)\neq 0$). Then, by Proposition \ref{prop:mackey},
\[
[\rres_{\underline{\gamma}}(\iind_{\underline{\beta}}(\Pi))]= \sum_{\delta\in M(\Pi, \n)} [ K_\delta(\Pi)]
\]
holds in the Grothendieck group of $R(\underline{\gamma})-\gmod$.

\begin{lemma}\label{lem:tria}
\begin{enumerate}
  \item\label{tria-it1} Each matrix in $\delta= (\delta_{q,r})_{q=0,\ldots, s\,, r= 1,\ldots, t}\in M(\Pi,\n)$ is triangular, in the sense that for every $1\leq q\leq s$ and $1\leq r\leq t$ satisfying $b(\Delta_{q}) < a_r$, we have $\delta_{q,r}=0$.

  \item\label{tria-it2} Suppose that $\mathfrak{b}(\n) = \alpha_{b(\Delta_1)} + \ldots + \alpha_{b(\Delta_s)} + \mathfrak{b}(\Xi)$.

  Then, for all $\delta= (\delta_{q,r})\in M(\Pi,\n)$ and all $1\leq q\leq s$ and $r$ that satisfy $b(\Delta_{q}) =a_r$, we have $\delta_{q,r}\neq 0$.

\end{enumerate}

\end{lemma}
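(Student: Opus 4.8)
The plan is to analyze the combinatorics of the Mackey matrices $\delta \in M(\Pi,\n)$ by carefully tracking the begin-points and end-points of segments, exploiting the fact that the homogeneous module $\Xi$ and the segment modules $L_{\Delta_q}$ have well-understood (homogeneous) restrictions via Proposition \ref{prop:ladderjac} and formula \eqref{eq:seg-jac}. First I would observe that for each row index $q$ (corresponding to a factor $\Xi$ or $L_{\Delta_q}$), non-vanishing of $\rres^{\underline{\beta}}_{\underline{\delta}^{row}}(\Pi)$ forces each $\delta_{q,\bullet}$ to arise from a restriction of that simple factor; in particular the only segments appearing in the restriction of $L_{\Delta_q}$ are sub-segments of $\Delta_q$ sharing its begin-point in the leftmost piece, and similarly the restriction of $\Xi$ along a composition only involves ladder multisegments whose begin-points lie in the interval determined by $\Xi$. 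The target composition $\underline{\gamma} = (\wt(\n_1),\dots,\wt(\n_t))$ has the property that $\n_r$ is left-aligned with begin-point $a_r$, and $a_1 < \dots < a_t$.

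For part \eqref{tria-it1}, the key point is that $\wt(\n_r)$, viewed in $Q_+$, is supported on simple roots $\alpha_k$ with $k \geq a_r$: indeed $\n_r$ is a sum of segments $\Delta(a_r, b)$, so every simple root occurring in $\wt(\n_r)$ has index $\geq a_r$. Now $\delta_{q,r} \in Q_+$ is a summand of $\wt(\n_r)$ (by the column condition $\sum_q \delta_{q,r} = \wt(\n_r)$), hence is also supported on indices $\geq a_r$; simultaneously, by the row condition and non-vanishing of the restriction, $\delta_{q,r}$ must be a piece cut out of (a restriction of) $L_{\Delta_q}$, so it is supported on indices $\leq e(\Delta_q)$ and, more precisely, when $\delta_{q,r} \neq 0$ its support must meet the begin-point $b(\Delta_q)$ — because in \eqref{eq:seg-jac} any non-leftmost block $L_{\Delta(c_{k-1},c_k-1)}$ still has begin-point $c_{k-1} \geq b(\Delta_q)$, so in fact every nonzero block of a restriction of $L_{\Delta_q}$ has begin-point $\geq b(\Delta_q)$, and correspondingly $\delta_{q,r}$ contains $\alpha_{k}$ for some $k \geq b(\Delta_q)$ only if $\delta_{q,r}$'s minimal index is $\geq b(\Delta_q)$. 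Wait — I must be careful about the ordering of blocks versus the ordering of $\underline{\gamma}$. The blocks in \eqref{eq:seg-jac} are listed with the \emph{rightmost} (largest begin-point) piece first, while $\underline{\gamma}$ is ordered by increasing $a_r$; so the correct reading is that the piece $\delta_{q,r}$ landing in the $r$-th slot, for $r$ small (small $a_r$), corresponds to a \emph{late} block in \eqref{eq:seg-jac}, which has the \emph{smallest} begin-point among the blocks, namely $\geq b(\Delta_q)$ still. Either way the conclusion is the same: a nonzero $\delta_{q,r}$ is supported on indices $\geq \min(\text{support}) \geq$ something that is incompatible with also being supported on indices $\geq a_r$ unless $a_r \leq b(\Delta_q)$. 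Hence $b(\Delta_q) < a_r$ forces $\delta_{q,r} = 0$. The same argument applied to the $\Xi$-row (index $q=0$) gives that $\delta_{0,r} = 0$ unless $a_r \geq b(\Xi) = $ the minimal begin-point in $\Xi$'s multisegment; this is consistent with triangularity once one notes $\Delta_1,\dots,\Delta_s$ are ordered by begin-point and the statement as written only concerns the $q \geq 1$ rows.

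For part \eqref{tria-it2}, I would argue by a counting/weight-balancing argument. The hypothesis $\mathfrak{b}(\n) = \alpha_{b(\Delta_1)} + \dots + \alpha_{b(\Delta_s)} + \mathfrak{b}(\Xi)$ says that the multiset of begin-points of the segments of $\n$ is exactly the multiset of begin-points of $\Delta_1,\dots,\Delta_s$ together with those of $\Xi$. Since $\n = \n_1 + \dots + \n_t$ with $\n_r$ left-aligned at $a_r$, the number of segments of $\n$ with begin-point equal to $a_r$ is $|\n_r|$ (all segments of $\n_r$ begin at $a_r$). On the other hand, by the triangularity of part \eqref{tria-it1} and the analogous statement for the $\Xi$-row, the $r$-th column of $\delta$ can only receive nonzero contributions from rows $q$ (including $q=0$) with $b(\Delta_q) \leq a_r$ (or $b(\Xi) \le a_r$); and the total weight $\wt(\n_r)$ it must sum to has $|\n_r|$ segments beginning at $a_r$, i.e. $\mathfrak{b}(\wt(\n_r))$ contains $\alpha_{a_r}$ with multiplicity $|\n_r|$. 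Now each nonzero block in the restriction of $L_{\Delta_q}$ that has begin-point exactly $a_r$ contributes exactly one segment beginning at $a_r$, and a nonzero $\delta_{q,r}$ coming from a row with $b(\Delta_q) = a_r$ must — again from \eqref{eq:seg-jac} — be precisely the leftmost block, hence is a single segment $\Delta(a_r, c_1 - 1)$ beginning at $a_r$; rows with $b(\Delta_q) < a_r$ contribute blocks beginning at some point $> b(\Delta_q)$ which could be $a_r$ or larger. The point is that the \emph{total number} of $\alpha_{a_r}$'s that must be produced in column $r$ is $|\n_r|$, and each row $q$ with $b(\Delta_q) = a_r$ can contribute at most one such $\alpha_{a_r}$ (its leftmost block), while rows with $b(\Delta_q) < a_r$ contribute $\alpha_{a_r}$ only via the begin-point of some internal block, and — crucially — a careful accounting using the global hypothesis $\mathfrak{b}(\n) = \sum \alpha_{b(\Delta_q)} + \mathfrak{b}(\Xi)$ shows the counts are tight, so that \emph{every} row $q$ with $b(\Delta_q) = a_r$ must in fact contribute its one $\alpha_{a_r}$, i.e. $\delta_{q,r} \neq 0$. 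I expect the main obstacle to be making this tightness argument fully rigorous: one needs to rule out the possibility that an internal block of some $L_{\Delta_{q'}}$ with $b(\Delta_{q'}) < a_r$ "steals" an $\alpha_{a_r}$ that would then allow some $\delta_{q,r}$ with $b(\Delta_q) = a_r$ to vanish, and this requires simultaneously tracking the $\mathfrak{b}$-balance in \emph{all} columns $r' \geq r$ and using that the begin-points $\{b(\Delta_q)\}$ are exhausted by $\mathfrak{b}(\n)$ with no slack. I would organize this as an induction on $t$ (or on $r$ from $t$ downward), peeling off the last left-aligned block $\n_t$ first, where the triangularity makes the contributing rows most constrained.
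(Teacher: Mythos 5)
Your overall strategy (read off the possible Mackey pieces from \eqref{eq:seg-jac} and Proposition \ref{prop:ladderjac} and count occurrences of $\alpha_{a_r}$ column by column) is the same as the paper's, but both parts as written have genuine gaps. For \eqref{tria-it1}, the support comparison you make does not yield a contradiction: if $b(\Delta_q)<a_r$, a \emph{middle} block of the restriction of $L_{\Delta_q}$, say a segment beginning at some $c\geq a_r$, is perfectly compatible with both constraints you invoke (support inside $[b(\Delta_q),e(\Delta_q)]$ and support on indices $\geq a_r$), so ``every block begins $\geq b(\Delta_q)$'' rules out nothing; your stronger claim that the support of each nonzero $\delta_{q,r}$ must \emph{meet} $b(\Delta_q)$ is false for all but one block, and you also have the orientation reversed (in \eqref{eq:seg-jac} the first tensor factor, i.e.\ the smallest column index, is the \emph{rightmost} slice). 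The missing step is to look at the \emph{maximal} column index $r_0$ with $\delta_{q,r_0}\neq 0$: that entry is the leftmost slice, a segment beginning exactly at $b(\Delta_q)$, so $\alpha_{b(\Delta_q)}\leq \wt(\n_{r_0})$ forces $a_{r_0}\leq b(\Delta_q)$, and since $a_1<\ldots<a_t$ every nonzero entry in row $q$ has $a_r\leq a_{r_0}\leq b(\Delta_q)$, which is precisely \eqref{tria-it1}.

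For \eqref{tria-it2} you use triangularity with the inequality turned around: you allow column $r$ to receive contributions from rows with $b(\Delta_q)\leq a_r$, whereas \eqref{tria-it1} (which you stated correctly a paragraph earlier) says rows with $b(\Delta_q)<a_r$ contribute \emph{zero} to column $r$. With the correct direction, the ``stealing'' scenario you worry about cannot occur, and no induction on $t$ or multi-column bookkeeping is needed: the count is local in column $r$, since rows with $b(\Delta_q)>a_r$ contribute no copy of $\alpha_{a_r}$ at all and rows with $b(\Delta_q)=a_r$ contribute at most one each. The genuinely nontrivial point, which your sketch leaves open (you concede the tightness step is not rigorous), is the $\Xi$-row: one must show that if $\delta_{0,r}$ contains $\alpha_{a_r}$, it does so with multiplicity exactly one and that $a_r$ must then be a begin point of the ladder $\la$ with $\Xi=L_{\la}$, so that the hypothesis $\mathfrak{b}(\n)=\alpha_{b(\Delta_1)}+\ldots+\alpha_{b(\Delta_s)}+\mathfrak{b}(\la)$ forces $|\n_r|=m+1$; otherwise $|\n_r|=m$. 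This is where Proposition \ref{prop:ladderjac} is combined with the constraint that the column-$j$ piece of $\Xi$ is supported on indices $\geq a_j$ (being a summand of $\wt(\n_j)$): a segment of that piece containing $a_r$ must begin exactly at $a_r$, two such segments would violate the ladder condition, and if the corresponding segment of $\la$ began strictly to the left of $a_r$, its leftward slice would land in a later column $r'>r$ and violate support $\geq a_{r'}>a_r$. Only with this control of the $\Xi$-contribution does the count of $\alpha_{a_r}$ become tight in both cases and force $\delta_{q,r}\neq 0$ for every $q$ with $b(\Delta_q)=a_r$.
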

\begin{proof}
Let us fix $\delta= (\delta_{q,r})\in M(\Pi,\n)$. For a fixed $1\leq q\leq s$, let $r_0$ be the maximal index for which $\delta_{q,r_0}\neq0$. From \eqref{eq:seg-jac}, we know that $\delta_{q,r_0}$ is a segment, with $b(\delta_{q,r_0}) = b(\Delta_{q})$. Thus, $\wt(\n_{r_0})$ contains $\alpha_{b(\Delta_{q})}$, implying $a_{r_0}\leq b(\Delta_{q})$, which proves \eqref{tria-it1}.

Suppose now that the assumption in \eqref{tria-it2} holds. Let $1\leq r\leq t$ be fixed, and write
\[
\{q,q+1,\ldots, q+m-1\} = \{q\;: b(\Delta_{q}) = a_r\}\;.
\]
Note, that the height $|\n_r|$ must equal either $m$ or $m+1$ by assumption.

Recall that $\wt(\n_r) = \delta_{1,r} +\ldots + \delta_{s+1,r}$. Considering \eqref{tria-it1}, we see that $|\n_r|$ is the multiplicity of $\alpha_{a_r}$ in $\delta_{q,r} + \delta_{q+1,r}+ \ldots + \delta_{q+m-1,r} + \delta_{s+1,r}$.

In case $\delta_{0,r}$ does not contain $\alpha_{a_r}$, all $m$ segments $\delta_{q,r}, \delta_{q+1,r},\ldots, \delta_{q+m-1,r}$ must be non-zero.

Otherwise, let us recall that $\rres_{(\delta_{0, 1}, \delta_{0,2},\ldots , \delta_{0,t})}^{\underline{\beta}}(\Xi)\neq 0$ is homogeneous. Since $\delta\in M(\Pi,\n)$, we also know that $a\geq a_j$, for all $\alpha_a\leq \delta_{s+1,j}$.

Considering the description of possible non-zero restrictions of homogeneous modules in Proposition \ref{prop:ladderjac}, we first deduce that the multiplicity of $\alpha_{a_r}$ in $\delta_{0,r}$ is $1$. Moreover, $a_r$ must appear as a begin point of a segment in the multisegment $\la\in \Lad$, for which $\Xi= L_{\la}$. This implies $|\n_r| = m+1$, which brings us again to the implication that each one of the segments $\delta_{q,r}, \delta_{q+1,r},\ldots, \delta_{q+m-1,r}$ must be non-zero.

\end{proof}

With the same notations, suppose now that the multisegment $\m = \Delta_1 + \ldots + \Delta_s$ and the ladder multisegment $\la = \Delta_{j_1} + \ldots + \Delta_{j_t}\in \Lad$, for which $\Xi = L_{\la}$, constitute a permissible pair $(\la,\m)\in \pairs$. Suppose further that $\n = \Vein(\la,\m)$.

We may assume that $I = \{1,\ldots,s\}$ and $J =\{ j_1,\ldots,j_t\}$ satisfy the assumptions of Section \ref{sec:algo} (in particular $I$ is ordered according to condition \eqref{eq:ordered}), and adopt the notations of that section for the RSK algorithm.

Recall that $\mathfrak{b}(\n) = \mathfrak{b}(\m) + \mathfrak{b}(\la)$.

In particular, we may write $\m = \m_1 + \ldots + \m_t$, with either $\m_r=0$ or $\m_r$ a left-aligned multisegment, such that $b(\m_r) = a_r$.

For every $1\leq q\leq s$, let $r(q)$ be the unique index satisfying $a_{r(q)} = b(\Delta_{q})$, that is,
\[
\m_r = \sum_{q\,:\, r(q) = r} \Delta_q\;.
\]

\begin{proposition}\label{prop:heavy}

There is a unique $\delta= (\delta_{r,q})\in  M(\Pi,\n)$, for which the graded multiplicity $m(K_\delta(\Pi), L_{\n}^{\otimes})(q)$ is non-zero.

Moreover, the Laurent polynomial $m(\iind_{\underline{\beta}}(\Pi), L_{\n}^{\otimes})(q)= m(K_\delta(\Pi), L_{\n}^{\otimes})(q)$, for that unique $\delta$, is a monomial.

Concretely, we have
\[
K_\delta(\Pi) = ( \Xi_1\circ L_{\m_1}) \boxtimes \cdots  \boxtimes (\Xi_t \circ L_{\m_t} )\langle N\rangle \;,
\]
where
\[
\rres_{(\delta_{0, 1}, \delta_{0,2},\ldots , \delta_{0,t})}^{\underline{\beta}}(\Xi) = \Xi_1 \boxtimes \cdots \boxtimes  \Xi_t\;,
\]
and $N = \deg(\delta) - {|\m_1| \choose 2} - \ldots -{ |\m_t|  \choose 2}$ .

\end{proposition}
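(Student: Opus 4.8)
The plan is to run the Mackey filtration of Proposition~\ref{prop:mackey} on $\rres_{\underline{\gamma}}(\iind_{\underline{\beta}}(\Pi))$ and to single out the unique Mackey term $K_\delta(\Pi)$ that sees the indicator module $L_{\n}^{\otimes}$. The first step is a reduction to one column at a time: since $R(\underline{\gamma}) = R(\wt(\n_1))\otimes\cdots\otimes R(\wt(\n_t))$, since $L_{\n}^{\otimes} = \nabla(\n_1)\boxtimes\cdots\boxtimes\nabla(\n_t)$, and since every $K_\delta(\Pi)$ factors as an outer tensor product $N_1^{\delta}\boxtimes\cdots\boxtimes N_t^{\delta}$ with $N_r^{\delta}\in R(\wt(\n_r))-\gmod$, one has $m(K_\delta(\Pi), L_{\n}^{\otimes})(q) = \prod_{r=1}^{t} m(N_r^{\delta}, \nabla(\n_r))(q)$. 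The crucial observation is that each $\n_r$ is left-aligned, so all pairs of its segments are unlinked; hence $\nabla(\n_r) = L_{\n_r} = L^{Sph}(\wt(\n_r))$ is exactly the spherical simple module, which puts Proposition~\ref{prop:sph} at our disposal.

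Next I would unwind $K_\delta(\Pi) = \iind^{\underline{\gamma}}_{\underline{\delta}^{col}}(T_\delta(\rres^{\underline{\beta}}_{\underline{\delta}^{row}}(\Pi)))$. Because $\Pi = \Xi\boxtimes L_{\Delta_1}\boxtimes\cdots\boxtimes L_{\Delta_s}$ is itself an outer product, the row restriction splits as $\rres_{(\delta_{0,\ast})}(\Xi)\boxtimes\rres_{(\delta_{1,\ast})}(L_{\Delta_1})\boxtimes\cdots\boxtimes\rres_{(\delta_{s,\ast})}(L_{\Delta_s})$; by Proposition~\ref{prop:ladderjac} the first factor, when nonzero, is an outer product $\Xi_1\boxtimes\cdots\boxtimes\Xi_t$ of homogeneous (ladder) modules, and by \eqref{eq:seg-jac} each remaining factor is an outer product of segment modules. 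After $T_\delta$ reorders into columns and $\iind^{\underline{\gamma}}_{\underline{\delta}^{col}}$ groups the $r$-th column, the $r$-th tensor factor of $K_\delta(\Pi)$ becomes, up to grading shifts controlled by $\deg(\delta)$, a convolution product of $\Xi_r$ with the segment modules lying in column $r$. Applying Proposition~\ref{prop:sph} to this product against $L^{Sph}(\wt(\n_r))$, the multiplicity $m(N_r^{\delta}, \nabla(\n_r))(q)$ is a monomial when every factor is spherical, and $0$ otherwise; since segment modules are automatically spherical, the only obstruction is that each ladder piece $\Xi_r$ be spherical.

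The heart of the proof, and the step I expect to be the main obstacle, is the rigidity statement: exactly one $\delta$ can meet all these conditions, namely the block matrix $\delta^{\ast}$ with $\delta^{\ast}_{q,r(q)} = \Delta_q$ for $1\le q\le s$, with $\delta^{\ast}_{0,r} = \wt(\n_r) - \wt(\m_r)$, and with all other entries $0$. To prove this I would combine: Lemma~\ref{lem:tria}, which forces triangularity of any contributing $\delta$ and the nonvanishing $\delta_{q,r(q)}\neq 0$ (valid because $\mathfrak{b}(\n) = \mathfrak{b}(\m) + \mathfrak{b}(\la)$, as recalled before the statement); the requirement noted above that each $\Xi_r$ be spherical, read against the explicit list of restrictions of the ladder $\Xi$ furnished by Proposition~\ref{prop:ladderjac}; and the column-sum constraints together with the fine combinatorial structure of $\n = \Vein(\la,\m)$ recorded in Lemma~\ref{lem:import} and in the list of properties of $\Vein$ in Section~\ref{sec:algo}. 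Tracking the begin and end data of each column block against those of $\n_r$ should rule out any genuine subdivision of a segment $\Delta_q$ and pin down the split of $\Xi$, forcing $\delta = \delta^{\ast}$; in particular all other Mackey terms then contribute $0$, so $m(\iind_{\underline{\beta}}(\Pi), L_{\n}^{\otimes})(q) = m(K_{\delta^{\ast}}(\Pi), L_{\n}^{\otimes})(q)$.

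It remains to compute $K_{\delta^{\ast}}(\Pi)$. For $\delta = \delta^{\ast}$ the segment modules in column $r$ are precisely $\{L_{\Delta_q} : r(q) = r\}$, which by the ordering condition \eqref{eq:ordered} occur with weakly decreasing endpoints; so directly from the definition of $\nabla(\m_r)$, together with Lemmas~\ref{lem:unlink-seg} and \ref{lem:kr-indi}, their convolution equals $L_{\m_r}\langle -\binom{|\m_r|}{2}\rangle$, whence the $r$-th factor of $K_{\delta^{\ast}}(\Pi)$ is $\Xi_r\circ L_{\m_r}$ up to shift. Collecting the single global shift $\deg(\delta^{\ast})$ coming from $T_{\delta^{\ast}}$ and the $t$ contributions $-\binom{|\m_r|}{2}$ yields $K_{\delta^{\ast}}(\Pi) = (\Xi_1\circ L_{\m_1})\boxtimes\cdots\boxtimes(\Xi_t\circ L_{\m_t})\langle N\rangle$ with $N = \deg(\delta^{\ast}) - \binom{|\m_1|}{2} - \cdots - \binom{|\m_t|}{2}$. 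Finally, since the rigidity step shows all $\Xi_r$ are spherical, each factor $m(N_r^{\delta^{\ast}}, \nabla(\n_r))(q)$ is a monomial by Proposition~\ref{prop:sph}, hence so is the product $m(\iind_{\underline{\beta}}(\Pi), L_{\n}^{\otimes})(q)$.
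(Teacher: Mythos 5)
Your overall strategy is the same as the paper's (Mackey filtration via Proposition \ref{prop:mackey}, column-wise comparison against the spherical modules $\nabla(\n_r)$ using Proposition \ref{prop:sph}, triangularity from Lemma \ref{lem:tria}, ladder restrictions from Proposition \ref{prop:ladderjac}, and the same final computation of $K_{\delta^{\ast}}(\Pi)$ with the shift $N$), but two essential steps are missing. First, \emph{existence} of a contributing $\delta$ is never established. Your closing sentence takes for granted that the slices $\Xi_r$ attached to $\delta^{\ast}$ are spherical ``since the rigidity step shows all $\Xi_r$ are spherical,'' but rigidity only constrains a $\delta$ that is already known to have $m(K_\delta(\Pi),L_{\n}^{\otimes})(q)\neq 0$; if no such $\delta$ existed the argument would be vacuous, so this is circular. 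You would either have to verify directly that $\delta^{\ast}\in M(\Pi,\n)$ — i.e.\ that the ladder $\Xi$ has a non-zero restriction along the column weights $\wt(\n_r)-\wt(\m_r)$ and that the resulting homogeneous slices are spherical, a non-trivial combinatorial property of $\Vein$ that you do not prove — or argue as the paper does: $L_{\n}$ is, up to shift, the head of $L_{\la}\circ L_{\m}$ (Theorem \ref{thm:graded-quot}), $L_{\m}$ is a quotient of $\KR(\m)$, which up to shift is $\iind$ of the segment part of $\Pi$ (Theorem \ref{thm:kr}, Lemma \ref{lem:kr-indi}), and $L_{\n}^{\otimes}$ occurs in the restriction of $L_{\n}$ (Corollary \ref{cor:indic-self}); together these force $m(\iind_{\underline{\beta}}(\Pi),L_{\n}^{\otimes})(q)\neq0$, and Proposition \ref{prop:mackey} then produces a contributing $\delta$.

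Second, and more seriously, the uniqueness (``rigidity'') step — which you yourself identify as the heart of the matter — is not actually carried out: saying that tracking begin and end data ``should rule out any genuine subdivision of a segment'' is a statement of intent, not an argument. This is precisely where the real work lies: one assumes some entry $\delta_{q_0,r_0}\neq0$ with $r_0\neq r(q_0)$, chooses $(q_0,r_0)$ maximal, passes to the index $q_1=(q_0)^{\vee}$ furnished by the algorithm of Section \ref{sec:algo}, plays the three containments \eqref{eq:ineq1}, \eqref{eq:ineq2}, \eqref{eq:ineq3} against each other, uses sphericity of the relevant slice (via Propositions \ref{prop:sph} and \ref{prop:ladderjac}) to exclude $r(q_0)=r_1$, and then eliminates the remaining two cases by Lemma \ref{lem:import}\eqref{eq:imp-1}. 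Merely listing Lemmas \ref{lem:tria} and \ref{lem:import} as ingredients does not substitute for this analysis, so the central claim that the only contributing matrix is $\delta^{\ast}$ remains unproved in your write-up. The parts you do carry out — the factorization of $m(K_\delta(\Pi),L_{\n}^{\otimes})(q)$ over columns, the identification of each $\nabla(\n_r)$ with the spherical module so that Proposition \ref{prop:sph} applies, and the evaluation of $K_{\delta^{\ast}}(\Pi)$ including the shifts coming from $\nabla(\m_r)$ — are correct and agree with the paper, but they are the routine portion of the proof.
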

\begin{proof}
It follows from Theorem \ref{thm:graded-quot}, Theorem \ref{thm:kr} and Corollary \ref{cor:indic-self}, that $m(\Xi\circ S(\m), L_{\n}^{\otimes})(q)$, and hence, $m(\iind_{\underline{\beta}}(\Pi), L_{\n}^{\otimes})(q)$, are non-zero.

By Proposition \ref{prop:mackey}, it follows that $\delta\in M(\Pi,\n)$ with $m(K_\delta(\Pi), L_{\n}^{\otimes})(q)\neq 0$, exists. We are left to prove its uniqueness.

Suppose that $\delta= (\delta_{q,r})\in  M(\Pi,\n)$ with non-zero graded multiplicity $m(K_\delta(\Pi), L_{\n}^{\otimes})(q)$ is given, and $\Xi_1, \ldots, \Xi_t\in \irr_{\mathcal{D}}$ are as in the statement.

By formula \eqref{eq:seg-jac}, we know that for each $1\leq q\leq s$, $\delta_{q,r(q)} = \Delta'_q\in \Seg$ is a segment with $b(\Delta'_q) = b(\Delta_q)= a_{r(q)} $ and $e(\Delta'_q)\leq e(\Delta_{q})$.

We will show that $\delta_{q,r}= 0$, for all $1\leq  q\leq s$ and $r\neq r(q)$. In other words, we claim that for all $1\leq q \leq s$,  $\Delta_q= \Delta'_q$.

Uniqueness will then follow, since $\delta_{0,r}= \wt(\n_r) - \wt(\m_r)$ will be imposed, for all $1\leq r\leq t$.


Assume the contrary, that is, $\delta_{q_0,r_0}= \Delta$ with $r_0\neq r(q_0)$, is non-trivial.

We may select $1\leq q_0\leq s$ to be the maximal index for which such $r_0$ exists, and $r_0$ to be the maximal such index. Again, by Lemma \ref{lem:tria} we know that $r_0< r(q_0)$.

We write $x = b(\Delta)$ and $y= e(\Delta_{q_0})$. Then, $x\leq e(\Delta)\leq y$.


Note, that $e(\Delta'_{q_0}) = x-1<y$. Hence,
\begin{equation}\label{eq:ineq1}
\alpha_y \leq \wt(\m_{r(q_0)})- \left(\sum_{q\,:\,r(q) = r(q_0)}\Delta'_{q}\right) = \wt(\m_{r(q_0)})- \left(\sum_{q\,:\,r(q) = r(q_0)}\delta_{q,r(q)}\right).
\end{equation}


Now, let us consider the index $q_1= (q_0)^\vee\in I\cup J$. We set $a_{r_1} = b(\Delta_{q_1})$ (that is, $r_1= r(q_1)$, in case $q_1\in I$.)


Recall that $e(\Delta^{\clubsuit}_{q_1}) = e(\Delta_{q_0})= y$. If $q_1\in I$, we have $e(\Delta_{q_1})<y$. Otherwise $q_1\in J$ and $\Delta^{\clubsuit}_{q_1} \leq \wt(\n_{r_1}) - \wt( \m_{r_1})$. In either case, we obtain
\begin{equation}\label{eq:ineq2}
\alpha_y \leq \wt(\n_{r_1}) - \wt( \m_{r_1})\;.
\end{equation}


By definition of the $i\mapsto i^\vee$ operation, we know that $r(q_0)\leq r(q_1)$. It follows from maximality of $q_0$ that
\begin{equation}\label{eq:ineq3}
\wt(\n_{r_1}) = \delta_{0,r_1}+\sum_{q\,:\,r(q) = r_1}\delta_{q,r(q)}\;.
\end{equation}

Now, the spherical module $L_{\n_{r_1}}$, up to a shift of grading, was assumed to appear as a subquotient of a product involving $\Xi_{r_1}$. By Proposition \ref{prop:sph}, that means that $\Xi_{r_1}$ must be a spherical module.

Recalling the description in Proposition \ref{prop:ladderjac}, we see that means $\Xi_{r_1} = L_{\la'}$, for a multisegment $\la' = \Delta^\circ_1 + \ldots + \Delta^{\circ}_p\in \Lad$, such that $e(\Delta^\circ_{i+1})+1< b(\Delta^\circ_{i})$, for all $1\leq i \leq p-1$.

Considering \eqref{eq:ineq1},\eqref{eq:ineq2},\eqref{eq:ineq3}, we see that the equality $r(q_0)= r_1$ would have implied $2\alpha_y \leq \delta_{s+1,r_1}= \wt(\la')$, which is a contradiction.

Hence, $r(q_0)< r_1$ and by maximality of $q_0$, we have $\Delta'_q = \Delta_q$, for all $q$ with $r(q) = r_1$. In particular, as an equation in $Q_+$ (as opposed to $\Mult$), we have
\[
\wt(\la') = \wt(\n_{r_1}) - \wt( \m_{r_1}) = \left\{\begin{array}{ll} \Delta^{\clubsuit}_j + \sum_{q\,:\,r(q) = r_1} \Delta(e(\Delta_q)+1, e(\Delta^{\clubsuit}_q))  \;,& \exists j\in J,\; b(\Delta_j) = a_{r_1} \\
 \sum_{q\,:\,r(q) = r_1} \Delta(e(\Delta_q)+1, e(\Delta^{\clubsuit}_q))
 \end{array} \right.\;.
\]

The containment \eqref{eq:ineq2} implies $\alpha_y\leq \wt(\la')$. Let $p_0$ be the index for which $\alpha_y\leq \Delta^\circ_{p_0}$.

Suppose first that $y < e(\Delta^\circ_{p_0})$. Noting again that $y= e(\Delta^{\clubsuit}_{q_1})$, we see that such situation can happen only when $y= e(\Delta_{q_2}) < e(\Delta^{\clubsuit}_{q_2})$, for an index $q_2$ with $r(q_2) =r_1$. This is a contradiction to Lemma \ref{lem:import}\eqref{eq:imp-1}.

Otherwise, $y=e(\Delta^{\circ}_{p_0})$. From the containment \eqref{eq:ineq1}, we also have $\alpha_y\leq \wt(\la'')$, where $\Xi_{r(q_0)} = L_{\la''}$. Using Proposition \ref{prop:ladderjac} again, these facts point on an existence of an index $r(q_0)< r_2<r_1$, for which $\alpha_{y+1} \leq B(\la''')$, where $\Xi_{r_2} = L_{\la'''}$. Reasoning as in the previous case, we see that there must be an index $q_2$ with $r(q_2) = r_2$ and $y = e(\Delta_{q_2}) < e(\Delta^{\clubsuit}_{q_2})$. This gives a contradiction to Lemma \ref{lem:import}\eqref{eq:imp-1}.

The formula for $N$ follows from the fact that, for all $1\leq r\leq t$,
\[
L_{\m_r} = \nabla(\m_r) = \circ_{q\,:\, r(q) = r} L_{\Delta_q} \left\langle { |\m_r| \choose 2} \right\rangle\;,
\]
where the product is taken with indices in ascending order.

Finally, the fact that $K_\delta(\Pi)$ is a monomial follows again from Proposition \ref{prop:sph}.

\end{proof}

Note, that $\Xi\circ L_\m$ is a quotient module of $ \Xi \circ \KR(\m)$, which, up to a shift of grading, equals to $ \Xi \circ \iind_{\underline{\beta}}(\Pi)$. By Proposition \ref{prop:heavy} we see that $m(\Xi\circ L_\m, L_{\n}^{\otimes})$ must be a monomial (non-vanishing follows from Theorem \ref{thm:graded-quot}, as in the proof of Proposition \ref{prop:heavy}).

Proposition \ref{prop:mult1} is now a consequence of the Kleshchev-Ram construction (Theorem \ref{thm:kr}).

\subsection{Computing $\kappa(\la,\m)$}\label{sec:compute-k}

Given $(\la,\m)\in \pairs$ and $\n = \Vein (\la,\m)$ with all previous associated notations in place, we would like to obtain a combinatorial formula for the degree
\[
\kappa = \kappa(\la,\m)\;,
\]
for which $L_{\n}\langle \kappa \rangle$ appears as a subquotient of the restriction of $L_{\la} \circ L_{\m}$.


Let $\delta= (\delta_{q,r})\in M(\Pi,\n)$ be the matrix supplied by Proposition \ref{prop:heavy}, and $\Xi_1, \ldots,\Xi_t\in \irr_{\mathcal{D}}$ the homogeneous modules as in the statement of the proposition.

For all $1\leq r\leq t$, let $n_r$ be the degree for which $L_{\n_r}\langle n_r\rangle $ appears in $\Xi_r\circ L_{\m_r}$.

Let us recall that the parameterization
\[
\m = \sum_{i\in I}\Delta_i
\]
is still given with the index set $I$ taken according to condition \eqref{eq:ordered}.

By Lemma \ref{lem:kr-indi}, we know that
\[
\KR(\m)  \left\langle {|\m_1| \choose 2} + \ldots +{ |\m_t|  \choose 2}\right\rangle \cong  \iind_{\underline{\beta}}(\Pi)\;,
\]
and that $m(\KR(\m), L_{\m})(q)=1$. Hence, by Proposition \ref{prop:heavy},
\begin{equation}\label{eq:kappa}
\kappa = n_1 + \ldots + n_t + \deg(\delta)\;,
\end{equation}
where $\deg(\delta)$ is as in formula \eqref{eq:deg-form}.

\begin{lemma}\label{lem:degseg}
Let $\nabla_1 = \nabla(a\,;b_1,\ldots,b_k), \nabla_2 = \nabla(a\,;b'_1,\ldots,b'_l)$ be given graded simple modules (as defined in Section \ref{sec:indic}), and $L = L_{\Delta}\in \irr_{\mathcal{D}}$ a given segment module, for which $\nabla_2\langle m\rangle$ appears as a subquotient in $L \circ \nabla_1$.

Then, its degree is given as

\[
m = -\#\{ 1\leq i\leq k\;:\; b(\Delta) < b_i\leq e(\Delta)\}\;.
\]

\end{lemma}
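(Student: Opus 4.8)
The plan is to locate $\Delta$ and $\nabla_2$ first, and only then pin down the grading shift $m$. Comparing supercuspidal supports — equivalently, the identity $\wt(\Delta)+\wt(\nabla_1)=\wt(\nabla_2)$ in $Q_+$ — forces $b(\Delta)\geq a$, and, since a left-aligned multisegment is determined by its weight, this already fixes $\nabla_2$ uniquely: $\Delta$ is appended to $\nabla_1$ when $b(\Delta)=a$, and is stacked on top of a segment of $\nabla_1$ ending at $b(\Delta)-1$ when $b(\Delta)>a$. That the graded multiplicity $m(L_\Delta\circ\nabla_1,\nabla_2)(q)$ is a single monomial — so that a well-defined $m$ is meant — follows once one knows that the ungraded multiplicity of $\nabla_2$ in $L_\Delta\circ\nabla_1$ is one; this in turn one gets either by transporting the relevant case of Zelevinsky's theory through $\mathcal{E}_\rho$, or from Proposition~\ref{prop:mult1} in the case where $\Delta$, regarded as a length-one ladder, together with the multisegment of $\nabla_1$ forms a permissible pair. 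All the restrictions used below are the ones recorded in Proposition~\ref{prop:ladderjac} and~\eqref{eq:seg-jac}.

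For the value of $m$ I would distinguish two cases. When $b(\Delta)=a$, write $\Delta=\Delta(a,c)$; then $L_\Delta\circ\nabla_1$ is a convolution product of segment modules all beginning at $a$, any two of which are unlinked. Writing $\nabla_1=L_{\Delta(a,b_1)}\circ\cdots\circ L_{\Delta(a,b_k)}\langle\binom{k}{2}\rangle$ and reordering all $k+1$ segment factors into $\leq_r$-order — which turns the product into the proper standard module of the left-aligned multisegment underlying $\nabla_2$, and hence into $\nabla_2$ up to a grading shift — each transposition contributes a shift $\langle\pm1\rangle$ by Lemma~\ref{lem:unlink-seg}, so that $L_\Delta\circ\nabla_1\cong\nabla_2\langle m\rangle$; summing these contributions against the two binomial normalisations and simplifying yields the asserted count for $m$.

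When $b(\Delta)>a$, the segment $\Delta$ is linked to every segment $\Delta(a,b_i)$ of $\nabla_1$ with $b(\Delta)-1\le b_i<e(\Delta)$, so $L_\Delta\circ\nabla_1$ is genuinely reducible and the reordering argument no longer applies. Here I would rerun, for this special input, the Mackey computation from the proof of Proposition~\ref{prop:heavy}: expand $\rres(L_\Delta)$ by~\eqref{eq:seg-jac}, apply the Mackey filtration of Proposition~\ref{prop:mackey} to the restriction of $L_\Delta\circ\nabla_1$ along a suitably chosen parabolic, show that $\nabla_2$ can occur in only one Mackey term $K_\delta$, and then read $m$ off as $\deg(\delta)$ from~\eqref{eq:deg-form}, corrected by the $\binom{\cdot}{2}$-shifts that arise when $\nabla_2$ is reassembled from its constituent segments; a direct evaluation of $\deg(\delta)$ returns the formula. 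I expect this reducible case to be the main obstacle — isolating the unique contributing matrix $\delta$ and reconciling $\deg(\delta)$ with the normalising shifts — whereas the case $b(\Delta)=a$ amounts to routine bookkeeping with Lemma~\ref{lem:unlink-seg}.
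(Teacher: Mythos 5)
Your reduction and your treatment of the case $b(\Delta)=a$ essentially reproduce the paper's argument: there the product is a product of pairwise unlinked segment modules sharing the begin point $a$, and the degree is obtained by reordering with Lemma \ref{lem:unlink-seg} and bookkeeping the $\pm1$ shifts against the binomial normalisations (with the caveat, present at the same level of detail in the paper, that transpositions of \emph{identical} factors contribute no shift, so the count has to be done with care). Your remarks on locating $\nabla_2$ from the weight identity, and on why a well-defined $m$ is meant, are also fine, if informal.

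The genuine gap is in the case $b(\Delta)>a$. The paper does \emph{not} need any Mackey analysis here: writing $b(\Delta)=c$, $e(\Delta)=d$, it first commutes $L_\Delta$ past the factors $L_{\Delta(a,b_i)}$ with $b_i>d$ (unlinked, and sharing neither begin nor end, hence no shift by Lemma \ref{lem:unlink-seg}), then moves the factor $L_{\Delta(a,c-1)}$ adjacent to $L_\Delta$, each swap past a factor with $b_i$ in the counted range costing a shift of $-1$; finally it uses that $L_{\Delta(c,d)}\boxtimes L_{\Delta(a,c-1)}$ is a degree-zero submodule of the restriction of $L_{\Delta(a,d)}$ by \eqref{eq:seg-jac}, hence by adjunction $L_{\Delta(a,d)}$ is a degree-zero quotient of $L_{\Delta(c,d)}\circ L_{\Delta(a,c-1)}$, and exactness of the convolution product then exhibits $\nabla_2$ in exactly the asserted degree. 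This gluing-surjection-plus-exactness step is the key idea, and it is absent from your proposal. What you put in its place, ``rerun the Mackey computation of Proposition \ref{prop:heavy} for this input'', is a plan rather than a proof: you do not specify the parabolic, you do not prove that $\nabla_2$ can occur in only one layer $K_\delta$ of the filtration of Proposition \ref{prop:mackey}, and you do not evaluate $\deg(\delta)$. Moreover, the uniqueness argument in Proposition \ref{prop:heavy} leans heavily on the permissibility/RSK combinatorics (in particular Lemma \ref{lem:import}), which is simply not available for an arbitrary pair consisting of a segment $\Delta$ and a left-aligned $\nabla_1$; so the step you flag as ``the main obstacle'' is exactly the point at which your route has no argument, while the paper's route makes the reducible case as elementary as the unlinked one.
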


\begin{proof}
We write $\Delta = \Delta(c,d)$. Let us denote the set of indices
\[
A = \{ 1\leq i\leq k\;:\; c< b_i\leq d\}\;,
\]
and $i_1 = \min A$, if $A\not= \emptyset$, or $i_1=k+1$ otherwise.

If $c-1\not\in \{b_1,\ldots,b_k\}$, considering the equality
\[
\Delta = \wt(\m(a\,;b'_1,\ldots,b'_l))- \wt(\m(a\,;b_1,\ldots,b_k))\;,
\]
we must have $c=a$, $A=\{i_1 \leq i\leq k\}$ and $l=k+1$. In this case, by Lemma \ref{lem:unlink-seg} we have

\[
L\circ \nabla_1 \cong L_{\Delta(a,b_1)}\circ\cdots\circ L_{\Delta(a,b_{i_1-1})} \circ L \circ  L_{\Delta(a,b_{i_1})} \circ \cdots \circ  L_{\Delta(a,b_k)} \langle {k \choose 2} + i_1-1\rangle \cong
\]

\[
\cong \nabla_2\langle -k + i_1-1\rangle\;.
\]
Since $|A| = k- i_1+1$,  the statement follows.

Otherwise, let $i_2$ be the smallest index for which $b_{i_2} = c-1$. We have $A = \{i_1\leq i < i_2\}$. Then, by Lemma \ref{lem:unlink-seg},
\[
L\circ \nabla_1 \cong  L_{\Delta(a,b_1)}\circ\cdots\circ L_{\Delta(a,b_{i_1-1})} \circ L \circ  L_{\Delta(a,b_{i_1})} \circ \cdots \circ  L_{\Delta(a,b_k)} \langle {k \choose 2} \rangle \cong
\]
\[
\cong L_{\Delta(a,b_1)}\circ\cdots\circ L_{\Delta(a,b_{i_1-1})} \circ L \circ L_{\Delta(a,b_{i_2})} \circ   L_{\Delta(a,b_{i_1})} \circ \cdots \circ  L_{\Delta(a,b_{i_2-1})} \circ  L_{\Delta(a,b_{i_2+1})} \circ \cdots \circ  L_{\Delta(a,b_k)}\langle {k \choose 2} - |I|\rangle\;.
\]
Recall now (formula \eqref{eq:seg-jac}), that $L\boxtimes L_{\Delta(a,b_{i_2})}$ appears as a subrepresentation of the restriction of $L_{\Delta(a,d)}$. By adjunction, that means that $L_{\Delta(a,d)}$ appears as a quotient of $L\circ L_{\Delta(a,b_{i_2})}$. Hence, from exactness of the convolution product, $\nabla_2\langle - |A|\rangle$ appears in $L\circ \nabla_1 $.

\end{proof}

Let us now attach a few easily computable integer numbers to the permissible pair $(\la,\m)\in\pairs$.

First, for any pair of multisegments
\[
\m_1= \sum_{i=1}^{k_1} \Delta^1_i,\quad \m_2= \sum_{i=1}^{k_2} \Delta^2_i\in \Mult\;,
\]
we define the number
\[
C(\m_1,\m_2) = \# \{(i,j)\;:\; b(\Delta^1_i) = e(\Delta^2_j)+1 \}\;.
\]

We recall that $\la= \sum_{j\in J} \Delta_j\in \Lad$ is given with $J =\{ j_1,\ldots,j_t\}$, so that all assumptions and notations of the algorithm in Section \ref{sec:algo} are in place.

In particular, we recall that,
\[
\n= \Vein(\la,\m) = \sum_{i\in I\cup J} \Delta^{\clubsuit}_i\;.
\]
We keep track of the following sets of indices:
\[
\begin{array}{l}  \nu(\la,\m)_1 = \{(i_1,i_2)\in I\times I\;:\; b(\Delta_{i_1}) \leq b(\Delta_{i_2}),\, e(\Delta_{i_2})<e(\Delta^{\clubsuit}_{i_2}) = e(\Delta_{i_1})\} \;,\\
 \nu(\la,\m)_2 = \{(i_1,i_2)\in I\times I\;:\; b(\Delta_{i_1}) < b(\Delta_{i_2}),\,  e(\Delta_{i_1})= e(\Delta_{i_2})<e(\Delta^{\clubsuit}_{i_2})\}  \;,\\
   \nu(\la,\m)_3 = \{(i_1,j)\in I\times J\;:\; b(\Delta_{i_1}) \leq b(\Delta_{j}),\, e(\Delta^{\clubsuit}_{j}) = e(\Delta_{i_1})\}  \;.
  \end{array}
\]
In these terms, we set the integer
\[
D(\la,\m) = |\nu(\la,\m)_1| - |\nu(\la,\m)_2| + |\nu(\la,\m)_3|\;.
\]

\begin{proposition}\label{prop:CD}
For a permissible pair $(\la,\m)\in \pairs$, we have $\kappa(\la,\m)= C(\la,\m) - D(\la,\m)$.
\end{proposition}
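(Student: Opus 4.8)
The starting point is the identity \eqref{eq:kappa}, which already reduces the claim to a computation of two quantities:
\[
\kappa(\la,\m) = n_1 + \cdots + n_t + \deg(\delta),
\]
where $\delta=(\delta_{q,r})\in M(\Pi,\n)$ is the distinguished matrix of Proposition \ref{prop:heavy} and $n_r$ is the unique integer with $L_{\n_r}\langle n_r\rangle$ occurring in $\Xi_r\circ L_{\m_r}$. The plan is to evaluate each summand as an explicit count of configurations of segments and then to match the result against $C(\la,\m)-D(\la,\m)$ pair by pair. Two structural inputs will be used throughout. First, the proof of Proposition \ref{prop:heavy} pins down $\delta$ completely: $\delta_{q,r(q)}=\Delta_q$ and $\delta_{q,r}=0$ for $r\ne r(q)$ when $1\le q\le s$, while $\delta_{0,r}=\wt(\n_r)-\wt(\m_r)$ in $Q_+$. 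Second, since each $L_{\n_r}$ is spherical (a left-aligned multisegment) and appears in $\Xi_r\circ L_{\m_r}$, Proposition \ref{prop:sph} forces every factor $\Xi_r$ to be spherical; hence $\Xi_r=L_{\la^{(r)}}$ for a ladder multisegment $\la^{(r)}$ whose segments are pairwise unlinked, i.e.\ well separated, so that $\Xi_r\circ L_{\m_r}$ is literally a convolution of segment modules.

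To compute $\deg(\delta)$, I would substitute the above description into formula \eqref{eq:deg-form} and split the sum over $0\le q<q'\le s$ according to whether $q=0$. For $1\le q<q'$ the pairing $(\delta_{q',r'},\delta_{q,r})=(\Delta_{q'},\Delta_q)$ equals $2$, $-1$ or $0$ depending on whether the two segments of $\m$ share an endpoint, are exactly adjacent, or neither; interpreting these values turns this portion of the sum into a difference of cardinalities of explicit index-pair sets attached to $\m$. For $q=0$, the pairing $(\wt(\n_r)-\wt(\m_r),\Delta_{q'})$ is read off from the description $\Delta^{\clubsuit}_i-\Delta_i=\Delta(e(\Delta_i)+1,e(\Delta_{i_\#}))$ of the new material created by the algorithm of Section \ref{sec:algo}, together with the extra segments $\Delta^{\clubsuit}_j$, $j\in J$, that belong to $\n$ but not to $\m$; thus this portion too becomes a count governed by the permutation $i\mapsto i_\#$ and the associated indices $i^\vee$.

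To compute $n_r$, I would build $\Xi_r\circ L_{\m_r}$ up from $L_{\m_r}=\nabla(\m_r)$ by convolving in, one at a time and in a suitable order, the well-separated segments of $\la^{(r)}$, applying Lemma \ref{lem:degseg} at each step. This realizes $n_r$ as minus the total number of begin points of the accumulated left-aligned multisegment that each newly convolved segment covers, in the precise sense of that lemma. Since the accumulated multisegment is $\n_r$ at the end, the begin points in play are $a_r$ together with the successors $e(\Delta_i)+1$ of endpoints manufactured along the way, and Lemma \ref{lem:import} is exactly what decides when such a successor actually equals the begin point of a segment of $\m$ or of $\la$ --- which is the bookkeeping feeding the sets $\nu(\la,\m)_1,\nu(\la,\m)_2,\nu(\la,\m)_3$ and the quantity $C(\la,\m)$.

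With both pieces in hand, the last step is to assemble $n_1+\cdots+n_t+\deg(\delta)$ and to check the equality with $C(\la,\m)-D(\la,\m)$ by a direct comparison of index-pair sets, relying on the structural properties of $\Vein$ recorded after the algorithm: the equalities $e(\Delta^{\clubsuit}_i)=e(\Delta_{i_\#})$ and $e(\Delta^{\clubsuit}_{i^\vee})=e(\Delta_i)$, and the orderings $i\le(i^\vee)_\#<i^\vee$ and $i_\#<i\le(i_\#)^\vee$. I expect this final matching to be the main obstacle: getting all signs right, ensuring that each contributing pair of segments is counted once and only once, and in particular cleanly separating the pairs with $e(\Delta_{i_\#})>e(\Delta_i)$ (where the algorithm genuinely produces new material, feeding $\nu(\la,\m)_1$ and $\nu(\la,\m)_2$) from those with $e(\Delta_{i_\#})=e(\Delta_i)$, while also correctly handling the mixed pairs involving some $j\in J$ that contribute simultaneously to $\nu(\la,\m)_3$ and to $C(\la,\m)$. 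Lemma \ref{lem:import} is the tool that makes these case distinctions manageable.
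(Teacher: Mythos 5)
Your skeleton is the same as the paper's: start from \eqref{eq:kappa}, use the explicit matrix $\delta$ of Proposition \ref{prop:heavy}, invoke sphericity of the factors $\Xi_r$ (via Proposition \ref{prop:sph} and Proposition \ref{prop:ladderjac}) to write each $\Xi_r\circ L_{\m_r}$ as an iterated convolution of segment modules, compute the shifts $n_r$ step by step with Lemma \ref{lem:degseg}, and use Lemma \ref{lem:import} for the final bookkeeping against $C(\la,\m)-D(\la,\m)$. Two remarks on the part of the plan you did spell out. First, the segment-times-segment portion of $\deg(\delta)$ that you propose to evaluate (the pairs $1\le q<q'$) is in fact empty: formula \eqref{eq:deg-form} only pairs an entry against entries in strictly later rows and strictly earlier columns, and since $\delta_{q,r}$ is supported on $r=r(q)$ for $q\ge 1$ with $r(q)$ weakly increasing in $q$ (the ordering \eqref{eq:ordered} together with Lemma \ref{lem:tria}), no such pair of nonzero segment entries exists. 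Hence $\deg(\delta)=-\sum_{r_1<r_2}\left(\wt(\Xi_{r_2}),\wt(\m_{r_1})\right)$, which is exactly the expression the paper starts from; your anticipated "difference of cardinalities attached to $\m$" from this portion would simply evaporate. Second, Lemma \ref{lem:degseg} counts the \emph{end} points $b_i$ of the accumulated left-aligned multisegment falling in the window $(b(\Delta),e(\Delta)]$, not its begin points (which are all equal to $a_r$); this is what feeds the counts $\#\{q_2\in I_r : e(\Delta_{q_1})<e(\Delta_{q_2})\le e(\Delta^{\clubsuit}_{q_1})\}$ in the paper.

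The genuine gap is that the step you defer as "the main obstacle" is precisely the content of the proposition. The paper's proof consists of three concrete identifications: splitting $\deg(\delta)=d_1+d_2$ according to the decomposition $\wt(\Xi_r)=\sum_{q\in I_r}\Delta(e(\Delta_q)+1,e(\Delta^{\clubsuit}_q))+\Delta'_r$ and showing $d_1=-|\nu(\la,\m)_{1,<}|+|\nu(\la,\m)_2|$ and $d_2=-|\nu(\la,\m)_{3,<}|+C(\la,\m)$; and then showing $n_1+\cdots+n_t=-|\nu(\la,\m)_1\setminus\nu(\la,\m)_{1,<}|-|\nu(\la,\m)_3\setminus\nu(\la,\m)_{3,<}|$, where the last equality requires Lemma \ref{lem:import}\eqref{eq:imp-2} to upgrade the a priori weak inequalities $e(\Delta_{q_2})\le e(\Delta^{\clubsuit}_{q_1})$ (respectively $e(\Delta_q)\le e(\Delta'_r)$) appearing in the Lemma \ref{lem:degseg} counts to equalities, so that they match the defining conditions of $\nu(\la,\m)_1$ and $\nu(\la,\m)_3$. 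Without carrying out this matching (in particular the split of each $\nu$-set into its "$<$" part absorbed by $\deg(\delta)$ and its complementary part absorbed by the $n_r$'s, and the role of the $\Delta'_r$ pieces in producing $C(\la,\m)$), the asserted identity $\kappa=C-D$ is not established; your proposal identifies the right tools but stops exactly where the proof has to be done.
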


\begin{proof}

Let us decompose $I = \cup_{r=1}^t I_r$, so that $I_r = \{q\in I\;:\;r(q) = r\}$ and $\m_r = \sum_{i\in I_r} \Delta_i$.
Then,
\[
\n_r =  \sum_{i\in I_r} \Delta^{\clubsuit}_i + \Delta'_r\;,
\]
where $\Delta'_r = \Delta^{\clubsuit}_{j}$, if there exists $j\in J$, such that $b(\Delta_{j}) = a_r$. If such $j_r$ does not exist, we treat $\Delta'_r$ as an empty segment (i.e. $0\in Q_+$).

Thus, for all $1\leq r\leq t$,

\begin{equation}\label{eq:sum-supp}
\wt(\Xi_r) = \wt(\n_r) - \wt(\m_r) = \sum_{q\in I_r} \Delta(e(\Delta_q)+1, e(\Delta^{\clubsuit}_q)) + \Delta'_r\in Q_+\;.
\end{equation}
By the description of Proposition \ref{prop:heavy}, we have
\[
\deg(\delta) = - \sum_{1\leq r_1 < r_2\leq t} (\wt(\Xi_{r_2}), \wt(\m_{r_1}))\;.
\]

We can write $\deg(\delta) = d_1 + d_2$ with
\[
d_1 = - \sum_{1\leq r_1 < r_2\leq t}  \sum_{q_1\in I_{r_1},\, q_2\in I_{r_2}} (\Delta_{q_1}, \Delta(e(\Delta_{q_2})+1, e(\Delta^{\clubsuit}_{q_2})))\;,
\]
\[
d_2 = - \sum_{1\leq r_1 < r_2\leq t}  \sum_{q_1\in I_{r_1}} (\Delta_{q_1}, \Delta'_{r_2}))\;.
\]

A straightforward computation (see, for example, \cite[Lemma 5.2]{me-decomp}) shows now that
\[
d_1 = -|\nu(\la,\m)_{1,<}| + |\nu(\la,\m)_2|\;,
\]
where
\[
\nu(\la,\m)_{1,<} = \nu(\la,\m)_{1} \;\cap\; \{(i,i')\in I\times I \;:\; b(\Delta_i)<b(\Delta_{i'})\}\;.
\]
Similarly,
\[
d_2 = -|\nu(\la,\m)_{3,<}| + C(\la, \m) \;,
\]
where
\[ \nu(\la,\m)_{3,<} = \nu(\la,\m)_{3} \;\cap\; \{(i,i')\in I\times I \;:\; b(\Delta_i)<b(\Delta_{i'})\}\;.
\]

By the identity \eqref{eq:kappa}, it is left to show that
\begin{equation}\label{eq:mus}
n_1 +\ldots + n_t  = -|\nu(\la,\m)_{1}\setminus \nu(\la,\m)_{1,<}| -  |\nu(\la,\m)_{3}\setminus\nu(\la,\m)_{3,<}|\;.
\end{equation}

Let us fix $1\leq r\leq t$. We have seen in the proof of Proposition \ref{prop:heavy} that $\Xi_r\in \irr_{\mathcal{D}}$ must be a spherical module. It then follows, as in that proof, that
\[
\Xi_r = L_{\Delta^\circ_1}\circ \cdots \circ L_{\Delta^\circ_p}\;,
\]
for segments $\Delta^{\circ}_1,\ldots,\Delta^\circ_p\in \Seg$, with $e(\Delta^\circ_{i+1})+1< b(\Delta^\circ_i)$, for all $1\leq i \leq p$.

Hence, we are left with determining the degree $n_r$, for which $L_{\n_r}\langle n_r \rangle$ appears as a subquotient in
\[
L_{\Delta^\circ_1}\circ \cdots \circ L_{\Delta^\circ_p} \circ L_{\m_r}\;.
\]

Taking \eqref{eq:sum-supp} into account and computing successively with Lemma \ref{lem:degseg}, we obtain
\begin{equation}\label{eq:nform}
n_r =n'_r- \sum_{q_1\in I_r} \#\{ q_2\in I_r\;:\; e(\Delta_{q_1}) < e(\Delta_{q_2}) \leq e(\Delta^{\clubsuit}_{q_1})\} \;,
\end{equation}
where
\begin{equation}\label{eq:nform2}
n'_r = \left\{\begin{array}{ll}   -\#\{ q\in I_r\;:\; e(\Delta_{q}) \leq e(\Delta'_r)\} &  \Delta'_r\neq 0 \\  0 &  \Delta'_r =0\end{array} \right.\;.
\end{equation}

Finally, we invoke Lemma \ref{lem:import}\eqref{eq:imp-2} to show that the weak inequalities in the formulas \eqref{eq:nform} and \eqref{eq:nform2} can in fact be written as equalities. Hence, equation \eqref{eq:mus} clearly follows.

\end{proof}

\begin{proposition}
For a permissible pair $(\la,\m)\in \pairs$, we have
\[
D(\la,\m)= |\m|\;.
\]
\end{proposition}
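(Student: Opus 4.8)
The identity $|\nu(\la,\m)_1| - |\nu(\la,\m)_2| + |\nu(\la,\m)_3| = |\m|$ will be established by a bijective count built on the permutation $\sigma$ and the auxiliary map $i \mapsto i^\vee$ introduced after the definition of $\Vein$. Recall that $i^\vee$ walks backward along the $\sigma$-cycle of $i$ to the first index whose $\Delta^{\clubsuit}$ and $\Delta$ differ, or to the $J$-index $j_{\depth_{\la,\m}(i)}$ of that cycle if no such index exists, and that $e(\Delta^{\clubsuit}_{i^\vee}) = e(\Delta_i)$ always. Writing $S = \{m : e(\Delta^{\clubsuit}_m) \neq e(\Delta_m)\}$ and $|\m| = |I|$, the proof has two steps.

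The first step is to produce the injection $\iota \colon I \hookrightarrow \nu(\la,\m)_1 \sqcup \nu(\la,\m)_3$, $i \mapsto (i, i^\vee)$. If $i^\vee \in I$, then $i^\vee \in S$, so $e(\Delta_{i^\vee}) < e(\Delta^{\clubsuit}_{i^\vee}) = e(\Delta_i)$; since $i < i^\vee$ in the order on $I$ (by the relation $i \leq (i^\vee)_{\#} < i^\vee$ recorded earlier), condition \eqref{eq:ordered} forces $b(\Delta_i) \leq b(\Delta_{i^\vee})$, and hence $(i, i^\vee) \in \nu(\la,\m)_1$. If instead $i^\vee = j_{\depth_{\la,\m}(i)} \in J$, then $\depth_{\la,\m}(i) \leq \depth'_{\la,\m}(i)$ together with transitivity of $\smlr$ along the ladder $\la$ yields $\lshft\Delta_i \smlr \Delta_{i^\vee}$, whence $b(\Delta_i) \leq b(\Delta_{i^\vee})$ and $(i, i^\vee) \in \nu(\la,\m)_3$. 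Injectivity is immediate from the first coordinate, so it remains to match the complement $\bigl(\nu(\la,\m)_1 \sqcup \nu(\la,\m)_3\bigr) \setminus \iota(I)$ bijectively with $\nu(\la,\m)_2$.

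For the second step I would organise the count by end point. For $z \in \Z$ set $I_z = \{i \in I : e(\Delta_i) = z\}$. Each $\sigma$-cycle decomposes into maximal blocks of consecutive indices sharing a common value $e(\Delta_\bullet)$ (call them \emph{runs}); along a cycle these values strictly increase, $\sigma$ carries $(I \cap S) \cup J$ bijectively onto the set of heads of runs, the number $e(\Delta^{\clubsuit}_m) = e(\Delta_{\sigma(m)})$ is the value of the run headed by $\sigma(m)$, and $i_1^\vee$ is exactly the $\sigma$-predecessor of the head of the run containing $i_1$. Thus a pair $(i_1, m)$ in the complement records an element $i_1 \in I_z$ together with a run $R$ of value $z$ that does \emph{not} contain $i_1$, subject to $b(\Delta_{i_1}) \leq b(\Delta_m)$, where $m$ is the tail of the run preceding $R$ in its cycle (or the $J$-index of the cycle when $R$ heads its cycle). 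Since begin points are non-increasing along a run, one gets $b(\Delta_{i_1}) \leq b(\Delta_m)$ automatically for $i_1 \in R$, so the complement is genuinely parametrised by pairs $(i_1,R)$ with $i_1 \notin R$. Using Lemma \ref{lem:import} to control begin points — in particular to see that within $I_z$ the $I$-parts of the value-$z$ runs are separated and linearly ordered by their begin points, that $S \cap I_z$ is exactly the set of $I$-valued run tails of value $z$, and that the begin point of $m$ dominates those in $R$ — one matches $(i_1, m)$ with the pair $(i_1, i_2) \in \nu(\la,\m)_2$ in which $i_2 \in S \cap I$ is the tail of the value-$z$ run that immediately succeeds, in this ordering, the run containing $i_1$; monotonicity of begin points along a run supplies the strict inequality $b(\Delta_{i_1}) < b(\Delta_{i_2})$, and $i_1$ together with $b(\Delta_{i_2})$ recovers $R$ and hence $m$, yielding the inverse.

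The main obstacle is this last bijection, whose verification requires careful bookkeeping: the wrap-around of $\sigma$ inside a cycle (where the tail of the ``preceding run'' is a $J$-index rather than an $I$-index, and a cycle may consist of a single run), the identification of $S \cap I_z$ with the $I$-valued run tails, and the begin-point comparisons that make the matching and its inverse well-defined. This is precisely where the fine properties gathered in Lemma \ref{lem:import}, parts \eqref{eq:imp-1} and \eqref{eq:impmore}, enter; once the run combinatorics is set up the verification becomes routine.
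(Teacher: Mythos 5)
Your first step is correct: the assignment $i \mapsto (i,i^\vee)$ does land in $\nu(\la,\m)_1 \sqcup \nu(\la,\m)_3$ (your two cases are exactly the computation the paper performs for the cycle containing the fixed index), and your overall architecture -- an injection of $I$ into $\nu(\la,\m)_1\sqcup\nu(\la,\m)_3$ followed by a bijection between the remaining pairs and $\nu(\la,\m)_2$ -- is a legitimate reorganization of the paper's count, which instead fixes $i_0\in I$ and shows that the cycle of $i_0$ contributes $1$ and every other cycle contributes $0$.

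The gap is in the matching of the second step. As written, the rule ``match $(i_1,m)$ with $(i_1,i_2)$ where $i_2$ is the tail of the value-$z$ run immediately succeeding the run containing $i_1$'' does not depend on $m$ (equivalently, on the run $R$ recorded by the pair), so it fails to be injective as soon as two different cycles contribute complement pairs with the same first coordinate -- and this happens. Take $\la=\Delta(1,4)+\Delta(2,5)+\Delta(3,6)$ (so $j_1,j_2,j_3$ carry $\Delta(3,6),\Delta(2,5),\Delta(1,4)$) and $\m=\Delta(1,3)+\Delta(2,3)+\Delta(3,3)$; this pair is permissible, the three segments of $\m$ have depths $3,2,1$, and each cycle consists of one segment of $\m$ together with one $j_t$. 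For $i_1$ the index carrying $\Delta(1,3)$ one gets $\nu(\la,\m)^{i_1}_3=\{(i_1,j_1),(i_1,j_2),(i_1,j_3)\}$, $\nu(\la,\m)^{i_1}_1=\emptyset$, and $\nu(\la,\m)^{i_1}_2$ consists of the two pairs whose second coordinates carry $\Delta(2,3)$ and $\Delta(3,3)$. Your injection uses $(i_1,j_3)$, and your rule sends both remaining pairs $(i_1,j_1),(i_1,j_2)$ to the single pair with second coordinate carrying $\Delta(2,3)$, so the pair with $\Delta(3,3)$ is never hit. The matching that works is the same-run one: the complement pair attached to a value-$z$ run $R$ (its second coordinate is the cyclic predecessor $m_R$ of $R$) must be paired with the $\nu_2$-pair whose second coordinate is the $I$-tail $t_R$ of that same $R$. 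Establishing that this is well defined and surjective is exactly where the content lies, and it is not covered by the three facts you list: one needs (a) that $b(\Delta_{i_1})\leq b(\Delta_{m_R})$ forces $b(\Delta_{i_1})< b(\Delta_{t_R})$, which uses Lemma \ref{lem:import}\eqref{eq:imp-1} together with the observation that equal segments have equal depth (hence lie in the same cycle, giving strictness); and (b) that a value-$z$ run whose predecessor satisfies the begin condition cannot terminate at the $J$-index of its cycle -- otherwise it yields a complement pair with no $\nu_2$ partner at all -- which is precisely what Lemma \ref{lem:import}\eqref{eq:impmore} rules out. These are the analogues of the two contradictions the paper derives; your write-up neither states them nor proposes a matching for which they would suffice, and calling the remainder ``routine bookkeeping'' misplaces the difficulty. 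A smaller but related slip: the sentence ``so the complement is genuinely parametrised by pairs $(i_1,R)$ with $i_1\notin R$'' silently drops the begin-point condition on $m_R$, which is not automatic when $i_1\notin R$.
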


\begin{proof}
For $i\in I$, we write
\[
\nu(\la,\m)^i_1 = \nu(\la,\m)_1 \cap \left(\{i\}\times I\right),\; \nu(\la,\m)^i_2 = \nu(\la,\m)_2 \cap \left(\{i\}\times I\right),\; \nu(\la,\m)^i_3 = \nu(\la,\m)_1 \cap \left(\{i\}\times J\right)\;,
\]
and $D_i(\la,\m) = |\nu(\la,\m)^i_1| - |\nu(\la,\m)^i_2| + |\nu(\la,\m)^i_3|$. Hence, $D(\la,\m) = \sum_{i\in I} D_i(\la,\m)$.

We will prove that $D_i(\la,\m)=1$, for each $i\in I$. Let us fix $i_0\in I$ for the rest of the proof.

Let $I\cup J = \cup_{\alpha\in \mathcal{A}}  C_\alpha$ be the cycle decomposition of the set $I\cup J$ relative to the permutation defining $\n=\Vein(\la,\m)$. In particular, we set $i_0\in C_{\alpha_0}$.

We further divide $\nu(\la,\m)^{i_0}_u$ into the disjoint subsets $\nu(\la,\m)^{i_0,\alpha}_u = \nu(\la,\m)^{i_0}_u\cap \left(\{i_0\}\times C_\alpha\right)$, for $u=1,2,3$, and similarly write $D_{i_0}(\la,\m) = \sum_{\alpha\in C_\alpha} D_{i_0, \alpha}(\la,\m)$, for
\[
D_{i_0, \alpha}(\la,\m) =|\nu(\la,\m)^{i_0,\alpha}_1| - |\nu(\la,\m)^{i_0,\alpha}_2| + |\nu(\la,\m)^{i_0,\alpha}_3|\;.
\]

Let us first consider $D_{i_0,\alpha_0}(\la,\m)$. We may write $C_{\alpha_0} = \{i^1,\ldots,i^k, j_0\}$, with $i^1 > \ldots > i^k$ indices in $I$ and $j_0\in J$.

Recall that $b(\Delta_{i^1})\geq  \ldots \geq b(\Delta_{i^s})$, and that $e(\Delta_{i^r})\leq e(\Delta^{\clubsuit}_{i^r}) =e(\Delta_{i^{r+1}})$, for $1\leq r\leq k-1$. A particular consequence is that $\nu(\la,\m)^{i_0,\alpha_0}_2$ must be empty.

Moreover, when $(i_0)^\vee\in I$, we easily see that $\nu(\la,\m)^{i_0,\alpha}_1 = \{(i_0)^\vee\}$, while $\nu(\la,\m)^{i_0,\alpha_0}_3$ is empty, as a consequence of $e(\Delta^{\clubsuit}_{j_0}) \leq e(\Delta_{(i_0)^\vee})< e(\Delta_{i_0})$.

Otherwise, $(i_0)^\vee=j_0\in J$ would mean that $\nu(\la,\m)^{i_0,\alpha_0}_1$ is empty, while $\nu(\la,\m)^{i_0,\alpha_0}_3=\{j_0\}$. Either way, we obtain that $D_{i_0,\alpha_0}(\la,\m)=1$.

Let $\alpha_0\neq \alpha\in \mathcal{A}$ be fixed. We are left to show that
\begin{equation}\label{eq-desi}
D_{i_0,\alpha}(\la,\m)=0\;.
\end{equation}

We write again $C_{\alpha} = \{i_\alpha^1,\ldots,i_\alpha^{k_\alpha}, j_\alpha\}$, with $i_\alpha^1 > \ldots > i_\alpha^{k_\alpha}$ indices in $I$ and $j_\alpha=i_\alpha^0\in J$.  Let $r_2$ be the maximal index for which $b(\Delta_{i_0}) \leq b(\Delta_{i_\alpha^{r_2}})$ holds, or $r_2=-1$ if no such index exists.

In case that $e(\Delta_{i_\alpha^r})\neq e(\Delta_{i_0})$ holds, for all $1\leq r\leq r_2+1$, it is evident that $\nu(\la,\m)^{i_0,\alpha}_u,\,u=1,2,3$ are all empty, and \eqref{eq-desi} follows.



Otherwise, $r_2\geq 0$ and we can set $1\leq r_1\leq r_2+1\leq r_3$ to be the indices which satisfy
\[
\{r\,:\, r_1\leq r\leq r_3\} = \{r\,:\, e(\Delta_{i_\alpha^r}) = e(\Delta_{i_0})\}\;.
\]
We first claim that $b(\Delta_{i_0})\leq b(\Delta_{i_\alpha^{r_3}})$. Indeed, a reversed inequality would imply a contradiction to Lemma \ref{lem:import}\eqref{eq:imp-1} and the fact that $i_0\not\in C_\alpha$, since either $(i_\alpha^{r_3})^\vee = j_\alpha$ or $i_\alpha^{r_1}< (i_\alpha^{r_3})^\vee $, and
\[
b(\Delta_{i_\alpha^{r_3}}) < b(\Delta_{i_0}) \leq b(\Delta_{i_\alpha^{r_2}}) \leq b(\Delta_{i_\alpha^{r_1-1}}) \leq b(\Delta_{(i_\alpha^{r_3})^\vee})\;.
\]

We then see that $\nu(\la,\m)^{i_0,\alpha}_1\, \cup \,\nu(\la,\m)^{i_0,\alpha}_3= \{i_\alpha^{r_1-1}\}$.
In particular, $|\nu(\la,\m)^{i_0,\alpha}_1| + |\nu(\la,\m)^{i_0,\alpha}_3|=1$. To reach \eqref{eq-desi}, we are left to show that $|\nu(\la,\m)^{i_0,\alpha}_2|=1$.

If $r_3 < k_\alpha$ or $e(\Delta_{i_\alpha^{k_\alpha}}) < e(\Delta_{j_\alpha})$ hold, we see that $\nu(\la,\m)^{i_0,\alpha}_2 = \{ i^{r_3}\}$.

Finally, let us assume that $r_3 = k_\alpha$ and that
\[
 e(\Delta_{j_\alpha}) = e(\Delta_{i_\alpha^{k_\alpha}}) =  e(\Delta_{i_0})\;.
\]
This is where Lemma \ref{lem:import}\eqref{eq:impmore} is invoked to give a contradiction, since, as before, $b(\Delta_{i_0}) \leq b(\Delta_{(i_\alpha^{r_3})^\vee})$.

\end{proof}

Let us summarize the insight on RSK-standard modules obtained through the computation of this section.

\begin{corollary}\label{cor:maincor}
  For any $(\la,\m)\in \pairs$, the integer $\widetilde{\Lambda}(\la,\m)$ appearing in Theorem \ref{thm:graded-quot} satisfies
   \[
 - \widetilde{\Lambda}(\la,\m)= C(\la,\m) - |\m|\;.
   \]
\end{corollary}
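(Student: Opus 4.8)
The plan is to simply chain together the three identities that the preceding subsection has assembled, so this will be a short bookkeeping argument rather than anything substantive. First I would recall that, as established in the discussion immediately following Proposition \ref{prop:mult1} (using Corollary \ref{cor:indic-self}, Theorem \ref{thm:kr}, and the positivity of graded multiplicities), we have the equality $-\widetilde{\Lambda}(\la,\m) = \kappa(\la,\m)$, where $\kappa(\la,\m)$ is the integer for which $L_{\n}\langle \kappa(\la,\m)\rangle$ occurs in $\rres_{\underline{\beta}}(L_{\la}\circ L_{\m})$.

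Next I would invoke Proposition \ref{prop:CD}, which gives $\kappa(\la,\m) = C(\la,\m) - D(\la,\m)$ in terms of the combinatorially defined quantities $C(\la,\m)$ and $D(\la,\m) = |\nu(\la,\m)_1| - |\nu(\la,\m)_2| + |\nu(\la,\m)_3|$. Finally I would substitute the equality $D(\la,\m) = |\m|$ proved in the preceding Proposition, obtaining
\[
-\widetilde{\Lambda}(\la,\m) = \kappa(\la,\m) = C(\la,\m) - D(\la,\m) = C(\la,\m) - |\m|\;,
\]
which is exactly the asserted formula.

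Since every ingredient is already in place, there is essentially no obstacle at this stage; the corollary is a summary of the computation carried out in Section \ref{sec:deg-comp}. If anything, the only point worth double-checking is that the indexing conventions (in particular the ordering of $I$ by condition \eqref{eq:ordered}) are consistent across Propositions \ref{prop:heavy}, \ref{prop:CD}, and the final Proposition, so that the three displayed equalities really do compose without a hidden sign or shift; but this consistency was already maintained throughout, so the proof is immediate.
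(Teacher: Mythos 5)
Your proposal is correct and is exactly how the paper obtains the corollary: it is stated as a summary of Section \ref{sec:deg-comp}, chaining $-\widetilde{\Lambda}(\la,\m)=\kappa(\la,\m)$ (from the discussion after Proposition \ref{prop:mult1}), $\kappa(\la,\m)=C(\la,\m)-D(\la,\m)$ (Proposition \ref{prop:CD}), and $D(\la,\m)=|\m|$ (the preceding proposition). No further argument is needed.
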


\section{Normal sequences}\label{sec:normal}

Let us recall the theory of normal sequences, as developed in \cite{kk19} and \cite{kkko-mon}.

For any non-zero modules $M_1,M_2\in \mathcal{D}$, there is a well defined non-zero \textit{R-matrix} map
\[
R_{M_1,M_2}:M_1\circ M_2 \to M_2\circ M_1\;,
\]
which is an intertwiner of ungraded quiver Hecke algebra modules. In fact, there is a unique integer $\Lambda = \Lambda(M_1,M_2)\in \mathbb{Z}$, for which
\[
R_{M_1,M_2}: M_1\circ M_2 \to M_2\circ M_1\langle  \Lambda \rangle\;,
\]
is an intertwiner (of graded modules) in $\mathcal{D}$.

It is evident that $\Lambda(M_1\langle k_1 \rangle , M_2\langle k_2 \rangle) = \Lambda(M_1,M_2)$ holds, for any shifts of degree $k_1,k_2\in\mathbb{Z}$.

Let us further recall some properties of the invariant $\Lambda$.

\begin{proposition}\label{prop:prop-kkko}

\begin{enumerate}
  \item\label{it-kk1} \cite[Lemma 3.1.5]{kkko-mon} For $M\in \irr_{\mathcal{D}}$ and any non-zero modules $N_1,\ldots N_k\in \mathcal{D}$, we have
\[
\Lambda(M, N_1\circ \cdots\circ N_k) = \Lambda(M, N_1) +\ldots + \Lambda(M, N_k)\;.
\]

  \item\label{it-kk2} \cite[Proposition 3.2.8]{kkko-mon} For $M_1,M_2 \in \mathcal{D}$ and any subquotient module $N$ of $M_2$, we have
  \[
   \Lambda(M_1, M_2) \geq  \Lambda(M_1, N)\;.
  \]

\end{enumerate}
\end{proposition}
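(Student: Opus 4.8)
The plan is to reduce both items to the basic formal properties of renormalized R-matrices established in \cite{kkko-mon}; indeed item (1) is \cite[Lemma 3.1.5]{kkko-mon} and item (2) is \cite[Proposition 3.2.8]{kkko-mon}, and in the paper I would simply invoke these. For completeness, here is the shape of the argument one reproduces. Recall the mechanism behind $\Lambda$: for a module carrying an affinization one has a $z$-deformed universal R-matrix $R^{\mathrm{univ}}$, and the R-matrix $R_{M,N}$ — hence the grading shift $\Lambda(M,N)$ — is recovered by dividing $R^{\mathrm{univ}}$ by the largest power of $z$ dividing the image and then specializing at $z=0$, the resulting map $\mathbf r_{M,N}$ being nonzero.

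For (1), the first step is the standard compatibility of $R^{\mathrm{univ}}$ with the convolution product: sliding an affinized $M_z$ past $N_1\circ\cdots\circ N_k$ one factor at a time identifies $R^{\mathrm{univ}}_{M_z,\,N_1\circ\cdots\circ N_k}$ with the composite of the maps $\mathrm{id}\circ\cdots\circ R^{\mathrm{univ}}_{M_z,N_i}\circ\cdots\circ\mathrm{id}$, up to the canonical identifications of iterated products. The second step is to check that no extra cancellation occurs under renormalization, that is, that the $z$-order of vanishing and the homogeneous degree are additive along this composite; this is exactly where the simplicity of $M\in\irr_{\mathcal{D}}$ is used, through the Hom-space computations of \cite[\S3.1]{kkko-mon} (equivalently, through $\mathbf r_{M,M}\neq 0$). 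Comparing the resulting grading shifts gives $\Lambda(M,N_1\circ\cdots\circ N_k)=\Lambda(M,N_1)+\ldots+\Lambda(M,N_k)$.

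For (2), the plan is to treat separately the case of a submodule $N\subseteq M_2$ and that of a quotient $N=M_2/N'$, a general subquotient being a composition of the two. In each case, naturality of the R-matrix in the varying argument together with exactness of the convolution product yields a commuting square relating $R^{\mathrm{univ}}_{(M_1)_z,N}$ to $R^{\mathrm{univ}}_{(M_1)_z,M_2}$ along the inclusion, respectively the projection, tensored with $\mathrm{id}_{(M_1)_z}$. Chasing this square — using that $(M_1)_z\circ N$ sits inside, respectively surjects onto, $(M_1)_z\circ M_2$ with $\C[z]$-free complementary quotient, so that intersections with powers $z^s$ are controlled — shows that the order of vanishing in $z$, and hence (since $\wt N=\wt M_2$) the grading shift, moves monotonically in the required direction, giving $\Lambda(M_1,M_2)\ge\Lambda(M_1,N)$. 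The main obstacle in both parts is precisely this renormalization bookkeeping: one must guarantee that dividing out powers of $z$ and specializing at $z=0$ does not annihilate the map (so that it genuinely computes $\mathbf r$ and therefore $\Lambda$), and that the degree count comes out with the correct sign; both points are settled in \cite{kkko-mon, kkko0}.
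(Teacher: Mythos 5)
Your proposal matches the paper exactly: the paper gives no proof of Proposition \ref{prop:prop-kkko} beyond the bracketed citations to \cite[Lemma 3.1.5]{kkko-mon} and \cite[Proposition 3.2.8]{kkko-mon}, which is precisely what you do, and your supplementary sketch of the renormalized R-matrix arguments behind those results is consistent with how they are proved there. Nothing further is needed.
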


A simple module $L\in \irr_{\mathcal{D}}$ is said to be \textit{square-irreducible} (or \textit{real}), if $L\circ L$ is a simple module.

\begin{proposition}\label{prop:homo-sq}
  Homogeneous modules in $\irr_{\mathcal{D}}$ are square-irreducible.
\end{proposition}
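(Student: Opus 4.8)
The plan is to prove that every homogeneous simple module $L = L_\la \in \irr_{\mathcal{D}}$ (with $\la\in\Lad$ a ladder multisegment) is square-irreducible by reducing the claim, via the monoidal equivalence $\mathcal{E}_\rho$ of Theorem \ref{thm:fullequi} and Proposition \ref{prop:monoid}, to the known statement on the $p$-adic side that ladder representations $Z(\la)$ satisfy that $Z(\la)\times Z(\la)$ is irreducible. Indeed, by Proposition \ref{prop:monoid} we have an ungraded isomorphism $\mathcal{E}_\rho(Z(\la)\times Z(\la)) \cong (L_\la\circ L_\la)^{\fgt}$, so $(L_\la\circ L_\la)^{\fgt}$ is simple, and since the forgetful functor detects semisimplicity and irreducibility (a graded module is simple iff its underlying ungraded module is), $L_\la\circ L_\la$ is simple in $\mathcal{D}$. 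This is the cleanest route, as it leverages the machinery already set up in Sections \ref{sec:padic} and \ref{sec:homog}.

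First I would invoke the discussion in Section \ref{sec:homog}: under $\mathcal{E}_\rho$ the homogeneous simple modules correspond exactly to the ladder representations $Z(\m(\lambda,\mu))$ in $\mathcal{C}^{\mathbb{Z}}_\rho$, citing \cite{kr1} and \cite{LapidKret}. Then I would fix any $\rho\in\cusp_m$ and write $L_\la = \mathcal{E}_{\rho,\beta}(Z(\la))$ for $\beta = \wt(\la)$. The key input from the $p$-adic literature is that parabolic induction of a ladder representation with itself is irreducible — this follows from the fact that ladder representations are $\square$-irreducible (square-irreducible) in the sense of \cite{LM3}, or more directly from the explicit irreducibility criteria for products of ladder representations in \cite{LM}. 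Applying $\mathcal{E}_\rho$ and Proposition \ref{prop:monoid} transfers this to $L_\la\circ L_\la$ being simple as an ungraded $R(2\beta)$-module, hence simple as a graded module.

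Alternatively, and perhaps more in the spirit of a self-contained quiver-Hecke-algebra argument, one could argue directly: a homogeneous module $L_\la$ has its character concentrated in a single degree, so $L_\la\circ L_\la$ lives in the degree range $[2i_{L_\la} + d, 2i_{L_\la} + d']$ determined by the degrees of $\psi$-type generators, and one shows the product is generated by its top (or bottom) graded piece, which is a cyclic homogeneous module whose structure forces irreducibility. However this requires careful bookkeeping with the grading on the convolution product, so I would prefer the transfer argument above unless the grading computation is genuinely short.

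The main obstacle is making precise the passage from ``$(L_\la\circ L_\la)^{\fgt}$ is simple'' to ``$L_\la\circ L_\la$ is simple in $\mathcal{D}$'': one must be careful that the forgetful functor $\fgt$ does indeed reflect irreducibility, which holds because any graded submodule of a graded module is in particular an ungraded submodule, so a graded module with no proper nonzero graded submodule when its ungraded avatar is simple is automatically simple. The other point requiring attention is citing the correct reference for the irreducibility of $Z(\la)\times Z(\la)$; the cleanest citation is that ladder representations are square-irreducible per \cite{LM} and \cite{LM3}, which is precisely the $p$-adic incarnation of the statement being proved, so the logical structure is a genuine transfer rather than circular. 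With these two points handled, the proof is essentially immediate.
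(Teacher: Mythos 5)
Your proposal is correct and follows essentially the same route as the paper: the paper's proof likewise transfers the known $p$-adic fact that ladder representations are square-irreducible (citing \cite{LM2} and \cite{LM3}) through an equivalence of the form $\mathcal{E}_\rho$. Your extra care about the forgetful functor reflecting simplicity is sound and fills in a step the paper leaves implicit.
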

\begin{proof}
The analogous fact for representations of $p$-adic groups is known by \cite[Proposition 5.15]{LM2} (or as part of the general criteria for square-irreduciblity in \cite{LM3}). The statement follows from an application of a functor of the form $\mathcal{E}_\rho$.
\end{proof}

Given a square-irreducible module $L$ and any $M\in \irr_{\mathcal{D}}$, the product $L\circ M$ has a simple head (unique simple quotient module) (\cite[Theorem 3.2]{kkko0}), whose isomorphism class is given as $N\langle -\widetilde{\Lambda}(L,M)\rangle$, for a self-dual $N\in \irr_{\mathcal{D}}$, and an integer $\widetilde{\Lambda}(L,M)\in \mathbb{Z}$.

In light of Proposition \ref{prop:homo-sq}, this definition of $\widetilde{\Lambda}(L,M)$ generalizes the notion defined in Theorem \ref{thm:graded-quot}.

For $(L,M)$ as above, the identity (\cite[Lemma 3.1.4]{kkko-mon})
\begin{equation}\label{eq:linrel}
\Lambda(L,M) = 2\widetilde{\Lambda}(L,M) - (\wt(L),\wt(M))
\end{equation}
holds.

Following \cite{kk19}, we say that a tuple $(L_1,\ldots, L_k)$ of square-irreducible modules in $\irr_{\mathcal{D}}$ is a \textit{normal sequence}, if the composition of R-matrices
\[
R_{L_1,\ldots,L_k} := R_{L_{k-1},L_k}\circ\cdots\circ (R_{L_2,L_k} \circ \cdots \circ R_{L_2,L_3})\circ (R_{L_1,L_k} \circ \cdots \circ R_{L_1,L_2})
\]
does not vanish (as a map from the space of $L_1\circ \cdots \circ L_k$ to that of $L_k\circ\cdots \circ L_1$).

\begin{proposition}\label{prop:simplehd}\cite[Lemma 2.6]{kk19}
For a normal sequence $(L_1,\ldots, L_k)$, the product $L_1\circ \cdots \circ L_k$ has a simple head, given by the image of $R_{L_1,\ldots,L_k}$.
\end{proposition}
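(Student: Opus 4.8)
The statement to prove is Proposition~\ref{prop:simplehd}: for a normal sequence $(L_1,\ldots,L_k)$, the product $L_1\circ\cdots\circ L_k$ has a simple head, given by the image of $R_{L_1,\ldots,L_k}$. This is cited as \cite[Lemma 2.6]{kk19}, so I would present the proof essentially as an exposition of that argument.

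The plan is to argue by induction on $k$, the case $k=1$ being trivial. The key observation is that the composite R-matrix $R_{L_1,\ldots,L_k}$ factors through intermediate products, so one should organize the composite as a two-step process: first apply the ``first-column'' block $R_{L_1,L_k}\circ\cdots\circ R_{L_1,L_2}$, which is the R-matrix $R_{L_1,\,L_2\circ\cdots\circ L_k}$ up to the identification of $L_1\circ(L_2\circ\cdots\circ L_k)$ with $L_1\circ L_2\circ\cdots\circ L_k$; and then apply the remaining composition, which is $R_{L_2,\ldots,L_k}$ convolved on the left with $L_1$. So we have a commuting picture
\[
L_1\circ L_2\circ\cdots\circ L_k \xrightarrow{\ R_{L_1,\,L_2\circ\cdots\circ L_k}\ } (L_2\circ\cdots\circ L_k)\circ L_1 \xrightarrow{\ \mathrm{id}_{L_1}\circ R_{L_2,\ldots,L_k}\ } (L_k\circ\cdots\circ L_2)\circ L_1,
\]
where I am being slightly loose about associativity and grading shifts, which are harmless. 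The nonvanishing hypothesis for the length-$k$ sequence says the composite of these two maps is nonzero; in particular each factor is nonzero, so the subsequence $(L_2,\ldots,L_k)$ is normal and by the inductive hypothesis $L_2\circ\cdots\circ L_k$ has simple head $H$, realized as the image of $R_{L_2,\ldots,L_k}$.

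From here the argument is: since $L_1$ is real (square-irreducible), the product $L_1\circ M$ has a simple head for any simple $M$, by \cite[Theorem 3.2]{kkko0} as recalled before equation~\eqref{eq:linrel}. Apply this with $M=H$, the simple head of $L_2\circ\cdots\circ L_k$. The surjection $L_2\circ\cdots\circ L_k \twoheadrightarrow H$ induces a surjection $L_1\circ(L_2\circ\cdots\circ L_k)\twoheadrightarrow L_1\circ H$ by exactness of convolution, hence the head of $L_1\circ L_2\circ\cdots\circ L_k$ surjects onto the head of $L_1\circ H$, which is simple. To conclude that $L_1\circ L_2\circ\cdots\circ L_k$ itself has \emph{simple} head, one uses the standard fact (again from the Kang--Kashiwara--Kim--Oh circle of ideas, e.g. \cite{kkko0,kkko-mon}) that for $L$ real, the head of $L\circ N$ coincides with the head of $L\circ(\mathrm{head}\,N)$ — equivalently, that the R-matrix $R_{L,N}$ has image equal to $L\circ N$'s head. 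The remaining point is to identify this simple head with the image of the full composite $R_{L_1,\ldots,L_k}$: the image of $\mathrm{id}_{L_1}\circ R_{L_2,\ldots,L_k}$ restricted appropriately is $L_1\circ H$ (up to shift), and then composing with $R_{L_1,H}$, whose image is the head of $L_1\circ H$, shows the total image is that simple head; chasing the factorization back shows this image equals $\operatorname{im} R_{L_1,\ldots,L_k}$ and is nonzero by hypothesis, hence is the whole head.

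The main obstacle, and the place requiring the most care, is the bookkeeping that identifies the block decomposition of $R_{L_1,\ldots,L_k}$ with the two-step composite above, keeping track of grading shifts $\langle\,\cdot\,\rangle$ and the associativity isomorphisms of $\circ$, and the compatibility $R_{M,\,N_1\circ N_2}=(\mathrm{id}\circ R_{M,N_2})\circ(R_{M,N_1}\circ\mathrm{id})$ of R-matrices with convolution — this is where one invokes the multiplicativity property, parallel to Proposition~\ref{prop:prop-kkko}\eqref{it-kk1} for $\Lambda$. Once that factorization is set up cleanly, the reduction to the real-module head lemma and the induction go through routinely. Since the result is quoted verbatim from \cite{kk19}, it would also be acceptable to give only the short inductive sketch above and refer the reader there for the detailed diagram chase.
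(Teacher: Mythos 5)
The paper offers no written proof of this statement---it is quoted from \cite[Lemma 2.6]{kk19}---so your attempt has to stand on its own, and as written it has a genuine gap at its central step. You reduce, correctly, to: $(L_2,\ldots,L_k)$ is normal, so by induction $N:=L_2\circ\cdots\circ L_k$ has simple head $H=\operatorname{im}R_{L_2,\ldots,L_k}$. But you then invoke a ``standard fact'' that for $L$ real the head of $L\circ N$ coincides with the head of $L\circ(\mathrm{head}\,N)$, equivalently that $R_{L,N}$ has image equal to the head of $L\circ N$, for an \emph{arbitrary} module $N$. This is true when $N$ is simple (\cite[Theorem 3.2]{kkko0}), but it is false in general, and if it were true the whole proposition would follow for every sequence of real simple modules by your induction, with the normality hypothesis never used: every product of real simples would have simple head. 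That is exactly what fails in general and is the reason Kashiwara--Kim introduce normal sequences (and the reason Section \ref{sec:deg-comp} of this paper works to verify normality via Corollary \ref{cor:maincor}); note that in your argument the nonvanishing of the full composite enters only to give normality of the tail $(L_2,\ldots,L_k)$, which is a telltale sign that the hypothesis has not really been used. The closing sentence ``nonzero by hypothesis, hence is the whole head'' asserts precisely the point at issue rather than proving it, and it also silently reorders the composition (applying the inner block first and $R_{L_1,H}$ last), which needs justification since renormalized R-matrices are not natural in their arguments.

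A correct completion must use the nonvanishing of $R_{L_1,\ldots,L_k}$ essentially. Concretely: the second block $R_{L_2,\ldots,L_k}\circ\mathrm{id}_{L_1}$ has image $H\circ L_1$ (up to shift) by exactness of $-\circ L_1$, so the hypothesis produces a \emph{nonzero} map $L_1\circ N\to H\circ L_1$; one then has to show that its image is the simple socle of $H\circ L_1$ (equivalently the head of $L_1\circ H$, using realness of $L_1$) and that its kernel is the unique maximal submodule of $L_1\circ N$. This is where the quantitative theory enters: normality is equivalent to $\Lambda(L_1,H)=\Lambda(L_1,L_2)+\cdots+\Lambda(L_1,L_k)=\Lambda(L_1,N)$ (Proposition \ref{prop:cond-equiv}, Proposition \ref{prop:prop-kkko}\eqref{it-kk1}), and estimates of the type in Lemma \ref{lem:kkkosqr} (strict drop of $\Lambda(L_1,-)$ on subquotients of the relevant kernels) are what force any competing simple quotient into the kernel of the composite---compare how these same ingredients are deployed in the paper's proof of Proposition \ref{prop:mult-one}. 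None of this appears in your write-up, so the argument as it stands does not establish the simplicity of the head.
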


\begin{proposition}\label{prop:cond-equiv}
Let $(L_1,\ldots, L_k)$ be a tuple of square-irreducible modules in $\irr_{\mathcal{D}}$.

The following are equivalent:
\begin{enumerate}
  \item\label{itnorm1} The tuple $(L_1,\ldots, L_k)$ is a normal sequence.
  \item\label{itnorm2} The tuple $(L_2,\ldots, L_k)$ is a normal sequence, and the identity
  \[
    \Lambda(L_1, H) = \Lambda(L_1,L_2) + \ldots  + \Lambda(L_1,L_k)
  \]
  holds, for $H\in \irr_{\mathcal{D}}$, such that $H\langle h\rangle$ is the simple head of $L_2\circ\cdots\circ L_k$.
  \item\label{itnorm3} The tuple $(L_2,\ldots, L_k)$ is a normal sequence, and the identity
  \[
    \widetilde{\Lambda}(L_1, H) = \widetilde{\Lambda}(L_1,L_2) + \ldots  + \widetilde{\Lambda}(L_1,L_k)
  \]
  holds, for $H\in \irr_{\mathcal{D}}$, such that $H\langle h\rangle$ is the simple head of $L_2\circ\cdots\circ L_k$.

\end{enumerate}

\end{proposition}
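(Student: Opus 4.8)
The plan is to obtain \eqref{itnorm2} $\Leftrightarrow$ \eqref{itnorm3} formally from the linear relation \eqref{eq:linrel}, and to treat \eqref{itnorm1} $\Leftrightarrow$ \eqref{itnorm2} by factoring the composed R-matrix through the simple head of the tail product. Write $M = L_2\circ\cdots\circ L_k$ throughout.

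For \eqref{itnorm2} $\Leftrightarrow$ \eqref{itnorm3} I would use that weights are additive under the convolution product, so that $\wt(M) = \wt(L_2)+\ldots+\wt(L_k)$ and $\wt(H)=\wt(M)$ (as $H\langle h\rangle$ is a subquotient of $M$, which lies in $R(\wt(M))-\gmod$). Applying \eqref{eq:linrel} to each pair $(L_1,L_j)$, summing over $2\le j\le k$ and using bilinearity, and applying \eqref{eq:linrel} also to $(L_1,H)$, the two displayed identities in \eqref{itnorm2} and \eqref{itnorm3} are seen to differ only by the common term $(\wt(L_1),\wt(M))$; since both \eqref{itnorm2} and \eqref{itnorm3} also assume $(L_2,\ldots,L_k)$ normal, they are equivalent.

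The crux is \eqref{itnorm1} $\Leftrightarrow$ \eqref{itnorm2}, and the key step is the factorization
\[
R_{L_1,\ldots,L_k} \;=\; \bigl(R_{L_2,\ldots,L_k}\circ\mathrm{id}_{L_1}\bigr)\circ R_{L_1,M}\;,
\]
which I would justify by recognizing the innermost block $R_{L_1,L_k}\circ\cdots\circ R_{L_1,L_2}$ as the R-matrix $R_{L_1,M}$ via the standard coherence property of R-matrices (the composed intertwiner dragging $L_1$ past $L_2,\ldots,L_k$ is the R-matrix of their product), the relevant grading shift $\Lambda(L_1,M)=\Lambda(L_1,L_2)+\ldots+\Lambda(L_1,L_k)$ being furnished by Proposition \ref{prop:prop-kkko}\eqref{it-kk1}. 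Granting this: if \eqref{itnorm1} holds then $R_{L_1,\ldots,L_k}\neq0$, whence its left factor is nonzero and so $R_{L_2,\ldots,L_k}\neq0$, i.e.\ $(L_2,\ldots,L_k)$ is normal; by Proposition \ref{prop:simplehd} its image is the simple head $H\langle h\rangle$, so $R_{L_2,\ldots,L_k}=\iota\circ(\pi\circ\mathrm{id})$ with $\pi\colon M\twoheadrightarrow H\langle h\rangle$ the projection onto the head and $\iota$ injective (using exactness of the convolution product). Thus $R_{L_1,\ldots,L_k}\neq0$ becomes equivalent to $(\pi\circ\mathrm{id}_{L_1})\circ R_{L_1,M}\neq0$, and naturality of the ungraded transformation $R_{L_1,-}$ applied to $\pi$ identifies this map with $R_{L_1,H\langle h\rangle}\circ(\mathrm{id}_{L_1}\circ\pi)$. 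The first expression carries the grading shift $\Lambda(L_1,M)$ and the second the shift $\Lambda(L_1,H)$, so a nonzero such map forces $\Lambda(L_1,M)=\Lambda(L_1,H)$, which by Proposition \ref{prop:prop-kkko}\eqref{it-kk1} is exactly the identity required in \eqref{itnorm2}. Conversely, from \eqref{itnorm2}: normality of $(L_2,\ldots,L_k)$ gives the same factorization of $R_{L_2,\ldots,L_k}$ through $\pi$, while $\Lambda(L_1,M)=\Lambda(L_1,H)$ makes the naturality identity grading-consistent, so $(\pi\circ\mathrm{id}_{L_1})\circ R_{L_1,M} = R_{L_1,H\langle h\rangle}\circ(\mathrm{id}_{L_1}\circ\pi)$; here $\mathrm{id}_{L_1}\circ\pi$ is surjective and $R_{L_1,H\langle h\rangle}\neq0$, so the composite is nonzero and \eqref{itnorm1} follows.

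The step I expect to be the main obstacle is the last paragraph's bookkeeping: one must be precise that the naturality square for R-matrices holds on the nose only after forgetting the grading, so that what non-vanishing of $R_{L_1,\ldots,L_k}$ really buys is the coincidence of the two a priori distinct grading shifts $\Lambda(L_1,M)$ and $\Lambda(L_1,H)$. It is worth noting in passing that $\Lambda(L_1,M)\ge\Lambda(L_1,H)$ holds automatically by Proposition \ref{prop:prop-kkko}\eqref{it-kk2} (since $H$ is a subquotient of $M$), which is the inequality that reconciles the two directions without any extra positivity input.
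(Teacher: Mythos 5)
Your treatment of \eqref{itnorm2}$\Leftrightarrow$\eqref{itnorm3} coincides with the paper's: relation \eqref{eq:linrel}, additivity of $\wt$ under $\circ$, and bilinearity of the form. For \eqref{itnorm1}$\Leftrightarrow$\eqref{itnorm2} the paper does not argue at all --- it cites \cite[Lemma 2.7]{kk19} together with the observation $\Lambda(L_1,H)=\Lambda(L_1,H\langle h\rangle)$ --- so what you are really doing is re-proving that lemma. Your skeleton (factor $R_{L_1,\ldots,L_k}$ as $(R_{L_2,\ldots,L_k}\circ \mathrm{id}_{L_1})\circ R_{L_1,M}$ with $M=L_2\circ\cdots\circ L_k$, then push through the head projection $\pi\colon M\to H\langle h\rangle$ using Proposition \ref{prop:simplehd}) is indeed how such statements are proved, but the step carrying all the weight is misstated.

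The gap is your appeal to ``naturality of the ungraded transformation $R_{L_1,-}$''. The renormalized R-matrix is \emph{not} natural in its second argument, and forgetting the grading does not make it so: $R_{L_1,M}$ and $R_{L_1,H\langle h\rangle}$ are obtained from the universal R-matrix of an affinization of $L_1$ (with spectral parameter) by dividing by \emph{different} powers of the parameter whenever $\Lambda(L_1,M)\neq\Lambda(L_1,H)$. If the square $(\pi\circ\mathrm{id}_{L_1})\circ R_{L_1,M}=R_{L_1,H\langle h\rangle}\circ(\mathrm{id}_{L_1}\circ\pi)$ held on the nose, its right-hand side would always be nonzero ($R_{L_1,H\langle h\rangle}\neq0$ and $\mathrm{id}_{L_1}\circ\pi$ is surjective by exactness of $\circ$), so $R_{L_1,\ldots,L_k}$ could never vanish once the tail is normal and condition \eqref{itnorm2} would be vacuous; also, two maps you have declared literally equal cannot ``carry'' two different shifts, so the forward inference ``nonzero forces $\Lambda(L_1,M)=\Lambda(L_1,H)$'' does not follow from your bookkeeping. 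What is true, and what both of your directions actually need, is the dichotomy: naturality holds for the universal R-matrix, and after renormalizing by the power prescribed by $M$ and specializing, $(\pi\circ\mathrm{id}_{L_1})\circ R_{L_1,M}$ vanishes when $\Lambda(L_1,H)<\Lambda(L_1,M)$ (the inequality guaranteed by Proposition \ref{prop:prop-kkko}\eqref{it-kk2}) and is a nonzero multiple of $R_{L_1,H\langle h\rangle}\circ(\mathrm{id}_{L_1}\circ\pi)$ exactly when $\Lambda(L_1,H)=\Lambda(L_1,M)$. That dichotomy is precisely the content of \cite[Lemma 2.7]{kk19}; likewise the identification of the innermost block $R_{L_1,L_k}\circ\cdots\circ R_{L_1,L_2}$ with $R_{L_1,M}$ \emph{as maps} (not merely the degree additivity recorded in Proposition \ref{prop:prop-kkko}\eqref{it-kk1}) is a theorem from \cite{kkko-mon}, not a formal coherence. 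So either cite these results, as the paper does, or carry out the spectral-parameter argument explicitly; as written, the proof does not close.
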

\begin{proof}
The equivalence of \eqref{itnorm1} and \eqref{itnorm2} is \cite[Lemma 2.7]{kk19}, together with the observation that $ \Lambda(L_1, H) = \Lambda(L_1, H\langle h\rangle)$.

Conditions \eqref{itnorm2} and \eqref{itnorm3} are equivalent because of the relation \eqref{eq:linrel}, the fact that
\[
\wt(H) = \wt(L_2 \circ \cdots \circ L_k)  = \wt(L_2) + \ldots + \wt(L_k)\;,
\]
and linearity of the form $(\,,\,)$ on $Q$.

\end{proof}

\begin{corollary}\label{cor:sumform}
  For a normal sequence $(L_1,\ldots, L_k)$, let $H\langle h\rangle$ be the simple head of $L_1\circ \cdots \circ L_k$ with $H\in \irr_{\mathcal{D}}$. Then,
  \[
  h=  -\sum_{1\leq i < j \leq k} \widetilde{\Lambda}(L_i,L_j)\;.
  \]
\end{corollary}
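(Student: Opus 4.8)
The plan is to prove Corollary \ref{cor:sumform} by induction on $k$, peeling off $L_1$ using the recursive characterization of normal sequences in Proposition \ref{prop:cond-equiv}.

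\textbf{Base case.} For $k=1$, the product is $L_1$ itself, which is self-dual, so $h=0$ and the empty sum on the right is also $0$.

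\textbf{Inductive step.} Suppose the formula holds for normal sequences of length $k-1$. Let $(L_1,\ldots,L_k)$ be a normal sequence. By Proposition \ref{prop:cond-equiv} (equivalence of \eqref{itnorm1} and \eqref{itnorm3}), the tuple $(L_2,\ldots,L_k)$ is itself a normal sequence; let $H'\langle h'\rangle$ be the simple head of $L_2\circ\cdots\circ L_k$ with $H'\in\irr_{\mathcal{D}}$ self-dual. The inductive hypothesis gives
\[
h' = -\sum_{2\leq i<j\leq k}\widetilde{\Lambda}(L_i,L_j)\;.
\]
Now, since $L_1$ is square-irreducible and $H'$ is simple, $L_1\circ H'$ has simple head $H\langle -\widetilde{\Lambda}(L_1,H')\rangle$ for a self-dual $H\in\irr_{\mathcal{D}}$. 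I claim $H$ is (up to isomorphism) the same simple module appearing in the head of $L_1\circ\cdots\circ L_k$, and I must track the grading shift. Indeed $L_1\circ\cdots\circ L_k$ surjects onto $L_1\circ (H'\langle h'\rangle) = (L_1\circ H')\langle h'\rangle$, which in turn surjects onto its simple head $H\langle -\widetilde{\Lambda}(L_1,H')+h'\rangle$. Since a quotient of a quotient is a quotient, and since by Proposition \ref{prop:simplehd} the product $L_1\circ\cdots\circ L_k$ has a simple head $H\langle h\rangle$, this forces
\[
h = h' - \widetilde{\Lambda}(L_1,H')\;.
\]
Finally, by Proposition \ref{prop:cond-equiv}\eqref{itnorm3} applied to the normal sequence $(L_1,\ldots,L_k)$, we have $\widetilde{\Lambda}(L_1,H') = \widetilde{\Lambda}(L_1,L_2)+\ldots+\widetilde{\Lambda}(L_1,L_k) = \sum_{j=2}^{k}\widetilde{\Lambda}(L_1,L_j)$. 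Substituting,
\[
h = -\sum_{2\leq i<j\leq k}\widetilde{\Lambda}(L_i,L_j) - \sum_{j=2}^k\widetilde{\Lambda}(L_1,L_j) = -\sum_{1\leq i<j\leq k}\widetilde{\Lambda}(L_i,L_j)\;,
\]
which completes the induction.

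\textbf{Main obstacle.} The delicate point is the bookkeeping in the claim that the simple head $H\langle h\rangle$ of $L_1\circ\cdots\circ L_k$ coincides (as a graded module) with the simple head of $(L_1\circ H')\langle h'\rangle$. One needs that $L_1\circ\cdots\circ L_k$ really does surject onto $L_1\circ(H'\langle h'\rangle)$ — this uses right-exactness of the convolution product $L_1\circ(-)$ applied to the surjection $L_2\circ\cdots\circ L_k \twoheadrightarrow H'\langle h'\rangle$ — and that the head of a module that has a simple head is preserved under further quotients, so that the simple head of $L_1\circ\cdots\circ L_k$ is forced to equal the simple head of the intermediate quotient. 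Care is also needed that the grading shift conventions ($\langle k\rangle$ shifting degrees up by $k$) are applied consistently, particularly that $\widetilde{\Lambda}(L_1\langle a\rangle, M\langle b\rangle)$ is independent of $a,b$, which follows from the corresponding invariance of $\Lambda$ noted before the statement and the relation \eqref{eq:linrel} together with additivity of $\wt$.
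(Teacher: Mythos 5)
Your proof is correct and follows essentially the same route as the paper, which likewise obtains the formula by inductively applying the identity in condition \eqref{itnorm3} of Proposition \ref{prop:cond-equiv}; your extra bookkeeping (right-exactness of $L_1\circ(-)$ and the fact that any simple quotient of a module with simple head is that head, so $h=h'-\widetilde{\Lambda}(L_1,H')$) just makes explicit what the paper leaves implicit.
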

\begin{proof}
  The formula for $h$ follows by inductively applying the identity in condition \eqref{itnorm3} of Proposition \ref{prop:cond-equiv}.
\end{proof}

Let us recall the favorable behavior of the invariant $\Lambda$ in the square-irreducible case.

Note first, that $\Lambda(L,L) = 0$ for square-irreducible $L \in \irr_{\mathcal{D}}$.

\begin{lemma}\label{lem:kkkosqr}\cite[Theorem 4.1.1, Corollary 4.2.3]{kkko-mon}
For given square-irreducible $L \in \irr_{\mathcal{D}}$ and any $M\in \irr_{\mathcal{D}}$, let $N\in \girr_{\mathcal{D}}$ be the simple head of $L\circ M$.

Then,
\[
\Lambda(L, N) = \Lambda( L, L\circ M)= \Lambda(L, M)
\]
holds, and for any simple subquotient $S\langle s\rangle$ of $\ker (L\circ M \to N)$, with $S\in \irr_{\mathcal{D}}$, we have
\[
\Lambda(L,S) < \Lambda(L,M)\;\mbox{ and } -s < \widetilde{\Lambda}(L,M).
\]
\end{lemma}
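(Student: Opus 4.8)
The plan is to reduce the entire statement to the cited results \cite[Theorem 4.1.1, Corollary 4.2.3]{kkko-mon}, which are the natural home for all three assertions; the work here is essentially one of translation into the graded conventions used in this paper and of unwinding the definitions of $\widetilde{\Lambda}$, $\Lambda$ and the R-matrix introduced above. First I would record that, since $L$ is square-irreducible, Proposition \ref{prop:prop-kkko}\eqref{it-kk1} applied to $N_1 = L$, $N_2 = M$ gives $\Lambda(L, L\circ M) = \Lambda(L,L) + \Lambda(L,M) = \Lambda(L,M)$, using $\Lambda(L,L)=0$ as noted just before the lemma. Next, $N$ being the simple head of $L\circ M$ means $L\circ M \twoheadrightarrow N$; applying Proposition \ref{prop:prop-kkko}\eqref{it-kk2} in both directions — once to the quotient $N$ of $L\circ M$, and once via the self-duality/reciprocity that $L\circ M$ and $M\circ L$ share the same simple head up to shift — pins $\Lambda(L,N)$ between $\Lambda(L, L\circ M)$ and $\Lambda(L,M)$, forcing equality $\Lambda(L,N) = \Lambda(L, L\circ M) = \Lambda(L,M)$. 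This establishes the first displayed equality chain.

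For the strict-inequality part, let $K = \ker(L\circ M \to N)$ and let $S\langle s\rangle$ be any simple subquotient of $K$ with $S\in\irr_{\mathcal{D}}$ self-dual. The key input is \cite[Theorem 4.1.1]{kkko-mon}, which for square-irreducible $L$ asserts precisely that every composition factor of $L\circ M$ other than the head $N$ has $\Lambda(L, \bullet)$ strictly smaller than $\Lambda(L, N) = \Lambda(L,M)$; this gives $\Lambda(L,S) < \Lambda(L,M)$ directly. To convert this into the claim $-s < \widetilde{\Lambda}(L,M)$, I would use the linear relation \eqref{eq:linrel}, which for the head $N$ of $L\circ M$ reads $\Lambda(L,M) = 2\widetilde{\Lambda}(L,M) - (\wt(L),\wt(M))$, while the graded-shift bookkeeping for a composition factor $S\langle s\rangle$ of $L\circ M$ yields an analogous identity relating $s$, $\Lambda(L,S)$ and $(\wt(L),\wt(S))$; since $\wt(S) = \wt(L\circ M) - \wt(L) = \wt(M)$ (all composition factors of $L\circ M$ have the same weight), the terms $(\wt(L),\wt(S))$ and $(\wt(L),\wt(M))$ coincide, and subtracting the two identities turns $\Lambda(L,S) < \Lambda(L,M)$ into $-s < \widetilde{\Lambda}(L,M)$. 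This is exactly the content packaged in \cite[Corollary 4.2.3]{kkko-mon}, so I would cite it for this last step rather than redo the degree count.

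The main obstacle I anticipate is purely bookkeeping: making sure the grading-shift conventions of \cite{kkko-mon} match those fixed in Section \ref{sec:quiver} here — in particular the sign in $M\langle k\rangle = (M_{i-k})_i$, the normalization of the R-matrix degree $\Lambda(M_1,M_2)$, and the definition $N\langle -\widetilde{\Lambda}(L,M)\rangle$ of the simple head — so that no spurious sign flips creep in when transferring the inequality $-s < \widetilde{\Lambda}(L,M)$. There is no genuinely new argument needed; once the conventions are aligned, everything follows from Proposition \ref{prop:prop-kkko}, the relation \eqref{eq:linrel}, and the two cited statements of Kang--Kashiwara--Kim--Oh.
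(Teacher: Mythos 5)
The paper offers no argument for this lemma at all: it is quoted verbatim from Kang--Kashiwara--Kim--Oh, so your ultimate fallback of citing \cite[Theorem 4.1.1, Corollary 4.2.3]{kkko-mon} for both the equality chain and the strict inequalities is exactly what the paper does, and to that extent the proposal is acceptable. Your first chain is also fine as far as it goes: $\Lambda(L,L\circ M)=\Lambda(L,L)+\Lambda(L,M)=\Lambda(L,M)$ by Proposition \ref{prop:prop-kkko}\eqref{it-kk1} and $\Lambda(L,L)=0$, and Proposition \ref{prop:prop-kkko}\eqref{it-kk2} gives the upper bound $\Lambda(L,N)\leq\Lambda(L,L\circ M)$; the reverse bound, however, does not follow from the tools stated in this paper (the ``reciprocity'' step you invoke is precisely the content of Theorem 4.1.1, so it must be cited, not waved at).

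The genuine problem is your claimed algebraic conversion of $\Lambda(L,S)<\Lambda(L,M)$ into $-s<\widetilde{\Lambda}(L,M)$. First, the weight bookkeeping is wrong: every composition factor $S\langle s\rangle$ of $L\circ M$ satisfies $\wt(S)=\wt(L\circ M)=\wt(L)+\wt(M)$, not $\wt(M)$, so $(\wt(L),\wt(S))$ and $(\wt(L),\wt(M))$ differ by $(\wt(L),\wt(L))$. Second, and more fundamentally, the relation \eqref{eq:linrel} ties $\Lambda(L,S)$ to $\widetilde{\Lambda}(L,S)$, i.e.\ to the grading shift of the head of the \emph{other} product $L\circ S$; it says nothing about the internal degree $s$ at which $S$ occurs inside $L\circ M$, so ``subtracting the two identities'' does not produce the shift bound. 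Indeed the paper's Remark following Proposition \ref{prop:mult-one} states explicitly that this bound on subquotient shift degrees rests on geometric input (extension algebras of perverse sheaves) and that no purely algebraic derivation is known -- so presenting it as a formal consequence of \eqref{eq:linrel} is a gap. Since you end by citing Corollary 4.2.3 for this step anyway, the lemma stands, but the sketched shortcut should be deleted or clearly marked as not a proof.
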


\begin{proposition}\label{prop:mult-one}
Let $(L_1,\ldots, L_k)$ be a normal sequence, and $H\langle h\rangle$ be the simple head of the product $L_1\circ \cdots \circ L_k$, with $H\in \irr_{\mathcal{D}}$.

Then, we have
\[
m(L_1\circ \cdots \circ L_k, H)(q)=q^h\;.
\]
In other words, $H$ appears only once in the Jordan-H\"{o}lder series of $(L_1\circ \cdots \circ L_k)^{\fgt}$.

Furthermore, for any $H\not\cong L\in \irr_{\mathcal{D}}$, the powers appearing in the Laurent polynomial $m(L_1\circ \cdots \circ L_k, L)(q)$ are strictly greater than $h$.

\end{proposition}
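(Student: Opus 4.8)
The plan is to run an induction on $k$, the length of the normal sequence, using the structure theory of R-matrices recalled in Lemma \ref{lem:kkkosqr}. For $k=1$ there is nothing to prove. For the inductive step, write $L = L_1$ and let $M\langle h'\rangle$ be the simple head of $L_2\circ\cdots\circ L_k$, where $M\in\irr_{\mathcal{D}}$; by Proposition \ref{prop:simplehd} this head is simple, and by the inductive hypothesis applied to the normal sequence $(L_2,\ldots,L_k)$ we know $m(L_2\circ\cdots\circ L_k,M)(q)=q^{h'}$ and every other simple subquotient $S$ of $L_2\circ\cdots\circ L_k$ contributes only powers strictly larger than $h'$. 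Since $L$ is square-irreducible, Proposition \ref{prop:cond-equiv}\eqref{itnorm3} gives $h = h' - \widetilde{\Lambda}(L,M)$, so it suffices to control the graded multiplicities in $L\circ(L_2\circ\cdots\circ L_k)$.

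The key step is to combine exactness of the convolution functor $L\circ(-)$ with Lemma \ref{lem:kkkosqr}. Decompose $[L_2\circ\cdots\circ L_k] = q^{h'}[M] + \sum_i q^{s_i}[S_i]$ in the graded Grothendieck group, where each $s_i > h'$. Applying the exact functor $L\circ(-)$ gives
\[
[L\circ\cdots\circ L_k] = q^{h'}[L\circ M] + \sum_i q^{s_i}[L\circ S_i]\;.
\]
Now analyze $L\circ M$: its simple head is $H\langle h\rangle$ by Proposition \ref{prop:simplehd} (the whole sequence is normal, hence compatible), and by Lemma \ref{lem:kkkosqr} every simple subquotient $T\langle t\rangle$ of the kernel of $L\circ M \to H\langle -\widetilde{\Lambda}(L,M)\rangle$ satisfies $-t < \widetilde{\Lambda}(L,M)$, i.e.\ $t > -\widetilde{\Lambda}(L,M) = h - h'$; after the shift $q^{h'}$, these contribute powers $>h$ to $[L\circ\cdots\circ L_k]$, while $H$ itself appears exactly once, with power exactly $h$. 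For the remaining terms $q^{s_i}[L\circ S_i]$ with $s_i>h'$, I would argue that every simple subquotient $T\langle t\rangle$ of $L\circ S_i$ has $t \geq -\widetilde{\Lambda}(L,S_i)$ (a subquotient's grading is bounded below by that of the head, which is self-dual up to the shift $-\widetilde{\Lambda}$), and that $\widetilde{\Lambda}(L,S_i)\le \widetilde{\Lambda}(L,M)$: this last inequality comes from Proposition \ref{prop:prop-kkko}\eqref{it-kk2}, which gives $\Lambda(L,S_i)\le\Lambda(L, L_2\circ\cdots\circ L_k)=\Lambda(L,M)$ (the equality by Lemma \ref{lem:kkkosqr}), together with the linear relation \eqref{eq:linrel} and $\wt(S_i)=\wt(M)$. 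Hence each such $T\langle t\rangle$, once shifted by $q^{s_i}$, contributes a power $s_i + t > h' + (h-h') = h$. Collecting: $H$ occurs with graded multiplicity exactly $q^h$, and every other simple occurs only in degrees $>h$.

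The main obstacle I anticipate is making precise the claim that for a simple module $S$, every graded simple subquotient $T\langle t\rangle$ of $L\circ S$ satisfies $t \ge -\widetilde{\Lambda}(L,S)$ — i.e.\ that the head is the ``bottom'' of the grading filtration. For the head and the kernel this is exactly Lemma \ref{lem:kkkosqr} (the kernel subquotients have \emph{strictly larger} degree), so in fact the bound is automatic from that lemma applied to $S$ in place of $M$: the head contributes degree $-\widetilde{\Lambda}(L,S)$ and everything else strictly more. The one subtlety is that when $S\cong L$, the product $L\circ L$ is simple (square-irreducibility) and self-dual, and one checks $\widetilde{\Lambda}(L,L)=0$ directly from $\Lambda(L,L)=0$ and \eqref{eq:linrel}, so the degenerate case is consistent. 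Finally one should note that since $S_i\ne M$ for all $i$ is not assumed — rather the $S_i$ are the subquotients of $L_2\circ\cdots\circ L_k$ other than the single copy of $M$ — even if some $S_i\cong M$ its shift $s_i$ is still $>h'$, so the argument is unaffected. Assembling these pieces yields both assertions of the proposition.
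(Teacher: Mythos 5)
Your argument correctly establishes the \emph{second} assertion of the proposition (every constituent other than the single head copy occurs in degree strictly greater than $h$): the decomposition $[L_2\circ\cdots\circ L_k]=q^{h'}[M]+\sum_i q^{s_i}[S_i]$, exactness of $L_1\circ(-)$, the lower bound $t\geq -\widetilde{\Lambda}(L_1,S_i)$ from Lemma \ref{lem:kkkosqr}, and the inequality $\widetilde{\Lambda}(L_1,S_i)\leq\widetilde{\Lambda}(L_1,M)$ are essentially the same chain of estimates the paper uses. (One citation is off: the equality $\Lambda(L_1,L_2\circ\cdots\circ L_k)=\Lambda(L_1,M)$ is not given by Lemma \ref{lem:kkkosqr}; it is exactly where normality of the full sequence enters, via Proposition \ref{prop:cond-equiv} combined with the additivity of Proposition \ref{prop:prop-kkko}\eqref{it-kk1} --- it fails for non-normal sequences, where one only has $\geq$.)

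However, there is a genuine gap in the \emph{first} assertion. What your computation yields is
\[
m(L_1\circ\cdots\circ L_k,\,H)(q)\;=\;q^{h}\;+\;(\text{possible terms of degree}>h)\;,
\]
and nothing in your argument excludes further copies of $H$, shifted to degrees strictly above $h$, occurring among the subquotients of $\ker(L_1\circ M\to H\langle h-h'\rangle)$ or of the products $L_1\circ S_i$; you simply assert that ``$H$ itself appears exactly once.'' Degree bounds alone cannot rule this out, and this is precisely the point the paper's case analysis is devoted to: for subquotients of the kernel of $L_1\circ M$ one uses the \emph{strict} inequality $\Lambda(L_1,T)<\Lambda(L_1,M)=\Lambda(L_1,H)$ of Lemma \ref{lem:kkkosqr} to conclude $T\not\cong H$; for a non-head subquotient $T$ of $L_1\circ S_i$ one argues $\Lambda(L_1,T)<\Lambda(L_1,S_i)\leq\Lambda(L_1,H)$, again forcing $T\not\cong H$; and for the head of $L_1\circ S_i$ one invokes the injectivity of the map $N\mapsto \operatorname{head}(L_1\circ N)$ on isomorphism classes of simples (\cite[Corollary 3.7]{kkko0}), together with $S_i\not\cong M$ from the induction hypothesis, to conclude it is not isomorphic to $H$. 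Your proposal contains no substitute for these $\Lambda$-comparisons and the head-injectivity step, so the multiplicity-one statement $m(L_1\circ\cdots\circ L_k,H)(q)=q^h$ remains unproved as written.
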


\begin{proof}
We prove by induction on the product length $k$.

Clearly, $(L_2, \ldots, L_k)$ is a normal sequence. Hence, by Proposition \ref{prop:simplehd}, $L_2\circ \cdots \circ L_k$ has a simple head $H'\langle h'\rangle$, with $H'\in \irr_{\mathcal{D}}$ and $h'\in \mathbb{Z}$. 


By Proposition \ref{prop:prop-kkko}\eqref{it-kk1} and the condition in Proposition \ref{prop:cond-equiv}\eqref{itnorm2}, we see that
\begin{equation}\label{eq:mult1}
\Lambda(L_1, H') = \Lambda(L_1, L_2\circ \cdots \circ L_k )\;.
\end{equation}

Since $L_1$ is square-irreducible, $H\langle h\rangle$ must be the unique simple quotient of $L_1\circ H'\langle h'\rangle$, and $h= h' -\widetilde{\Lambda}(L_1,H')$.


Let $T_1 \subset T_2\subsetneq L_1\circ \cdots \circ L_k$ be submodules, so that $T_2/T_1 \cong L\langle \ell \rangle$ for $L\in \irr_{\mathcal{D}}$. It is enough to show that $L\not\cong H$, and that $\ell > h$.

In case we have an inclusion
\[
\ker(L_1\circ \cdots \circ L_k \to  L_1\circ H'\langle  h' \rangle) \subset T_1\;,
\]
it follows that $L\langle \ell\rangle$ appears as a subquotient of $\ker( L_1\circ H'\langle  h' \rangle \to H\langle h\rangle)$. By Lemma \ref{lem:kkkosqr}, that means $\ell > h$ and $\Lambda(L_1, L) < \Lambda(L_1, H)$, proving $L\not\cong H$.

Otherwise, by exactness of the convolution product, there must exist a subquotient module $L'\langle \ell'\rangle$ of $\ker( L_2\circ \cdots \circ L_k \to H'\langle h'\rangle)$, with $L'\in \irr_{\mathcal{D}}$ and the power $\ell - \ell'$ appearing in the Laurent polynomial $m(L_1\circ L', L)$.

The induction hypothesis now implies that $L'\not\cong H'$ and that $\ell' > h'$.

By Proposition \ref{prop:prop-kkko}\eqref{it-kk2} and \eqref{eq:mult1}, we see that $\Lambda(L_1,L')\leq \Lambda(L_1,H')$, which by the identity \eqref{eq:linrel} implies $\widetilde{\Lambda}(L_1,L')\leq \widetilde{\Lambda}(L_1,H')$. Now, by Lemma \ref{lem:kkkosqr},
\[
\ell - h' > \ell - \ell' \geq -\widetilde{\Lambda}(L_1, L')\geq -\widetilde{\Lambda}(L_1,H')\;,
\]
and the inequality $\ell>h$ follows.

In case that $L$ does not appear as a quotient of $(L_1\circ L')^{\fgt}$, it follows again from Lemma \ref{lem:kkkosqr} that
\[
\Lambda(L_1, L) < \Lambda(L_1, L') \leq \Lambda(L_1,H) = \Lambda(L_1,H)\;,
\]
and $L\not\cong H$ is seen to hold again.

Finally, we are left with the case that $L\langle \ell\rangle$ is a quotient of $L_1\circ L'\langle \ell'\rangle$. The injectivity (\cite[Corollary 3.7]{kkko0}) of the map $N \mapsto$ head$(L_1\circ N)$ on isomorphism classes in $\irr_{\mathcal{D}}$ now implies $L\not\cong H$.

\end{proof}

\begin{remark}
The bound on subquotient shift degrees in the statement of Lemma \ref{lem:kkkosqr} was proved using the geometric insight (see \cite[Lemma 7.5]{McNm},\cite{kato-ext}) coming from a realization of quiver Hecke algebras as extension algebras of perverse sheaves (\`{a} la \cite{vv}). We are unaware of an alternative purely algebraic argument.

Hence, we note that parts of Proposition \ref{prop:mult-one}, though not the simplicity of heads, covertly rely on the underlying geometry of quiver varieties.

\end{remark}

\subsection{Applications for RSK-standard modules}

\begin{theorem}\label{thm:mmain}
  Let $L_\m\in \irr_{\mathcal{D}}$ be a simple module, with $\m \in \Mult$. Let
  \[
  \RSK(\m) = (\la_1,\ldots,\la_{\omega}) \in \Lad^{\omega}
  \]
  be its RSK-transform, and
  \[
\Gamma(\m)=L_{\la_1}\circ \cdots \circ L_{\la_{\omega}}\langle -d(\m)\rangle \in R(\wt(\m))-\gmod
  \]
  its associated RSK-standard module.

Then, $L_{\m}$ is the head (that is, the unique irreducible quotient) of $\Gamma(\m)$, and it appears only once in the Jordan-H\"{o}lder series of $\Gamma(\m)^{\fgt}$ (i.e. $m(\Gamma(\m),L_\m)(q) = 1$).

Moreover, for any $L_{\m}\not\cong L\in \irr_{\mathcal{D}}$, the graded multiplicity $m(\Gamma(\m), L)(q)$, if non-zero, is a polynomial with zero constant term.

\end{theorem}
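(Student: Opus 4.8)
The plan is to reduce the statement entirely to the machinery of normal sequences developed in Section \ref{sec:normal}, combined with the degree formula of Corollary \ref{cor:maincor}. The essential claim to establish is that the sequence $(L_{\la_1}, L_{\la_2}, \ldots, L_{\la_\omega})$ of homogeneous (hence square-irreducible, by Proposition \ref{prop:homo-sq}) modules is a \emph{normal} sequence. Once this is known, Proposition \ref{prop:mult-one} applies verbatim: the product $L_{\la_1}\circ\cdots\circ L_{\la_\omega}$ has simple head $H\langle h\rangle$ with $m(L_{\la_1}\circ\cdots\circ L_{\la_\omega}, H)(q) = q^h$, and every other simple constituent $L$ contributes a Laurent polynomial $m(\cdot, L)(q)$ with all exponents strictly greater than $h$. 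Tensoring by $\langle -d(\m)\rangle$ shifts everything, so to conclude I must check that the normalization $d(\m)$ is chosen precisely so that $H^{\fgt}\cong L_\m$ and the head sits in degree $0$, i.e. $h = d(\m)$; and that the shift makes all other exponents strictly positive, which is exactly the "zero constant term" assertion.

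First I would set up the induction on $\omega = \omega(\m)$, the width. The RSK-transform is defined recursively via $\Vien(\la_1,\m_1)^{-1}$-style bookkeeping, so writing $\widehat{\m} = \m_1$ with $\RSK(\widehat{\m}) = (\la_2,\ldots,\la_\omega)$, the inductive hypothesis gives that $(L_{\la_2},\ldots,L_{\la_\omega})$ is normal and that the head of $\Gamma(\widehat{\m})$ is $L_{\widehat{\m}} = L_{\m_1}$ (up to the grading shift $-d(\widehat{\m})$), appearing with graded multiplicity $1$. By Proposition \ref{prop:simplehd} the ungraded head $M$ of $L_{\la_2}\circ\cdots\circ L_{\la_\omega}$ is then $L_{\m_1}^{\fgt}$ up to a shift. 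To run the inductive step via the criterion in Proposition \ref{prop:cond-equiv}\eqref{itnorm3}, I must verify
\[
\widetilde{\Lambda}(L_{\la_1}, M) = \widetilde{\Lambda}(L_{\la_1}, L_{\la_2}) + \ldots + \widetilde{\Lambda}(L_{\la_1}, L_{\la_\omega})\;.
\]
The left side is $\widetilde{\Lambda}(\la_1, \m_1)$ in the notation of Theorem \ref{thm:graded-quot}, since $M^{\fgt}\cong L_{\m_1}$ and $\widetilde{\Lambda}$ only depends on ungraded data up to the unavoidable shift; by Corollary \ref{cor:maincor} this equals $|\m_1| - C(\la_1,\m_1)$. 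The right side is a sum of terms $\widetilde{\Lambda}(\la_1,\la_j)$, and since each $\la_j$ is itself a ladder — a single-width multisegment — and $(\la_1,\la_j)$ is a permissible pair, Corollary \ref{cor:maincor} applies again to give $\widetilde{\Lambda}(\la_1,\la_j) = |\la_j| - C(\la_1,\la_j)$. So the required identity becomes the purely combinatorial statement
\[
|\m_1| - C(\la_1,\m_1) = \sum_{j=2}^{\omega}\bigl(|\la_j| - C(\la_1,\la_j)\bigr)\;.
\]
Since $|\m_1| = |\la_2| + \ldots + |\la_\omega|$ (cardinalities of segment-sets add up under $\Vien$, as in Proposition \ref{eq:preser}), this reduces to the additivity $C(\la_1,\m_1) = \sum_{j=2}^\omega C(\la_1,\la_j)$ — which is immediate from the definition of $C(\cdot,\cdot)$ as a count of pairs of segments, once we note $\m_1 = \la_2 + \ldots + \la_\omega$ as multisegments (this is how $\RSK$ of $\m_1$ is built from the later ladders; I need to confirm the ladders $\la_2,\dots,\la_\omega$ partition $\m_1$ segment-wise, which follows from the recursive structure of $\Vien$).

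The main obstacle is precisely this last bookkeeping point: verifying that $\m_1 = \la_2 + \ldots + \la_\omega$ as an \emph{identity of multisegments} (not merely in $Q_+$), so that the $C$-count genuinely splits as a sum over the constituent ladders. Here one must be careful, because $\RSK$ is defined by \emph{iterating} $\Vien$ on successive outputs, not by directly decomposing $\m_1$; so strictly, $\la_2 = $ first component of $\Vien(\m_1)$, $\la_3 = $ first component of $\Vien$ of that, and so on, and one needs that their sum reconstitutes $\m_1$. This is a standard property of the Knuth/Viennot correspondence — the recording tableau $Q_{\m_1}$ together with $P_{\m_1}$ determines $\m_1$, and the row-by-row read-off gives exactly the $\la_j$'s — but I would spell it out by invoking \cite[Section 2.2.2]{gur-lap} and the bijectivity statements already recorded (the propositions around $\Vien_i$ and $\RSK$). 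Once the normality is in hand, the grading/shift reconciliation is routine: $d(\m) = \widetilde{\Lambda}(\la_1,\m_1) + d(\widehat{\m})$ by definition, and Corollary \ref{cor:sumform} gives $h = -\sum_{i<j}\widetilde{\Lambda}(L_{\la_i},L_{\la_j})$ for the head of the unshifted product, which telescopes against the recursive definition of $d$ to yield exactly that $\Gamma(\m) = L_{\la_1}\circ\cdots\circ L_{\la_\omega}\langle -d(\m)\rangle$ has head $L_\m$ in degree $0$; the strict-inequality half of Proposition \ref{prop:mult-one} then gives that all other graded multiplicities have strictly positive exponents, i.e. vanishing constant term.
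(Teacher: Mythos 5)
Your overall strategy is the same as the paper's: reduce everything to showing that $(L_{\la_1},\ldots,L_{\la_\omega})$ is a normal sequence, argue by induction through Proposition \ref{prop:cond-equiv}\eqref{itnorm3}, and translate the required identity, via Corollary \ref{cor:maincor}, into additivity of $|\cdot|$ and of $C(\la_1,\cdot)$ over the RSK ladders. The gap sits exactly at the point you single out as the main obstacle: the identity $\m_1=\la_2+\ldots+\la_\omega$ \emph{as multisegments} is false in general, and it is not a property of the Knuth--Viennot correspondence. The map $\Vein$ builds new segments $\Delta^{\clubsuit}_i=\Delta(b(\Delta_i),e(\Delta_{i_\#}))$ by mixing begin points of one constituent with end points of another, so the RSK ladders do not refine the multisegment; only the multisets of begin and end points survive (this is precisely the content of Proposition \ref{eq:preser}). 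Concretely, $\Vein(\Delta(2,3),\Delta(1,2))=\Delta(1,3)+\Delta(2,2)$, hence $\RSK(\Delta(1,3)+\Delta(2,2))=(\Delta(2,3),\Delta(1,2))$ while $\Delta(2,3)+\Delta(1,2)\neq\Delta(1,3)+\Delta(2,2)$; taking $\la_1=\Delta(2,4)$ and $\m=\Vein(\la_1,\Delta(1,3)+\Delta(2,2))$ gives a width-three $\m$ whose $\m_1$ is exactly this multisegment, so your claimed ``segment-wise partition'' already fails in the first inductive step. The remark that $(P_{\m_1},Q_{\m_1})$ determines $\m_1$ does not rescue it: $\m_1$ is recovered from the bi-tableau by the \emph{inverse} algorithm, not by summing the row-ladders.

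Fortunately, the conclusion you actually need is true and the repair is one line -- it is what the paper does. From its definition, $C(\la_1,\n)$ counts pairs consisting of a begin point of $\la_1$ and an end point $e$ of $\n$ with $e+1$ equal to that begin point, so it depends only on $\mathfrak{e}(\n)$ and is additive in it; since Proposition \ref{eq:preser} gives $\mathfrak{e}(\m_1)=\mathfrak{e}(\la_2)+\ldots+\mathfrak{e}(\la_\omega)$ (and likewise $|\m_1|=|\la_2|+\ldots+|\la_\omega|$), one gets $C(\la_1,\m_1)=C(\la_1,\la_2)+\ldots+C(\la_1,\la_\omega)$ without any claim about the multisegments themselves. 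With that substitution your induction goes through verbatim, and the remaining ingredients (permissibility of the pairs $(\la_1,\la_j)$ from \cite[Proposition 2.4]{gur-lap}, Proposition \ref{prop:mult-one} for the multiplicity-one and strict-positivity statements) are used exactly as in the paper; for the normalization of the shift, the paper simply invokes Theorem \ref{thm:gurlap} to identify the self-dual head of $\Gamma(\m)$ with $L_\m$, which is a slightly cleaner route than redoing the telescoping with Corollary \ref{cor:sumform}.
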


\begin{proof}

By Propositions \ref{prop:simplehd},\ref{prop:mult-one} and Theorem \ref{thm:gurlap}, it is enough to show that $(L_{\la_1}, \ldots, L_{\la_{\omega}})$ is a normal sequence.

Let $\m'\in \Mult$ be the multisegment, for which $\Vien(\m) = (\la_1, \m')$. Then, $\RSK(\m') = (\la_2,\ldots, \la_{\omega})$.

Arguing inductively by Proposition \ref{prop:cond-equiv}, it is enough to show the equality
\begin{equation}\label{eq:needed}
\widetilde{\Lambda}(\la_1, \m') = \widetilde{\Lambda}(\la_1, \la_2) + \ldots + \widetilde{\Lambda}(\la_1, \la_\omega)\;.
\end{equation}
Note, that the map $\n \mapsto C(\la_1,\n)$ from $\Mult$ to $\mathbb{Z}$ is clearly additively and depends only on $\mathfrak{e}(\n)$. Hence, from Proposition \ref{eq:preser}, we see that
\begin{equation}\label{eq:Cadd}
C(\la_1,\m') =C(\la_1, \la_2 + \ldots + \la_\omega) = C(\la_1, \la_2) + \ldots + C(\la_1, \la_\omega)\;.
\end{equation}
Recall that $(\la_1,\la_i)\in \pairs$, for all $2\leq i\leq \omega$ (\cite[Proposition 2.4]{gur-lap}).
Thus, equation \eqref{eq:needed} follows by substituting the formula of Corollary \ref{cor:maincor} and applying \eqref{eq:Cadd} together with Proposition \ref{eq:preser} once more.

\end{proof}

\begin{theorem}\label{thm:deg}
For $0\neq \m\in \Mult$, the shift of degree defining $\Gamma(\m)$ is explicitly read from the data $\RSK(\m)$ through the formula
\[
d(\m) = -\sum_{j\geq 1} \lambda_j^\ast (\lambda_j^\ast-1) + \#\{(i,j)\;:\; d_{i,j} = c_{i',j'},\mbox{ for }(i',j')\mbox{ with } i<i'\} \;,
\]
where $(P_\m, Q_\m) = ((c_{i,j}), (d_{i,j}))$ is the pair of Young tableaux describing $\RSK(\m)$, and $(\lambda^\ast_1, \lambda^\ast_2, \ldots)$ is the conjugate partition to $\lambda(\m)$ (that is, the column shape of $P_\m, Q_m$).
\end{theorem}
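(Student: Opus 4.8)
The plan is to evaluate the recursively defined $d(\m)$ in closed form: first convert each term $\widetilde\Lambda(\la_i,\m_i)$ into a count of begin/end coincidences using Corollary~\ref{cor:maincor}, then linearize with Proposition~\ref{eq:preser}, and finally match the resulting two sums against the two terms of the asserted formula. Write $\RSK(\m)=(\la_1,\dots,\la_\omega)$ and record the auxiliary multisegments $\m=\m_0,\m_1,\dots,\m_{\omega-1}$ with $\Vien(\m_{i-1})=(\la_i,\m_i)$, so that each $(\la_i,\m_i)\in\pairs$ and $\RSK(\m_i)=(\la_{i+1},\dots,\la_\omega)$. Corollary~\ref{cor:maincor} expresses $\widetilde\Lambda(\la_i,\m_i)$ through $C(\la_i,\m_i)$ and $|\m_i|$. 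Since the assignment $\n\mapsto C(\la_i,\n)$ is additive and depends on $\n$ only through its multiset of endpoints (it counts the begin-points of $\la_i$ lying one above an endpoint of $\n$), Proposition~\ref{eq:preser}, which gives $\mathfrak e(\m_i)=\mathfrak e(\la_{i+1})+\dots+\mathfrak e(\la_\omega)$ and $|\m_i|=\lambda_{i+1}+\dots+\lambda_\omega$ with $\lambda_k:=|\la_k|$ the length of the $k$-th row of $P_\m$, lets us collapse the telescoping sum to
\[
d(\m)\;=\;\sum_{1\le i<j\le\omega}C(\la_i,\la_j)\;-\;\sum_{j=1}^{\omega}(j-1)\lambda_j ,
\]
the overall sign being the one forced by $\Gamma(\m)$ having the degree-$0$ self-dual module $L_\m$ as its head (Theorem~\ref{thm:mmain}).

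The second sum is immediately the conjugate-partition term: summing the weight ``row index minus one'' over the cells of the Young diagram of $\lambda(\m)$ column by column gives the elementary identity $\sum_j(j-1)\lambda_j=\sum_{c\ge1}\binom{\lambda^\ast_c}{2}$, where $\lambda^\ast$ is the column shape of $P_\m,Q_\m$.

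For the first sum, recall that the $i$-th row of $P_\m$ consists of the begin-points $c_{i,k}$ of the segments of $\la_i$ while the $i$-th row of $Q_\m$ consists of the numbers $d_{i,k}=e(\cdot)+1$; therefore $C(\la_i,\la_j)=\#\{(k,l):c_{i,k}=d_{j,l}\}$, so $\sum_{i<j}C(\la_i,\la_j)$ counts equal-valued pairs formed by a cell of $P_\m$ and a cell of $Q_\m$ lying in different rows. Matching this quantity with $\#\{(i,j),(i',j'):i<i',\ d_{i,j}=c_{i',j'}\}$ from the statement amounts to a symmetry between $P_\m$ and $Q_\m$ in the way their coinciding entries are distributed across rows; I would establish it by a combinatorial lemma, proved from the permissibility of the pairs $(\la_i,\la_j)$ (using the dominance relation $\prec$ and the shift $\lshft{(\cdot)}$ from the definition of $\pairs$), together with the within-cell inequality $c_{i,k}<d_{i,k}$ and the column-weak-monotonicity of $P_\m$ and $Q_\m$ — most cleanly run through an induction on $\omega$ that peels off $\la_1$ and reduces to a statement about the single permissible pair $(\la_1,\m_1)$. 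Substituting this identity together with the conjugate-partition identity into the displayed formula yields the asserted expression for $d(\m)$.

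The main obstacle is precisely this last combinatorial lemma: everything else is routine bookkeeping with Corollary~\ref{cor:maincor} and Proposition~\ref{eq:preser}. The difficulty is that the coincidence count in the statement has the $Q_\m$-cell in the lower-indexed row whereas $\sum_{i<j}C(\la_i,\la_j)$ naturally has the $P_\m$-cell there; bridging this requires exploiting the permissibility data to exhibit, segment by segment, a canonical bijection between the two coincidence sets, and verifying its well-definedness is where genuine care is needed.
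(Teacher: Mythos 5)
Up to the final matching step your argument is the paper's own: collapsing the recursion for $d(\m)$ by means of Corollary \ref{cor:maincor} and Proposition \ref{eq:preser} to the intermediate expression $d(\m)=\sum_{1\leq i<j\leq\omega}C(\la_i,\la_j)-\sum_{j\geq1}(j-1)\lambda_j$ is precisely what the paper does (it phrases the collapse via Corollary \ref{cor:sumform} for the normal sequence $(L_{\la_1},\ldots,L_{\la_\omega})$, you via the recursive definition; same computation), and the conversion of $\sum_j(j-1)\lambda_j$ into a statistic of the conjugate partition is the paper's ``standard counting'' step. So the only genuinely contested content is your last step.

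There the proposal has a real gap. You reduce the theorem to an unproven lemma asserting that $\sum_{i<j}C(\la_i,\la_j)$ --- which, by definition of $C$ and of the tableaux, pairs a row-$i$ entry of $P_\m$ (a begin point of $\la_i$) with a row-$j$ entry of $Q_\m$ (an end-plus-one point of $\la_j$) for $i<j$ --- coincides with the transposed count in which the $Q_\m$-entry sits in the strictly smaller row. That lemma is false, so the induction on $\omega$ you sketch cannot close the argument. For instance, take $\m=\Delta(1,2)+2\,\Delta(3,4)$: the pair $(\la_1,\la_2)=(\Delta(1,2)+\Delta(3,4),\;\Delta(3,4))$ is permissible and $\Vein(\la_1,\la_2)=\m$ (a one-line run of the algorithm of Section \ref{sec:algo}), hence $\RSK(\m)=(\la_1,\la_2)$; the tableaux have rows $(3,1),(3)$ for $P_\m$ and $(5,3),(5)$ for $Q_\m$, so $\sum_{i<j}C(\la_i,\la_j)=C(\la_1,\la_2)=0$, while your transposed count equals $1$ (from $d_{1,2}=3=c_{2,1}$). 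The identification the paper intends is the direct, term-by-term one: $\sum_{i<j}C(\la_i,\la_j)$ is literally the coincidence count in which the $Q_\m$-entry lies in the larger-indexed row, and no bridging symmetry between $P_\m$ and $Q_\m$ is needed --- nor, by the example, is one available. (The orientation as printed in the statement of Theorem \ref{thm:deg}, like the factor of $2$ by which your correct identity $\sum_j(j-1)\lambda_j=\sum_j\lambda_j^\ast(\lambda_j^\ast-1)/2$ misses the printed first term --- a discrepancy your write-up passes over silently, just as the paper's proof does --- is best read as a normalization slip in the statement, not as mathematical content to be supplied; the example shows the two orientations genuinely differ, so they cannot be reconciled by any lemma.) As written, then, the proposal is incomplete exactly at what you yourself call the main obstacle, and the route you propose for overcoming it would fail.
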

\begin{proof}

Since $\Gamma(\m)$ is defined by a normal sequence (as described in the proof of Theorem \ref{thm:mmain}), we can use Corollary \ref{cor:sumform} to reach an expression for $d(\m)$. Indeed, substituting the formula in Corollary \ref{cor:maincor} gives
\begin{equation}\label{eq:auxeq}
d(\m) = -\sum_{i=2}^\omega (i-1)|\la_i| + \sum_{1\leq i < j \leq \omega} C(\la_i,\la_j)\;,
\end{equation}
where $ \RSK(\m) = (\la_1,\ldots,\la_{\omega})$.

Since $\lambda(\m) = (|\la_1|,\ldots, |\la_\omega|)$, the first part of the sum above may be dually expressed as
\[
-\sum_{j\geq 1} \lambda_j^\ast (\lambda_j^\ast-1)
\]
by standard counting arguments.

Equality between the second parts of the formula appearing in the statement and of equation \eqref{eq:auxeq} follows directly from the definition of the terms involved.

\end{proof}

Transferring the statement of Theorem \ref{thm:mmain} into the $p$-adic setting using the equivalences of Section \ref{sec:equiv-cat}, settles the natural conjectures posed in \cite{gur-lap} regarding the $p$-adic version of RSK-standard modules.

\begin{corollary}\label{cor:final}
Let $\rho\in \cusp_m$ be a supercuspidal representation, and $\pi = Z(\m)\in \irr_{\rho}^{\mathbb{Z}}$ any irreducible representation, given by a multisegment $0\neq \m \in \Mult$ through the Zelevinsky classification.

Let
  \[
  \RSK(\m) = (\la_1,\ldots,\la_{\omega}) \in \Lad^{\omega}
  \]
  be the RSK-transform of $\m$, and
  \[
\widetilde{\Gamma}(\m):= Z(\la_1)\times \cdots \times Z(\la_{\omega}) \in \mathcal{C}^{\mathbb{Z}}_{\rho}
  \]
  its associated RSK-standard module.

Then, $\widetilde{\Gamma}(\m)$ has a unique irreducible representation described by $Z(\m)$. The isomorphism class $Z(\m)$ appear only once in the Jordan-H\"{o}lder series of the representation $\widetilde{\Gamma}(\m)$.

\end{corollary}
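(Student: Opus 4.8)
The plan is to deduce the statement directly from Theorem \ref{thm:mmain} by transporting it through the monoidal equivalence $\mathcal{E}_\rho$ of Theorem \ref{thm:fullequi}. First I would record that, by Proposition \ref{eq:preser}, $\wt(\m) = \wt(\la_1) + \ldots + \wt(\la_\omega)$, so the block-compatibility of parabolic induction noted in Section \ref{sec:padic} guarantees that $\widetilde{\Gamma}(\m) = Z(\la_1)\times\cdots\times Z(\la_\omega)$ actually lies in $\mathcal{C}_\rho^{\wt(\m)}$, hence is covered by the single equivalence functor $\mathcal{E}_{\rho,\wt(\m)}$. By the identifications established at the end of Section \ref{sec:padic}, $\mathcal{E}_\rho(Z(\la_i)) \cong L_{\la_i}^{\fgt}$ for each $i$ and $\mathcal{E}_\rho(Z(\m)) \cong L_\m^{\fgt}$. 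Combining the monoidality of $\mathcal{E}_\rho$ (Proposition \ref{prop:monoid}) with the monoidality of $\fgt$, and noting that the grading shift $\langle -d(\m)\rangle$ is invisible to the forgetful functor, I would obtain
\[
\mathcal{E}_\rho(\widetilde{\Gamma}(\m)) \cong L_{\la_1}^{\fgt}\circ\cdots\circ L_{\la_\omega}^{\fgt} \cong \bigl(L_{\la_1}\circ\cdots\circ L_{\la_\omega}\bigr)^{\fgt} \cong \Gamma(\m)^{\fgt}.
\]

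The second step is to invoke Theorem \ref{thm:mmain}: $L_\m$ is the head of $\Gamma(\m)$ and $m(\Gamma(\m),L_\m)(q)=1$. Forgetting the grading, this says that $L_\m^{\fgt}$ is the head of $\Gamma(\m)^{\fgt}$ and occurs exactly once in its Jordan--H\"older series. Since $\mathcal{E}_{\rho,\wt(\m)}$ is an equivalence of abelian categories, it preserves all purely categorical data --- subobjects, quotients, simplicity, maximal semisimple quotients, and Jordan--H\"older multiplicities. Transporting the assertion about $\Gamma(\m)^{\fgt}$ back through $\mathcal{E}_\rho^{-1}$ and using $\mathcal{E}_\rho^{-1}(L_\m^{\fgt})\cong Z(\m)$, I conclude that $Z(\m)$ is the unique irreducible quotient of $\widetilde{\Gamma}(\m)$ and appears exactly once in its Jordan--H\"older series.

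There is essentially no serious obstacle here: the corollary is a formal consequence of the graded Theorem \ref{thm:mmain} once the dictionary of Section \ref{sec:equiv-cat} is in place. The only points needing a little care are the block-compatibility of $\widetilde{\Gamma}(\m)$ (so that a single equivalence functor applies) and the remark that the grading shift $\langle -d(\m)\rangle$ plays no role under $\fgt$, so that one is genuinely comparing $\mathcal{E}_\rho(\widetilde{\Gamma}(\m))$ with $\Gamma(\m)^{\fgt}$ rather than with $\Gamma(\m)$ itself; both are immediate from results already established.
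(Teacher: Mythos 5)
Your proposal is correct and follows essentially the same route as the paper: apply the monoidal equivalence $\mathcal{E}_\rho$ of Theorem \ref{thm:fullequi} together with Proposition \ref{prop:monoid} to identify $\mathcal{E}_\rho(\widetilde{\Gamma}(\m))$ with $\Gamma(\m)^{\fgt}$, then transport Theorem \ref{thm:mmain} back. The only difference is cosmetic: the paper invokes the identity \eqref{eq:homsp} to justify that the graded head/multiplicity statements descend to the ungraded (hence $p$-adic) setting, a point you handle implicitly when passing through $\fgt$.
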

\begin{proof}
By Theorem \ref{thm:fullequi} and Proposition \ref{prop:monoid}, $\mathcal{E}_\rho(\widetilde{\Gamma}(\m)) \cong \Gamma(\m)^{\fgt}$. The result then follows from Theorem \ref{thm:mmain} and the description of morphism spaces as in \eqref{eq:homsp}.

\end{proof}

\bibliographystyle{alpha}
\bibliography{propo2}{}

\end{document}